\documentclass[11pt]{article}

\usepackage[
  shownumpages,               
  bgcolor={245,245,250},      
  braincolor={190,32,240},    
  citingstyle=authoryear,     
  bibliostyle=plainnat,       
  bibfile=refs          
]{brainlab}

\setbrainmeta{
  title={Exploring New Frontiers in Vertical Federated Learning: the Role of Saddle Point Reformulation},
  authors={
    Aleksandr Beznosikov\textsuperscript{1,2,3}, Georgiy Kormakov \textsuperscript{2}, 
    Alexander Grigorievskiy \textsuperscript{4},
    Mikhail Rudakov\textsuperscript{3},
    Ruslan Nazykov\textsuperscript{5},
    Alexander Rogozin\textsuperscript{3}, 
    Anton Vakhrushev\textsuperscript{4}, 
    Andrey Savchenko\textsuperscript{4,6}, 
    Martin Takáč\textsuperscript{7}, 
    Alexander Gasnikov\textsuperscript{3}
  },
  affiliations={
    \textsuperscript{1}Basic Research of Artificial Intelligence Laboratory (BRAIn Lab) \\
    \textsuperscript{2}Federated Learning Problems Laboratory \\
    \textsuperscript{3} Innopolis University \\
    \textsuperscript{4} Sber AI Lab\\
    \textsuperscript{5} Technical University of Munich\\
    \textsuperscript{6} HSE University\\
    \textsuperscript{7} Mohamed bin Zayed University of Artificial Intelligence\\
  },
  abstract={
    The objective of Vertical Federated Learning (VFL) is to collectively train a model using features available on different devices while sharing the same users. This paper focuses on the saddle point reformulation of the VFL problem via the classical Lagrangian function. We first demonstrate how this formulation can be solved using deterministic methods. More importantly, we explore various stochastic modifications to adapt to practical scenarios, such as employing compression techniques for efficient information transmission, enabling partial participation for asynchronous communication, and utilizing coordinate selection for faster local computation. We show that the saddle point reformulation plays a key role and opens up possibilities to use mentioned extension that seem to be impossible in the standard minimization formulation. Convergence estimates are provided for each algorithm, demonstrating their effectiveness in addressing the VFL problem. Additionally, alternative reformulations are investigated, and numerical experiments are conducted to validate performance and effectiveness of the proposed approach.
  }
}

\newcommand{\Exp}{\mathbb{E}}
\newcommand{\PP}{\mathbb{P}}
\newcommand{\E}[1]{{\mathbb{E}\left[#1\right] }}    

\newcommand{\R}{\mathbb{R}}

\usepackage{color}
\usepackage{graphicx}
\usepackage{enumitem}
\usepackage{nicefrac}
\usepackage{wrapfig}
 \allowdisplaybreaks

\usepackage{tikz}
\usepackage{threeparttable}
\usepackage{makecell}
\usepackage{lscape}
\usepackage{multirow}

\newcommand{\prox}{\text{prox}}

\newcommand{\cO}{{\cal O}}

\newcommand{\cX}{{\cal X}}
\newcommand{\cY}{{\cal Y}}

\newcommand{\cZ}{{\cal Z}}

\makeatletter
\newcommand\fs@nocaptionruled{
  \let\@fs@capt\relax
  \def\@fs@pre{}
  \def\@fs@post{\kern2pt\hrule\relax}%
  \def\@fs@mid{\kern2pt\hrule\kern2pt}%
  \let\@fs@iftopcapt\iftrue}
\makeatother

\begin{document}
\begin{mainpart}
\section{Introduction}
\label{sec:intro}

Federated Learning is an emergent paradigm that involves training a model on private data from several devices. 
It can be divided into two types: horizontal (HFL)\cite{konevcny2016federated,mcmahan2017communication}, where data samples are distributed across clients, and vertical (VFL)~\cite{liu2022vertical,yang2023survey,vfl_review_2,vfl_review_3} with {feature-wise partitioning}. In contrast to HFL, VFL divides the features of the same samples across clients. 
In this paper, we focus on the VFL problem, which appears in various fields from scoring problems~\cite{vfl_finsector_2021} to healthcare~\cite{vfl_med_2019} and smart manufacturing~\cite{vfl_smartmanuf_2021}.

Since we deal with a distributed environment in both horizontal and vertical data partitioning, the organization of the communication process plays a crucial role in developing learning algorithms. Due to the difference in the formulations, unique characteristics and issues can arise. The HFL problem statement is very similar to the classical distributed cluster learning \cite{verbraeken2020survey}, therefore, the study of various kinds of specialized HFL algorithms that take into account different aspects ranging from communication efficiency to personalization is quite extensive and comprehensive~\cite{kairouz2021advances}. It is a natural idea to transfer most of the techniques and useful stories from the horizontal scenario to the vertical one. And there are such results -- see e.g. \cite[Table~3]{liu2022vertical}, but not many at the moment. This can be due to the fact that the VFL problem is more ambiguous and complex from a formal optimization point of view, {than} it is not easy to use the theory from HFL.

Formally, VFL can be viewed as a classical minimization problem, with specifics in calculating the loss function, its gradient, or possibly higher-order derivatives.
But there is another way to look at. In particular, the VFL problem can be rewritten as an augmented Lagrangian \cite[Section 8]{boyd2011distributed}, which can be solved using the \texttt{ADMM} method \cite{Glowinski1975SurLP,GABAY197617}. 
Recent works argue that such a view of VFL is more private \cite{hu2019learning,xie2022improving}. 
The augmented Lagrangian reformulation combined with the \texttt{ADMM} algorithm is a powerful tool for solving many practical optimization problems (not just VFL) \cite{5594963,JMLR:v11:forero10a,WAHLBERG201283,wang2013online,NIPS2014_8fb5f8be,NIPS2014_1ff1de77}.
It provides privacy and an efficient solution for various scenarios, offering superior performance compared to other methods, making it a viable choice. 

In spite of this, the already mentioned modifications from \cite[Table~3]{liu2022vertical} focus on the basic minimization formulation. 
Notably, many of these results are often empirical and lack a theoretical foundation for convergence. Moreover, current results around the Lagrangian statement have also shortcomings and are weakly studied. In particular, the widely-used approach to VFL based on  
\texttt{ADMM} is costly, as two additional minimization subproblems must be solved on each iteration. Thus, we propose to expand the theory around  saddle point reformulations in this paper since the augmented Lagrangian reformulation is a good alternative to the classical minimization formulation. 
In particular, we address three research questions:
\begin{enumerate}[nosep]
\it
    \item 
    Is there any other way to rewrite the VFL problem which can provide advantages to the standard minimization formulation?
    \item What basic method should be used to solve the new VFL problem reformulation?
    \item Is it possible to modify the basic method for practical use? 
\end{enumerate}

\subsection{Our contribution} \label{sec:contr}

$\bullet$ \textbf{New look at VFL.} We first consider the VFL reformulation via classical Lagrangian and show that if the original VFL problem is convex, then the reformulation is convex-concave Saddle Point Problem (SPP), hence methods for SPP, such as \texttt{ExtraGradient} \cite{korpelevich1976extragradient,nemirovski2004prox}, can be applied to it.

$\bullet$ \textbf{New basic method for VFL.}
The classical Lagrangian is a convex-concave SPP that can be solved using optimal methods. 
We introduce the basic deterministic algorithm and its efficient stochastic modifications for VFL and prove that they significantly outperform existing techniques, e.g., \texttt{ADMM}, in terms of iteration cost (Table~\ref{tab:comparison0} in Appendix~\ref{app:algos}).

$\bullet$ \textbf{Family of practical modifications.} 
We present various modifications of the basic version of the algorithm to address practical needs and to make the basic algorithm more robust, including 
i)  introducing
compression operators to reduce the amount of transmitted information and solve the communication bottleneck \cite{alistarh2017qsgd,Alistarh-EF2018};
ii) allowing partial participation  for asynchronous device communication \cite{ribero2020communication};
iii) a coordinate modification to reduce the cost of local computing \cite{doi:10.1137/100802001};
{iv) adding noise and encryption for more privacy \cite{abadi2016deep}.}
Moreover, we show that the saddle reformulation allows to fully reveal the possibilities of these modifications.


$\bullet$ \textbf{New solver for augmented Lagrangian problem.}
Although the focus of this paper is primarily on the classical Lagrangian, we also consider the augmented version, present an algorithm for it and prove convergence estimates. 
The convergence estimates of the method for the augmented Lagrangian are no better (or even worse if the augmentation parameter is high) than those of the method for the classical Lagrangian.  
That is why we focus on the non-augmented formulation and put the augmented one in Appendix~\ref{app:aug}. 

$\bullet$ \textbf{More VFL reformulations.} 
We consider additional saddle point reformulations of the VFL problem, which have advantages, such as easier stepsize estimation, but require extra memory or existence of dual function of the loss. 

$\bullet$  
\textbf{Extension to non-convex problems.}
We show how our approach can be generalized to handle non-convex learning problems. It is worth noting that all modifications are easily transferable.

$\bullet$ \textbf{Numerical experiments.} We show empirically that our approach can outperform existing VFL solutions in the standard minimization formulation and the saddle problem reformulation.

\subsection{Technical preliminaries}

We use $\langle a,b \rangle = \sum_{i=1}^d [a]_i [b]_i$ to denote the standard inner product of $a,b\in\R^d$ where $[a]_i$ corresponds to the $i$-th component of $a$ in the standard basis in $\R^d$. It induces $\ell_2$-norm in $\R^d$ in the following way $\|x\|_2 = \sqrt{\langle x, x \rangle}$. To denote maximal eigenvalue of positive semidefinite matrix $M\in\R^{d\times d}$ we use $\lambda_{\max}(M)$. Operator $\E{\cdot}$ denotes mathematical expectation, and operator $\mathbb{E}_\xi[\cdot]$ express conditional mathematical expectation w.r.t. all randomness coming from random variable $\xi$.

We also need two classical definitions for the function $f$. 

\begin{definition}
The function $f: \R^d \to \R$, is $L$-\textit{smooth}, if there exists a constant $L > 0$ such that $\forall x,y \in \R^d$
$\| \nabla f(x) - \nabla f(y)\| \leq L \|x-y \|.$ 
\end{definition}
\begin{definition}
The function $f: \R^d \to \R$, is \textit{convex}, if $f(x) \geq f(y) + \langle \nabla f(y) , x - y \rangle$ ~for all $x,y \in \R^d$.
\end{definition}

\section{Saddle Point Reformulation and Extragradient} \label{sec:ref}

\textbf{Reformulation.}
The most common problem in machine learning, known as empirical risk minimization \cite{shalev2014understanding}, can be formulated as follows:
\begin{equation}
    \label{eq:main_problem}
        \min_{x \in \R^d} ~~ \left[ f(x) := \ell\left(Ax,b\right) + r(x)\right],
\end{equation}
where $x$ is a vector of model parameters, $A \in \R^{s \times d}$ is a data matrix, $b \in \R^{s}$ is a vector of labels, $\ell: \R^s \times \R^s \to \R$ is a loss function, $r: \R^d \to \R$ is a separable regularizer, $s$ is a number of data samples and $d$ is a size of the model. This paper considers a VFL setting where data is stored across $n$ different devices. Here, the matrix $A$ is divided by columns, and each device gets different features of each of the $s$ data points (for simplicity, we assume that the matrix $A$ contains no missing data). Thus, we can rewrite (\ref{eq:main_problem}) in the form of the VFL problem~\cite{liu2024vertical}:
\begin{equation}
    \label{eq:main_problem_vfl}
        \min_{x \in \R^d}   \left[\ell\left(\sum_{i=1}^{n} A_ix_i,b\right) + \sum_{i=1}^n r_i(x_i) \right],
\end{equation}
where $A_i \in \R^{s \times d_i}$ is a local data matrix on the $i$-th device, $x_i$ is a part of the parameters corresponding to the features of $i$-th device. It is natural to assume that $x_i$ lies on $i$th device. 
We additionally assume that labels $b$ contain private information and are stored in the first device. 
The problem (\ref{eq:main_problem_vfl}) can be rewritten as a constrained problem with additional variable $z \in \R^s$:
\begin{equation}
\label{eq:vfl_constr_1}
\begin{split}
        \min_{x \in \R^d} 
        \min_{z \in \R^s} ~~ \left[\ell\left(z,b\right) + \sum_{i=1}^n r_i(x_i)\right],
        ~~
        \text{s.t.} ~~ \sum_{i=1}^n A_i x_i = z.
\end{split}
\end{equation}

In turn, the problem with constraints can be rewritten as a saddle point problem, where the target function is the Lagrangian function 
\begin{equation}
    \label{eq:vfl_lin_spp_1}
\begin{split}
    \min_{(x, z) \in \R^{d + s}}\max_{y \in \R^s} \left[L (x,z,y) := \ell\left(z,b\right)
    + \sum_{i=1}^n r_i(x_i) + y^T \left(\sum_{i=1}^n A_i x_i - z\right)\right].
\end{split}
\end{equation}

The formulation (\ref{eq:vfl_lin_spp_1}) is the focus of our paper. 
Meanwhile, as we mentioned earlier, approaches to VFL based on \texttt{ADMM} also consider the Lagrangian functions with a regularizer $(\rho/2) \| \sum_{i=1}^n A_i x_i - z \|^2$ (we consider this case in Appendix \ref{app:aug}). 
For both reformulations, we propose a method that guarantees its convergence.

\textbf{Why saddle point?} 
Let us try to motivate the use of the saddle point reformulation (\ref{eq:vfl_lin_spp_1}) instead of the classical minimization the problem (\ref{eq:main_problem}) with the following example. 

If we consider the classical formulation (\ref{eq:main_problem}), which is valid for both vertical and horizontal cases, the main difference between these two types of data partitioning is the nature of the gradient computation, in particular concerning the communication process. In the horizontal case of (\ref{eq:main_problem}), all workers have the same parameter vectors but different training samples: $\ell (Ax,b) = \sum_{i=1}^n \ell (\hat A_i x, b_i)$, where $\hat A_i \in \R^{s_i \times d}$, $b_i \in \R^{s_i}$. To compute the gradient, we simply accumulate $\nabla_x \ell (\hat A_j x^k, \hat b_j)$ from all the workers: $\nabla f(x) = \sum_{j=1}^n \nabla_x \ell (\hat A_j x^k, \hat b_j)$. In the vertical case, to calculate the gradient for the parameters $x_i$ stored on the $i$th device, it is necessary to obtain $A_j x_j$ from all the devices: $\nabla_{x_i} f(x) = A_i^T \nabla_z \ell (z, b)$ with $z = \sum_{j=1}^n A_j x_j$.

In modern applications, various kinds of stochasticity arise in communication: compression to speed up information transfer or random noise for privacy \cite{abadi2016deep}. Let us consider the simplest model in which the stochasticity of communication is additive to the package on which it acts: package + noise $\xi$. As we discussed, we send different things in the horizontal and vertical cases. More specifically, the randomness we introduce has the following effect on the true gradients:
\begin{itemize}
    \item in the horizontal case
    \begin{equation*}
\nabla f(x)  ~\to~ \sum_{j=1}^n [\nabla_x \ell (\hat A_j x^k, \hat b_j) + \hat \xi_j] = \sum_{j=1}^n \nabla_x \ell (\hat A_j x^k, \hat b_j) + \sum_{j=1}^n \hat \xi_j,
\end{equation*}
\item in the vertical one
\begin{equation*}
\nabla_{x_i} f(x)  ~\to~  A_i^T \nabla_z \ell (z, b), \quad \text{where} \quad  z = \sum_{j=1}^n A_j x_j + \xi_j.
\end{equation*}
\end{itemize}

A key detail can be seen here: the simplest additive stochasticity in the horizontal case remains additive, but in the vertical case, the influence of randomness dips much more firmly into the gradient structure.
Let us look at how this kind of stochasticity affects the saddle point reformulation (\ref{eq:vfl_lin_spp_1}). One can note that it is also necessary to collect $A_j x_j$ during gradient computing. In more details, $\nabla_y L(x,z,y) = \sum_{j=1}^n A_j x_j - z$. With communication stochasticity this transfers to $\sum_{j=1}^n [A_j x_j + \xi_j] - z = \sum_{j=1}^n A_j x_j + \sum_{j=1}^n \xi_j - z$. The impact of randomness is additive. Because the saddle point reformulation "separates" the loss function $\ell$ and the data matrix $A$, the influence of stochasticity becomes more straightforward compared to (\ref{eq:main_problem}).

Before moving to stochastic methods, we must learn a deterministic algorithm as a base for constructing.

\textbf{Basic method.} 
The most straightforward idea is to solve the saddle problem using the gradient descent-ascent method: $x^{k+1}= x^k - \gamma \nabla_x L (x^k, z^k, y^k)$ (the same for $z$), $y^{k+1}= y^k + \gamma \nabla_y L (x^k, z^k, y^k)$. Gradient descent-ascent is not the best solution (\ref{eq:vfl_lin_spp_1}). Indeed, it gives relatively poor convergence guarantees for strongly convex -- strongly concave problems \cite{browder1966existence,rockafellar1969convex,sibony1970methodes}, and may diverge for convex-concave problems altogether \cite[Sections 7.2 and 8.2]{goodfellow2016nips}. Therefore, it is suggested to take the \texttt{ExtraGradient}/\texttt{Mirror Prox} method \cite{korpelevich1976extragradient,nemirovski2004prox}. 
The essence of this method is the use of an additional extrapolation step: $x^{k+1/2}= x^k - \gamma \nabla_x L (x^k, y^k)$, $x^{k+1}= x^k - \gamma \nabla_x L (x^{k+1/2}, y^{k+1/2})$ (the same for $y$). It can be explained by the simplest example of a two-dimensional saddle point problem $\min_{x \in \R} \max_{y \in \R} g(x,y)=xy$. For the first-order optimality condition, it has the unique saddle point with $(x^*, y^*)=(0, 0)$. In any point $(x^k, y^k)$, the step direction of gradient descent-ascent $(-\nabla_x L (x^k, y^k), 
\nabla_y L (x^k, y^k))$ is orthogonal to $(x^k - x^*, y^k - y^*)$; thus the iteration of gradient descent-ascent enlarges the distance to the saddle point. However, if we make the step of \texttt{ExtraGradient}, the direction $(-\nabla_x L (x^{k+1/2}, y^{k+1/2})$, 
$\nabla_y L (x^{k+1/2}, y^{k+1/2}))$ attracts to the saddle point. Furthermore, \texttt{ExtraGradient} is optimal for convex-concave saddle point problems \cite{zhang2021lower}. 
Iteration of the \texttt{ExtraGradient} method for our problem (\ref{eq:vfl_lin_spp_1}) is given in Algorithm~\ref{alg:EG}, and convergence is proved in Theorem~\ref{th:EG_basic_1}. The proof is postponed to Appendix \ref{app:th:EG_basic_1}.

\begin{assumption} \label{as:convexity_smothness}
The function $\ell: \R^s \to \R$, is $L_{\ell}$-smooth and convex.
Each function $r_i: \R^{d_i} \to \R$, is $L_{r}$-smooth and convex.
\end{assumption}

\begin{theorem} \label{th:EG_basic_1}
Let Assumption \ref{as:convexity_smothness} hold. Let the problem (\ref{eq:vfl_lin_spp_1})
be solved by Algorithm~\ref{alg:EG}. \\ Then for 
$
\gamma = \tfrac{1}{2} \min \left\{ 1; \tfrac{1}{\sqrt{\lambda_{\max}(A^T A)}}; \tfrac{1}{L_r}; \tfrac{1}{L_{\ell}} \right\},
$ 
it holds that
$$
\text{gap}(\bar x^K, \bar z^K,\bar y^K) = \mathcal{O} \left(  \frac{ ( 1 + \sqrt{\lambda_{\max}(A^T A)} + L_{\ell} + L_r  ) D^2}{K}  \right),
$$ 
where $\bar x^K := \tfrac{1}{K}\sum_{k=0}^{K-1} x^{k+1/2}$, $\bar z^K := \tfrac{1}{K}\sum_{k=0}^{K-1} z^{k+1/2}$, $\bar y^K := \tfrac{1}{K}\sum_{k=0}^{K-1} y^{k+1/2}$ and \\ $D^2 := \max_{x, z, y \in \cX, \cZ, \cY} \left[ \|x^0 - x \|^2 + \|z^0 - z \|^2 + \|y^0 - y \|^2 \right]$.
\end{theorem}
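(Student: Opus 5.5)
We will prove Theorem~\ref{th:EG_basic_1} with the standard monotone-operator argument for \texttt{ExtraGradient}. Write $w=(x,z,y)$ and introduce the operator
\[
F(w) := \Bigl(\nabla_x L(w),\, \nabla_z L(w),\, -\nabla_y L(w)\Bigr) = \Bigl(\nabla r(x) + A^T y,\; \nabla_z \ell(z,b) - y,\; -(Ax - z)\Bigr),
\]
where $\nabla r(x)$ stacks the blocks $\nabla r_i(x_i)$ and $A=[A_1,\dots,A_n]$. Algorithm~\ref{alg:EG} is then $w^{k+1/2}=w^k-\gamma F(w^k)$ followed by $w^{k+1}=w^k-\gamma F(w^{k+1/2})$.

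\textbf{Step 1: Monotonicity and Lipschitz continuity.} By Assumption~\ref{as:convexity_smothness}, $L$ is convex in $(x,z)$ and linear in $y$, so $F$ is monotone. Indeed, the bilinear coupling terms $y^TAx$ and $-y^Tz$ cancel in $\langle F(w)-F(w'),w-w'\rangle$, and what remains is a sum of monotone gradient terms. For Lipschitz continuity we split $F$ into its block-diagonal part $(\nabla r,\nabla\ell,0)$, which is $\max\{L_r,L_\ell\}$-Lipschitz, and its skew linear part given by the matrix
\[
B=\begin{pmatrix}0&0&A^T\\0&0&-I\\-A&I&0\end{pmatrix}.
\]
Its norm satisfies $\|B\| = \sqrt{\lambda_{\max}(AA^T+I)} \le 1+\sqrt{\lambda_{\max}(A^TA)}$. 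Hence $F$ is $L_F$-Lipschitz with $L_F\le L_r+L_\ell+1+\sqrt{\lambda_{\max}(A^TA)}$. The stepsize in the theorem satisfies $\gamma \lesssim 1/L_F$ only up to a constant factor. Getting the constant exactly right (e.g.\ $\gamma L_F\le 1$ versus the $\tfrac12\min$ form, since $\tfrac12\min\{a^{-1},b^{-1},c^{-1},d^{-1}\}$ can exceed $1/(a+b+c+d)$ by a factor up to $2$) is the fiddly point. If needed, I would handle it by bounding each block separately in the descent inequality rather than through a single $L_F$, or by absorbing the discrepancy into the $\mathcal{O}(\cdot)$ with $\gamma \ge \tfrac{1}{8}\cdot\tfrac{1}{L_F}$-type estimates.

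\textbf{Step 2: One-step inequality.} For an arbitrary $w$, I will use the standard three-point identities for the two steps:
\[
2\gamma\langle F(w^{k+1/2}),w^{k+1/2}-w\rangle \le \|w^k-w\|^2-\|w^{k+1}-w\|^2 - \|w^{k+1/2}-w^k\|^2 + \gamma^2\|F(w^{k+1/2})-F(w^k)\|^2 .
\]
The last term is at most $\gamma^2L_F^2\|w^{k+1/2}-w^k\|^2$, which is absorbed by $-\|w^{k+1/2}-w^k\|^2$ when $\gamma L_F\le1$.

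\textbf{Step 3: From the operator inequality to the gap.} Summing over $k=0,\dots,K-1$ telescopes the right-hand side to at most $\|w^0-w\|^2$. On the left, convexity of $L$ in $(x,z)$ and concavity (linearity) in $y$ give
\[
\langle F(w^{k+1/2}),w^{k+1/2}-w\rangle \ge L(x^{k+1/2},z^{k+1/2},y)-L(x,z,y^{k+1/2}).
\]
Jensen's inequality applied to the averages $\bar x^K,\bar z^K,\bar y^K$ then yields
\[
L(\bar x^K,\bar z^K,y)-L(x,z,\bar y^K)\le \frac{\|w^0-w\|^2}{2\gamma K}.
\]
Taking the maximum over $w\in\cX\times\cZ\times\cY$ bounds the gap by $D^2/(2\gamma K)$. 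Finally, $1/\gamma \le 2(1+\sqrt{\lambda_{\max}(A^TA)}+L_r+L_\ell)$ gives the stated rate.

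The main obstacle is Step 1: pinning down the Lipschitz constant of the skew block and reconciling it with the specific form of $\gamma$. The rest is the textbook \texttt{ExtraGradient} analysis.
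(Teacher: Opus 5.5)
Your proposal is correct and is essentially the paper's proof. The paper carries out the same extragradient computation written block by block for $x$, $z$, $y$, applying Young's inequality separately to the two cross terms in each block, then using convexity, telescoping, Jensen, and the max over $\cX\times\cZ\times\cY$ exactly as in your Step 3.

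The one place your write-up does not close as stated is the constant in Step 1. Your triangle-inequality bound $L_F\le \max\{L_r,L_\ell\}+\sqrt{1+\lambda_{\max}(A^TA)}$ only certifies $\gamma L_F\le (1+\sqrt2)/2$ for the stated $\gamma$, so the absorption in Step 2 is not justified as written. Your second fallback, shrinking $\gamma$, would prove the rate only for a different stepsize than the theorem specifies.

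Your first fallback is the right one, and it is precisely the paper's bookkeeping. Writing $\Delta x=x-x'$ and so on, split each block of $F(w)-F(w')$ with $\|a+b\|^2\le 2\|a\|^2+2\|b\|^2$:
\[
\|F(w)-F(w')\|^2\le 2\bigl(L_r^2+\lambda_{\max}(A^TA)\bigr)\|\Delta x\|^2+2\bigl(L_\ell^2+1\bigr)\|\Delta z\|^2+2\bigl(1+\lambda_{\max}(A^TA)\bigr)\|\Delta y\|^2 .
\]
This gives $L_F\le 2\max\{1,\sqrt{\lambda_{\max}(A^TA)},L_r,L_\ell\}=1/\gamma$, so $\gamma L_F\le 1$ holds exactly with the stated $\gamma$.
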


\begin{algorithm}{\texttt{EGVFL} for (\ref{eq:vfl_lin_spp_1})}
   \label{alg:EG}
\begin{algorithmic}[1]
   \State {\bfseries Input:} starting point $(x^0, z^0, y^0) \in \R^{d+2s}$, stepsize $\gamma > 0$, number of steps $K$
   \For{$k=0$ {\bfseries to} $K-1$}
   \State \hspace{-0.4cm} 1st device sends $y^k$ to other devices  \label{lin_alg:EG_liny1_send}
   \State \hspace{-0.4cm} All send $A_i x^k_i$ to 1st device \label{lin_alg:EG_linx1_send}
   \State \hspace{-0.4cm} All update: $x^{k+1/2}_i = x^k_i - \gamma \left(A_i^T y^k + \nabla r_i (x^k_i) \right)$ \label{lin_alg:EG_linx1}
   \State \hspace{-0.4cm} 1st updates: $z^{k+1/2} = z^k - \gamma (\nabla \ell(z^k, b) - y^k)$ \label{lin_alg:EG_linz1}
   \State \hspace{-0.4cm} 1st updates: $y^{k+1/2} = y^k + \gamma (\sum_{i=1}^n A_i x^k_i - z^k)$ \label{lin_alg:EG_liny1}
   \State \hspace{-0.4cm} 1st device sends $y^{k+1/2}$ to other devices \label{lin_alg:EG_liny2_send}
   \State \hspace{-0.4cm} All devices send $ A_i x^{k+1/2}_i$ to 1st device  \label{lin_alg:EG_linx2_send}
   \State \hspace{-0.4cm} All update: $x^{k+1}_i = x^k_i - \gamma (A_i^T y^{k+1/2} + \nabla r_i(x^{k+1/2}_i))$ \label{lin_alg:EG_linx2}
   \State \hspace{-0.4cm} 1st updates: $z^{k+1} = z^k - \gamma (\nabla \ell(z^{k+1/2}, b) - y^{k+1/2})$ \label{lin_alg:EG_linz2}
   \State \hspace{-0.4cm} 1st updates: $y^{k+1} = y^k + \gamma (\sum_{i=1}^n A_i x^{k+1/2}_i - z^{k+1/2})$\label{lin_alg:EG_liny2}
   \EndFor
\end{algorithmic}
\end{algorithm}

In Theorem \ref{th:EG_basic_1}, we use the convergence criterion for convex-concave saddle point problems
\begin{equation}
\label{eq:duality_gap}
    \begin{split} 
    \text{gap}(x,z,y)
    := \max_{\tilde y \in \cY} L(x, z, \tilde y) - \min_{\tilde x, \tilde z \in \cX, \cZ} L(\tilde x, \tilde z, y).
    \end{split}
\end{equation}

It is important that in the formulation of Theorem \ref{th:EG_basic_1} and in the definition (\ref{eq:duality_gap}), we use some bounded sets $\cX$, $\cZ$, $\cY$ although the original problem (\ref{eq:main_problem}) is unbounded. Such an assumption is standard for the analysis of methods for convex-concave problems. The criterion (\ref{eq:duality_gap}) can also be used for unconstrained/unbounded problems. To do this, one can use the trick from \cite{nesterov2007dual} and introduce bounded sets $\cX$, $\cZ$, $\cY$ artificially as compact subsets of $\R^d, \R^s, \R^s$. 
This trick is valid if some solution $x^*, y^*, z^*$ lies in $\cX$, $\cZ$, $\cY$. 
Moreover, following \cite[Theorems 3.59, 3.60]{beck2017first}, one can show that in Theorem \ref{th:EG_basic_1} we can use the criterion: $\ell (\bar z^K, b) - \ell (z^*, b) + \| A \bar x^{K} - \bar z^{K} \|$, instead of (\ref{eq:duality_gap}). It is more natural and means that $A \bar x^{K}  \to \bar z^{K}$ and $\ell (\bar z^K, b) \to \ell (A x^*, b)$, which is what is required in the original problem (\ref{eq:main_problem}) (see Section~\ref{sec:gap} for more details). One can find a simplified version of gap {in} \cite{xu2017accelerated}. If we assume the existence of some solution $(x^*, z^* , y^*)$, it can be used as follows: $\text{gap}^*(x,z,y) = L(x, z, y^*) - L(x^*, z^*, y)$. Theorem \ref{th:EG_basic_1} can be rewritten with $\text{gap}^*(x,z,y)$: there is no maximum in the right-hand side of the estimate, simply $\|x^0 - x^* \|^2 + \|z^0 - z^* \|^2 + \|y^0 - y^* \|^2$. But still, the use of (\ref{eq:duality_gap}) is preferable, e.g., for the already mentioned problem $\min_{x \in \R} \max_{y \in \R} g(x,y)=xy$ with the solution $(0,0)$, but then $\text{gap}^*(x,z,y)$ is always exactly $0$.
The following corollary can be easily derived from Theorem~\ref{th:EG_basic_1}.

\begin{corollary} \label{cor:EG_basic_1}
Under the conditions of Theorem \ref{th:EG_basic_1}, to achieve $\varepsilon$-solution we need
$
\mathcal{O} \left( \tfrac{ ( 1+ \sqrt{\lambda_{\max}(A^T A)} + L_{\ell} + L_r  ) D^2}{\varepsilon}  \right) \text{iterations.}
$
\end{corollary}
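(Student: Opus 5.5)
The corollary follows from Theorem~\ref{th:EG_basic_1} by inverting its rate. We call a point $(\bar x^K,\bar z^K,\bar y^K)$ an $\varepsilon$-solution if $\text{gap}(\bar x^K,\bar z^K,\bar y^K)\le\varepsilon$ with $\text{gap}$ defined in (\ref{eq:duality_gap}). Under Assumption~\ref{as:convexity_smothness}, with the stepsize $\gamma$ prescribed in Theorem~\ref{th:EG_basic_1}, the theorem yields an absolute constant $C>0$ such that
\begin{equation*}
\text{gap}(\bar x^K,\bar z^K,\bar y^K)\le \frac{C\,\bigl(1+\sqrt{\lambda_{\max}(A^TA)}+L_\ell+L_r\bigr)D^2}{K}
\end{equation*}
for every $K\ge 1$.

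The first step is to make this constant explicit. Since $\gamma=\tfrac12\min\{1;1/\sqrt{\lambda_{\max}(A^TA)};1/L_r;1/L_\ell\}$, we have
\begin{equation*}
\frac{1}{\gamma}=2\max\bigl\{1,\sqrt{\lambda_{\max}(A^TA)},L_r,L_\ell\bigr\}\le 2\bigl(1+\sqrt{\lambda_{\max}(A^TA)}+L_\ell+L_r\bigr).
\end{equation*}
The standard extragradient bound has the form $D^2/(\gamma K)$ up to a constant factor, so this inequality is exactly how the theorem's right-hand side arises.

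The second step is to choose the number of iterations. Take
\begin{equation*}
K=\Bigl\lceil C\,\bigl(1+\sqrt{\lambda_{\max}(A^TA)}+L_\ell+L_r\bigr)D^2/\varepsilon\Bigr\rceil .
\end{equation*}
Then the right-hand side of the bound is at most $\varepsilon$, so the averaged iterate is an $\varepsilon$-solution. This $K$ is $\mathcal{O}\bigl((1+\sqrt{\lambda_{\max}(A^TA)}+L_\ell+L_r)D^2/\varepsilon\bigr)$, because the ceiling adds at most $1$. That extra $1$ is absorbed in the $\mathcal{O}$, assuming as usual that $\varepsilon$ does not exceed the initial scale $(1+\sqrt{\lambda_{\max}(A^TA)}+L_\ell+L_r)D^2$; otherwise $K=1$ already suffices.

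There is no genuine obstacle. The only point needing care is that $D^2$ is finite. This holds because $\cX,\cZ,\cY$ are taken compact, using the trick of \cite{nesterov2007dual} described after the theorem, and they contain a saddle point, so the bound is meaningful. If one prefers the criterion $\text{gap}^*$, or $\ell(\bar z^K,b)-\ell(z^*,b)+\|A\bar x^K-\bar z^K\|$, the same inversion applies verbatim with the corresponding $D^2$.
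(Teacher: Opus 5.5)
Your proposal is correct and matches the paper, which gives no separate proof and simply says the corollary follows easily from Theorem~\ref{th:EG_basic_1} by inverting the $\mathcal{O}(1/K)$ rate, exactly as you do. In fact, the appendix proof gives $\text{gap}(\bar x^K,\bar z^K,\bar y^K)\le D^2/(2\gamma K)$ with $1/(2\gamma)=\max\{1,\sqrt{\lambda_{\max}(A^TA)},L_r,L_\ell\}$, so your constant $C$ can be taken equal to $1$.
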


An intriguing feature of the saddle point reformulation is that the expression  $\ell(Ax, b)$ can be equivalently rewritten as $\tilde \ell ( \tilde A x, b)$ with $\tilde \ell (y, b) = \ell (y / \beta, b)$ and $\tilde A = \beta A$. We can select   $\beta$ such that in Theorem \ref{th:EG_basic_1} we have $\textstyle{\sqrt{\lambda_{\max}(\tilde A^T \tilde A)} = L_{\tilde \ell}}$. 
It becomes evident that the appropriate choice for 
 $\beta = L_{\ell}^{1/3} / \lambda^{1/6}_{\max} (A^T A)$. {Hence, in Corollary \ref{cor:EG_basic_1}, one can achieve $\mathcal{O} \left( \tfrac{ ( 1+ \sqrt[3]{\lambda_{\max}(A^T A) \cdot L_{\ell}} + L_r  ) D^2}{\varepsilon}  \right)$ iteration complexity (further details can be found in Section~\ref{sec:beta}). One can note that \texttt{Gradient Descent} (\texttt{GD}) and \texttt{Accelerated Gradient Descent} (\texttt{AGD}) -- classical deterministic methods for (\ref{eq:main_problem}) have the following convergence estimates \cite{nesterov2003introductory}:} $\mathcal{O} \left( \tfrac{ ( \lambda_{\max}(A^T A) \cdot L_{\ell} + L_r  ) D^2}{\varepsilon}  \right)$ and $\mathcal{O} \left( \tfrac{ \sqrt{\lambda_{\max}(A^T A) \cdot L_{\ell}+ L_r}  D}{\sqrt{\varepsilon}}  \right)$, respectively. It can be seen that the obtained result is better than \texttt{Gradient Descent}, and better than \texttt{Accelerated Gradient Descent} in terms of $\lambda_{\max}(A^T A)$ viewpoint (but worse in $\varepsilon$). As we mentioned in Section \ref{sec:contr}, there are approaches for the saddle point reformulation, e.g., \texttt{ADMM} \cite{vfl_admm_xie2022}. We compare the results in Table~\ref{tab:comparison0} (Appendix~ \ref{app:algos}).

This is possible because the loss function $\ell$ and the data matrix $A$ are ``separated''. {As mentioned before, ``separation'' can also be beneficial for stochastic algorithms.} We explore them in the next section, but now let us note that Algorithm \ref{alg:EG} presents several drawbacks. One notable limitation is its deterministic nature. In the subsequent section, we underscore the disadvantages of this characteristic and suggest alterations to enhance the foundational version of our general approach.
Another significant drawback of Algorithm \ref{alg:EG} is its reliance on the knowledge of $\lambda_{\max}(A^T A)$.
Given that parts of matrix $A$ are dispersed across different devices, determining $\lambda_{\max}(A^T A)$ is challenging. However, an estimation can be made using $\lambda_{\max}(A^T_i A_i)$, as illustrated in Lemma \ref{lem:matrix}. Alternatively, we can contemplate a reformulation that negates the need for $\lambda_{\max}(A^T A)$ entirely, as discussed in Section \ref{sec:reform}. Furthermore, incorporating augmentation, as outlined in \cite{boyd2011distributed}, can be beneficial and straightforward for implementation. It is crucial to highlight that any variations derived from Section~\ref{sec:reform}, as well as any adaptations of Algorithm \ref{alg:EG} from Section \ref{sec:mod}, can be seamlessly integrated. 

\section{Family of Modifications} \label{sec:mod}

This section presents the different modifications of Algorithm \ref{alg:EG}. Most of them are stochastic modifications. These stochastic modifications are one of the main reasons for using saddle point reformulation. In any distributed optimization, including federated learning, both in its vertical and horizontal setting, the issue of communication organization is crucial. In particular, a lot of research is related to the efficiency to spend less time on communications \cite{FEDLEARN,cocoa-2018-JMLR,Ghosh2020,MARINA}, since they are from some point of view a waste of time \cite{kairouz2021advances}.

But we start with a non-stochastic modification.

\subsection{Proximal modification for computational friendly losses/regularizers and constrained setting} \label{app:prox}

Algorithm \ref{alg:EG} assumes we can calculate the gradient of the function $\ell$ and the function $r$. But not all functions even allow this. For example, one can choose the $\ell_1$ regularizer as the function $r$. Or if we want to solve the constrained version of (\ref{eq:main_problem}), we can take $r$ as an indicator function of some set $\cX$. 
Here we consider the case of non-smooth, but computing-friendly $\ell$ and $r$. One can modify lines \ref{lin_alg:EG_linx1}, \ref{lin_alg:EG_linz1}, \ref{lin_alg:EG_linx2}, \ref{lin_alg:EG_linz2} in Algorithm \ref{alg:EG} as follows. 

\begin{algorithm}{\texttt{EGVFL} for (\ref{eq:vfl_lin_spp_1}) with proximal friendly functions}
   \label{alg:prox_EG}
\begin{algorithmic}[1]
   \State {\bfseries Input:} starting point $(x^0, z^0, y^0) \in \R^{d+2s}$, stepsize $\gamma > 0$, number of steps $K$
   \For{$k=0$ {\bfseries to} $K-1$}
   \State First device sends $y^k$ to other devices 
   \State All devices in parallel send $A_i x^k_i$ to first device
   \State All devices in parallel update: $x^{k+1/2}_i = \text{prox}_{\gamma r_i}(x^k_i - \gamma A_i^T y^k)$ \label{lin_alg:EG_prox_linx1}
   \State First device updates: $z^{k+1/2} = \text{prox}_{\gamma \ell}(z^k + \gamma y^k)$
   \State First device updates: $y^{k+1/2} = y^k + (\sum_{i=1}^n \gamma A_i x^k_i - \gamma z^k)$
   \State First device sends $y^{k+1/2}$ to other devices 
   \State All devices send $\gamma A_i x^{k+1/2}_i$ to first device
   \State All devices in parallel update: $x^{k+1}_i = \text{prox}_{\gamma r_i}(x^k_i - \gamma A_i^T y^{k+1/2})$ \label{lin_alg:EG_prox_linx2}
   \State First device: $z^{k+1} = \text{prox}_{\gamma \ell}(z^k + \gamma y^{k+1/2})$
   \State First device: $y^{k+1} = y^k + (\sum_{i=1}^n \gamma A_i x^{k+1/2}_i - \gamma z^{k+1/2})$
   \EndFor
\end{algorithmic}
\end{algorithm}

Here $\text{prox}_{\gamma f}$ is a proximal operator \cite{parikh2014proximal}: $\text{prox}_{\gamma f}(x) = \arg \min_{y \in \R^d} (\gamma f(y) + \tfrac{1}{2}\| x - y\|^2)$. In the general case, solving an additional minimization problem to calculate such an operator is necessary. But, in the case of simple, proximal-friendly functions $\ell$ and $r$, the proximal operator has a closed-form solution and can be computed exactly and sometimes for free. 
Theorem \ref{th:EG_prox} gives the convergence, and proof can be found in Appendix \ref{app:th:EG_prox}.
{
\begin{theorem} \label{th:EG_prox}
Let $\ell$ and $r$ be proximal-friendly and convex functions. Let problem (\ref{eq:vfl_lin_spp_1})
be solved by Algorithm \ref{alg:prox_EG}. Then for 
$
\gamma = \tfrac{1}{\sqrt{2}} \cdot \min \bigg\{ 1;$ $ \tfrac{1}{\sqrt{\lambda_{\max}(A^T A)}} \bigg\},
$
it holds that
$$
\text{gap}(\bar x^K, \bar z^K,\bar y^K) = \mathcal{O} \left( \frac{(1 + \sqrt{\lambda_{\max}(A^T A)} ) D^2}{K}   \right),
$$
where $\bar x^K$, $\bar z^K$, $\bar y^K$, $D^2$ are defined in Theorem \ref{th:EG_basic_1}.
\end{theorem}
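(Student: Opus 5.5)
We view Algorithm~\ref{alg:prox_EG} as the proximal extragradient (Mirror-Prox) method for the composite monotone inclusion $0 \in F(w) + \partial g(w)$. Here $w = (x,z,y)$, $F$ is the bilinear part and $g$ is the separable convex part:
\begin{equation*}
F(w) = \left(A^T y,\; -y,\; -(Ax - z)\right), \qquad g(w) = \sum_{i=1}^n r_i(x_i) + \ell(z,b).
\end{equation*}
The $y$-block of $g$ is zero, so its prox is the identity. The lines of Algorithm~\ref{alg:prox_EG} are then exactly
\begin{equation*}
w^{k+1/2} = \prox_{\gamma g}\left(w^k - \gamma F(w^k)\right), \qquad w^{k+1} = \prox_{\gamma g}\left(w^k - \gamma F(w^{k+1/2})\right),
\end{equation*}
and the prox of a separable function splits blockwise into the device-local updates. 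Since $F$ is linear and skew-symmetric, it is monotone with $\langle F(w)-F(w'), w-w'\rangle = 0$. Its Lipschitz constant is $L_F = \|M\|$, where $M$ is the operator matrix with blocks $A$, $I$, $-I$. A bound of the form $\|M\|^2 \le 2\max\{1, \lambda_{\max}(A^TA)\}$ (via $\|Ax - z\|^2 \le 2\|Ax\|^2 + 2\|z\|^2$ and $\|A^Ty\|^2 + \|y\|^2$) explains the choice $\gamma = \frac{1}{\sqrt 2}\min\{1, 1/\sqrt{\lambda_{\max}(A^TA)}\}$, which gives $\gamma^2 L_F^2 \le 1$.

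The core is a one-step inequality. The prox optimality conditions give, for any $u$,
\begin{align*}
\gamma\langle F(w^{k+1/2}), w^{k+1}-u\rangle + \gamma(g(w^{k+1})-g(u)) &\le \tfrac12\|w^k-u\|^2 - \tfrac12\|w^{k+1}-u\|^2 - \tfrac12\|w^{k+1}-w^k\|^2,\\
\gamma\langle F(w^k), w^{k+1/2}-w^{k+1}\rangle + \gamma(g(w^{k+1/2})-g(w^{k+1})) &\le \tfrac12\|w^k-w^{k+1}\|^2 - \tfrac12\|w^{k+1/2}-w^{k+1}\|^2 - \tfrac12\|w^{k+1/2}-w^k\|^2.
\end{align*}
Adding the two inequalities yields $\gamma[\langle F(w^{k+1/2}), w^{k+1/2}-u\rangle + g(w^{k+1/2}) - g(u)]$ on the left. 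The error term $\gamma\langle F(w^{k+1/2}) - F(w^k), w^{k+1/2} - w^{k+1}\rangle$ appears on the right, and we bound it by Cauchy--Schwarz and Young:
\begin{equation*}
\gamma\langle F(w^{k+1/2}) - F(w^k), w^{k+1/2} - w^{k+1}\rangle \le \tfrac{\gamma^2 L_F^2}{2}\|w^{k+1/2}-w^k\|^2 + \tfrac12\|w^{k+1/2}-w^{k+1}\|^2.
\end{equation*}
With $\gamma L_F \le 1$, both pieces are absorbed by the negative terms. This leaves
\begin{equation*}
\gamma\left[\langle F(w^{k+1/2}), w^{k+1/2}-u\rangle + g(w^{k+1/2}) - g(u)\right] \le \tfrac12\|w^k-u\|^2 - \tfrac12\|w^{k+1}-u\|^2.
\end{equation*}

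To connect with the gap, we use the convex-concave structure of $L$. For $u = (\tilde x, \tilde z, \tilde y)$, the bilinear structure gives the identity
\begin{equation*}
\langle F(w), w-u\rangle + g(w) - g(u) = L(x,z,\tilde y) - L(\tilde x, \tilde z, y),
\end{equation*}
which should be checked directly by expanding the coupling terms $y^T(Ax-z)$. We sum over $k=0,\dots,K-1$, telescope, and divide by $\gamma K$. Jensen's inequality then applies, since $L(\cdot,\cdot,\tilde y)$ is convex and $L(\tilde x,\tilde z,\cdot)$ is linear. This gives $L(\bar x^K,\bar z^K,\tilde y) - L(\tilde x,\tilde z,\bar y^K) \le \|w^0-u\|^2/(2\gamma K)$. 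Taking the maximum over $u \in \cX\times\cZ\times\cY$ bounds the gap by $D^2/(2\gamma K)$. Substituting $1/\gamma = \sqrt2\max\{1,\sqrt{\lambda_{\max}(A^TA)}\} \le \sqrt2(1+\sqrt{\lambda_{\max}(A^TA)})$ gives the claimed rate.

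The main obstacle is bookkeeping rather than ideas. First, we must verify that the $y$-update signs in Algorithm~\ref{alg:prox_EG} match $-\gamma F$ with the stated $F$. Second, we must pin down the Lipschitz bound on $M$ so that it matches the constant $\frac{1}{\sqrt2}$ exactly; if the crude bound gives a slightly larger constant, we would instead use $\|Ax-z\|^2 \le (\lambda_{\max}+1)(\|x\|^2+\|z\|^2)$, which changes only the $\mathcal{O}$ constant. Finally, the maximum over $u$ must be taken after summation, so that $u$ is deterministic and independent of $k$; this is fine here because the method is deterministic.
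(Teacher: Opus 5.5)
Your proposal is correct and is essentially the paper's proof. The paper applies the same prox inequality twice per block, bounds the extrapolation error by Young's inequality together with $\|a-b\|^2 \le 2\|a\|^2 + 2\|b\|^2$ for the $y$-block, then telescopes, uses Jensen, and maximizes over the bounded sets; your stacked $F$ and $g$ just package that blockwise computation. Also, your crude bound $\|M\|^2 \le 2\max\{1,\lambda_{\max}(A^TA)\}$ already gives $\gamma L_F \le 1$ for the stated $\gamma$ (in fact $\|M\|^2 = 1+\lambda_{\max}(A^TA)$), so the fallback you mention is not needed.
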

}







\subsection{Modification with quantization for effective communications}

Let us take a look at one of one of the main techniques in the fight for communication efficiency -- compression \cite{1bit,alistarh2017qsgd}. The following definition can formally describe the compression of communicated vectors.
{
\begin{definition} \label{def:quantization}
Operator $Q:\R^d\to \R^d$ is called unbiased compressor if there exists a constant $\omega \geq 1$ such that for all $x \in \R^d$ it holds 
$$
    \Exp[Q(x)] = x, \quad \Exp[\| Q(x)\|^2] \leq \omega \| x\|^2.
$$
\end{definition}
}

Operator $Q$ can be e.g., random coordinate choice or randomized rounding \cite{beznosikov2020biased}.



Methods with compression in horizontal distributed learning are studied for quite a long time \cite{1bit,alistarh2017qsgd}. Variance reduction methods provide a breakthrough here, initially proposed to solve non-distributed stochastic finite-sum problems \cite{schmidt2017minimizing,defazio2014saga,johnson2013accelerating,nguyen2017sarah}. Several 
papers have shown that the variance reduction technique can be transferred to the distributed case, where stochasticity appears not from the random choice of the batch number but from the compression \cite{DIANA,MARINA,qian2020error}. 
For our algorithm with unbiased compression (Algorithm \ref{alg:EG_quantization}), we take the variance reduction method for saddle point problems from \cite{alacaoglu2021stochastic}. 
We introduce an additional sequence of points $w^k_i$ (reference points for $x^k_i$) and $u^k$ (reference points for $y^k$). In contrast to the classical variance reduction technique, we do not introduce reference points for the $z$ variables since we do not communicate them. 
To update all $w^k_i$ and $u^k$ synchronously, we need to generate $b_k\in\{0,1\}$, 
one can set the same random seed for generating $b^k$ on all devices to avoid additional communication.  
Next, we have to send full vectors $Aw^k_i$ and $u^k$ to the first device and all the others, respectively. The key is that the reference points are updated rarely, namely with low probability $p$, only when $b_k = 1$ (see lines \ref{lin_alg:unbiased_linws} --
\ref{lin_alg:unbiased_linwf}). When $w^k_i$ and $u^k$ are not updated, we only send compressed vectors $Q(y^{k+1/2} - u^k)$ and $Q(A_i x^{k+1/2}_i - A_i w^{k}_i)$ (lines \ref{lin_alg:unbiased_q1}, \ref{lin_alg:unbiased_q2}). Sending compressed information, rarely forwarding full packages, is the main point of Algorithm \ref{alg:EG_quantization}. 
Theorem \ref{th:EG_compressed_unbiased} gives the convergence; its proof can be found in Appendix \ref{app:th:EG_compressed_unbiased}.

{
\begin{theorem} \label{th:EG_compressed_unbiased}
Let Assumption \ref{as:convexity_smothness} hold. Let problem (\ref{eq:vfl_lin_spp_1})
be solved by Algorithm~\ref{alg:EG_quantization} with operator $Q$ that satisfies Definition \ref{def:quantization}. Then for 
$
\gamma = \frac{1}{4}\min\bigg\{ 1;\frac{1}{L_r};$ $\frac{1}{L_{\ell}};\sqrt{\frac{1-\tau}{\omega \lambda_{\max}(A A^T)}}; \sqrt{\frac{1-\tau}{\omega\lambda_{\max}(A A^T)}} \bigg\}$, $\tau = 1 - p$
it holds that
\begin{align*}
    \E{\text{gap}(\bar x^K, \bar z^K, \bar y^K)}
    = 
    \mathcal{O}  \left(  \left[ 1 + \sqrt{\frac{\omega}{p}}  \sqrt{\lambda_{\max}(A A^T) }  + L_{\ell} + L_r \right] \frac{D^2}{K}  \right),
\end{align*}
where $\bar x^K$, $\bar z^K$, $\bar y^K$, $D^2$ are defined in Theorem \ref{th:EG_basic_1}.
\end{theorem}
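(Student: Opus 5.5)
We will follow the variance-reduced extragradient analysis of \cite{alacaoglu2021stochastic}, adapted so that only the bilinear coupling is stochastic. Write $u=(x,z,y)$ and let $F(u)=(A^Ty+\nabla r(x),\,\nabla\ell(z,b)-y,\,-(Ax-z))$ be the monotone operator of (\ref{eq:vfl_lin_spp_1}). Split it as $F=F_1+F_2$, where $F_1(u)=(\nabla r(x),\nabla\ell(z,b)-y, z)$ collects the parts computed exactly on the devices. This part is monotone and Lipschitz with constant $\mathcal{O}(1+L_r+L_\ell)$. The remaining part $F_2(u)=(A^Ty,0,-Ax)$ is the skew coupling, whose evaluation requires communication. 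In Algorithm~\ref{alg:EG_quantization}, $F_2$ at the extrapolated point is replaced by an estimator built from the reference point $w=(w_x,u_y)$: for instance $A^T u^k + A^T Q(y^{k+1/2}-u^k)$ and $Aw^k+\sum_i Q(A_ix_i^{k+1/2}-A_iw_i^k)$. The first step is to check two properties. The estimator $g^{k+1/2}$ is conditionally unbiased, $\mathbb{E}_Q g^{k+1/2}=F_2(u^{k+1/2})$. Its variance satisfies $\mathbb{E}_Q\|g^{k+1/2}-F_2(w^k)\|^2\le \omega\lambda_{\max}(AA^T)\|u^{k+1/2}-w^k\|^2$, which follows from Definition~\ref{def:quantization} and $\|A^Tv\|^2\le\lambda_{\max}(AA^T)\|v\|^2$. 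This is a mean-squared Lipschitz condition with constant $L_Q^2=\omega\lambda_{\max}(AA^T)$.

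Next we derive the one-step inequality. For an arbitrary $u$ in $\cX\times\cZ\times\cY$, we expand $\|u^{k+1}-u\|^2$ using the update $u^{k+1}=\bar u^k-\gamma(F_1(u^{k+1/2})+g^{k+1/2})$, where $\bar u^k=\tau u^k+(1-\tau)w^k$ is the anchor. We use the convexity identity $\tau\|u^k-u\|^2+(1-\tau)\|w^k-u\|^2-\|\bar u^k-u\|^2=\ldots$, as in Alacaoglu--Malitsky. The goal is
\begin{align*}
2\gamma\langle F(u^{k+1/2}),u^{k+1/2}-u\rangle \le\; &\Phi_k(u)-\mathbb{E}_k\Phi_{k+1}(u)+2\gamma\langle F_2(u^{k+1/2})-g^{k+1/2},u^{k+1/2}-u\rangle\\
&-\bigl(1-\tau-\gamma^2\cdot\mathcal{O}(L_Q^2)\bigr)\|u^{k+1/2}-w^k\|^2-(\ldots)\|u^{k+1}-u^{k+1/2}\|^2,
\end{align*}
with Lyapunov function $\Phi_k(u)=\tau\|u^k-u\|^2+\tfrac{1-\tau}{p}\|w^k-u\|^2$. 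We use that $w^{k+1}=u^{k+1}$ with probability $p$, so the $w$-term telescopes in expectation. The deterministic part $F_1$ is handled exactly as in Theorem~\ref{th:EG_basic_1}: Lipschitzness gives $\gamma^2 L_{F_1}^2\le 1/16$, which is ensured by $\gamma\le\frac14\min\{1,1/L_r,1/L_\ell\}$. The compressed part is absorbed by the term $(1-\tau)\|u^{k+1/2}-w^k\|^2$, which is nonnegative when $\gamma^2\omega\lambda_{\max}(AA^T)\lesssim 1-\tau$. This is exactly the remaining entry in the stepsize.

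Finally we sum over $k$, use $\langle F(u^{k+1/2}),u^{k+1/2}-u\rangle\ge L(x^{k+1/2},z^{k+1/2},\tilde y)-L(\tilde x,\tilde z,y^{k+1/2})$ by convex-concavity, and apply Jensen to the averages. Taking the maximum over $u$ gives $\text{gap}\le \frac{\max_u\Phi_0(u)}{2\gamma K}+\frac1K\max_u\sum_k\langle F_2(u^{k+1/2})-g^{k+1/2},u^{k+1/2}-u\rangle$. Since $\Phi_0\le (1+\frac{1-\tau}{p})D^2=2D^2$, substituting $\gamma$ with $1-\tau=p$ yields the claimed rate $[1+\sqrt{\omega/p}\sqrt{\lambda_{\max}(AA^T)}+L_\ell+L_r]D^2/K$. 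The main obstacle is this last step: the maximum over $u$ sits inside the expectation, so the martingale noise term does not vanish. We would handle it with the standard ghost-sequence trick. Define $v^{k+1}=v^k-\gamma\,(g^{k+1/2}-F_2(u^{k+1/2}))$ with $v^0=u^0$, and split $\langle\cdot,u\rangle=\langle\cdot,u-v^k\rangle+\langle\cdot,v^k\rangle$. The first piece is bounded by $\|u-v^0\|^2/(2\gamma)$ plus $\frac{\gamma}{2}\sum\|g^{k+1/2}-F_2(u^{k+1/2})\|^2$. The second piece has zero expectation because $v^k$ and $u^{k+1/2}$ are measurable before $Q$ is drawn. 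The accumulated variance $\le L_Q^2\|u^{k+1/2}-w^k\|^2$ is again absorbed by the negative terms, possibly at the cost of halving the constant in $\gamma$.
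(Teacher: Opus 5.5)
Your outline is the paper's proof. It is an Alacaoglu--Malitsky one-step bound: the exact part is evaluated at the current point and then at the half-step, and the $A$-coupling is anchored at the reference point. The compression variance is absorbed by $-(1-\tau)\|x^{k+1/2}-w^k\|^2$ and $-(1-\tau)\|y^{k+1/2}-u^k\|^2$. The $Q$-noise under the $\max$ is handled by splitting off a fixed point; the paper recenters at $x^0,y^0$ and uses Young plus martingale orthogonality, which does the same job as your ghost sequence. Here $w^k,u^k$ denote, as in the paper, the references for $x$ and $y$.

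\textbf{First gap: the reference bookkeeping.} Algorithm~\ref{alg:EG_quantization} sets $w^{k+1}_i=x^k_i$ and $u^{k+1}=y^k$ with probability $p$. That is the \emph{current} iterate, not the new one. Your $\Phi_k$ is built for the rule ``reference $\leftarrow$ new iterate''. For the actual rule, its $x$-block $\tau\|x^k-x\|^2+\|w^k-x\|^2$ gives $\Phi_k-\mathbb{E}_{b_k}\Phi_{k+1}=(2\tau-1)\|x^k-x\|^2+(1-\tau)\|w^k-x\|^2-\tau\|x^{k+1}-x\|^2$. This misses the required $\tau\|x^k-x\|^2+(1-\tau)\|w^k-x\|^2-\|x^{k+1}-x\|^2$ by $(1-\tau)(\|x^k-x\|^2-\|x^{k+1}-x\|^2)$, which has no sign, so nothing telescopes. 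The correct potential has unit weights: $\|x^k-x\|^2+\|w^k-x\|^2+\|z^k-z\|^2+\|y^k-y\|^2+\|u^k-y\|^2$. It works because $\mathbb{E}_{b_k}\|w^{k+1}-x\|^2=(1-\tau)\|x^k-x\|^2+\tau\|w^k-x\|^2$ exactly cancels the anchor terms; this is the paper's ``add $\|w^{k+1}-x\|^2$ to both sides''. Note that $z$ is unanchored: the algorithm has no reference for $z$, so your uniform anchor $\bar u^k$ does not apply to that block.

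\textbf{Second gap: the coin-flip noise.} ``The $w$-term telescopes in expectation'' holds only for a fixed comparison point. Since $\text{gap}$ puts the $\max$ inside $\mathbb{E}$, the coin flip leaves a second martingale term, linear in the comparison point: $2\sum_k\langle(1-\tau)x^k+\tau w^k-w^{k+1},x\rangle$ plus its $y$-analogue. Your final inequality for $\text{gap}$ drops it, since you only treat the $Q$-noise. The paper controls it in \eqref{eq:unbiased_temp8}--\eqref{eq:unbiased_temp9}. It recenters at $x^0$ using the zero conditional mean, applies Young with a constant weight, and uses orthogonality of increments to get $2\max_{x\in\cX}\|x-x^0\|^2+\frac18\sum_k\tau(1-\tau)\mathbb{E}\|w^k-x^k\|^2$. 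The variance is then absorbed through $\|w^k-x^k\|^2\le 2\|w^k-x^{k+1/2}\|^2+2\|x^{k+1/2}-x^k\|^2$.

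That absorption needs the terms $-\tau\|x^{k+1/2}-x^k\|^2$ and $-\tau\|y^{k+1/2}-y^k\|^2$, which your displayed inequality omits. These are also the only terms that can swallow your $F_1$-Lipschitz error. Their weight is $\tau$, not $1$ as in Theorem~\ref{th:EG_basic_1}, so you need $\tau$ bounded away from $0$; the paper assumes $\tau\ge\frac12$, i.e.\ $p\le\frac12$, inside the proof.

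\textbf{A smaller point.} The bound $\mathbb{E}\|\sum_iQ(A_i(x^{k+1/2}_i-w^k_i))\|^2\le\omega\lambda_{\max}(AA^T)\|x^{k+1/2}-w^k\|^2$ does not follow from Definition~\ref{def:quantization} alone, which says nothing about cross terms between devices. It holds when $\sum_iQ(v_i)=Q(\sum_iv_i)$ (e.g.\ RandK with a shared seed). It also holds for independent compressors, using $\lambda_{\max}(A_i^TA_i)\le\lambda_{\max}(A^TA)$. These are the two cases the paper distinguishes.
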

}

\begin{algorithm}{\texttt{EGVFL} with unbiased compression for (\ref{eq:vfl_lin_spp_1})}   \label{alg:EG_quantization}
\begin{algorithmic}[1]
   \State {\bfseries Input:}  
   initial point $(x^0, z^0, y^0) \in \R^{d+2s}$, $(w^0, u^0) \in \R^{d+s}$, stepsize $\gamma > 0$, number of steps $K$
   \For{$k=0$ {\bfseries to} $K-1$}
   \State \hspace{-0.4cm} All update: $x^{k+1/2}_i = \tau x^{k}_i + (1 - \tau) w^k_i - \gamma \left(A_i^T u^k + \nabla r_i (x^k_i) \right)$
   \State \hspace{-0.4cm} 1st updates: $z^{k+1/2} = z^k - \gamma (\nabla \ell(z^k, b) - y^k)$
   \State \hspace{-0.4cm} 1st updates: $y^{k+1/2} = \tau y^k + (1 - \tau) u^k + \gamma (\sum_{i=1}^n A_i w^k_i - z^k)$
   \State \hspace{-0.4cm} 1st sends $Q(y^{k+1/2} - u^k)$ to other devices \label{lin_alg:unbiased_q1}
   \State \hspace{-0.4cm} All send $Q(A_i x^{k+1/2}_i - A_i w^{k}_i)$ to 1st \label{lin_alg:unbiased_q2}
   \State \hspace{-0.4cm} All update: $x^{k+1}_i = \tau x^{k}_i + (1 - \tau) w^k_i - \gamma (A_i^T  [Q(y^{k+1/2} - u^k) + u^k] + \nabla r_i(x^{k+1/2}_i))$
   \State \hspace{-0.4cm} 1st update: $z^{k+1} = z^k - \gamma (\nabla \ell(z^{k+1/2}, b) - y^{k+1/2})$
   \State \hspace{-0.4cm} 1st update: $y^{k+1} = \tau y^k + (1 - \tau) u^k + \gamma (\sum_{i=1}^n [Q(A_i x^{k+1/2}_i - A_i w^{k}_i) + A_i w^{k}_i ] - z^{k+1/2})$
   \State \hspace{-0.4cm} Flip a coin $b_k \in \{0, 1\}$ where $\PP\{ b_k = 1 \} = p$ \label{lin_alg:unbiased_linws}
   \If{$b_k = 1$} 
   \State \hspace{-0.4cm} All update: $w^{k+1}_i = x^{k}_i$
   \State \hspace{-0.4cm} 1st updates: $u^{k+1} = y^{k}$
   \State \hspace{-0.4cm} All send uncompressed $A_i w^{k+1}_i$ to 1st
   \State \hspace{-0.4cm} 1st sends uncompressed $u^{k+1}$ to other devices
   \Else
   \State \hspace{-0.4cm} All update: $w^{k+1}_i = w^{k}_i$
   \State \hspace{-0.4cm} 1st updates: $u^{k+1} = u^{k}$ \label{lin_alg:unbiased_linwf}
   \EndIf
   \EndFor
\end{algorithmic}
\end{algorithm}

In Algorithm ~\ref{alg:EG_quantization}, one mandatory communication round with compression occurs and possibly one more (without compression) with probability $p$. If $Q$ compress a package by a factor of $\beta$, then each iteration requires $\mathcal{O}\left( \beta^{-1} + p \right)$ data transfers on average. If $p$ is close to 1, Theorem \ref{th:EG_compressed_unbiased} gives faster convergence, but more data transfer is needed. If $p$ tends to 0, the transmitted information complexity per iteration decreases but the iterative convergence rate drops. The optimal choice of $p$ is $\beta^{-1}$. 
For Theorem \ref{th:EG_compressed_unbiased}, one can obtain an analogue of Corollary \ref{cor:EG_basic_1}, which states that if $p = \beta^{-1}$, then the iterative complexity of Algorithm \ref{alg:EG_quantization} is $\sqrt{w \beta}$ times higher than for Algorithm \ref{alg:EG}. But the estimated amount of information transferred for Algorithm  \ref{alg:EG_quantization}  is $\beta$ times less than the iterative complexity.
For Algorithm \ref{alg:EG}, the complexity of the transmitted information matches the iterative one.  Moreover, for most practical operators $\beta \geq w$. Hence, in the view of full information transferred, Algorithm \ref{alg:EG_quantization} may be better than Algorithm~\ref{alg:EG}.

The use of compression was investigated for the VFL problem, but not in the saddle point formulation. The papers \cite{chen2021secureboost,xu2021efficient,Cai_2022,sun2023communicationefficient} do not provide theoretical guarantees at all. The work \cite{castiglia2023compressedvfl} investigates only special cases of compression operators. Only the authors of paper \cite{stanko2024accelerated} give guarantees only for the quadratic loss $\ell$: $\mathcal{O}  \left( \omega^2 \frac{\lambda^3_{\max}(A A^T)}{\lambda^2_{\min}(A A^T)}\tfrac{ D^2 }{K^2}  \right)$. This is much worse than our guarantee estimates. 

\subsection{Modification with biased compression for more effective communications}

Using unbiased compression operators is more straightforward in theory, but the most popular compression operators in practice are biased (deterministic rounding \cite{horvath2019natural}, greedy coordinate selection \cite{Alistarh-EF2018}, vector decomposition \cite{powersgd})
and can be described as follows.

{
\begin{definition}\label{def:biased}
Operator $C:\R^d\to \R^d$ (possibly randomized) is called a biased compressor if there exists a constant $\delta \geq 1$ such that for all $x \in \R^d$ it holds 
$$
    \E{\| C(x) - x \|^2} \leq \left( 1 - \frac{1}{\delta}\right) \| x\|^2.
$$
\end{definition}
}


Using biased compressors is a complex issue. It can cause divergence even for quadratic problems \cite{beznosikov2020biased}. To fix this, an error compensation technique \cite{stich2018sparsified,error_feedback,stich2019error} can be applied. This approach accumulates non-transmitted information ($\{e_k\}$, $\{ e_i^k\}$) and adds it to a new package at the next iteration of Algorithm \ref{alg:EG_quantization}.

 Theorem \ref{th:EG_compressed_biased} gives the convergence, and proof can be found in Appendix \ref{app:th:EG_compressed_biased}. Note that the proof techniques of Theorems \ref{th:EG_compressed_unbiased} and Theorem~ \ref{th:EG_compressed_biased} differ considerably, just as the proofs of convergence of distributed GD with unbiased and biased compression~\cite{DIANA,stich2019error}. 
{
\begin{theorem} \label{th:EG_compressed_biased}
Let Assumption \ref{as:convexity_smothness} hold. Let the problem (\ref{eq:vfl_lin_spp_1})
be solved by Algorithm~\ref{alg:EG_biased} with operators and $C$, which satisfy Definition \ref{def:biased}. Then for
$\tau = 1 - p$ and
$
\textstyle{
\gamma = \frac{1}{4}\min\bigg\{ 1;\frac{1}{L_r}; \frac{1}{L_{\ell}};\sqrt{\frac{1-\tau}{\delta^2[\lambda_{\max}(A A^T) + n \cdot \max_{i}\{\lambda_{\max}(A_i A^T_i)\}]}};}$ $\textstyle{\sqrt{\frac{1-\tau}{\omega\lambda_{\max}(A A^T)}}\bigg\},
}
$
it holds that
\begin{align*}
\E{\text{gap}(\bar x^K, \bar z^K, \bar y^K)} = \mathcal{O} \left(
     \left[\frac{\delta}{\sqrt{p}}  \left(\sqrt{\lambda_{\max}(A A^T) } +  n \cdot \max\limits_{i = 1, \ldots, n} \{\sqrt{\lambda_{\max}(A_i A^T_i) } \}\right) + L_{\ell} + L_r \right] \frac{D^2}{K}\right),
\end{align*}
where $\bar x^K$, $\bar z^K$, $\bar y^K$, $D^2$ are defined in Theorem \ref{th:EG_basic_1}.
\end{theorem}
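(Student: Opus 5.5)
We treat (\ref{eq:vfl_lin_spp_1}) as a monotone variational inequality with operator $F(x,z,y) = (A^T y + \nabla r(x),\ \nabla \ell(z,b) - y,\ z - Ax)$. We split $F$ into a smooth monotone part $G = (\nabla r(x), \nabla\ell(z,b), 0)$, which carries the constants $L_r, L_\ell$ and is never communicated, and a skew-symmetric bilinear part $B$ built from $A$ and $I$. Only $B$ is affected by compression.

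Algorithm~\ref{alg:EG_biased} is the loopless variance-reduced extragradient of Algorithm~\ref{alg:EG_quantization}, with the compressed differences replaced by error-compensated packages. We therefore copy the Lyapunov argument of Theorem~\ref{th:EG_compressed_unbiased}, with one change. In that proof the term $\mathbb{E}\|Q(v)-v\|^2 \le (\omega-1)\|v\|^2$ was used together with unbiasedness. Here we instead introduce virtual iterates $\tilde x^{k} = x^{k} - \gamma A^T e^k$ and $\tilde y^{k} = y^{k} + \gamma\sum_i e_i^k$ (sign conventions to be matched to the algorithm). The virtual sequence then evolves exactly as the uncompressed variance-reduced EG, and compression shows up only through the error vectors.

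Plan of steps:

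\begin{enumerate}
\item For any fixed $(x,z,y)$ in the bounded sets, write the one-step inequality for the virtual iterates, following \cite{alacaoglu2021stochastic}:
\begin{align*}
2\gamma\langle F(u^{k+1/2}), u^{k+1/2}-u\rangle \le{}& \tau\|\tilde u^k-u\|^2 + (1-\tau)\|w^k-u\|^2 - \|\tilde u^{k+1}-u\|^2 \\
&- \text{(negative terms)} + \gamma^2\cdot\text{(error terms)}.
\end{align*}
The reference-point update with probability $p$ gives the usual $\frac{1-\tau}{p}$ bookkeeping of $\mathbb{E}\|w^{k+1}-u\|^2$.

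\item Bound the discrepancy between virtual and real points. The differences $\|\tilde u^{k}-u^{k}\|^2$ are controlled by $\gamma^2\lambda_{\max}(AA^T)\|e^k\|^2$ for the $y$-error and by $\gamma^2 n\max_i\lambda_{\max}(A_iA_i^T)\sum_i\|e_i^k\|^2$ for the $x$-errors. The factor $n$ comes from Cauchy–Schwarz on $\sum_i A_i^T(\cdot)$ versus blockwise errors. This is where the second spectral quantity in the rate originates.

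\item Derive a recursion for the errors from Definition~\ref{def:biased}: $\mathbb{E}\|e^{k+1}\|^2 \le (1-\frac1\delta)\|e^k + v^k\|^2 \le (1-\frac{1}{2\delta})\|e^k\|^2 + 2\delta\|v^k\|^2$. Unrolling and summing gives $\sum_k \|e^k\|^2 \lesssim \delta^2\sum_k\|v^k\|^2$. Here $v^k$ is the transmitted difference, such as $y^{k+1/2}-u^k$ or $A_i x_i^{k+1/2} - A_iw_i^k$.

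\item Absorb these sums into the negative terms $-(1-\tau)\|u^{k+1/2}-w^k\|^2$ from step 1. This requires $\gamma^2\delta^2[\lambda_{\max}(AA^T)+n\max_i\lambda_{\max}(A_iA_i^T)] \lesssim 1-\tau = p$, which is exactly the stepsize condition in the statement. The conditions $\gamma\le 1/L_r$ and $\gamma\le 1/L_\ell$ absorb the smooth part $G$ through the usual extragradient Lipschitz step.

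\item Sum over $k$, use monotonicity and linearity to pass to averages, and take the maximum over the bounded sets. We handle the expectation-of-max with the standard auxiliary-sequence trick, as in Theorem~\ref{th:EG_compressed_unbiased}. This yields $\mathbb{E}\,\mathrm{gap} \lesssim D^2/(\gamma K)$, and substituting $\gamma$ gives the claimed bound.
\end{enumerate}

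The main obstacle is step 4: the error terms must be absorbed while the reference points jump randomly. Between resets the transmitted difference $v^k$ is measured against $w^k$, but after a reset the accumulated errors are still there. I would first try a combined potential $\|\tilde u^k-u\|^2 + \frac{1-\tau}{p}\|w^k-u\|^2 + c\gamma^2\delta(\|e^k\|^2+\sum_i\|e_i^k\|^2)$ with $c$ tuned so that the $2\delta\|v^k\|^2$ growth is paid for by the variance-reduction negative terms. This costs the extra factor $\delta$, giving $\delta/\sqrt p$ rather than $\sqrt{\delta/p}$ in the rate.
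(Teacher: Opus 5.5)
This is essentially the paper's proof. It uses the same virtual iterates ($x^k_i-\gamma A_i^Te^k$, and $y^k$ shifted by $\gamma\sum_i e^k_i$) and reuses the variance-reduction and expectation-of-max bookkeeping from Theorem~\ref{th:EG_compressed_unbiased}. It unrolls the error recursion from $e^0=0$ to get $\frac{1}{K}\sum_k\mathbb{E}\|e^k\|^2\le 6\delta^2\,\frac{1}{K}\sum_k\mathbb{E}\|y^{k+1/2}-u^k\|^2$, and likewise for $e^k_i$ with the transmitted vectors $A_i(x^{k+1/2}_i-w^k_i)$. These sums are then absorbed into the $-(1-\tau)\|w^k-x^{k+1/2}\|^2$ and $-(1-\tau)\|u^k-y^{k+1/2}\|^2$ terms under exactly your stepsize condition. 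The resets you flag as the main obstacle are harmless: that per-step recursion holds whatever the coin flips are, and the paper simply sums it rather than building a potential.

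Two bookkeeping points need fixing.

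\begin{enumerate}
\item In your step 2 the factor $\lambda_{\max}(A_iA_i^T)$ is in the wrong place. The factor $n$ enters only through $\|\sum_i e^k_i\|^2\le n\sum_i\|e^k_i\|^2$ in the $y$-shift. The factor $\lambda_{\max}(A_i^TA_i)$ enters only when you bound $\|A_i(x^{k+1/2}_i-w^k_i)\|^2$. Putting it in the discrepancy as well, as your step 2 does, would count it twice.

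\item The averaging $\tau x^k+(1-\tau)w^k$ and the reset $w^{k+1}=x^k$ both use the real iterate. So the one-step bound must be kept in the paper's mixed form $\|\hat x^k-x\|^2-(1-\tau)\|x^k-x\|^2+(1-\tau)\|w^k-x\|^2$, for which $\hat x^{k+1}-\hat x^k$ is exactly the uncompressed increment. Your version with $\tau\|\tilde u^k-u\|^2$, matched against the reset, leaves a cross term $2p\gamma\langle A^Te^k,\tilde x^k-u\rangle$. That term is linear in $e^k$ and depends on the comparator $u$, so your quadratic discrepancy bounds do not control it.
\end{enumerate}
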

}

The choice of optimal $p$ is the same as Algorithm \ref{alg:EG_quantization}. It is enough to take $p = \beta^{-1}$, where $\beta$ is the compression power of $C$.
The estimate from Theorem \ref{th:EG_compressed_biased} shows the central theoretical problem with biased compressors. If $\delta \sim w$, the results in Theorem \ref{th:EG_compressed_biased} are worse than in Theorem \ref{th:EG_compressed_unbiased}. Unfortunately, this kind of problem is inherent in all work around biased compressions -- one cannot fully theoretically justify that biased compressors perform better \cite{MARINA,stich2019error,EF21}. The only thing we can fight for is more or less acceptable convergence. Meanwhile, intuition and practical results show that biased operators are superior to unbiased ones \cite{beznosikov2020biased,EF21}.


\begin{algorithm}{\texttt{EGVFL} with biased compression for (\ref{eq:vfl_lin_spp_1})}
   \label{alg:EG_biased}
\begin{algorithmic}[1]
   \State {\bfseries Input:} starting point $(x^0, z^0, y^0) \in \R^{d+2s}$, $(w^0, u^0) \in \R^{d+s}$, stepsize $\gamma > 0$, number of steps $K$
   \For{$k=0$ {\bfseries to} $K-1$}
   \State All devices in parallel update: $x^{k+1/2}_i = \tau x^{k}_i + (1 - \tau) w^k_i - \gamma \left(A_i^T u^k + \nabla r_i (x^k_i) \right)$
   \State First device updates: $z^{k+1/2} = z^k - \gamma (\nabla \ell(z^k, b) - y^k)$ \label{lin_alg:biased_linz1}
   \State First device updates: $y^{k+1/2} = \tau y^k + (1 - \tau) u^k + \gamma (\sum_{i=1}^n A_i w^k_i - z^k)$ \label{lin_alg:biased_liny1}
   \State First device compresses $C(y^{k+1/2} - u^k + e^k)$ and sends to other devices
   \State First device updates: $e^{k+1} = y^{k+1/2} - u^k + e^k - C(y^{k+1/2} - u^k + e^k)$ and sends to other devices 
   \State All devices in parallel compress $C(A_i x^{k+1/2}_i - A_i w^{k}_i + e^{k}_i)$ and send to first device
   \State All devices in parallel update: $e^{k+1}_i = A_i x^{k+1/2}_i - A_i w^{k}_i + e^{k}_i - C(A_i x^{k+1/2}_i - A_i w^{k}_i + e^{k}_i)$
   \State All devices update: $x^{k+1}_i = \tau x^{k}_i + (1 - \tau) w^k_i - \gamma (A_i^T  [C(y^{k+1/2} - u^k + e^k) + u^k] + \nabla r_i(x^{k+1/2}_i))$
   \State First device update: $z^{k+1} = z^k - \gamma (\nabla \ell(z^{k+1/2}, b) - y^{k+1/2})$ \label{lin_alg:biased_linz2}
   \State First device update: $y^{k+1} = \tau y^k + (1 - \tau) u^k + \gamma (\sum_{i=1}^n [C(A_i x^{k+1/2}_i - A_i w^{k}_i + e^k_i) + A_i w^{k}_i ] - z^{k+1/2})$ \label{lin_alg:biased_liny2}
   \State Flip a coin $b_k \in \{0, 1\}$ where $\PP\{ b_k = 1 \} = p$ \label{lin_alg:biased_linws}
   \If{$b_k = 1$} 
   \State All devices in parallel update: $w^{k+1}_i = x^{k}_i$
   \State First device updates: $u^{k+1} = y^{k}$
   \State All devices send uncompressed $A_i w^{k+1}_i$ to first device
   \State First device sends uncompressed $u^{k+1}$ to other devices
   \Else
   \State All devices in parallel update: $w^{k+1}_i = w^{k}_i$
   \State First device updates: $u^{k+1} = u^{k}$ \label{lin_alg:biased_linwf}
   \EndIf
   \EndFor
\end{algorithmic}
\end{algorithm}

\subsection{Partial participation for asynchronous client connection}

Algorithm \ref{alg:EG} requires that at each iteration all devices communicate (send and receive messages). It is possible that some devices may drop out of the learning process. In this subsection, we consider a modification of Algorithm \ref{alg:EG}, where only 1 randomly selected device communicates at each iteration \cite{ribero2020communication,chen2020optimal,cho2020client,273723}. We take Algorithm \ref{alg:EG_quantization} as a base, but instead of compression, we use random client selection and only send information from this client to the first device.

Even though the first device sends $y^{k+1/2}$ to all devices, this does not mean that all devices need to receive the message at the exact same moment. They can get a set of several messages with $y$ at once when they contact the first device. In that case they just do several sequential  updates of $x_i$.
{
\begin{theorem}\label{th:pp}
Let Assumption \ref{as:convexity_smothness} hold. Let the problem (\ref{eq:vfl_lin_spp_1})
be solved by Algorithm~\ref{alg:EG_pp} (Appendix \ref{app:algos}). Then for $\tau = 1 - p$ and
$ 
\gamma = \frac{1}{4}\min \bigg\{ 1;\frac{1}{L_r}; \frac{1}{L_{\ell}};$ $\sqrt{\frac{1-\tau}{\lambda_{\max}(A A^T) + n \cdot \max_{i}\{\lambda_{\max}(A_i A^T_i)\}}} \bigg\}
$ 
it holds that
\begin{align*}
    \E{\text{gap}(\bar x^K, \bar z^K, \bar y^K)} = \mathcal{O}\left(
     \left[ \frac{1}{\sqrt{p}}\left(\sqrt{\lambda_{\max}(A A^T) } +  n \cdot \max\limits_{i = 1, \ldots, n}\{\sqrt{\lambda_{\max}(A_i A^T_i) }\}\right) + L_{\ell} + L_r \right] \frac{D^2}{K}\right),
\end{align*}
where $\bar x^K$, $\bar z^K$, $\bar y^K$, $D^2$ are defined in Theorem \ref{th:EG_basic_1}.
\end{theorem}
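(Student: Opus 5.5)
We treat Algorithm~\ref{alg:EG_pp} as a special case of the loopless variance-reduced extragradient of \cite{alacaoglu2021stochastic}, in the same way as Algorithm~\ref{alg:EG_quantization}. The only change is the stochastic oracle: random client selection replaces the compressor $Q$. We write the saddle operator of (\ref{eq:vfl_lin_spp_1}) as
$$F(x,z,y) = \big(A^T y + \nabla r(x),\ \nabla\ell(z,b) - y,\ -(Ax - z)\big).$$
It is monotone and splits into a smooth monotone part ($\nabla r$, $\nabla \ell$, constants $L_r, L_\ell$) and a bilinear coupling $B(x,y) = (A^T y, -Ax)$. Only $B$ is estimated stochastically. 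At iteration $k$ a device $i_k$ is drawn uniformly. The first device reconstructs $\sum_j A_j x_j^{k+1/2}$ as $\sum_j A_j w_j^k + n\,(A_{i_k}x_{i_k}^{k+1/2} - A_{i_k}w_{i_k}^k)$, and only device $i_k$ receives the correction $y^{k+1/2}-u^k$. Other devices apply delayed updates that are mathematically identical.

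\textbf{Step 1 (unbiasedness and variance).} Let $g^k$ be the resulting estimator of $B$ at the extrapolated point, with reference point $(w^k,u^k)$. Conditioned on the past, $\Exp_{i_k}[g^k] = B(x^{k+1/2},y^{k+1/2})$. The $x$-block of $g^k$ is exact in expectation. The $y$-block deviation $n A_{i}(x_i - w_i) - A(x-w)$ has second moment at most
$$n\sum_i \|A_i(x_i-w_i)\|^2 \le n\max_i\lambda_{\max}(A_i^TA_i)\,\|x-w\|^2 .$$
For the dual block the deviation is bounded through $\lambda_{\max}(AA^T)\|y-u\|^2$ together with the partial-update term. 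Combining,
$$\Exp\|g^k - B(\cdot^{k+1/2}) \|^2 \le \big(\lambda_{\max}(AA^T) + n\max_i\lambda_{\max}(A_iA_i^T)\big)\, \|(x^{k+1/2},y^{k+1/2}) - (w^k,u^k)\|^2 .$$
This is the exact analogue of the bound $\omega\lambda_{\max}(AA^T)$ from Definition~\ref{def:quantization} used in Theorem~\ref{th:EG_compressed_unbiased}. Here $\hat L^2 := \lambda_{\max}(AA^T)+n\max_i\lambda_{\max}(A_iA_i^T)$ plays the role of the mean-square Lipschitz constant. Using $\lambda_{\max}(A_iA_i^T)=\lambda_{\max}(A_i^TA_i)$ keeps the statement symmetric.

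\textbf{Step 2 (one-step Lyapunov inequality).} Fix an arbitrary $(x,z,y)$ in the bounded sets. Following the proof of Theorem~\ref{th:EG_compressed_unbiased}, we expand $\|x^{k+1}-x\|^2$, $\|z^{k+1}-z\|^2$ and $\|y^{k+1}-y\|^2$ using the convex combination $\tau(\cdot)^k + (1-\tau)(\cdot)_{\mathrm{ref}}^k$. We use the identity $\Exp\|w^{k+1}-x\|^2 = p\|x^k-x\|^2 + (1-p)\|w^k-x\|^2$ with $\tau = 1-p$ to build the potential
$$\Phi^k = \tau\|v^k - v\|^2 + \tfrac{1-\tau}{p}\|\text{ref}^k - v\|^2 ,$$
where $v=(x,z,y)$. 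The $z$ block has no reference point, so it enters with the plain extragradient recursion. The smooth parts are handled by $L_r,L_\ell$-Lipschitzness, as in Theorem~\ref{th:EG_basic_1}. The stochastic term is handled by Young's inequality and Step 1; it is absorbed provided $\gamma^2 \hat L^2 \le (1-\tau)/16$ and $\gamma \le \min\{1, 1/L_r, 1/L_\ell\}/4$, which is exactly the stepsize in the statement. This yields
$$2\gamma\,\langle F(v^{k+1/2}), v^{k+1/2}-v\rangle \le \Phi^k - \Exp\Phi^{k+1} + \gamma\,\langle \text{noise}^k, v^{k+1/2}-v\rangle ,$$
up to absolute constants.

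\textbf{Step 3 (gap).} We sum over $k$, use monotonicity and convex--concavity to pass to $L(\bar x^K,\bar z^K,y) - L(x,\bar z^K... )$, i.e.\ to $L(\bar x^K,\bar z^K,y)-L(x,z,\bar y^K)$, and then take the maximum over $(x,z,y)$. This step is the main obstacle. The martingale term $\sum_k\langle \text{noise}^k, v^{k+1/2}-v\rangle$ has zero mean only for fixed $v$, but the maximizer depends on the randomness. We handle it with the standard ghost-iterate trick: introduce an auxiliary sequence $\tilde v^{k+1}=\tilde v^k - \gamma\,\text{noise}^k$, so that the term is bounded by $\tfrac12\|\tilde v^0-v\|^2$ plus the accumulated variance. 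The accumulated variance is controlled by Step 1 together with the same telescoping $\sum_k\Exp\|v^{k+1/2}-\text{ref}^k\|^2$ already bounded in Step 2. Dividing by $2\gamma K$ and using $\Phi^0\lesssim D^2/p$ (with the factor absorbed through $1-\tau=p$) gives
$$\Exp[\text{gap}] = \mathcal{O}\big((1/\gamma)D^2/K\big) .$$
Substituting $1/\gamma \sim 1 + L_r + L_\ell + \hat L/\sqrt p$ and $\hat L \le \sqrt{\lambda_{\max}(AA^T)} + n\max_i\sqrt{\lambda_{\max}(A_iA_i^T)}$ yields the claimed bound.
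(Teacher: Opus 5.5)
Your architecture matches the paper's. The paper proves this theorem by rerunning the proof of Theorem~\ref{th:EG_compressed_unbiased}, replacing $Q(y^{k+1/2}-u^k)$ by $y^{k+1/2}-u^k$ and $\sum_i Q(A_ix^{k+1/2}_i-A_iw^k_i)$ by $nA_{i_k}(x^{k+1/2}_{i_k}-w^k_{i_k})$. The only new estimate is your Step~1 bound $\mathbb{E}\|nA_{i_k}(x^{k+1/2}_{i_k}-w^k_{i_k})\|^2\le n\max_i\lambda_{\max}(A_i^TA_i)\,\mathbb{E}\|x^{k+1/2}-w^k\|^2$. Your ghost-iterate step is the same device as \eqref{eq:unbiased_temp12}--\eqref{eq:unbiased_temp13}. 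One clarification: the $x$-block is exact, since every device receives $y^{k+1/2}$. So $\lambda_{\max}(AA^T)$ bounds the deterministic increment $A^T(y^{k+1/2}-u^k)$ rather than a deviation, which leaves your $\hat L^2$ unchanged.

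The gap is in Step~3. You note that a term with zero mean only for fixed $v$ cannot be maximized inside the expectation, but you apply this only to the client-sampling noise. The coins $b_k$ produce a second such term.
\begin{itemize}
\item Step~2 relies on $\mathbb{E}\|w^{k+1}-x\|^2=p\|x^k-x\|^2+(1-p)\|w^k-x\|^2$ (your $\mathbb{E}\Phi^{k+1}$), which holds only for deterministic $x$.
\item Pathwise, $\|w^{k+1}-x\|^2$ differs from this by an $x$-independent zero-mean quantity plus $2\langle (1-\tau)x^k+\tau w^k-w^{k+1},x\rangle$.
\item The quantity $\mathbb{E}\max_{x\in\cX}\sum_k\langle (1-\tau)x^k+\tau w^k-w^{k+1},x\rangle$ is nonnegative by Jensen and in general nonzero. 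The same holds for $u^k$ and $y$.
\end{itemize}
The paper bounds this in \eqref{eq:unbiased_temp8}--\eqref{eq:unbiased_temp9} with the same Young/martingale device. That leaves an extra variance $\tfrac18\sum_k\tau(1-\tau)\mathbb{E}\|w^k-x^k\|^2$, plus its $u,y$ analogue, which must be absorbed via $\|w^k-x^k\|^2\le 2\|w^k-x^{k+1/2}\|^2+2\|x^{k+1/2}-x^k\|^2$. This drops the coefficient of $\|x^{k+1/2}-x^k\|^2$ (and of $\|y^{k+1/2}-y^k\|^2$) from $\tau$ to $(3\tau-1)/2$.

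The smooth part and the $z$-coupling are not anchored at $(w^k,u^k)$. Their errors $2\gamma^2L_r^2\|x^{k+1/2}-x^k\|^2$ and $2\gamma^2\|y^{k+1/2}-y^k\|^2$ must also be absorbed by these $\tau$-weighted terms. This is why the paper's proof additionally takes $\tau\ge 1/2$, i.e.\ $p\le 1/2$. Your claim that the stated $\gamma$ alone closes the argument is therefore not accurate.

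A smaller point concerns your potential. The $\tau$-weighted potential $\tau\|v^k-v\|^2+\tfrac{1-\tau}{p}\|\mathrm{ref}^k-v\|^2$ is taken from \cite{alacaoglu2021stochastic}, where the reference update is $w^{k+1}=x^{k+1}$. Here $w^{k+1}=x^k$, and with the identity you use, your $\Phi$ leaves a one-step residual $(1-\tau)^2(\|x^k-x\|^2-\|w^k-x\|^2)$, so the one-step inequality you write does not hold as stated. The quantity that telescopes step by step is the unweighted $\|x^k-x\|^2+\|w^k-x\|^2+\|z^k-z\|^2+\|y^k-y\|^2+\|u^k-y\|^2$, as in \eqref{eq:unbiased_proof_temp1}--\eqref{eq:unbiased_proof_temp3}.
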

}

\begin{algorithm}{\texttt{EGVFL} with partial participation for (\ref{eq:vfl_lin_spp_1})}
   \label{alg:EG_pp}
\begin{algorithmic}[1]
   \State {\bfseries Input:} starting point $(x^0, z^0, y^0) \in \R^{d+2s}$, $(w^0, u^0) \in \R^{d+s}$, stepsize $\gamma > 0$, number of steps $K$
   \For{$k=0$ {\bfseries to} $K-1$}
   \State All devices in parallel update: $x^{k+1/2}_i = \tau x^{k}_i + (1 - \tau) w^k_i - \gamma \left(A_i^T u^k + \nabla r_i (x^k_i) \right)$
   \State First device updates: $z^{k+1/2} = z^k - \gamma (\nabla \ell(z^k, b) - y^k)$
   \State First device updates: $y^{k+1/2} = \tau y^k + (1 - \tau) u^k + \gamma (\sum_{i=1}^n A_i w^k_i - z^k)$
   \State First device sends $y^{k+1/2}$ to other devices 
   \State Random device $i_k$ sends $(A_{i_k} x^{k+1/2}_{i_k} - A_{i_k} w^{k}_{i_k})$ to first device
   \State All devices update: $x^{k+1}_i = \tau x^{k}_i + (1 - \tau) w^k_i - \gamma (A_i^T  y^{k+1/2} + \nabla r_i(x^{k+1/2}_i))$
   \State First device update: $z^{k+1} = z^k - \gamma (\nabla \ell(z^{k+1/2}, b) - y^{k+1/2})$
   \State First device update: $y^{k+1} = \tau y^k + (1 - \tau) u^k + \gamma ( n \cdot [A_{i_k} x^{k+1/2}_{i_k} - A_{i_k} w^{k}_{i_k}] + \sum_{i=1}^n  A_i w^{k}_i - z^{k+1/2})$
   \State Flip a coin $b_k \in \{0, 1\}$ where $\PP\{ b_k = 1 \} = p$ 
   \If{$b_k = 1$} 
   \State All devices in parallel update: $w^{k+1}_i = x^{k}_i$
   \State First device updates: $u^{k+1} = y^{k}$
   \State All devices send uncompressed $A_i w^{k+1}_i$ to first device
   \State First device sends uncompressed $u^{k+1}$ to other devices
   \Else
   \State All devices in parallel update: $w^{k+1}_i = w^{k}_i$
   \State First device updates: $u^{k+1} = u^{k}$
   \EndIf
   \EndFor
\end{algorithmic}
\end{algorithm}

Using the same reasonings as after Theorem \ref{th:EG_compressed_unbiased}, one can find the optimal choice of $p$. In Algorithm ~\ref{alg:EG_pp}, one mandatory communication round with only 1 client occurs and possibly one more (with all clients) with probability $p$. Then each iteration requires $\mathcal{O}\left( n^{-1} + p \right)$ data transfers on average.
The optimal choice of $p$ is $\beta^{-1}$.





\subsection{Coordinate modification for low-cost local computing}

The last modification is related to cheapening the cost of local computation in Algorithm \ref{alg:EG}. The most expensive local operations are matrix vector multiplications: $A_i x^k_i$ and $A_i^T y^k$. To make them cheaper, we can apply to the idea of coordinate descent \cite{doi:10.1137/100802001,doi:10.1137/16M1060182,richtarik2013optimal,doi:10.1080/10556788.2016.1190360} and compute not all coordinates for the resulting vectors $A_i x^k_i$ and $A_i^T y^k$ but only $1$, then instead of multiplying matrix by vector, we just compute the scalar product of two vectors. This is implemented in the following modification of Algorithm~\ref{alg:EG_quantization}.  Theorem \ref{th:EG_coord} gives the convergence, proof can be found in Appendix \ref{app:th:EG_coord}.

\begin{algorithm}{\texttt{EGVFL} with coordinate choice for (\ref{eq:vfl_lin_spp_1})}
   \label{alg:EG_coord}
\begin{algorithmic}[1]
   \State {\bfseries Input:} starting point $(x^0, z^0, y^0) \in \R^{d+2s}$, $(w^0, u^0) \in \R^{d+s}$, stepsize $\gamma > 0$, number of steps $K$
   \For{$k=0$ {\bfseries to} $K-1$}
   \State All devices in parallel update: $x^{k+1/2}_i = \tau x^{k}_i + (1 - \tau) w^k_i - \gamma \left(A_i^T u^k + \nabla r_i (x^k_i) \right)$
   \State First device updates: $z^{k+1/2} = z^k - \gamma (\nabla \ell(z^k, b) - y^k)$
   \State First device updates: $y^{k+1/2} = \tau y^k + (1 - \tau) u^k + \gamma (\sum_{i=1}^n A_i w^k_i - z^k)$
   \State First device sends  $y^{k+1/2}$ sends to other devices 
   \State All devices in parallel choice coordinate(s) $c^k_i$, computes $\langle A_i(x^{k+1/2}_i - w^{k}_i), e_{c^k_i} \rangle e_{c^k_i}$ and send to first device
   \State All devices choice coordinate(s) $j^k_i$ and update: $x^{k+1}_i = \tau x^{k}_i + (1 - \tau) w^k_i - \gamma ( d_i \cdot \langle A_i^T (y^{k+1/2} - u^k), e_{j^k_i} \rangle e_{j^k_i} + A_i^T u^k + \nabla r_i(x^{k+1/2}_i))$
   \State First device update: $z^{k+1} = z^k - \gamma (\nabla \ell(z^{k+1/2}, b) - y^{k+1/2})$
   \State First device update: $y^{k+1} = \tau y^k + (1 - \tau) u^k + \gamma (\sum_{i=1}^n [s \cdot \langle A_i(x^{k+1/2}_i - w^{k}_i), e_{c^k_i} \rangle e_{c^k_i} + A_i w^{k}_i ] - z^{k+1/2})$
   \State Flip a coin $b_k \in \{0, 1\}$ where $\PP\{ b_k = 1 \} = p$
   \If{$b_k = 1$} \label{lin_alg:coord_linws}
   \State All devices in parallel update: $w^{k+1}_i = x^{k}_i$
   \State First device updates: $u^{k+1} = y^{k}$
   \State All devices send uncompressed $A_i w^{k+1}_i$ to first device
   \State First device sends uncompressed $u^{k+1}$ to other devices
   \Else
   \State All devices in parallel update: $w^{k+1}_i = w^{k}_i$
   \State First device updates: $u^{k+1} = u^{k}$ \label{lin_alg:coord_linwf}
   \EndIf
   \EndFor
\end{algorithmic}
\end{algorithm}

{
\begin{theorem} \label{th:EG_coord}
Let Assumption \ref{as:convexity_smothness} hold. Let the problem (\ref{eq:vfl_lin_spp_1})
be solved by Algorithm~\ref{alg:EG_coord}. \\ Then for 
$
\gamma = \tfrac{1}{4}\min
 \bigg\{ 1;\frac{1}{L_r}; \frac{1}{L_{\ell}};\sqrt{\frac{1-\tau}{s  \lambda_{\max} (A^T A) }}; \sqrt{\frac{1-\tau}{d \max_{i} \{\lambda_{\max} (A_i^T A_i)\}}}\bigg\}
$
and 
$\tau = 1 - p$,
it holds that
{
\begin{align*}
\E{\text{gap}(\bar x^K, \bar z^K, \bar y^K)}
=  
    \mathcal{O}\left(
     \left[  \frac{s}{\sqrt{p}} \sqrt{\lambda_{\max} (A^T A)}  
     +
     \frac{d}{\sqrt{p}} \cdot \max\limits_{i = 1, \ldots, n} \{\sqrt{\lambda_{\max}(A_i A^T_i) } \}  + L_{\ell} + L_r  \right] \frac{D^2}{K}\right),
\end{align*}}where $\bar x^K$, $\bar z^K$, $\bar y^K$, $D^2$ are defined in Theorem \ref{th:EG_basic_1}.
\end{theorem}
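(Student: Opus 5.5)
We treat Algorithm~\ref{alg:EG_coord} as an instance of the loopless variance-reduced extragradient of \cite{alacaoglu2021stochastic}, applied to the monotone operator of (\ref{eq:vfl_lin_spp_1}),
$$F(x,z,y) = \bigl(A^T y + \nabla r(x),\ \nabla \ell(z,b) - y,\ z - Ax\bigr).$$
We split it as $F = F_{\mathrm{lin}} + F_{\mathrm{sm}}$. Here $F_{\mathrm{lin}}(x,z,y) = (A^T y, -y, z - Ax)$ is skew-symmetric, and $F_{\mathrm{sm}} = (\nabla r(x), \nabla \ell(z,b), 0)$ is monotone and $\max\{L_r, L_\ell\}$-Lipschitz. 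Only the bilinear coupling is estimated stochastically: the $z$-block and the $\nabla r_i$ terms are computed exactly, as in the proof of Theorem~\ref{th:EG_compressed_unbiased}. The argument therefore repeats that proof line by line. The only change is the stochastic oracle $\hat F^k$ for the linear part at $w^{k+1/2}=(x^{k+1/2},z^{k+1/2},y^{k+1/2})$ with reference point $(w^k,u^k)$. Instead of $A_i^T Q(y^{k+1/2}-u^k)$ it uses $d_i\langle A_i^T(y^{k+1/2}-u^k), e_{j}\rangle e_j$, and instead of $Q(A_i(x^{k+1/2}_i-w^k_i))$ it uses $s\langle A_i(x^{k+1/2}_i - w^k_i), e_c\rangle e_c$.

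\textbf{Step 1 (oracle properties).} With uniform coordinates, both estimators are unbiased: $\mathbb{E}_j[d_i\langle v,e_j\rangle e_j] = v$ and $\mathbb{E}_c[s\langle g, e_c\rangle e_c]=g$. Their second moments are $d_i\|v\|^2$ and $s\|g\|^2$ respectively. Hence
$$\mathbb{E}\|\hat F^k - F_{\mathrm{lin}}(w^k,u^k)\|^2 \le s\,\Bigl\|\sum_i A_i(x_i^{k+1/2}-w_i^k)\Bigr\|^2 + d\max_i \lambda_{\max}(A_iA_i^T)\,\|y^{k+1/2}-u^k\|^2 .$$
For the $x$-part we sum over devices and use $d_i\le d$. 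For the $y$-part we need care: the $n$ coordinate samples $c^k_i$ are independent across $i$, so the cross terms vanish and the variance is $\sum_i s\|A_i(\cdot)\|^2$. This sum is bounded by $s\,\max_i\lambda_{\max}(A_i^TA_i)\|x^{k+1/2}-w^k\|^2$, or alternatively by $s\lambda_{\max}(A^TA)$ via Lemma~\ref{lem:matrix}-type bounds. The net effect is a mean-squared Lipschitz constant
$$L_Q^2 \lesssim s\lambda_{\max}(A^TA) + d\max_i\lambda_{\max}(A_i^TA_i),$$
which replaces $\omega\lambda_{\max}(AA^T)$ in Theorem~\ref{th:EG_compressed_unbiased}.

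\textbf{Step 2 (one-step Lyapunov inequality).} For an arbitrary reference point $w=(x,z,y)$ we take the potential
$$\Phi_k = \tau\|w^k - w\|^2 + \tfrac{1-\tau}{p}\|\hat w^k - w\|^2,$$
where $\hat w^k$ denotes the reference point. We expand $\|w^{k+1}-w\|^2$ using the extragradient identity and the convex combination $\tau w^k+(1-\tau)\hat w^k$. The exact smooth part is handled as in Theorem~\ref{th:EG_basic_1}, giving $\gamma L_r, \gamma L_\ell\le 1/4$. The stochastic linear part is controlled by Step 1 and Young's inequality; it is absorbed into the negative term $-(1-\tau)\|w^{k+1/2}-\hat w^k\|^2$ provided $\gamma^2 L_Q^2 \le (1-\tau)/16$. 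This is exactly the stepsize condition in the statement. The result is
$$2\gamma\langle F(w^{k+1/2}), w^{k+1/2}-w\rangle \le \Phi_k - \mathbb{E}\Phi_{k+1} + (\text{martingale term}).$$

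\textbf{Step 3 (gap).} We sum over $k$, use monotonicity and convexity–concavity of $L$ to pass to $\bar w^K$, and take the maximum over $w$ in the compact sets. The main obstacle is that this maximum is taken inside the expectation while $w$ is data dependent. We handle it with the standard ghost-iterate trick of \cite{alacaoglu2021stochastic}: introduce an auxiliary sequence driven by $\hat F^k - \mathbb{E}\hat F^k$, which costs an extra $D^2$ together with a variance term already bounded in Step 1. This yields
$$\mathbb{E}\,\text{gap}\lesssim \frac{D^2}{\gamma K}.$$
Substituting $\gamma$ and using $1-\tau=p$, we get
$$\frac{1}{\gamma}\lesssim 1+L_r+L_\ell + \sqrt{s\lambda_{\max}(A^TA)/p}+\sqrt{d\max_i\lambda_{\max}}/\sqrt p,$$
which is at most the stated bound, since $\sqrt{s}\le s$ and $\sqrt d\le d$.
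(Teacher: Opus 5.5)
Your overall route matches the paper's. The proof of Theorem~\ref{th:EG_coord} is the proof of Theorem~\ref{th:EG_compressed_unbiased} with $Q$ replaced by the two coordinate estimators, whose unbiasedness and second moments $d_i\|A_i^T v\|^2$, $s\|g\|^2$ give the stepsize constants. Your ghost sequence for the oracle noise is equivalent to the paper's Young-plus-martingale-orthogonality bound. However, the parts of your argument that handle the random reference points fail as written.

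\textbf{The potential.} Algorithm~\ref{alg:EG_coord} is not literally the loopless method of \cite{alacaoglu2021stochastic}. When $b_k=1$ it sets $w^{k+1}=x^k$, $u^{k+1}=y^k$ (the \emph{old} iterate). There is also no reference point for $z$, and the $y$--$z$ coupling is evaluated at current points. For this rule, $\mathbb{E}_{b_k}\|w^{k+1}-x\|^2=p\|x^k-x\|^2+(1-p)\|w^k-x\|^2$. Combine this with the one-step bound $\|x^{k+1}-x\|^2\le\tau\|x^k-x\|^2+(1-\tau)\|w^k-x\|^2+\dots$ and $\tau=1-p$. The only weights that telescope are then equal ones, $\|x^k-x\|^2+\|w^k-x\|^2$ (plus $\|z^k-z\|^2+\|y^k-y\|^2+\|u^k-y\|^2$), which is what the paper uses. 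With your weights $\tau$ and $(1-\tau)/p=1$ you instead get $\mathbb{E}\Phi_{k+1}\le\Phi_k+p^2\bigl(\|x^k-x\|^2-\|w^k-x\|^2\bigr)+\dots$. This residual has no sign and does not telescope. Your $\Phi_k$ is the right potential for the snapshot $w^{k+1}=x^{k+1}$, not for this algorithm.

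\textbf{The missing martingale term.} In Step~2 you average over $b_k$ with the comparison point fixed. In Step~3 you move the maximum over $\cX,\cY$ inside the expectation but introduce an auxiliary sequence only for $\hat F^k-\mathbb{E}\hat F^k$. The coin $b_k$ creates a second data-dependent term, $2\sum_k\langle(1-\tau)x^k+\tau w^k-w^{k+1},x\rangle$, and its $(u,y)$ analogue. These must also be bounded, as in (\ref{eq:unbiased_temp8})--(\ref{eq:unbiased_temp9}). Doing so costs a term of order $\tau(1-\tau)\sum_k\mathbb{E}\|w^k-x^k\|^2$, and likewise for $u^k,y^k$. You absorb it via $\|w^k-x^k\|^2\le2\|w^k-x^{k+1/2}\|^2+2\|x^{k+1/2}-x^k\|^2$ into $-\tau\|x^{k+1/2}-x^k\|^2-(1-\tau)\|w^k-x^{k+1/2}\|^2$. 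This is where the paper needs $\tau\ge\frac12$, i.e.\ $p\le\frac12$. Without this step your bound $\mathbb{E}\,\text{gap}\lesssim D^2/(\gamma K)$ is not justified.

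\textbf{A smaller point in Step~1.} With independent $c_i^k$ the cross terms vanish only in the variance. They do not vanish in the second moment, which is what enters the extragradient Young step: $\mathbb{E}\|\sum_i s\langle g_i,e_{c_i}\rangle e_{c_i}\|^2=(s-1)\sum_i\|g_i\|^2+\|\sum_i g_i\|^2$. Your displayed $s\|\sum_i g_i\|^2$ holds only for a common coordinate. Your final $L_Q^2$ is still correct up to constants, since $\max_i\lambda_{\max}(A_i^TA_i)\le\lambda_{\max}(A^TA)$.
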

}

Using the same reasonings as after Theorem ~\ref{th:EG_compressed_unbiased}, one can find the optimal choice of $p$. 
In Algorithm~\ref{alg:EG_coord}, two mandatory computing of scalar products (instead of matrix vector multiplication) take places and possibly two matrix vector multiplications with probability $p$. Then each iteration requires $\mathcal{O}\left( n + s + p \cdot ns\right)$ local computations on average.
The optimal choice of $p$ is $(n+s)/ (ns)$.

{In addition to computing efficiency aspects, privacy protection algorithms are of particular importance in federated learning. We address this issue in the next two sections.}

{

\subsection{Modification with additive noise for privacy protection}
\label{sec:new_app_0}

If we want to add extra privacy to basic Algorithm \ref{alg:EG}, we can first use the well-known and classic mechanism of adding noise to packages \cite{abadi2016deep}. Following this idea, we add noise to $y^k$, $A_i x^k_i$, $y^{k+1/2}$ and $A_i x^{k+1/2}_i$ in Algorithm \ref{alg:EG}. Namely, in lines \ref{lin_alg:EG_liny1_send}, \ref{lin_alg:EG_linx1_send}, \ref{lin_alg:EG_linx1}, \ref{lin_alg:EG_liny1}, \ref{lin_alg:EG_liny2_send}, \ref{lin_alg:EG_linx2_send}, \ref{lin_alg:EG_linx2} and \ref{lin_alg:EG_liny2} we change: $y^k \to y^k + \xi^k$, $A_i x^k_i \to A_i x^k_i + \xi_i^k$, $y^{k+1/2} \to y^{k+1/2}  + \xi^{k+1/2}$ and $A_i x^{k+1/2}_i \to A_i x^{k+1/2}_i + \xi_i^{k+1/2}$. 

\begin{algorithm}{\texttt{EGVFL} with additive noise for (\ref{eq:vfl_lin_spp_1})}
   \label{alg:EG_noise}
\begin{algorithmic}[1]
   \State {\bfseries Input:} starting point $(x^0, z^0, y^0) \in \R^{d+2s}$, stepsize $\gamma > 0$, number of steps $K$
   \For{$k=0$ {\bfseries to} $K-1$}
   \State First device generates noise $\xi^k$ and sends $y^k + \xi^k$ to other devices
   \State All devices in parallel generate noises $\xi^k_i$ and send $A_i x^k_i + \xi^k_i$ to first device
   \State All devices in parallel update: $x^{k+1/2}_i = x^k_i - \gamma \left(A_i^T (y^k + \xi^k )+ \nabla r_i (x^k_i) \right)$
   \State First device updates: $z^{k+1/2} = z^k - \gamma (\nabla \ell(z^k, b) - y^k)$
   \State First device updates: $y^{k+1/2} = y^k + \gamma (\sum_{i=1}^n (A_i x^k_i + \xi^k_i) - z^k)$
   \State First device generates noise $\xi^{k+1/2}$ and sends $y^{k+1/2} + \xi^{k+1/2}$ to other devices 
   \State All devices in parallel generate noises $\xi^{k+1/2}_i$ and send $A_i x^{k+1/2}_i + \xi^{k+1/2}_i$ to first device
   \State All devices in parallel update: $x^{k+1}_i = x^k_i - \gamma (A_i^T (y^{k+1/2} + \xi^{k+1/2})+ \nabla r_i(x^{k+1/2}_i))$
   \State First device: $z^{k+1} = z^k - \gamma (\nabla \ell(z^{k+1/2}, b) - y^{k+1/2})$
   \State First device: $y^{k+1} = y^k + \gamma (\sum_{i=1}^n (A_i x^{k+1/2}_i + \xi^{k+1/2}_i) - z^{k+1/2})$
   \EndFor
\end{algorithmic}
\end{algorithm}

Theorem \ref{th:EG_add_noise} gives the convergence, proof can be found in Appendix \ref{app:th:EG_add_noise}.
{
\begin{theorem} \label{th:EG_add_noise}
Let Assumption \ref{as:convexity_smothness} hold. Let $\mathbb{E}[\xi^k] = 0$, $\mathbb{E}[\xi^k_i] = 0$, $\mathbb{E}[\|\xi^k\|^2] \leq \sigma^2$ and $\mathbb{E}[\|\xi^k_i\|^2] \leq \sigma^2$ (the same for $\xi^{k+1/2}$ and $\xi^{k+1/2}_i$). Let the problem (\ref{eq:vfl_lin_spp_1})
be solved by Algorithm~\ref{alg:EG_noise}. Then for 
$
\gamma = \min \bigg\{ \tfrac{1}{2}; \tfrac{1}{\sqrt{8\lambda_{\max}(A^T A)}}; \tfrac{1}{2L_r}; $ $ \tfrac{1}{2L_{\ell}}; \sqrt{\tfrac{D^2}{8(\lambda_{\max}(A A^T) + n)\sigma^2 K}}  \bigg\},
$ 
it holds that
\begin{align*}
    \mathbb{E} \big[ \text{gap} (\bar x^K, \bar z^K, \bar y^K)\big]
    =
    \cO \left(\frac{ (1 + \sqrt{\lambda_{\max}(A^T A)} + L_r + L_{\ell} ) D^2}{ K}
    \!+\!
    \sqrt{\frac{(\lambda_{\max}(A A^T) + n)\sigma^2 D^2}{K}} \right),
\end{align*}
where $\bar x^K$, $\bar z^K$, $\bar y^K$, $D^2$ are defined in Theorem \ref{th:EG_basic_1}.
\end{theorem}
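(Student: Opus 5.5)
We treat Algorithm~\ref{alg:EG_noise} as stochastic \texttt{ExtraGradient} applied to the monotone operator $F(x,z,y) := \big(A^T y + \nabla r(x),\; \nabla \ell(z,b) - y,\; -(Ax - z)\big)$. The algorithm uses noisy oracles $F(\cdot) + \eta^k$ at $w^k=(x^k,z^k,y^k)$ and $F(\cdot)+\eta^{k+1/2}$ at $w^{k+1/2}$. The noise components are $\eta^k_x = A^T\xi^k$ (block $i$ equal to $A_i^T\xi^k$), $\eta^k_z = 0$ and $\eta^k_y = -\sum_i \xi^k_i$, and similarly at the half step. Hence $\mathbb{E}[\eta]=0$ conditionally on the past, and
\[
\mathbb{E}\|\eta\|^2 \le \lambda_{\max}(AA^T)\sigma^2 + n\sum_i\mathbb{E}\|\xi_i\|^2 \lesssim (\lambda_{\max}(AA^T)+n)\sigma^2 ,
\]
which is the constant in the statement. (The cross term $\sum_i \xi_i$ actually gives $n\sigma^2$ by independence, or $n^2\sigma^2$ in the worst case; we absorb this into the $\mathcal{O}$.) The deterministic part of the argument is exactly that of Theorem~\ref{th:EG_basic_1}. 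Monotonicity follows from convexity of $\ell$ and $r$, since the bilinear part is skew. The Lipschitz constant of $F$ is at most a constant times $\max\{1,\sqrt{\lambda_{\max}(A^TA)},L_r,L_\ell\}$, which is what the first four terms of the min in $\gamma$ guarantee via $\gamma L_F\le 1/2$-type conditions.

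\textbf{One-step inequality.} For any $w\in\cX\times\cZ\times\cY$ we use the standard \texttt{ExtraGradient} identities for $w^{k+1/2}$ and $w^{k+1}$:
\[
2\gamma\langle F(w^{k+1/2})+\eta^{k+1/2}, w^{k+1/2}-w\rangle \le \|w^k-w\|^2-\|w^{k+1}-w\|^2 - \|w^{k+1/2}-w^k\|^2 + \gamma^2\|F(w^{k+1/2})+\eta^{k+1/2}-F(w^k)-\eta^k\|^2 .
\]
We bound the last term by $2\gamma^2L_F^2\|w^{k+1/2}-w^k\|^2 + 4\gamma^2(\|\eta^k\|^2+\|\eta^{k+1/2}\|^2)$. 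The first part is cancelled by the stepsize restriction, leaving $8\gamma^2(\lambda_{\max}(AA^T)+n)\sigma^2$ in expectation. Summing over $k$ and using convexity–concavity of $L$, we get $\sum_k\langle F(w^{k+1/2}),w^{k+1/2}-w\rangle \ge K\big(L(\bar x,\bar z,y)-L(x,z,\bar y)\big)$, and then take the max over $w$.

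\textbf{Main obstacle: the term $\sum_k\langle \eta^{k+1/2}, w^{k+1/2}-w\rangle$ with $w$ chosen after the max.} Since $w$ depends on the whole trajectory, its expectation is not zero. We would use the ghost-iterate trick of Nemirovski et al. Introduce $v^{k+1}=v^k-\gamma\eta^{k+1/2}$ with $v^0=w^0$ and write
\[
\langle \eta^{k+1/2},w^{k+1/2}-w\rangle = \langle \eta^{k+1/2}, w^{k+1/2}-v^k\rangle + \langle \eta^{k+1/2}, v^k-w\rangle .
\]
The first piece has zero expectation because $w^{k+1/2}$ and $v^k$ are measurable before $\xi^{k+1/2}$ is drawn. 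The second is bounded by $\frac{1}{2\gamma}(\|v^k-w\|^2-\|v^{k+1}-w\|^2)+\frac{\gamma}{2}\|\eta^{k+1/2}\|^2$, which telescopes to $D^2/(2\gamma)$ plus another $\gamma K(\lambda_{\max}+n)\sigma^2$. Care is needed because $w^{k+1/2}$ depends on $\xi^k$ and $\xi^k_i$, but not on $\xi^{k+1/2}$, so conditional unbiasedness still holds.

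\textbf{Conclusion.} Collecting terms gives
\[
\mathbb{E}[\text{gap}]\lesssim \frac{D^2}{\gamma K}+\gamma(\lambda_{\max}(AA^T)+n)\sigma^2 .
\]
Plugging in the prescribed $\gamma$ gives the result. The deterministic minimum yields the first term of the bound, and the choice $\gamma\propto\sqrt{D^2/((\lambda_{\max}+n)\sigma^2K)}$ balances the second into $\sqrt{(\lambda_{\max}(AA^T)+n)\sigma^2D^2/K}$.
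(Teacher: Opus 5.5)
Your approach is correct in substance, modulo three fixable details listed below. It reaches the same intermediate bound $\mathbb{E}[\text{gap}]=\cO\big(\frac{D^2}{\gamma K}+\gamma(\lambda_{\max}(AA^T)+n)\sigma^2\big)$, but handles the noise/maximum interaction differently from the paper.

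\textbf{How the paper does it.} The paper uses no auxiliary sequence. It does the bookkeeping block by block, exactly as in the proof of Theorem~\ref{th:EG_basic_1}, and anchors the noise term at the starting point: $w^{k+1/2}-w=(w^{k+1/2}-w^0)+(w^0-w)$.
\begin{itemize}
\item The first piece has zero mean, since $w^{k+1/2}$ does not depend on $\xi^{k+1/2},\xi^{k+1/2}_i$.
\item For the second piece it sums over $k$ \emph{before} applying Young's inequality, $\langle \sum_k\eta^{k+1/2}, w-w^0\rangle\le\frac{1}{4\gamma}\|w-w^0\|^2+\gamma\|\sum_k\eta^{k+1/2}\|^2$ (its constants differ slightly). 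It then uses $\mathbb{E}\|\sum_k\eta^{k+1/2}\|^2=\sum_k\mathbb{E}\|\eta^{k+1/2}\|^2$, by orthogonality of martingale increments. This is the same device it uses for the compression errors in Theorem~\ref{th:EG_compressed_unbiased}.
\end{itemize}

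\textbf{Comparison.} The paper's route is shorter and fully elementary, but it relies on the Euclidean Pythagorean identity for the accumulated noise. Your ghost-iterate telescoping needs only per-step conditional unbiasedness, and carries over unchanged to Bregman (mirror-prox) geometries and to high-probability bounds. Your operator-level view also makes the reduction to the deterministic case more transparent.

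\textbf{Details to repair.}
\begin{itemize}
\item[(i)] \emph{Sign of the ghost step.} The noise enters the right-hand side as $-2\gamma\langle\eta^{k+1/2},w^{k+1/2}-w\rangle$, so you must upper bound $\langle\eta^{k+1/2},w-v^k\rangle$. With $v^{k+1}=v^k-\gamma\eta^{k+1/2}$ the telescoping runs the wrong way and leaves $+\frac{1}{2\gamma}\|v^K-w\|^2$. Take $v^{k+1}=v^k+\gamma\eta^{k+1/2}$ instead.
\item[(ii)] \emph{The factor $n$ versus $n^2$ is not a constant.} Your middle expression $\lambda_{\max}(AA^T)\sigma^2+n\sum_i\mathbb{E}\|\xi_i\|^2$ is of order $(\lambda_{\max}(AA^T)+n^2)\sigma^2$, and cannot be absorbed into the stated rate. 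You must commit to the noises being independent across devices and fresh at each round, as the paper assumes explicitly in its proof. Then $\mathbb{E}\|\sum_i\xi_i\|^2\le n\sigma^2$.
\item[(iii)] \emph{Stepsize constants.} The split $2\gamma^2L_F^2\|w^{k+1/2}-w^k\|^2+4\gamma^2(\|\eta^k\|^2+\|\eta^{k+1/2}\|^2)$ costs an extra factor $2$ on every Lipschitz term. With the prescribed $\gamma$ it does not close: for example, the $x$-block coefficient becomes $4\gamma^2(L_r^2+\lambda_{\max}(A^TA))$, which can reach $3/2$. To work with exactly that stepsize, group the noise only with the bilinear terms and track each block separately, as the paper does. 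It obtains $2\gamma^2L_r^2+4\gamma^2\lambda_{\max}(A^TA)\le1$, $4\gamma^2\lambda_{\max}(AA^T)+2\gamma^2\le1$ and $2\gamma^2L_\ell^2+2\gamma^2\le1$.
\end{itemize}
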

}

The pros and cons of this approach are known. Noise makes it difficult to use the forwarded information for adversarial purposes, but the convergence of the method also suffers - the higher the noise, the worse the solution we find, or the slower the convergence we have. In particular, in Theorem \ref{th:EG_add_noise} we have the rate $\mathcal{O} (  {1}/{\sqrt{K}} )$ instead of $\mathcal{O} (  {1}/{K} )$ from Theorem \ref{th:EG_basic_1}.

\subsection{Homomorphic encryption modification for coded communication}
\label{sec:new_app_1}

If even more privacy is required compared to adding noise, then encryption techniques can be applied.
{
\begin{definition} \label{def:hom_encr}
Pair of encoding $E$ and decoding $D$ operators is called a homomorphic encryption if for any $n \in \mathbb{N}$, $x_i \in \R$, $\alpha_i \in \R$ it holds
$$
    D \left( \sum_{i=1}^n \alpha_i E(x_i)\right) = \sum_{i=1}^n \alpha_i x_i.
$$
\end{definition}
}

\begin{algorithm}{EG with homomorphic encrypt. for (\ref{eq:vfl_lin_spp_1})}
   \label{alg:EG_code}
\begin{algorithmic}[1]
   \State {\bfseries Input:} starting point $(x^0, z^0, y^0) \in \R^{d+2s}$, stepsize $\gamma > 0$, number of steps $K$
   \For{$k=0$ {\bfseries to} $K-1$}
   \State 1st uses encryption $E(y^{k})$, $E(x_i^k)$ and sends to other devices
   \State All send $A_i^T E(y^{k})$, $A_i E(x_i^k)$ to first device
   \State 1st decodes: $D(A_i^T E(y^{k}))= A_i^T y^{k}$, $D(A_i E(x^{k}_i))= A_i x^{k}_i$
   \State 1st updates: $x^{k+1/2}_i = x^k_i - \gamma ( A^T_i y^k + \nabla r_i (x^k_i)) $
   \State 1st updates: $z^{k+1/2} = z^k - \gamma (\nabla \ell(z^k, b) - y^k)$
   \State 1st updates: $y^{k+1/2} = y^k + \gamma (\sum_{i=1}^n A_i x^k_i - z^k)$
   \State 1st uses encryption $E(y^{k+1/2})$, $E(x_i^{k+1/2})$ and sends to other devices
   \State All send $A_i^T E(y^{k+1/2})$, $A_i E(x_i^{k+1/2})$ to 1st
   \State 1st decodes: $D(A_i^T E(y^{k+1/2}))= A_i^T y^{k+1/2}$, $D(A_i E(x^{k+1/2}_i))= A_i x^{k+1/2}_i$
   \State 1st updates: {\small $x^{k+1}_i = x^k_i - \gamma ( A^T_i y^{k+1/2} + \nabla r_i (x^{k+1/2}_i)) $}
   \State 1st updates: {\small $z^{k+1} = z^k - \gamma (\nabla \ell(z^{k+1/2}, b) - y^{k+1/2})$}
   \State 1st updates: {\small $y^{k+1} = y^k + \gamma (\sum_{i=1}^n A_i x^{k+1/2}_i - z^{k+1/2})$}
   \EndFor
\end{algorithmic}
\end{algorithm}

This definition is not consistent with the classical definition of homomorphic encryption \cite{acar2018survey}. But most classical encryption operators satisfy Definition \ref{def:hom_encr}. In our modification we focus on protecting the labels that are stored on the first device. Only the first device has the decryption key. Therefore, we move the updates $x_i$ from the local device to the first device and encrypt everything sent from the first device, and all other devices only perform multiplication on matrices $A_i$. 
This operation is valid for working with encoded information; it produces a linear combination message. According to Definition \ref{def:hom_encr}, this type of information can be decoded.
This scheme is implemented in Algorithm~\ref{alg:EG_code}. In terms of $x$, $z$ and $y$ updates, Algorithm~\ref{alg:EG_code} is the same as Algorithm~\ref{alg:EG}, thus Theorem~\ref{th:EG_basic_1} is valid. The main strength is obviously security, but using encryption greatly slows down the learning process because encoding and decoding is computationally expensive.

}

\section{Family of Reformulations}\label{sec:reform}

\subsection{Reformulation with additional variables}

Let us discuss other reformulations beyond (\ref{eq:vfl_lin_spp_1}), e.g., a reformulation with additional variables. 

In the formulation (\ref{eq:vfl_constr_1}), instead of $Ax = z$, we can introduce constraints in a different way  with variables $z_i \in \R^s,
$  for $i \in \{1,2,\dots,n\}$ as follows
\begin{align*}
        \min_{x \in \R^d}
        \min_{z \in \R^s} \left[\ell\left(\sum_{i=1}^n z_i,b\right) + \sum_{i=1}^n r_i(x_i) \right],~~
        \text{s.t.} ~~ A_i x_i = z_i ~~ \text{for}~~ i = 1, \ldots, n.
\end{align*}

The expression in the form of a Lagrangian function is
\begin{align}
    \label{eq:vfl_lin_spp_2}
    \min_{(x, z) \in \R^{d + sn}} \max_{y \in \R^{sn}}  \left[ \tilde L (x,z,y) := \ell \left(\sum_{i=1}^n z_i,b \right)
    + \sum_{i=1}^n r_i(x_i) +  \sum_{i=1}^n y^T_i  ( A_i x_i - z_i ) \right].
\end{align}

This saddle can be also solved using \texttt{ExtraGradient}.

\begin{algorithm}{\texttt{EGVFL} for (\ref{eq:vfl_lin_spp_2})}
   \label{alg:EG_basic_2}
\begin{algorithmic}[1]
   \State {\bfseries Input:} starting point $(x^0, z^0, y^0) \in \R^{d+2s}$, stepsize $\gamma > 0$, number of steps $K$
   \For{$k=0$ {\bfseries to} $K-1$}
   \State First device sends $y^k_i$ to other devices 
   \State All devices in parallel send $A_i x^k_i$ to first device
   \State All devices in parallel update: $x^{k+1/2}_i = x^k_i - \gamma (A_i^T y^k_i + \nabla r_i (x_i^k))$
   \State First device updates: $z^{k+1/2}_i = z^k_i - \gamma (\nabla \ell(\sum_{i=1}^n z^k_i, b) - y^k_i)$
   \State First device updates: $y^{k+1/2}_i = y^k_i + \gamma (A_i x^k_i - z^k_i)$
   \State First device sends $y^{k+1/2}_i$ to other devices 
   \State All devices in parallel send $A_i x^{k+1/2}_i$ to first device
   \State All devices in parallel update: $x^{k+1}_i = x^k_i - \gamma (A_i^T y^{k+1/2}_i + \nabla r_i (x_i^{k+1/2}))$
   \State First device updates: $z^{k+1}_i = z^k_i - \gamma (\nabla \ell(\sum_{i=1}^n z^{k+1/2}_i, b) - y^{k+1/2}_i)$
   \State First device updates: $y^{k+1}_i = y^k_i + \gamma (A_i x^{k+1/2}_i - z^{k+1/2}_i)$
   \EndFor
\end{algorithmic}
\end{algorithm}

{
\begin{theorem}\label{th:EG_basic_2}
Let Assumption \ref{as:convexity_smothness} hold. Let the problem (\ref{eq:vfl_lin_spp_2})
be solved by Algorithm~\ref{alg:EG_basic_2}.\\ Then for 
$
\gamma = \frac{1}{2} \min\left\{ 1; \frac{1}{\sqrt{\max_i\{ \lambda_{\max}(A_i^T A_i) \}}}; \frac{1}{L_r}; \frac{1}{n L_{\ell}}\right\},
$ 
it holds that 
\begin{align*} 
    \text{gap}_1(\bar x^{K}, \bar z^K, \bar y^K)
    = \mathcal{O}
     \left(\frac{ \left( 1+ \sqrt{\max_{i = 1, \ldots, n}\{ \lambda_{\max}(A_i^T A_i) \}} + nL_{\ell} + L_r \right)\tilde D^2}{K}  \right),
\end{align*}
where $\text{gap}_1(x,y,z) := \max_{\tilde y_i \in \mathcal{ \tilde Y}} \tilde L(x,z, \tilde y) - \min_{\tilde x, \tilde z \in \cX, \mathcal{\tilde Z}} \tilde L(\tilde x, \tilde z, y)$, $\bar x^K := \tfrac{1}{K}\sum_{k=0}^{K-1} x^{k+1/2}$, \\ 
$\bar z^K_i := \tfrac{1}{K}\sum_{k=0}^{K-1} z^{k+1/2}_i$, $\bar y^K_i := \tfrac{1}{K}\sum_{k=0}^{K-1} y^{k+1/2}_i$ and \\ $\tilde D^2 := \max_{x, z, y \in \cX, \cZ^n, \cY^n} \left[ \|x^0 - x \|^2 + \|z^0 - z \|^2 + \|y^0 - y \|^2 \right]$.
\end{theorem}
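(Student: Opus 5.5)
We view Algorithm~\ref{alg:EG_basic_2} as the standard \texttt{ExtraGradient} method applied to the monotone operator
\[
F(x,z,y) := \left( \left[A_i^T y_i + \nabla r_i(x_i)\right]_{i=1}^n,\; \left[\nabla \ell\Big(\sum_{j=1}^n z_j,b\Big) - y_i\right]_{i=1}^n,\; \left[-(A_i x_i - z_i)\right]_{i=1}^n \right).
\]
This is exactly $(\nabla_x \tilde L, \nabla_z \tilde L, -\nabla_y \tilde L)$ for (\ref{eq:vfl_lin_spp_2}). The proof then follows the same template as Theorem~\ref{th:EG_basic_1}. Only the Lipschitz constant of $F$ changes: in the original formulation $\sqrt{\lambda_{\max}(A^TA)}$ appears, while here the coupling matrix is block diagonal.

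\textbf{Monotonicity.} $\tilde L$ is convex in $(x,z)$, because $z\mapsto \ell(\sum_i z_i,b)$ is a convex function composed with a linear map, and each $r_i$ is convex. It is linear in $y$. Hence $F$ is monotone.

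\textbf{Lipschitz constant.} We split $F$ into three parts and bound each.
\begin{itemize}
\item The bilinear part $(x,z,y)\mapsto (A_i^T y_i, -y_i, -(A_ix_i - z_i))$ is a skew-symmetric block-diagonal linear operator. Its norm is bounded by $\max_i \|[A_i, -I]\| \le \sqrt{1+\max_i \lambda_{\max}(A_i^TA_i)}$.
\item The $r$-part is $L_r$-Lipschitz, since it is block separable.
\item The $\ell$-part maps $z$ to $(\nabla\ell(\sum_j z_j,b),\dots,\nabla\ell(\sum_j z_j,b))$, i.e., $n$ copies. Its Lipschitz constant is $\sqrt n\cdot L_\ell \cdot \|\mathbf{1}^T\otimes I\| = \sqrt n\cdot L_\ell\sqrt n = nL_\ell$. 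This is where the factor $nL_\ell$ in the stepsize comes from.
\end{itemize}
So $F$ is $L_F$-Lipschitz with $L_F \lesssim 1+\sqrt{\max_i\lambda_{\max}(A_i^TA_i)} + L_r + nL_\ell$. The chosen $\gamma$ gives $\gamma L_F \le$ a constant below $1$; a slightly loose constant is acceptable, or one can use a per-block Young inequality as in the proof of Theorem~\ref{th:EG_basic_1}.

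\textbf{Extragradient inequality.} For any $w=(x,z,y)$ in the bounded sets and $w^k$ the iterate, the standard one-step bound is
\[
2\gamma\langle F(w^{k+1/2}), w^{k+1/2}-w\rangle \le \|w^k-w\|^2-\|w^{k+1}-w\|^2 - (1-\gamma^2L_F^2)\|w^{k+1/2}-w^k\|^2 .
\]
We drop the last term since $\gamma L_F\le 1$. Then:
\begin{itemize}
\item Convexity in $(x,z)$ and linearity in $y$ give $\langle F(w^{k+1/2}), w^{k+1/2}-w\rangle \ge \tilde L(x^{k+1/2},z^{k+1/2},y)-\tilde L(x,z,y^{k+1/2})$.
\item Summing over $k$, dividing by $2\gamma K$, and applying Jensen's inequality to the averaged iterates gives $\tilde L(\bar x^K,\bar z^K,y)-\tilde L(x,z,\bar y^K)\le \|w^0-w\|^2/(2\gamma K)$.
\item Taking the maximum over $y\in\tilde{\mathcal Y}$ and the minimum over $(x,z)\in \cX\times\tilde{\mathcal Z}$ yields the bound $\tilde D^2/(2\gamma K)$, which gives the claimed rate.
\end{itemize}

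\textbf{Main obstacle.} The only real subtlety is obtaining the correct Lipschitz bound for the $\ell$-block: the duplication of $\nabla\ell$ across $n$ copies is what produces the factor $n$. The bilinear part must also be handled blockwise so that only $\max_i\lambda_{\max}(A_i^TA_i)$ appears, rather than $\lambda_{\max}(A^TA)$.
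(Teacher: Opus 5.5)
Your proposal is correct and is essentially the paper's proof: the paper runs the same extragradient analysis, only unrolled blockwise with Young's inequality, using per-block $\lambda_{\max}(A_i A_i^T)$ bounds and $\|\sum_j \Delta z_j\|^2\le n\|\Delta z\|^2$ for the $nL_\ell$ factor, rather than going through a global Lipschitz constant of $F$. One caution: your additive bound on $L_F$ does not give $\gamma L_F\le 1$ for the stated $\gamma$; even in its sharpest form $\sqrt{1+\Lambda}+\max\{L_r,nL_\ell\}$ it only yields $\gamma L_F\le 1/\sqrt{2}+1/2$, so you genuinely need the blockwise route you mention as a fallback, namely $\gamma^2\|F(w)-F(w')\|^2\le 2\gamma^2\max\{\Lambda+L_r^2,\,1+\Lambda,\,1+n^2L_\ell^2\}\|w-w'\|^2\le\|w-w'\|^2$ with $\Lambda=\max_i\lambda_{\max}(A_i^TA_i)$, which is exactly what lets you drop the last term.
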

}

An important detail to note it is that the step $\gamma$ in Theorem \ref{th:EG_basic_2} depends on $\lambda_{\max} (A_i A_i^T)$. 
Previous algorithms assumed knowledge of the estimate for $\lambda_{\max} (A A^T)$ which can be disadvantageous because we cannot collect $A$ on a single device, and estimating $\lambda_{\max} (A A^T)$ through $\lambda_{\max} (A_i A_i^T)$ can give deplorable results.

{We compare these results with Theorem \ref{th:EG_basic_1}. Note that, on the one hand, according to Lemma \ref{lem:matrix}: $\lambda_{\max} (A A^T) \leq n \cdot \max_{i} \lambda_{\max} (A_i A_i^T)$, and this estimate achieves equality. But on the other hand, the vectors $y$ and $z$ in the formulation \eqref{eq:vfl_lin_spp_2} are of size $sn$, and in the problem \eqref{eq:vfl_lin_spp_1} only $s$. This means that $\tilde D$ in Theorem \ref{th:EG_basic_2} can be significantly larger than the diameter $D$ in Theorem \ref{th:EG_basic_1}. It turns out that the results of Theorem \ref{th:EG_basic_2} cannot be said to be superior to Theorem \ref{th:EG_basic_1}, but \eqref{eq:vfl_lin_spp_2} and Algorithm \ref{alg:EG_basic_2} can be considered as an alternative to the formulation \eqref{eq:vfl_lin_spp_1} and Algorithm \ref{alg:EG} with tuning according to Theorem \ref{th:EG_basic_1}.}


\subsection{Reformulation with augmentation} \label{app:aug}

Let us consider the augmented version of (\ref{eq:vfl_lin_spp_1}):
\begin{equation}
    \begin{split}
    \label{eq:vfl_lin_spp_aug}
    \min\limits_{(x, z) \in \R^{d + s}}~ \max\limits_{y \in \R^s} \Bigg[L_{\text{aug}} (x,z,y) :=  \ell\left(z,b\right) + \sum_{i=1}^n r_i(x_i) + y^T \left(\sum_{i=1}^n A_i x_i - z\right) 
    + \frac{\rho}{2} \| \sum_{i=1}^n A_i x_i - z\|^2\Bigg],
    \end{split}
\end{equation}
where $\rho \geq 0$. The statement (\ref{eq:vfl_lin_spp_aug}) is classical and is considered in \cite{boyd2011distributed}. The saddle point problem (\ref{eq:vfl_lin_spp_aug}) can also be solved using the \texttt{ExtraGradient} technique. 

\begin{algorithm}{\texttt{EGVFL} for (\ref{eq:vfl_lin_spp_aug})}
   \label{alg:EG_aug}
\begin{algorithmic}[1]
   \State {\bfseries Input:} starting point $(x^0, z^0, y^0) \in \R^{d+2s}$, stepsize $\gamma > 0$, regularizer $\rho$, number of steps $K$
   \For{$k=0$ {\bfseries to} $K-1$}
   \State All devices in parallel send $A_i x^k_i$ to first device \label{lin_alg:EG_aug_linx1_send}
   \State First device sends $y^k$ and $\sum_{i=1}^n A_i x^{k}_i - z^k$ to other devices  \label{lin_alg:EG_aug_liny1_send}
   \State All devices in parallel update: \label{lin_alg:EG_aug_linx1}
   \Statex \hspace{0.45cm} $x^{k+1/2}_i = x^k_i - \gamma \left(A_i^T y^k + \nabla r_i (x^k_i) + \rho A_i^T (\sum_{i=1}^n A_i x^{k}_i - z^k)\right)$ 
   \State First device updates: $z^{k+1/2} = z^k - \gamma \left(\nabla \ell(z^k, b) - y^k + \rho (z^k - \sum_{i=1}^n A_i x^{k}_i)\right)$ \label{lin_alg:EG_aug_linz1}
   \State First device updates: $y^{k+1/2} = y^k + \gamma (\sum_{i=1}^n A_i x^k_i - z^k)$ \label{lin_alg:EG_aug_liny1}
   \State All devices send $ A_i x^{k+1/2}_i$ to first device  \label{lin_alg:EG_aug_linx2_send}
   \State First device sends $y^{k+1/2}$ and $\sum_{i=1}^n A_i x^{k+1/2}_i - z^{k+1/2}$ to other devices \label{lin_alg:EG_aug_liny2_send}
   \State All devices in parallel update:  \label{lin_alg:EG_aug_linx2}
   \Statex \hspace{0.45cm} $x^{k+1}_i = x^k_i - \gamma (A_i^T y^{k+1/2} + \nabla r_i(x^{k+1/2}_i) + \rho A_i^T (\sum_{i=1}^n A_i x^{k}_i - z^k) )$
   \State First device: $z^{k+1} = z^k - \gamma (\nabla \ell(z^{k+1/2}, b) - y^{k+1/2} + \rho (z^k - \sum_{i=1}^n A_i x^{k}_i))$ \label{lin_alg:EG_aug_linz2}
   \State First device: $y^{k+1} = y^k + \gamma (\sum_{i=1}^n A_i x^{k+1/2}_i - z^{k+1/2})$ \label{lin_alg:EG_aug_liny2}
   \EndFor
\end{algorithmic}
\end{algorithm}

{
\begin{theorem}\label{th:EG_aug}
Let Assumption \ref{as:convexity_smothness} hold. Let the problem (\ref{eq:vfl_lin_spp_aug}) be solved by Algorithm \ref{alg:EG_aug}. Then for 
$$
\gamma = \frac{1}{4} \cdot \min\left\{ 1; \frac{1}{\rho}; \frac{1}{\sqrt{\lambda_{\max}(A^T A)}}; \frac{1}{\sqrt{\rho\lambda_{\max}(A^T A)}}; \frac{1}{\rho\lambda_{\max}(A^T A)}; \frac{1}{L_r}; \frac{1}{L_{\ell}}\right\},
$$
it holds that
\begin{align*}
    \text{gap}_{\text{aug}} (\bar x^{K}, \bar z^K, \bar y^K)
    =
    \mathcal{O} \left(\frac{ \left( 1 + \rho + \sqrt{(1 + \rho)\lambda_{\max}(A^T A)} + \rho \lambda_{\max}(A^T A) + L_{\ell} + L_r\right) D^2}{K} \right),
\end{align*}
where $\text{gap}_{\text{aug}}(x,z,y) := \max_{\tilde y \in \cY} L_{\text{aug}}(x, z, \tilde y) - \min_{\tilde x, \tilde z \in \cX, \cZ} L_{\text{aug}}(\tilde x, \tilde z, y)$ and \\ $\bar x^K := \tfrac{1}{K}\sum_{k=0}^{K-1} x^{k+1/2}$, $\bar z^K := \tfrac{1}{K}\sum_{k=0}^{K-1} z^{k+1/2}$, $\bar y^K := \tfrac{1}{K}\sum_{k=0}^{K-1} y^{k+1/2}$ and $D^2 := \max_{x, z, y \in \cX, \cZ, \cY} [ \|x^0 - x \|^2 +  \|z^0 - z \|^2 + \|y^0 - y \|^2 ]$.
\end{theorem}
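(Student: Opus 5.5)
The plan is to treat Algorithm~\ref{alg:EG_aug} as an extragradient method for a monotone operator, with an extra smooth convex part whose gradient is frozen at the base point $(x^k,z^k)$ in both half-steps. Write $w=(x,z,y)$ and split $L_{\text{aug}} = L + h$, where $L$ is the classical Lagrangian from (\ref{eq:vfl_lin_spp_1}) and $h(x,z) := \tfrac{\rho}{2}\|\sum_i A_i x_i - z\|^2$. The monotone operator of $L$ is
$$F(w) := \big(\nabla r(x) + A^T y,\ \nabla \ell(z,b) - y,\ -(Ax - z)\big),$$
which is monotone by Assumption~\ref{as:convexity_smothness}. Let $G(w) := (\nabla_x h, \nabla_z h, 0)$. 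Then both half-steps read
$$w^{k+1/2} = w^k - \gamma\,(F(w^k) + G(w^k)), \qquad w^{k+1} = w^k - \gamma\,(F(w^{k+1/2}) + G(w^k)).$$

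\textbf{Step 1 (Lipschitz constants).} Using only Assumption~\ref{as:convexity_smothness} and the block structure, I would show that $F$ is Lipschitz with constant of order $1 + \sqrt{\lambda_{\max}(A^TA)} + L_\ell + L_r$, exactly as in Theorem~\ref{th:EG_basic_1}. Next, $h$ is convex and $L_h$-smooth. Its Hessian is $\rho\,[A,-I]^T[A,-I]$, so $L_h \le \rho(1+\lambda_{\max}(A^TA))$. This bound is the source of the $\rho$ and $\rho\lambda_{\max}$ terms in the stepsize. Cross terms of $\|F(w)-F(w')\|^2$, for example $\|A^T(y-y')\|\cdot\|x-x'\|$-type mixed contributions when combined with the $\rho$-part, are where I expect $\sqrt{(1+\rho)\lambda_{\max}}$ to appear. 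If needed I would simply bound these with Young's inequality.

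\textbf{Step 2 (one-iteration inequality).} For an arbitrary $u=(\tilde x,\tilde z,\tilde y)$, the standard extragradient three-point identities give
$$2\gamma\langle F(w^{k+1/2}) + G(w^k),\, w^{k+1/2}-u\rangle \le \|w^k-u\|^2-\|w^{k+1}-u\|^2-\|w^{k+1/2}-w^k\|^2+\gamma^2\|F(w^{k+1/2})-F(w^k)\|^2 .$$
The $G$-terms cancel in the difference of the two steps because $G$ is evaluated at $w^k$ both times; this is precisely why the frozen augmentation term is harmless.

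\textbf{Step 3 (linear terms to the gap).} By convexity–concavity of $L$,
$$\langle F(w^{k+1/2}), w^{k+1/2}-u\rangle \ge L(x^{k+1/2},z^{k+1/2},\tilde y) - L(\tilde x,\tilde z,y^{k+1/2}).$$
For $G$, I combine convexity of $h$ at $(\tilde x,\tilde z)$ with the descent lemma between $w^k$ and $w^{k+1/2}$:
$$\langle G(w^k), w^{k+1/2}-u\rangle \ge h(x^{k+1/2},z^{k+1/2}) - h(\tilde x,\tilde z) - \tfrac{L_h}{2}\|w^{k+1/2}-w^k\|^2 .$$
Adding the two bounds yields $L_{\text{aug}}(x^{k+1/2},z^{k+1/2},\tilde y)-L_{\text{aug}}(\tilde x,\tilde z,y^{k+1/2})$ on the left. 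The remaining error terms are $\gamma L_h\|w^{k+1/2}-w^k\|^2 + \gamma^2 L_F^2\|w^{k+1/2}-w^k\|^2$. The choice of $\gamma$ (each entry of the $\min$ with factor $\tfrac14$) makes their sum at most $\|w^{k+1/2}-w^k\|^2$, so they are absorbed.

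\textbf{Step 4 (sum, average, maximize).} Sum over $k=0,\dots,K-1$ and telescope. Then use Jensen's inequality: $L_{\text{aug}}$ is convex in $(x,z)$ and linear in $y$. This gives
$$L_{\text{aug}}(\bar x^K,\bar z^K,\tilde y)-L_{\text{aug}}(\tilde x,\tilde z,\bar y^K)\le \|w^0-u\|^2/(2\gamma K).$$
Since the method is deterministic, I can take the max over $\tilde y\in\cY$ and the min over $(\tilde x,\tilde z)\in\cX\times\cZ$ directly, bound $\|w^0-u\|^2\le D^2$, and plug in $1/\gamma$ as the sum of the inverses of the $\min$ entries to obtain the stated rate.

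The main obstacle is Step 1, in particular pinning down constants so that exactly the listed stepsize entries suffice. A second point needing care is verifying that the descent-lemma inequality for the frozen $G$ costs only $\gamma L_h$, which is first order in $\gamma$, and not $\gamma^2$. That requirement is why $1/\rho$ and $1/(\rho\lambda_{\max})$ enter the stepsize linearly rather than under a square root.
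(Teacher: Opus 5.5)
Your proof is correct, but it takes a different route from the paper, and it actually analyzes a slightly different update.

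\textbf{Your route.} You read lines~\ref{lin_alg:EG_aug_linx2} and~\ref{lin_alg:EG_aug_linz2} of Algorithm~\ref{alg:EG_aug} literally. There the augmentation gradient $G$ is frozen at $w^k$ in both half-steps, so it cancels when the two half-step directions are subtracted. Your three-point bound (convexity of $h$ at $(\tilde x,\tilde z)$ plus the descent lemma from $(x^k,z^k)$ to $(x^{k+1/2},z^{k+1/2})$) then recovers $h(x^{k+1/2},z^{k+1/2})-h(\tilde x,\tilde z)$ at first-order cost $\gamma L_h$, with $L_h=\rho(1+\lambda_{\max}(A^TA))$. Take $L_F^2\le 2\max\{L_r^2+\lambda_{\max},\,L_\ell^2+1,\,\lambda_{\max}+1\}$. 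The stated $\gamma$ then gives $\gamma L_h\le\tfrac12$ and $\gamma^2L_F^2\le\tfrac14$, so the absorption in Step~3 closes. Finally, $1/\gamma\le 4(1+\rho+\sqrt{\lambda_{\max}}+\sqrt{\rho\lambda_{\max}}+\rho\lambda_{\max}+L_r+L_\ell)$ yields the stated rate.

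\textbf{The paper's route.} The paper cites the same lines, but its proof substitutes $\rho A_i^T(\sum_j A_jx_j^{k+1/2}-z^{k+1/2})$ into the second half-step. This matches line~\ref{lin_alg:EG_aug_liny2_send}, which transmits $\sum_iA_ix_i^{k+1/2}-z^{k+1/2}$. So the paper analyzes plain extragradient on the full operator of $L_{\text{aug}}$:
\begin{itemize}
\item it expands coordinatewise, as for Theorem~\ref{th:EG_basic_1};
\item it bounds the new cross terms $\langle A(x^{k+1}-x^{k+1/2}),A(x^{k+1/2}-x^k)\rangle$ and $\langle A(x^{k+1}-x^{k+1/2}),z^k-z^{k+1/2}\rangle$ by Young's inequality, which produces $\gamma^2\rho^2\lambda_{\max}$- and $\gamma^2\rho^2\lambda_{\max}^2$-type stepsize conditions;
\item it turns the linear augmentation term into $-2\gamma[h(x^{k+1/2},z^{k+1/2})-h(x,z)]$ minus a dropped nonnegative term, via the polarization identity, which relies on $h$ being quadratic.
\end{itemize}

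\textbf{What each buys.} Your splitting is shorter and puts all $\rho$-dependence into the single constant $L_h$. It also shows that the entry $1/\sqrt{\rho\lambda_{\max}(A^TA)}$ is never actually needed. $F$ contains no $\rho$, so the cross terms you anticipated in Step~1 do not arise; $\sqrt{\rho\lambda_{\max}}$ enters only through $1/\gamma$, where it is dominated by $\sqrt{(1+\rho)\lambda_{\max}}$.

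The paper's route covers the midpoint variant, where your cancellation of $G$ fails. Your framework handles that case too. Use convexity--concavity of $L_{\text{aug}}$ (Lemma~\ref{lem:tech2}) directly, and bound $\gamma^2\|(F+G)(w^{k+1/2})-(F+G)(w^k)\|^2\le 2\gamma^2(L_F^2+L_h^2)\|w^{k+1/2}-w^k\|^2\le\|w^{k+1/2}-w^k\|^2$, which holds under the same stepsize.
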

}

The proof is postponed to Appendix \ref{app:th:EG_aug}. The results of Theorem \ref{th:EG_aug} are no better than Theorem \ref{th:EG_basic_1}, and in the case of large $\rho$ are even worse. Based on these guarantees (and they seem reasonable to us) the use of augmentation with \texttt{ExtraGradeint} in the theory does not give bonuses.

\subsection{Reformulation with dual loss}

The definition of the dual function gives $\ell^* (y, b) = \max_{x \in \R^s} \{ \langle z , y\rangle - \ell (z, b) \}$. With small reformulation and $z = Ax$, we get that $\ell (Ax, b) = \max_{y \in \R^s} \{ \langle y , Ax \rangle -  \ell^* (y, b)\}$. Then, one can rewrite (\ref{eq:main_problem}) as follows, 
\begin{equation}
    \label{eq:vfl_lin_spp_3}
    \textstyle{
    \min\limits_{x \in \R^d} \max\limits_{y \in \R^s} \hat L(x,y) := \left[ -l^*\left(y,b\right) + \sum\limits_{i=1}^n r_i(x_i) + y^T \left(\sum_{i=1}^n A_i x_i \right)\right].
    }
\end{equation}

The statement (\ref{eq:vfl_lin_spp_3}) is simpler than (\ref{eq:vfl_lin_spp_1}), since it does not contain additional variables $z$, but it requires the existence of a dual function for $\ell$.
The saddle point problem (\ref{eq:vfl_lin_spp_3}) can also be solved using the \texttt{ExtraGradient} technique. 

\begin{algorithm}{\texttt{EGVFL} for (\ref{eq:vfl_lin_spp_3})}
   \label{alg:EG_basic_3}
\begin{algorithmic}[1]
   \State {\bfseries Input:} starting point $(x^0, y^0) \in \R^{d+s}$, stepsize $\gamma > 0$, number of steps $K$
   \For{$k=0$ {\bfseries to} $K-1$}
   \State First device sends $y^k$ to other devices
   \State All devices in parallel send $A_i x^k_i$ to first device
   \State All devices in parallel update: $x^{k+1/2}_i = x^k_i - \gamma (A_i^T y^k + \nabla r_i (x^k_i))$
   \State First device updates: $y^{k+1/2} = y^k - \gamma ( \nabla \ell^*(y^k, b) - \sum_{i=1}^n A_i x^k_i )$
   \State First device sends $y^{k+1/2}$ to other devices 
   \State All devices in parallel send $A_i x^{k+1/2}_i$ to first device
   \State All devices in parallel update: $x^{k+1}_i = x^k_i - \gamma (A_i^T y^{k+1/2} + \nabla r_i (x^{k+1/2}_i))$
   \State First device updates: $y^{k+1} = y^k - \gamma ( \nabla \ell^*(y^{k+1/2}, b) - \sum_{i=1}^n A_i x^{k+1/2}_i)$
   \EndFor
\end{algorithmic}
\end{algorithm}

{
\begin{theorem}\label{th:EG_basic_3}
Let $l^*$ be $L_{\ell^*}$-smooth and convex, $r$ be $L_r$-smooth and convex. Let the problem (\ref{eq:vfl_lin_spp_3}) be solved by Algorithm \ref{alg:EG_basic_3}. Then for 
$$
\gamma = \frac{1}{2} \cdot \min\left\{ 1; \frac{1}{\sqrt{\lambda_{\max}(A^T A)}}; \frac{1}{L_r}; \frac{1}{L_{\ell^*}}\right\},
$$
it holds that
\begin{align*}
    \text{gap}_2(\bar x^{K}, \bar y^K)
    =
    \mathcal{O} \left(\frac{ \left( 1 + \sqrt{\lambda_{\max}(A^T A)} + L_{\ell^*} + L_r\right) \hat D^2}{K} \right),
\end{align*}
where $\text{gap}_2(x,y) := \max_{\tilde y \in \cY} \hat L(x, \tilde y) - \min_{\tilde x \in \cX} \hat L(\tilde x, y)$ and $\bar x^K := \tfrac{1}{K}\sum_{k=0}^{K-1} x^{k+1/2}$, $\bar y^K := \tfrac{1}{K}\sum_{k=0}^{K-1} y^{k+1/2}$ and $\hat D^2 := \max_{x, y \in \cX, \cY} \left[ \|x^0 - x \|^2 + \|y^0 - y \|^2 \right]$.
\end{theorem}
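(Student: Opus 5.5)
This is the standard ExtraGradient analysis applied to the monotone operator of the saddle problem (\ref{eq:vfl_lin_spp_3}). Set $u = (x,y)$ and define
$$
F(u) := \big( \nabla_x \hat L(x,y), -\nabla_y \hat L(x,y)\big) = \Big( A^T y + \nabla r(x),\; \nabla \ell^*(y,b) - Ax \Big),
$$
with $\nabla r(x) = (\nabla r_1(x_1),\dots,\nabla r_n(x_n))$. With this notation Algorithm~\ref{alg:EG_basic_3} is exactly $u^{k+1/2} = u^k - \gamma F(u^k)$, $u^{k+1} = u^k - \gamma F(u^{k+1/2})$.

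\textbf{Step 1: properties of $F$.}
\begin{itemize}
\item \emph{Monotonicity.} $F$ splits into a skew part $(A^T y, -Ax)$, whose inner product with $u-u'$ vanishes, plus the gradient field $(\nabla r(x), \nabla \ell^*(y,b))$ of a convex function. Convexity of $r$ and $\ell^*$ therefore gives $\langle F(u)-F(u'), u-u'\rangle \ge 0$.
\item \emph{Lipschitz continuity.} The skew part has norm at most $\sqrt{\lambda_{\max}(A^TA)}$. The gradient part is $\max\{L_r, L_{\ell^*}\}$-Lipschitz. So $F$ is $L_F$-Lipschitz with $L_F \le \sqrt{\lambda_{\max}(A^TA)} + L_r + L_{\ell^*}$. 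For the chosen $\gamma$ one gets $\gamma L_F \le 3/2$, which is not quite below $1$. To fix this I would split the squared norm as $\|F(u)-F(u')\|^2 \le 2\lambda_{\max}(A^TA)\|u-u'\|^2 + 2\max\{L_r^2,L_{\ell^*}^2\}\|u-u'\|^2$. Each piece times $\gamma^2$ is at most $1/4$, so $\gamma^2 L_F^2 \le 1$. If the constants still come out slightly off, I would only shrink the absolute factor; this affects nothing beyond the $\mathcal{O}$ constant.
\end{itemize}

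\textbf{Step 2: one-step inequality.} For an arbitrary $u \in \cX\times\cY$, the standard three-point identities for the two updates give
$$
2\gamma\langle F(u^{k+1/2}), u^{k+1/2} - u\rangle \le \|u^k-u\|^2 - \|u^{k+1}-u\|^2 - (1-\gamma^2L_F^2)\|u^{k+1/2}-u^k\|^2 .
$$
The last term is nonpositive, so it can be dropped. Concretely, expand $\|u^{k+1}-u\|^2$ using $u^{k+1}=u^k-\gamma F(u^{k+1/2})$. Then bound the cross term $2\gamma\langle F(u^{k+1/2})-F(u^k), u^{k+1}-u^{k+1/2}\rangle$ with Young's inequality and the Lipschitz constant.

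\textbf{Step 3: from the operator to the gap.} Since $\hat L$ is convex in $x$ and concave in $y$, for $u=(\tilde x,\tilde y)$ we have
$$
\langle F(u^{k+1/2}), u^{k+1/2}-u\rangle \ge \hat L(x^{k+1/2},\tilde y) - \hat L(\tilde x, y^{k+1/2}).
$$
Summing over $k=0,\dots,K-1$ telescopes the right-hand side of Step 2. Dividing by $2\gamma K$ and applying Jensen's inequality (again using convexity in $x$ and concavity in $y$) yields
$$
\hat L(\bar x^K,\tilde y) - \hat L(\tilde x,\bar y^K) \le \frac{\|u^0-u\|^2}{2\gamma K}.
$$
Taking the maximum over $\tilde y\in\cY$ and $\tilde x\in\cX$ gives $\text{gap}_2 \le \hat D^2/(2\gamma K)$. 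Finally, $1/\gamma = \mathcal{O}(1+\sqrt{\lambda_{\max}(A^TA)}+L_r+L_{\ell^*})$, which is the claimed rate.

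The only delicate point is the constant bookkeeping in Step 1, namely making $\gamma L_F\le 1$ hold with the stated stepsize. The Young-based split of the squared norm handles it. Everything else is the same argument as in Theorem~\ref{th:EG_basic_1}, with $z$ removed.
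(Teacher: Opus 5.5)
Your proof is correct and is essentially the paper's argument. The paper runs the same scheme block by block on $x$ and $y$: the three-point identity, Young's inequality on the cross term, telescoping, and Jensen, reusing the $x$-bound from the proof of Theorem~\ref{th:EG_basic_1}. Its blockwise Young bounds impose exactly your condition $2\gamma^2\bigl(\lambda_{\max}(A^TA)+\max\{L_r^2,L_{\ell^*}^2\}\bigr)\le 1$, so the stated stepsize works as is and your hedge about shrinking it is unnecessary. Packaging everything into the single monotone operator $F$ is just a more compact way of writing the same computation.
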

}

The proof is postponed to Appendix \ref{app:th:EG_basic_3}.

\subsection{Reformulation with dual loss and regularizer}

If we introduce dual functions for both $\ell$ and $r$, then \eqref{eq:vfl_lin_spp_1} can be rewritten as follows 
\begin{align}
\label{eq:dual_problem_separated}
\textstyle{
	\max_{y \in \R^s} ~ \left[-\sum_{i=1}^n r_i^*(-A_i^T y) - \ell^*(y, b)\right].
 }
\end{align}

To prove it, we start from (\ref{eq:vfl_lin_spp_1})
\begin{align*}
	\min_{(x, z) \in \R^{d + s}} \max_{y \in \R^s} \quad &  \ell \left(z,b\right) + r(x) + y^T \left(Ax - z\right)
 \\
	&= \max_{y \in \R^s} \left[\min_{(x, z) \in \R^{d + s}} \left[\left(- \langle z, y \rangle + \ell(z, b)\right) + \left( \langle  Ax , y \rangle + r(x)  \right)\right]\right] \\
	&= \max_{y \in \R^s} \left[-\max_{z \in \R^{s}}\left(\langle z, y \rangle  - \ell(z, b)\right) - \max_{x \in \R^{d}} \left(\langle -A^T y, x \rangle - r(x)\right)\right].
\end{align*}

Definitions of dual functions: $\ell^* (y, b) = \max_{z \in \R^s} \{ \langle z , y\rangle - \ell (z, b) \}$ and 
\\
$r^* (-A^T y) = \max_{x \in \R^s} \{ \langle -A^T y , x\rangle - r (x) \}$, give
\begin{align*}
	\max_{y \in \R^s} ~ \left[ -\ell^*(y, b) - r^*(-A^T y) \right].
\end{align*}

Due to the separability of $r$, its conjugate is also separable. Hence, we have (\ref{eq:dual_problem_separated}).

In the fact (\ref{eq:dual_problem_separated}) is the maximization of a concave function, which is very close to the original formulation (\ref{eq:main_problem}). This problem can be solved by distributed variants of \texttt{GD} and not only.

\section{Extension to Non-Convex Models}\label{sec:extension}

Let us consider a more general formulation where we can use arbitrary functions/models $g_i (A_i, w_i): \R^{d_{w_i}} \to \R^{s \times d_i}$ with weights/tuning variables $w_i \in \R^{d_{w_i}}$ instead of fixed data matrices $A_i$ (\ref{eq:main_problem_vfl}):
$$ 
        \min_{(x, w) \in \R^{d + d_w}}   \left[\ell\left(\sum_{i=1}^{n} g_i (A_i, w_i) x_i,b\right) + \sum_{i=1}^n r_i(x_i) \right],
$$

Here, the analogue of the Lagrangian function (\ref{eq:vfl_lin_spp_1}) can be written as follows:
\begin{equation}
    \label{eq:vfl_extension_spp_1}
\begin{split}
    \min_{(x, w, z) \in \R^{d + d_w + s} }\max_{y \in \R^s} \left[ \ell\left(z,b\right)
    + \sum_{i=1}^n r_i(x_i) + y^T \left(\sum_{i=1}^n g_i(A_i, w_i) x_i - z\right)\right].
\end{split}
\end{equation}

This SPP is generally not convex-concave, but can be solved by the modified version of Algorithm \ref{alg:EG}. 

\begin{algorithm}{\texttt{EGVFL} for (\ref{eq:vfl_extension_spp_1})}
   \label{alg:EG_extension}
\begin{algorithmic}[1]
   \State {\bfseries Input:} starting point $(x^0, w^0, z^0, y^0) \in \R^{d + d_w +2s}$, stepsize $\gamma > 0$, number of steps $K$
   \For{$k=0$ {\bfseries to} $K-1$}
   \State First device sends $y^k$ to other devices  
   \State All devices in parallel send $g_i (A_i, w^k_i) x^k_i$ to first device 
   \State All devices in parallel update: $x^{k+1/2}_i = x^k_i - \gamma \left(g_i^T (A_i, w^k_i) y^k + \nabla r_i (x^k_i) \right)$ 
   \State All devices in parallel update: 
   $w^{k+1/2}_i = w^{k}_i - \gamma \left( (y^k)^T \nabla g_i (A_i, w^k_i) x^k_i \right)$
   \State First device updates: $z^{k+1/2} = z^k - \gamma (\nabla \ell(z^k, b) - y^k)$ 
   \State First device updates: $y^{k+1/2} = y^k + \gamma \left(\sum_{i=1}^n g_i(A_i, w^k_i) x^k_i - z^k\right)$ 
   \State First device sends $y^{k+1/2}$ to other devices
   \State All devices in parallel send $g_i (A_i, w^{k+1/2}_i) x^{k+1/2}_i$ to first device 
   \State All devices in parallel update: $x^{k+1}_i = x^k_i - \gamma \left(g_i^T (A_i, w^{k+1/2}_i) y^{k+1/2} + \nabla r_i (x^{k+1/2}_i) \right)$ 
   \State All devices in parallel update: 
   $w^{k+1}_i = w^{k}_i - \gamma \left( (y^{k+1/2})^T \nabla g_i (A_i, w^{k+1/2}_i) x^{k+1/2}_i \right)$
   \State First device updates: $z^{k+1} = z^k - \gamma (\nabla \ell(z^{k+1/2}, b) - y^{k+1/2})$
   \State First device updates: $y^{k+1} = y^k + \gamma \left(\sum_{i=1}^n g_i(A_i, w^{k+1/2}_i) x^{k+1/2}_i - z^{k+1/2}\right)$ 
   \EndFor
\end{algorithmic}
\end{algorithm}

\section{Experiments}\label{sec:exp}

\textbf{Regression.} We conduct experiments on the linear regression problem: 
$$
\min_{x \in \R^d} \left[f(x) = \frac{1}{2}\|Ax - b \|^2 + \lambda \| x \|^2\right].
$$

Here, the smoothness constant of gradients is $L = \lambda_{\max}(AA^T) + \lambda$ with $\lambda = \lambda_{\max}(AA^T) / 10^3$. 
Other smoothness constants, which we use in theory for our method, are $L_{\ell} = 1$, $L_{r} = \lambda$. 
We take \texttt{mushrooms}, \texttt{a9a}, \texttt{w8a} and \texttt{MNIST} datasets from LibSVM library \cite{chang2011libsvm}. We vertically (by features) uniformly divide the dataset between 5 devices. 

This experiment uses different formulations to compare deterministic methods for solving the VFL problem. Here, we're not focusing on the distributed nature of the problems; instead, we aim to show that the saddle point reformulation using the classical Lagrangian function has merit (we investigate modifications in Appendix \ref{app:exp})
and 
methods for solving it can compete effectively with other approaches. 

Since there are two formulations of VFL, classical minimization and saddle point, we choose several methods for each formulation. For the minimization formulation, we take \texttt{GD} as the most popular method, and \texttt{AGD} \cite{nesterov2003introductory} as the theoretically unimprovable first-order method for smooth convex problems. For the saddle point formulation, we consider \texttt{ADMM} and Algorithm~\ref{alg:EG}. 
The methods are tuned according to the corresponding theory. For \texttt{GD} we choose step as $\tfrac{1}{L}$ \cite{book}, for \texttt{Nesterov} --  step as $\tfrac{1}{L}$ and momentum as $\tfrac{\sqrt{L} - \sqrt{\lambda}}{\sqrt{L} + \sqrt{\lambda}}$ \cite{nesterov2003introductory}, for \texttt{ADMM} we take regularizer parameter equal to $\tfrac{1}{\sqrt{\lambda_{\max}(AA^T)}}$ \cite{lu2023unified}. Algorithm \ref{alg:EG} is tuned according to Theorem~\ref{th:EG_basic_1} with and without $\beta$-trick (see disscusion after Corollary \ref{cor:EG_basic_1}). The application of the $\beta$-trick can also be considered for other methods. However, in the case of \texttt{GD} and \texttt{Nesterov}, it does not alter the method since the data matrix $A$ and the loss function $\ell$ are not split.
All methods start from zero. 

\begin{figure}
\begin{minipage}{0.49\textwidth}
  \centering
\includegraphics[width =  \textwidth ]{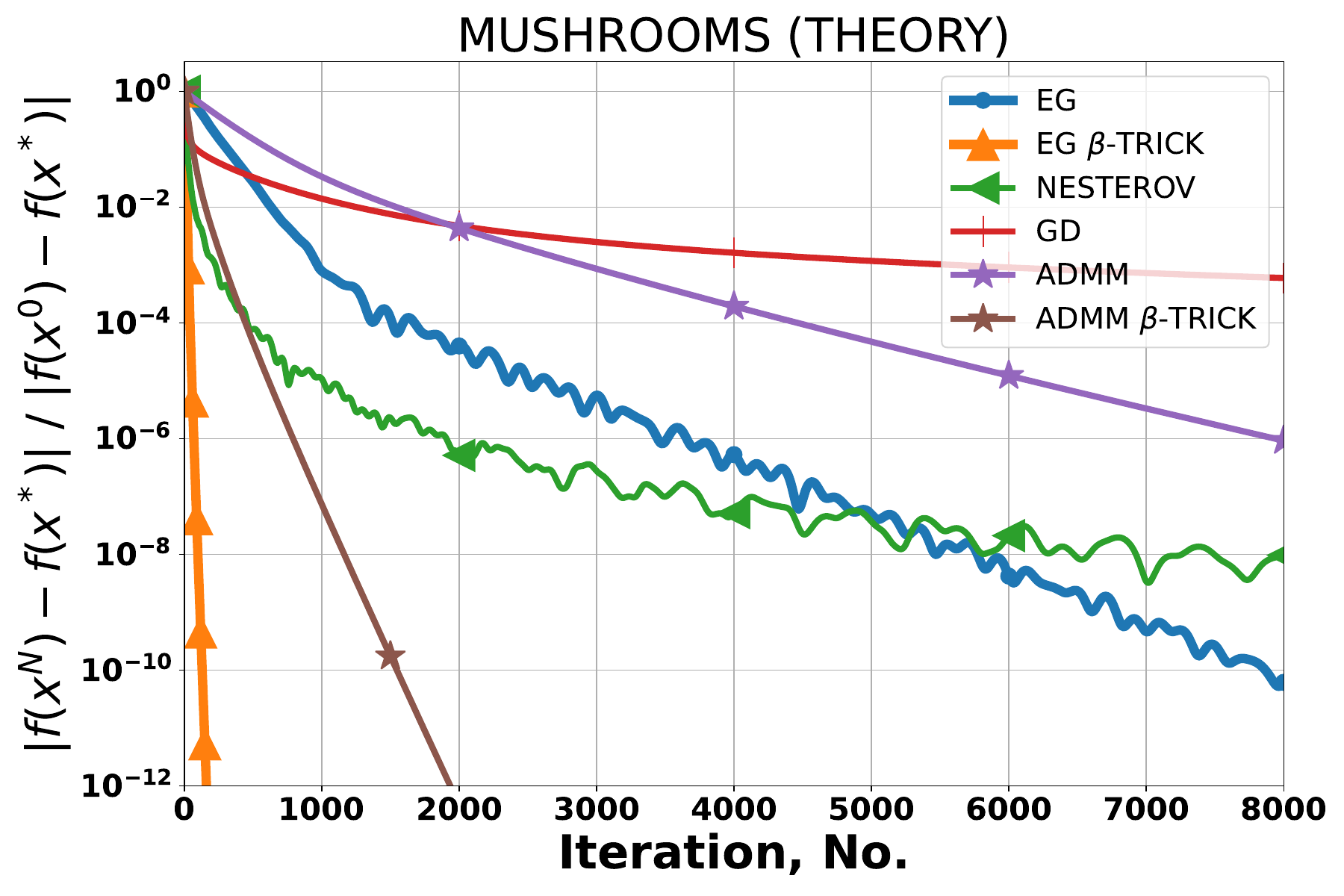}
\end{minipage}%
\begin{minipage}{0.49\textwidth}
  \centering
\includegraphics[width =  \textwidth ]{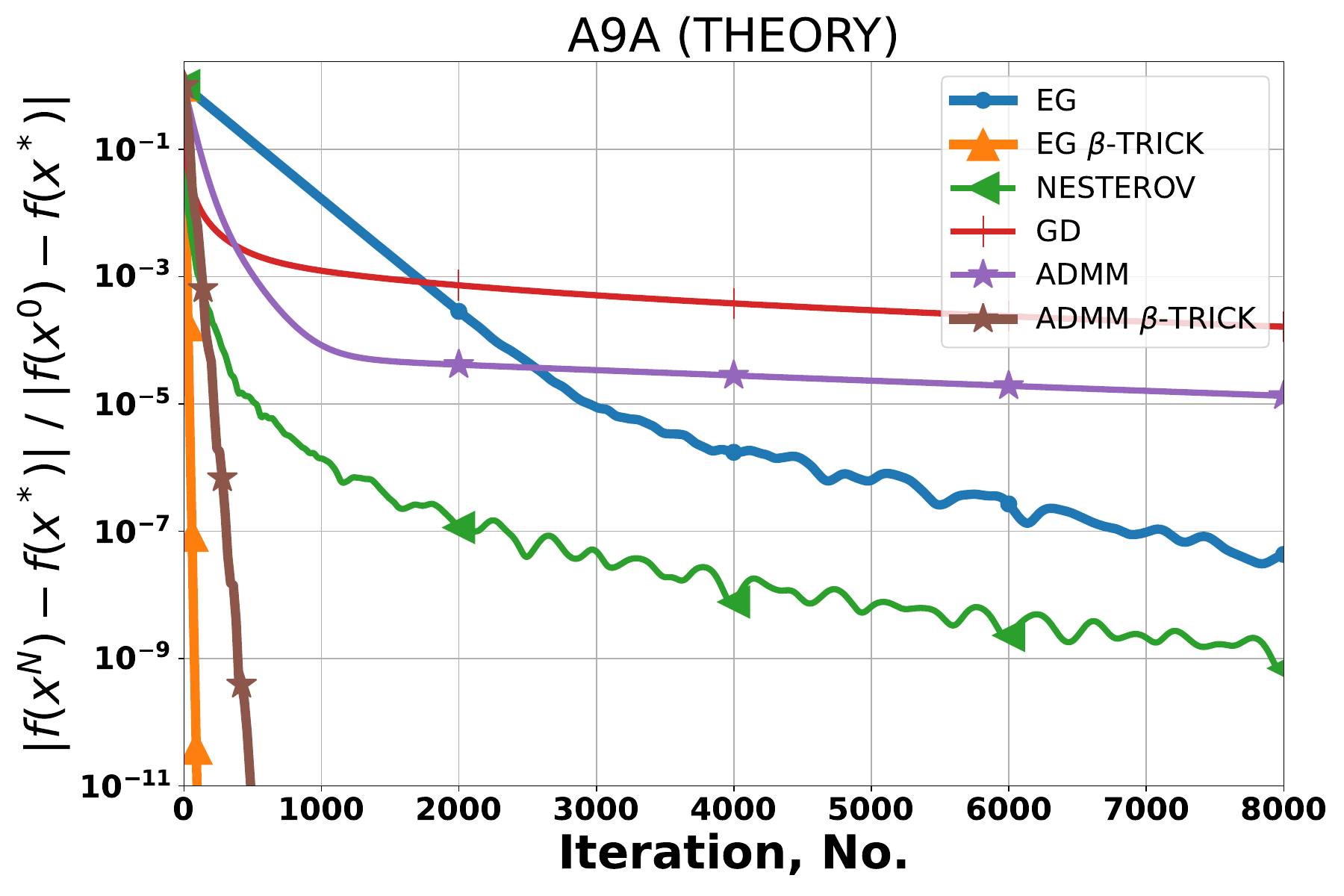}
\end{minipage}%
\\
\begin{minipage}{0.01\textwidth}
\quad
\end{minipage}%
\begin{minipage}{0.49\textwidth}
  \centering
(a) \texttt{mushrooms}
\end{minipage}%
\begin{minipage}{0.49\textwidth}
\centering
 (b) \texttt{a9a}
\end{minipage}%
\\
\begin{minipage}{0.49\textwidth}
  \centering
\includegraphics[width =  \textwidth ]{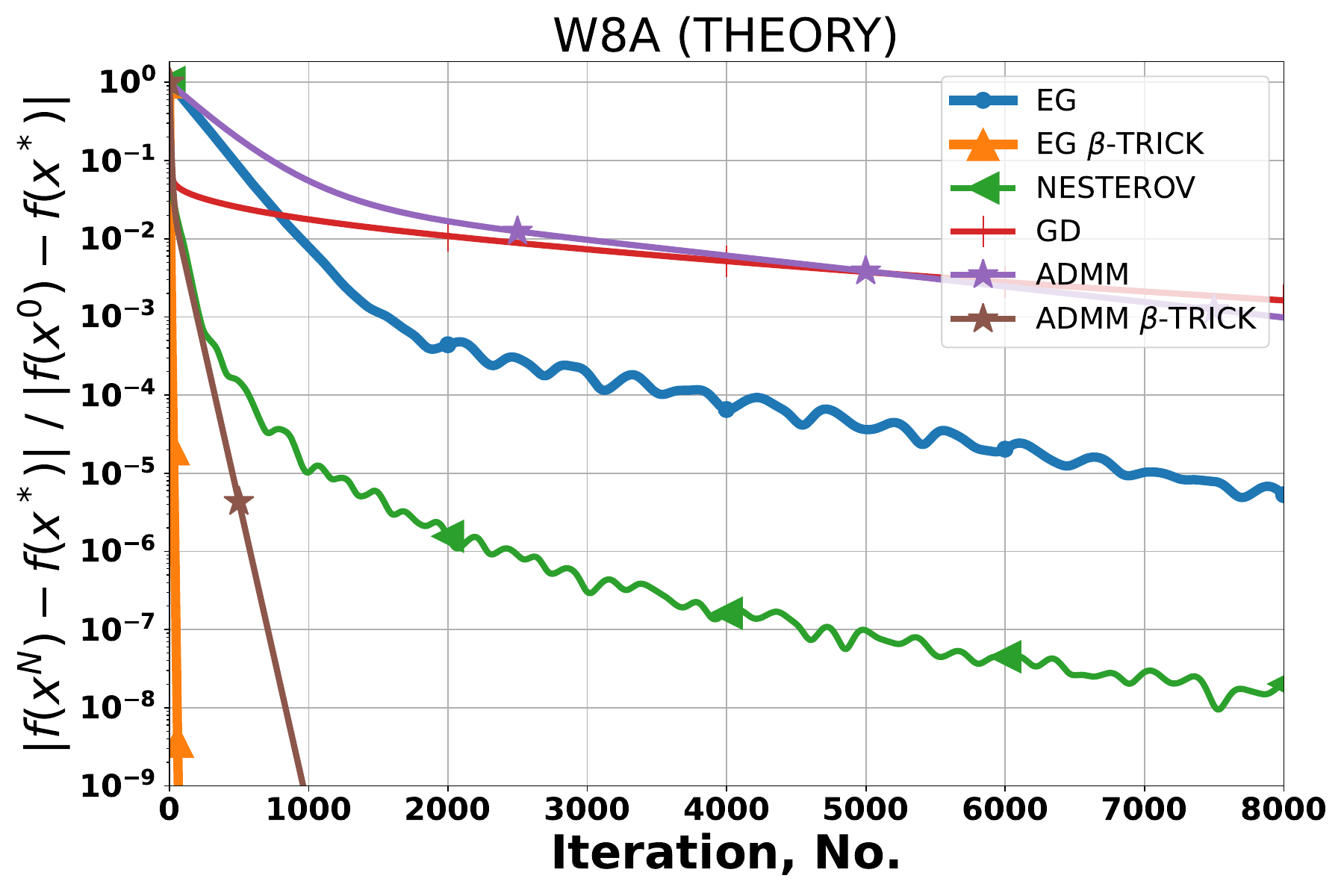}
\end{minipage}%
\begin{minipage}{0.49\textwidth}
  \centering
\includegraphics[width =  \textwidth ]{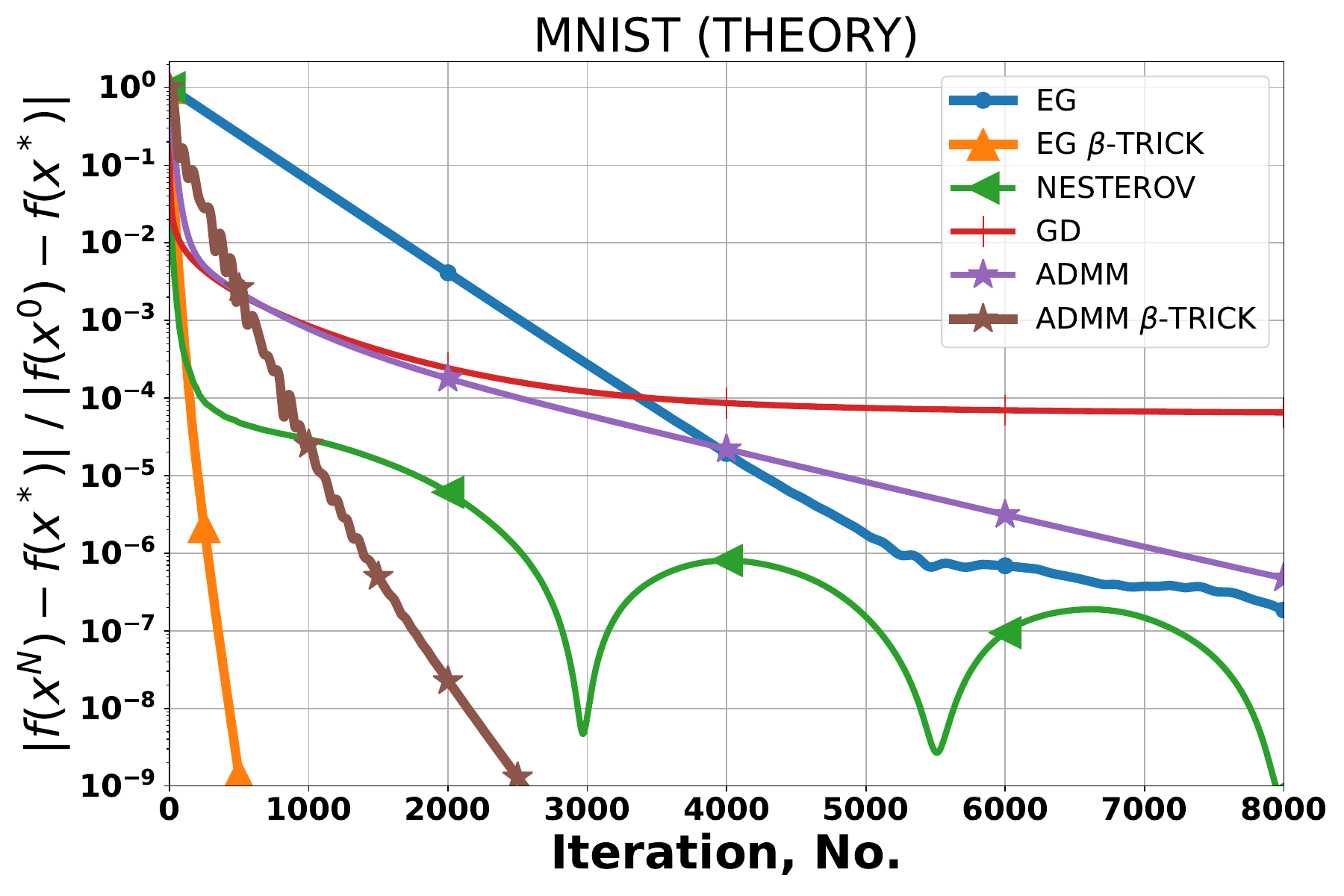}
\end{minipage}%
\\
\begin{minipage}{0.49\textwidth}
\centering
  (c) \texttt{w8a}
\end{minipage}%
\begin{minipage}{0.49\textwidth}
\centering
  (d) \texttt{MNIST}
\end{minipage}%
\caption{
Comparison of methods for solving the VFL problem in different formulations: minimization (\texttt{GD}, \texttt{Nesterov}) and saddle point (\texttt{ADMM}, \texttt{ExtraGradient}/Algorithm \ref{alg:EG}). The comparison is made on LibSVM datasets \texttt{mushrooms}, \texttt{a9a}, \texttt{w8a} and \texttt{MNIST}.}
    \label{fig:comparison1}
\end{figure}

The results, illustrated in Figure \ref{fig:comparison1}, show that Algorithm~\ref{alg:EG} with the choice $\beta$ convergence dramatically faster, the basic version Algorithm~\ref{alg:EG} initially lags behind \texttt{GD} and \texttt{Nesterov}, but in terms of steady-state convergence Algorithm~\ref{alg:EG} converges faster and eventually surpasses both \texttt{GD} and sometimes \texttt{Nesterov}. 

{We also note that our methods exhibit linear convergence, although the theory provides only sublinear guarantees. Linear convergence can be proven. For example, this is demonstrated in \cite{salim2022optimal}, where a problem similar to \eqref{eq:vfl_constr_1} is considered, and it is also proposed to solve it through a saddle point reformulation.}

Furthermore, as previously discussed in Section \ref{sec:intro}, the saddle reformulation offers advantages in terms of privacy. Significantly, we surpass our competitor in solving SPP – \texttt{ADMM}, with \texttt{ADMM} also exhibiting notably costlier iterations. The same experiments but with grid-search tuning of parameters for all methods is presented in Appendix \ref{app:exp}. In this setting, Algorithm~\ref{alg:EG} is even more faster than competitors.

\begin{wrapfigure}[19]{r}{9.5cm}
\vskip-15pt
  \centering
\begin{minipage}{0.49\textwidth}
  \centering
\includegraphics[width =  \textwidth ]{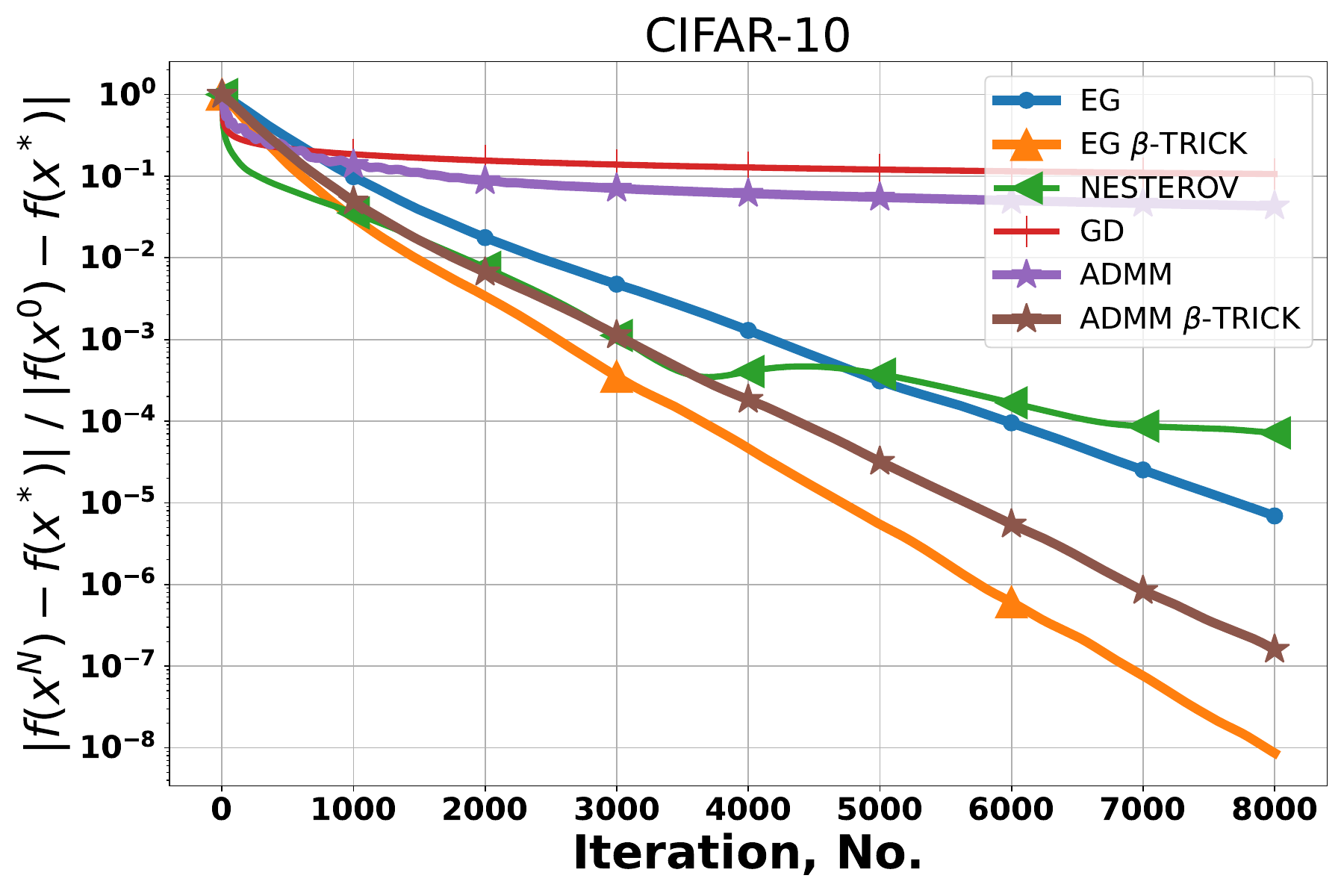}
\end{minipage}%
\caption{
Comparison of methods for solving the VFL problem in different formulations: minimization (\texttt{GD}, \texttt{Nesterov}) and saddle point (\texttt{ADMM}, \texttt{ExtraGradient}/Algorithm \ref{alg:EG}). The comparison is made on \texttt{CIFAR-10} dataset.}
    \label{fig:comparison2}
\end{wrapfigure}

\textbf{Fine-tuning of neural network.} We consider the pre-trained ResNet18 model on the ImageNet dataset. Our goal is to fine tune it on the CIFAR-10 dataset. As in the previous experiments, we take 5 clients, each client gets all the images, but only parts of them (about 1/5 of the whole image for each client). Then, each client passes its image portions through the pre-trained ResNet without the last linear layer, adjusting for the square size. As a result, each client receives embeddings corresponding to its sliced images. A new linear layer with the cross-entropy loss is trained on the embeddings of all clients, which means that the partitioning of the data is also vertical in this case. As in the previous paragraph, we use \texttt{GD}, \texttt{AGD}, \texttt{ADMM} and Algorithm~\ref{alg:EG} as methods for comparison. The methods are tuned as in the corresponding theory, since for this problem we can also estimate $L$. In the case of the ~\ref{alg:EG} and \texttt{ADMM} algorithms, we also use the $\beta$-trick.  

The results reflected in Figure \ref{fig:comparison2} show the superiority of Algorithm~\ref{alg:EG} over competitors. When the $\beta$-trick is used, \texttt{ExtraGradient} significantly outperforms other methods, but even without the $\beta$-trick Algorithm~\ref{alg:EG}  converges slightly worse than \texttt{AGD}, but later overtakes it as well.

\section*{Acknowledgments}

The work was done in the Laboratory of Federated Learning Problems of the ISP RAS (Supported by Grant App. No. 2 to Agreement No. 075-03-2024-214).

\end{mainpart}

\begin{appendixpart}

\addcontentsline{toc}{section}{Appendix} 


\section{Missing Table} \label{app:algos}



\renewcommand{\arraystretch}{2}
\begin{table*}[h]
\vspace{-0.3cm}
    \centering
\captionof{table}{Comparison of different saddle point reformulations of the VFL problem (\ref{eq:main_problem_vfl}) and deterministic methods for solving these reformulations.}
\vspace{-0.2cm}
    \label{tab:comparison0}   
    \scriptsize
    \resizebox{\linewidth}{!}{
  \begin{threeparttable}
    \begin{tabular}{|c|c|c|c|c|c|c|}
    \cline{3-7}
    \multicolumn{2}{c|}{}
     & \textbf{Method} & \textbf{Iteration complexity} & \textbf{Local device cost of iteration}  & \textbf{Local server cost of iteration} & \textbf{Communication cost of iteration}\\
    \hline
    \multirow{5}{*}{\rotatebox[origin=c]{90}{\textbf{Formulation}}} & (\ref{eq:vfl_lin_spp_1})
    & Algorithm \ref{alg:EG} & $\mathcal{O} \left( \frac{ 1+ \sqrt{\lambda_{\max}(A^T A)} + L_{\ell} + L_r  }{\varepsilon}  \right)$ & $\mathcal{O}\left( ds \cdot \text{Cost}(\text{a.o.}) + \text{Cost}(\nabla r)\right)$
    & $\mathcal{O}\left(  s \cdot \text{Cost}(\text{a.o.}) + \text{Cost}(\nabla \ell)\right)$ & $\mathcal{O}\left(  s \cdot \text{Send}(\text{1 coord.})  \right)$
    \\ \cline{2-7}
    & (\ref{eq:vfl_lin_spp_2})
    & Algorithm \ref{alg:EG_basic_2} & $\mathcal{O}\left(\frac{ 1+ \sqrt{\max_{i}\{ \lambda_{\max}(A_i^T A_i) \}} + nL_{\ell} + L_r}{\varepsilon} \right)$ & $\mathcal{O}\left( ds \cdot \text{Cost}(\text{a.o.}) + \text{Cost}(\nabla r)\right)$
    & $\mathcal{O}\left(  s \cdot \text{Cost}(\text{a.o.}) + \text{Cost}(\nabla \ell)\right)$ & $\mathcal{O}\left(  s \cdot \text{Send}(\text{1 coord.})  \right)$
    \\ \cline{2-7}
    & \multirow{2}{*}{(\ref{eq:vfl_lin_spp_aug})} & Algorithm \ref{alg:EG_aug} & $\mathcal{O} \left( \frac{  1+ \sqrt{\lambda_{\max}(A^T A)} + L_{\ell} + L_r  }{\varepsilon}  \right)$ \tnote{{\color{blue}(1)}}
    & $\mathcal{O}\left( ds \cdot \text{Cost}(\text{a.o.}) + \text{Cost}(\nabla r)\right)$
    & $\mathcal{O}\left(  s \cdot \text{Cost}(\text{a.o.}) + \text{Cost}(\nabla \ell)\right)$ & $\mathcal{O}\left(  s \cdot \text{Send}(\text{1 coord.})  \right)$
    \\ \cline{3-7} 
    && \texttt{ADMM} \cite{boyd2011distributed,lu2023unified} & $\mathcal{O} \left( \frac{  1+ \sqrt{\lambda_{\max}(A^T A)}}{\varepsilon}  \right)$ \tnote{{\color{blue}(2)}} & $\mathcal{O} \left( \sqrt{\frac{  1+ \sqrt{\lambda_{\max}(A^T A)} + L_r  }{e}}  \right) \cdot \mathcal{O}\left( d^2 \cdot \text{Cost}(\text{a.o.}) + \text{Cost}(\nabla r)\right) + \mathcal{O}\left( ds \cdot \text{Cost}(\text{a.o.})\right)^\text{{\color{blue}(3)}}$ &
    $\mathcal{O} \left( \sqrt{\frac{n L_{\ell}}{e}} \right) \cdot \mathcal{O}\left(  s \cdot \text{Cost}(\text{a.o.}) + \text{Cost}(\nabla \ell)\right)^\text{{\color{blue}(3)}}$ & $\mathcal{O}\left(  s \cdot \text{Send}(\text{1 coord.})  \right)$
    \\ \cline{2-7}
    & (\ref{eq:vfl_lin_spp_3})
    & Algorithm \ref{alg:EG_basic_3} & $\mathcal{O} \left(\frac{ 1 + \sqrt{\lambda_{\max}(A^T A)} + L_{\ell^*} + L_r}{\varepsilon} \right)$ 
    & $\mathcal{O}\left( ds \cdot \text{Cost}(\text{a.o.}) + \text{Cost}(\nabla r)\right)$
    & $\mathcal{O}\left(  s \cdot \text{Cost}(\text{a.o.}) + \text{Cost}(\nabla \ell^*)\right)$ & $\mathcal{O}\left(  s \cdot \text{Send}(\text{1 coord.})  \right)$
    \\\hline
    \end{tabular}   
    \begin{tablenotes}
    {\small   
    \item [] \tnote{{\color{blue}(1)}} detailed estimate has a dependence on the parameter $\rho$, here we substitute $\rho=1/\sqrt{\lambda_{\max}(A^T A)}$; \tnote{{\color{blue}(2)}} detailed estimate has a dependence on the parameter $\rho$ (see Corollary 3 from \cite{lu2023unified}), in particular $\rho \lambda_{\max}(A^T A) + 1/\rho$, then we substitute $\rho=1/\sqrt{\lambda_{\max}(A^T A)}$; \tnote{{\color{blue}(3)}} at each iteration of \texttt{ADMM} (see Section 8.3 from \cite{boyd2011distributed}) we need to solve subproblems on the first device/server and on all other devices, we assume that these subproblems are solved to precision $e$ using an optimal 1st order method -- Nesterov's accelerated method \cite{nesterov2003introductory}; {in some special cases, the complexity can be reduced to the complexities of other algorithms from this table}
 \item [] {\em Columns:} Iteration complexity = number of iterations to achieve $\varepsilon$-solutio, Local device cost of iteration = computational cost of operations on devices per one iteration,  Local server cost of iteration = computational cost of operations on server per one iteration,  Communication cost of iteration = communication spending during one iteration
 \item [] {\em Notation:} $L_{\ell}$ = smoothness constant of $\ell$, $L_r$ = smoothness constant of $r$, $\varepsilon$ = precision of the solution, $\text{Cost}(\text{a.o.})$ = cost of calculations of atomic operations: addition, multiplication of two numbers, $\text{Cost}(\nabla r)$, $\text{Cost}(\nabla \ell)$ = cost of computations of gradients for $r$ and $\ell$, $\text{Send}(\text{1 coord.})$ = cost of sending one coordinate/one number.
    }
\end{tablenotes}    
    \end{threeparttable}
    }
\end{table*}

\clearpage


\section{Missing Proofs}

\subsection{Proof of Theorem \ref{th:EG_basic_1}}\label{app:th:EG_basic_1}

{
\begin{theorem}[Theorem \ref{th:EG_basic_1}]
Let Assumption \ref{as:convexity_smothness} hold. Let the problem \eqref{eq:vfl_lin_spp_1}
be solved by Algorithm~\ref{alg:EG}. Then for 
$$
\gamma = \frac{1}{2} \cdot \min \left\{ 1; \frac{1}{\sqrt{\lambda_{\max}(A^T A)}}; \frac{1}{L_r}; \frac{1}{L_{\ell}} \right\},
$$
it holds that
$$
\text{gap}(\bar x^K, \bar z^K,\bar y^K) = \mathcal{O} \left(  \frac{ ( 1 + \sqrt{\lambda_{\max}(A^T A)} + L_{\ell} + L_r  ) D^2}{K}  \right),
$$
where $\bar x^K := \tfrac{1}{K}\sum_{k=0}^{K-1} x^{k+1/2}$, $\bar z^K := \tfrac{1}{K}\sum_{k=0}^{K-1} z^{k+1/2}$, $\bar y^K := \tfrac{1}{K}\sum_{k=0}^{K-1} y^{k+1/2}$ and \\ $D^2 := \max_{x, z, y \in \cX, \cZ, \cY} \left[ \|x^0 - x \|^2 + \|z^0 - z \|^2 + \|y^0 - y \|^2 \right]$.
\end{theorem}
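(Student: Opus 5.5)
The plan is to treat Algorithm~\ref{alg:EG} as the classical \texttt{ExtraGradient} method applied to the monotone operator of the saddle point problem (\ref{eq:vfl_lin_spp_1}). Write $u = (x,z,y)$ and
$$
F(u) := \bigl(\nabla_x L(u),\, \nabla_z L(u),\, -\nabla_y L(u)\bigr) = \bigl(A^T y + \nabla r(x),\ \nabla \ell(z,b) - y,\ z - Ax\bigr).
$$
Then the algorithm reads $u^{k+1/2} = u^k - \gamma F(u^k)$ and $u^{k+1} = u^k - \gamma F(u^{k+1/2})$.

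The first step is to record two structural facts.

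\emph{Monotonicity.} By Assumption~\ref{as:convexity_smothness}, $L$ is convex in $(x,z)$ and linear (hence concave) in $y$. Moreover, for any $u$ and $u'$,
$$
\langle F(u) - F(u'),\, u - u'\rangle \geq 0,
$$
because the bilinear coupling terms cancel.

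\emph{Lipschitz continuity.} Split $F = F_1 + F_2$, where $F_1 = (\nabla r(x), \nabla \ell(z,b), 0)$ and $F_2$ is the linear skew part given by the block matrix with entries $A^T$, $-I$, $I$, $-A$.
\begin{itemize}
\item $F_1$ is $\max\{L_r, L_\ell\}$-Lipschitz.
\item The norm of $F_2$ is at most $\sqrt{\lambda_{\max}(A^TA)} + 1$, by bounding the $x$–$y$ block by $\|A\|$ and the $z$–$y$ block by $1$.
\end{itemize}
Hence $F$ is $L_F$-Lipschitz with $L_F \lesssim 1 + \sqrt{\lambda_{\max}(A^TA)} + L_\ell + L_r$. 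I will be a little careful to get a constant for which the stated $\gamma$ satisfies $\gamma L_F \leq 1$, or at least $\gamma^2 L_F^2 \le 1/2$. Since $\gamma \le \frac12\min\{\cdot\}$, bounding each piece separately and using $(a+b)^2 \le 2a^2+2b^2$ should give this.

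The second step is the standard one-step inequality. For any $u \in \cX\times\cZ\times\cY$, expand $\|u^{k+1}-u\|^2$ using the update rules:
$$
\|u^{k+1}-u\|^2 = \|u^k-u\|^2 - 2\gamma\langle F(u^{k+1/2}), u^{k+1/2}-u\rangle + 2\gamma\langle F(u^{k+1/2}), u^{k+1/2}-u^{k+1}\rangle - \|u^{k+1}-u^k\|^2 .
$$
Then rewrite the third term via $u^{k+1/2}$ to obtain
$$
2\gamma\langle F(u^{k+1/2}), u^{k+1/2}-u\rangle \le \|u^k-u\|^2 - \|u^{k+1}-u\|^2 - \|u^{k+1/2}-u^k\|^2 + \gamma^2\|F(u^{k+1/2})-F(u^k)\|^2 .
$$
Finally, Lipschitzness together with $\gamma L_F\le 1$ kills the last two terms.

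The third step converts this into a gap bound. Convexity of $L$ in $(x,z)$ and concavity in $y$ give
$$
L(x^{k+1/2},z^{k+1/2},\tilde y) - L(\tilde x,\tilde z,y^{k+1/2}) \le \langle F(u^{k+1/2}), u^{k+1/2}-\tilde u\rangle .
$$
Sum this over $k = 0,\dots,K-1$ and divide by $K$. Jensen's inequality moves the averages inside, since $L$ is convex–concave. Then take the maximum over $\tilde u$ in the bounded sets. Because the right-hand side telescopes to $\|u^0-\tilde u\|^2/(2\gamma K) \le D^2/(2\gamma K)$, the maximum can be taken after the telescoping. This yields
$$
\text{gap}(\bar x^K,\bar z^K,\bar y^K) \le \frac{D^2}{2\gamma K}.
$$
Substituting $1/\gamma = 2\max\{1,\sqrt{\lambda_{\max}(A^TA)},L_r,L_\ell\}$ gives the claimed $\mathcal{O}$ bound.

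The main obstacle I expect is the constant bookkeeping in the Lipschitz estimate, namely making sure that the specific choice of $\gamma$ (a factor $\frac12$ and a min rather than a sum) actually ensures $\gamma^2\|F(u^{k+1/2})-F(u^k)\|^2 \le \|u^{k+1/2}-u^k\|^2$. I would handle this blockwise:
\begin{align*}
\gamma^2\|F(u)-F(u')\|^2 \le 2\gamma^2\bigl(&L_r^2\|\Delta x\|^2 + \lambda_{\max}(A^TA)\|\Delta y\|^2 + L_\ell^2\|\Delta z\|^2 + \|\Delta y\|^2 \\
&+ \|\Delta z\|^2 + \lambda_{\max}(A^TA)\|\Delta x\|^2\bigr).
\end{align*}
Each coefficient is then at most about $2\cdot\frac14\cdot 2 = 1$, which may require splitting with slightly different weights, but the order of the constants is unaffected.
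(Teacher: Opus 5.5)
Your proposal is correct and follows essentially the same route as the paper, which writes out this extragradient argument blockwise in $x$, $z$, $y$. The paper applies Young's inequality block by block, obtains the same coefficients $2\gamma^2(L_r^2+\lambda_{\max}(A^TA))$, $2\gamma^2(1+\lambda_{\max}(AA^T))$ and $2\gamma^2(1+L_\ell^2)$, and then telescopes, applies Jensen and takes the maximum over $\cX,\cZ,\cY$. Your blockwise estimate already makes each coefficient at most exactly $1$ for the stated $\gamma$, so the reweighting you hedge about is not needed.
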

}

To prove the convergence it is sufficient to show that the problem is convex--concave (Lemma \ref{lem:tech1}), to estimate the Lipschitz constant of gradients and use the general results from \cite{nemirovski2004prox}. But since proofs of the other algorithms is somewhat similar to proof of the basic algorithm, we provide the proof of Theorem \ref{th:EG_basic_1} to complete the picture and to move from basic proofs to more complex ones.

\begin{proof}
We start the proof with the following equations on the variables $x^{k+1}_i$, $x^{k+1/2}_i$, $x^k_i$ and any $x_i \in \R^{d_i}$:
\begin{align*}
    \|x^{k+1}_i - x_i \|^2
    &=
    \|x^{k}_i - x_i \|^2 + 2 \langle  x^{k+1}_i -  x^{k}_i,  x^{k+1}_i - x_i\rangle - \|  x^{k+1}_i -  x^{k}_i\|^2,
    \\
    \|x^{k+1/2}_i - x^{k+1}_i \|^2
    &=
    \| x^{k}_i - x^{k+1}_i \|^2 + 2 \langle x^{k+1/2}_i -   x^{k}_i, x^{k+1/2}_i - x^{k+1}_i\rangle - \|  x^{k+1/2}_i - x^k_i\|^2.
\end{align*}
Summing up two previous equations and making small rearrangements, we get
\begin{align*}
    \|x^{k+1}_i - x_i \|^2
    =&
    \|x^{k}_i - x_i \|^2 - \|  x^{k+1/2}_i - x^k_i\|^2 - \|x^{k+1/2}_i - x^{k+1}_i \|^2
    \\
    &+
    2 \langle  x^{k+1}_i -  x^{k}_i,  x^{k+1}_i - x_i\rangle + 2 \langle x^{k+1/2}_i -   x^{k}_i, x^{k+1/2}_i - x^{k+1}_i\rangle.
\end{align*}
Using that $x^{k+1}_i -  x^{k}_i = - \gamma (A_i^T y^{k+1/2} + \nabla r_i(x^{k+1/2}_i))$ and $x^{k+1/2}_i -   x^{k}_i = - \gamma (A_i^T y^k + \nabla r_i (x^k_i))$ (see lines \ref{lin_alg:EG_linx1} and \ref{lin_alg:EG_linx2} of Algorithm \ref{alg:EG}), we obtain
\begin{align}
    \label{eq:basic_proof_temp0}
    \|x^{k+1}_i - x_i \|^2
    =&
    \|x^{k}_i - x_i \|^2 - \|  x^{k+1/2}_i - x^k_i\|^2 - \|x^{k+1/2}_i - x^{k+1}_i \|^2
    \notag\\
    &-
    2\gamma \langle  A_i^T y^{k+1/2} + \nabla r_i(x^{k+1/2}_i),  x^{k+1}_i - x_i\rangle 
    \notag\\
    &- 2 \gamma \langle A_i^T y^k + \nabla r_i (x^k_i), x^{k+1/2}_i - x^{k+1}_i\rangle
    \notag\\
    =&
    \|x^{k}_i - x_i \|^2 - \|  x^{k+1/2}_i - x^k_i\|^2 - \|x^{k+1/2}_i - x^{k+1}_i \|^2
    \notag\\
    &-
    2\gamma \langle  A_i^T y^{k+1/2} + \nabla r_i(x^{k+1/2}_i),  x^{k+1/2}_i - x_i\rangle
    \notag\\
    &-
    2\gamma \langle  A_i^T (y^{k+1/2} - y^k) + \nabla r_i(x^{k+1/2}_i) - \nabla r_i (x^k_i),  x^{k+1}_i - x^{k+1/2}_i\rangle
    \notag\\
    =&
    \|x^{k}_i - x_i \|^2 - \|  x^{k+1/2}_i - x^k_i\|^2 - \|x^{k+1/2}_i - x^{k+1}_i \|^2
    \notag\\
    &-
    2\gamma \langle  A_i (x^{k+1/2}_i - x_i) ,  y^{k+1/2}\rangle
    -
    2\gamma \langle  \nabla r_i(x^{k+1/2}_i),  x^{k+1/2}_i - x_i\rangle
    \notag\\
    &-
    2\gamma \langle  A_i (x^{k+1}_i - x^{k+1/2}_i) ,  y^{k+1/2} - y^k\rangle
    \notag\\
    &-
    2\gamma \langle  \nabla r_i(x^{k+1/2}_i) - \nabla r_i (x^k_i),  x^{k+1}_i - x^{k+1/2}_i\rangle.
\end{align}
Summing over all $i$ from $1$ to $n$, we deduce
\begin{align*}
    \sum\limits_{i=1}^n \|x^{k+1}_i - x_i \|^2
    =&
    \sum\limits_{i=1}^n \|x^{k}_i - x_i \|^2 - \sum\limits_{i=1}^n\|  x^{k+1/2}_i - x^k_i\|^2 - \sum\limits_{i=1}^n \|x^{k+1/2}_i - x^{k+1}_i \|^2
    \\
    &-
    2\gamma \langle  \sum\limits_{i=1}^n A_i (x^{k+1/2}_i - x_i) ,  y^{k+1/2}\rangle
    -
    2\gamma \sum\limits_{i=1}^n \langle  \nabla r_i(x^{k+1/2}_i),  x^{k+1/2}_i - x_i\rangle
    \\
    &-
    2\gamma \langle  \sum\limits_{i=1}^n A_i (x^{k+1}_i - x^{k+1/2}_i) ,  y^{k+1/2} - y^k\rangle
    \\
    &-
    2\gamma \sum\limits_{i=1}^n \langle  \nabla r_i(x^{k+1/2}_i) - \nabla r_i (x^k_i),  x^{k+1}_i - x^{k+1/2}_i\rangle.
\end{align*}
With notation of $A = [A_1, \ldots, A_i, \ldots, A_n] $ and notation of $x = [x_1^T, \ldots,x_i^T, \ldots,x_n^T]^T$ from \eqref{eq:main_problem} and \eqref{eq:main_problem_vfl}, one can obtain that $\sum_{i=1}^n A_i x_i = A x$:
\begin{align*}
    \|x^{k+1} - x \|^2
    =&
    \|x^{k} - x \|^2 - \|  x^{k+1/2} - x^k\|^2 - \|x^{k+1/2} - x^{k+1} \|^2
    \\
    &-
    2\gamma \langle  A (x^{k+1/2} - x) ,  y^{k+1/2}\rangle
    -
    2\gamma \sum\limits_{i=1}^n \langle  \nabla r_i(x^{k+1/2}_i),  x^{k+1/2}_i - x_i\rangle
    \\
    &-
    2\gamma \langle  A (x^{k+1} - x^{k+1/2}) ,  y^{k+1/2} - y^k\rangle
    \\
    &-
    2\gamma \sum\limits_{i=1}^n \langle  \nabla r_i(x^{k+1/2}_i) - \nabla r_i (x^k_i),  x^{k+1}_i - x^{k+1/2}_i\rangle
    \\
    =&
    \|x^{k} - x \|^2 - \|  x^{k+1/2} - x^k\|^2 - \|x^{k+1/2} - x^{k+1} \|^2
    \\
    &-
    2\gamma \langle  A (x^{k+1/2} - x) ,  y^{k+1/2}\rangle
    -
    2\gamma \sum\limits_{i=1}^n \langle  \nabla r_i(x^{k+1/2}_i),  x^{k+1/2}_i - x_i\rangle
    \\
    &-
    2\gamma \langle  A^T (y^{k+1/2} - y^k) , x^{k+1} - x^{k+1/2} \rangle
    \\
    &-
    2\gamma \sum\limits_{i=1}^n \langle  \nabla r_i(x^{k+1/2}_i) - \nabla r_i (x^k_i),  x^{k+1}_i - x^{k+1/2}_i\rangle.
\end{align*}
By Cauchy Schwartz inequality: $2 \langle a, b \rangle \leq \eta \| a \|^2 + \tfrac{1}{\eta}\| b \|^2$ with $a = A^T ( y^{k+1/2} - y^{k})$, $b = x^{k+1/2} - x^{k+1}$, $\eta = 2\gamma$ and $a = \nabla r_i(x^{k+1/2}_i) - \nabla r_i (x^k_i)$, $b = x^{k+1/2}_i - x^{k+1}_i$, $\eta = 2\gamma$, we get
\begin{align}
    \label{eq:basic_proof_temp1}
    \|x^{k+1} - x \|^2
    \leq&
    \|x^{k} - x \|^2 - \|  x^{k+1/2} - x^k\|^2 - \|x^{k+1/2} - x^{k+1} \|^2
    \notag\\
    &-
    2\gamma \langle  A (x^{k+1/2} - x) ,  y^{k+1/2}\rangle
    -
    2\gamma \sum\limits_{i=1}^n \langle  \nabla r_i(x^{k+1/2}_i),  x^{k+1/2}_i - x_i\rangle
    \notag\\
    &+
    2\gamma^2 \|A^T (y^{k+1/2} - y^k)\|^2 + \frac{1}{2}\|x^{k+1} - x^{k+1/2} \|^2
    \notag\\
    &+
    2\gamma^2 \sum\limits_{i=1}^n \| \nabla r_i(x^{k+1/2}_i) - \nabla r_i (x^k_i)\|^2 + \frac{1}{2}  \sum\limits_{i=1}^n \| x^{k+1}_i - x^{k+1/2}_i\|^2
    \notag\\
    =&
    \|x^{k} - x \|^2 - \|  x^{k+1/2} - x^k\|^2 
    \notag\\
    &-
    2\gamma \langle  A (x^{k+1/2} - x) ,  y^{k+1/2}\rangle
    -
    2\gamma \sum\limits_{i=1}^n \langle  \nabla r_i(x^{k+1/2}_i),  x^{k+1/2}_i - x_i\rangle
    \notag\\
    &+
    2\gamma^2 \|A^T (y^{k+1/2} - y^k)\|^2 +
    2\gamma^2 \sum\limits_{i=1}^n \| \nabla r_i(x^{k+1/2}_i) - \nabla r_i (x^k_i)\|^2.
\end{align}
Using the same steps, one can obtain for $z \in \R^s$, 
\begin{align}
    \label{eq:basic_proof_temp2}
    \|z^{k+1} - z \|^2
    \leq&
    \|z^{k} - z \|^2 - \|  z^{k+1/2} - z^k\|^2 
    \notag\\
    &+
    2\gamma \langle  y^{k+1/2} ,  z^{k+1/2} - z\rangle
    -
    2\gamma \langle  \nabla \ell(z^{k+1/2}, b),  z^{k+1/2} - z\rangle
    \notag\\
    &+
    2\gamma^2 \|y^{k+1/2} - y^k\|^2 +
    2\gamma^2 \| \nabla \ell(z^{k+1/2}, b) -\nabla \ell(z^k, b)\|^2.
\end{align}
and for all $y \in \R^s$,
\begin{align}
    \label{eq:basic_proof_temp3}
    \|y^{k+1} - y \|^2
    \leq&
    \|y^{k} - y \|^2 - \|  y^{k+1/2} - y^k\|^2 
    \notag\\
    &-
    2\gamma \langle  z^{k+1/2} ,  y^{k+1/2} - y\rangle
    +
    2\gamma \langle  \sum_{i=1}^n A_i x^{k+1/2}_i,  y^{k+1/2} - y\rangle
    \notag\\
    &+
    2\gamma^2 \|z^{k+1/2} - z^k\|^2 +
    2\gamma^2 \left\| \sum_{i=1}^n A_i (x^{k+1/2}_i - x^k_i) \right\|^2
    \notag\\
    =&
    \|y^{k} - y \|^2 - \|  y^{k+1/2} - y^k\|^2 
    \notag\\
    &-
    2\gamma \langle  z^{k+1/2} ,  y^{k+1/2} - y\rangle
    +
    2\gamma \langle  A x^{k+1/2},  y^{k+1/2} - y\rangle
    \notag\\
    &+
    2\gamma^2 \|z^{k+1/2} - z^k\|^2 +
    2\gamma^2 \| A (x^{k+1/2} - x^k) \|^2.
\end{align}
Here we also use notation of $A$ and $x$. 
Summing up (\ref{eq:basic_proof_temp1}), (\ref{eq:basic_proof_temp2}) and (\ref{eq:basic_proof_temp3}), we obtain
\begin{align*}
    \|x^{k+1} - x \|^2 + \|z^{k+1} - z \|^2 + \|y^{k+1} - y \|^2
    \leq&
    \|x^{k} - x \|^2 + \|z^{k} - z \|^2 + \|y^{k} - y \|^2
    \notag\\
    &
    - \|  x^{k+1/2} - x^k\|^2 - \|  z^{k+1/2} - z^k\|^2 - \|  y^{k+1/2} - y^k\|^2 
    \notag\\
    &-
    2\gamma \langle  A (x^{k+1/2} - x) ,  y^{k+1/2}\rangle
    +
    2\gamma \langle  y^{k+1/2} ,  z^{k+1/2} - z\rangle
    \notag\\
    &-
    2\gamma \langle  z^{k+1/2} ,  y^{k+1/2} - y\rangle
    +
    2\gamma \langle  A x^{k+1/2},  y^{k+1/2} - y\rangle
    \notag\\
    &-
    2\gamma \sum\limits_{i=1}^n \langle  \nabla r_i(x^{k+1/2}_i),  x^{k+1/2}_i - x_i\rangle
    -
    2\gamma \langle  \nabla \ell(z^{k+1/2}, b),  z^{k+1/2} - z\rangle
    \notag\\
    &+
    2\gamma^2 \|A^T (y^{k+1/2} - y^k)\|^2 +
    2\gamma^2 \sum\limits_{i=1}^n \| \nabla r_i(x^{k+1/2}_i) - \nabla r_i (x^k_i)\|^2
    \notag\\
    &+
    2\gamma^2 \|y^{k+1/2} - y^k\|^2 +
    2\gamma^2 \| \nabla \ell(z^{k+1/2}, b) -\nabla \ell(z^k, b)\|^2
    \notag\\
    &+
    2\gamma^2 \|z^{k+1/2} - z^k\|^2 +
    2\gamma^2 \| A (x^{k+1/2} - x^k) \|^2.
\end{align*}
Using convexity and $L_r$-smoothness of the function $r_i$ with convexity and $L_{\ell}$-smoothness of the function $\ell$ (Assumption \ref{as:convexity_smothness}), we have
\begin{align*}
    \|x^{k+1} - x \|^2 + \|z^{k+1} - z \|^2 + \|y^{k+1} - y \|^2
    \leq&
    \|x^{k} - x \|^2 + \|z^{k} - z \|^2 + \|y^{k} - y \|^2
    \notag\\
    &
    - \|  x^{k+1/2} - x^k\|^2 - \|  z^{k+1/2} - z^k\|^2 - \|  y^{k+1/2} - y^k\|^2 
    \notag\\
    &-
    2\gamma \langle  A (x^{k+1/2} - x) ,  y^{k+1/2}\rangle
    +
    2\gamma \langle  y^{k+1/2} ,  z^{k+1/2} - z\rangle
    \notag\\
    &-
    2\gamma \langle  z^{k+1/2} ,  y^{k+1/2} - y\rangle
    +
    2\gamma \langle  A x^{k+1/2},  y^{k+1/2} - y\rangle
    \notag\\
    &-
    2\gamma \sum\limits_{i=1}^n \left(r_i(x^{k+1/2}_i) - r_i(x_i) \right)
    -
    2\gamma \left(  \ell(z^{k+1/2}, b)  - \ell(z, b)\right)
    \notag\\
    &+
    2\gamma^2 \|A^T (y^{k+1/2} - y^k)\| +
    2\gamma^2 L_r^2 \sum\limits_{i=1}^n  \| x^{k+1/2}_i - x^k_i\|^2
    \notag\\
    &+
    2\gamma^2 \|y^{k+1/2} - y^k\|^2 +
    2\gamma^2 L_{\ell}^2\| z^{k+1/2} - z^k\|^2
    \notag\\
    &+
    2\gamma^2 \|z^{k+1/2} - z^k\|^2 +
    2\gamma^2 \| A (x^{k+1/2} - x^k) \|^2.
\end{align*}
Using the definition of $\lambda_{\max}(\cdot)$ as a maximum eigenvalue, we get
\begin{align*}
    \|x^{k+1} - x \|^2 + \|z^{k+1} - z \|^2 + \|y^{k+1} - y \|^2
    \leq&
    \|x^{k} - x \|^2 + \|z^{k} - z \|^2 + \|y^{k} - y \|^2
    \notag\\
    &
    - \|  x^{k+1/2} - x^k\|^2 - \|  z^{k+1/2} - z^k\|^2 - \|  y^{k+1/2} - y^k\|^2 
    \notag\\
    &-
    2\gamma \langle  A (x^{k+1/2} - x) ,  y^{k+1/2}\rangle
    +
    2\gamma \langle  y^{k+1/2} ,  z^{k+1/2} - z\rangle
    \notag\\
    &-
    2\gamma \langle  z^{k+1/2} ,  y^{k+1/2} - y\rangle
    +
    2\gamma \langle  A x^{k+1/2},  y^{k+1/2} - y\rangle
    \notag\\
    &-
    2\gamma \sum\limits_{i=1}^n \left(r_i(x^{k+1/2}_i) - r_i(x_i) \right)
    -
    2\gamma \left(  \ell(z^{k+1/2}, b)  - \ell(z, b)\right)
    \notag\\
    &+
    2\gamma^2 \lambda_{\max} (AA^T) \|y^{k+1/2} - y^k\| +
    2\gamma^2 L_r^2 \| x^{k+1/2} - x^k\|^2
    \notag\\
    &+
    2\gamma^2 \|y^{k+1/2} - y^k\|^2 +
    2\gamma^2 L_{\ell}^2\| z^{k+1/2} - z^k\|^2
    \notag\\
    &+
    2\gamma^2 \|z^{k+1/2} - z^k\|^2 +
    2\gamma^2 \lambda_{\max} (A^T A)\| x^{k+1/2} - x^k \|^2.
\end{align*}
With the choice of $\gamma \leq \tfrac{1}{2} \cdot \min\left\{ 1; \tfrac{1}{\sqrt{\lambda_{\max}(A^T A)}}; \tfrac{1}{L_r}; \tfrac{1}{L_{\ell}}\right\}$, we get
\begin{align*}
    \|x^{k+1} - x \|^2 + \|z^{k+1} - z \|^2 + \|y^{k+1} - y \|^2
    \leq&
    \|x^{k} - x \|^2 + \|z^{k} - z \|^2 + \|y^{k} - y \|^2
    \notag\\
    &-
    2\gamma \langle  A (x^{k+1/2} - x) ,  y^{k+1/2}\rangle
    +
    2\gamma \langle  y^{k+1/2} ,  z^{k+1/2} - z\rangle
    \notag\\
    &-
    2\gamma \langle  z^{k+1/2} ,  y^{k+1/2} - y\rangle
    +
    2\gamma \langle  A x^{k+1/2},  y^{k+1/2} - y\rangle
    \notag\\
    &-
    2\gamma \sum\limits_{i=1}^n \left(r_i(x^{k+1/2}_i) - r_i(x_i) \right)
    -
    2\gamma \left(  \ell(z^{k+1/2}, b)  - \ell(z, b)\right)
    \\
    =&
    \|x^{k} - x \|^2 + \|z^{k} - z \|^2 + \|y^{k} - y \|^2
    \notag\\
    &+
    2\gamma \langle  A x - z,  y^{k+1/2}\rangle
    -
    2\gamma \langle  A x^{k+1/2} - z^{k+1/2}, y\rangle
    \notag\\
    &-
    2\gamma \sum\limits_{i=1}^n \left(r_i(x^{k+1/2}_i) - r_i(x_i) \right)
    -
    2\gamma \left(  \ell(z^{k+1/2}, b)  - \ell(z, b)\right).
\end{align*}
After small rearrangements, we obtain
\begin{align*}
    &\left( \ell (z^{k+1/2}, b) - \ell (z, b)\right) + \sum_{i=1}^n \left( r_i (x^{k+1/2}_i) - r_i (x_i)\right) 
    +
    \langle A x^{k+1/2} - z^{k+1/2}, y\rangle
    - \langle  A x - z, y^{k+1/2} \rangle
    \\
    &\hspace{8cm}\leq
    \frac{1}{2\gamma}\Big(\| x^k - x\|^2 + \| z^k - z\|^2 + \| y^k - y\|^2 
    \notag\\
    &\hspace{8.4cm} -\| x^{k+1} - x\|^2 - \| z^{k+1} - z \|^2 - \| y^{k+1} -  y \|^2\Big).
\end{align*}
Then we sum all over $k$ from $0$ to $K-1$, divide by $K$, and have
\begin{align*}
    &\frac{1}{K} \sum\limits_{k=0}^{K-1}\left( \ell (z^{k+1/2}, b) - \ell (z, b)\right) + \sum_{i=1}^n \frac{1}{K} \sum\limits_{k=0}^{K-1} \left( r_i (x^{k+1/2}_i) - r_i (x_i)\right) 
    \\
    &+
    \langle A \cdot \frac{1}{K} \sum\limits_{k=0}^{K-1} x^{k+1/2} - \frac{1}{K} \sum\limits_{k=0}^{K-1} z^{k+1/2}, y\rangle
    - \langle  A x - z, \frac{1}{K} \sum\limits_{k=0}^{K-1} y^{k+1/2} \rangle
    \\
    &\hspace{8cm}\leq
    \frac{1}{2\gamma K}\Big(\| x^0 - x\|^2 + \| z^0 - z\|^2 + \| y^0 - y\|^2 
    \notag\\
    &\hspace{8.4cm} -\| x^{K} - x\|^2 - \| z^{K} - z \|^2 - \| y^{K} -  y \|^2\Big)
    \\
    &\hspace{8cm}\leq
    \frac{1}{2\gamma K}(\| x^0 - x\|^2 + \| z^0 - z\|^2 + \| y^0 - y\|^2).
\end{align*}
With Jensen inequality for convex functions $\ell$ and $r_i$, one can note that
\begin{align*}
 \ell \left( \frac{1}{K} \sum\limits_{k=0}^{K-1} z^{k+1/2}, b \right) \leq \frac{1}{K} \sum\limits_{k=0}^{K-1} \ell (z^{k+1/2}, b), \\
 r_i \left( \frac{1}{K} \sum\limits_{k=0}^{K-1} x^{k+1/2}_i \right) \leq \frac{1}{K} \sum\limits_{k=0}^{K-1} r_i (x^{k+1/2}_i).
\end{align*}
Then, with notation $\bar x^K_i = \frac{1}{K} \sum\limits_{k=0}^{K-1} x^{k+1/2}_i$, $\bar z^K = \frac{1}{K} \sum\limits_{k=0}^{K-1} z^{k+1/2}$, $\bar y^K = \frac{1}{K} \sum\limits_{k=0}^{K-1} y^{k+1/2}$, we have
\begin{align*}
    &\ell (\bar z^K, b) - \ell (z, b) + \sum_{i=1}^n \left( r_i (\bar x^{K}_i) - r_i (x_i)\right) 
    +
    \langle A \bar x^{K} - \bar z^{K}, y\rangle
    - \langle  A x - z, \bar y^{K} \rangle
    \\
    &\hspace{10cm}\leq
    \frac{1}{2\gamma K}(\| x^0 - x\|^2 + \| z^0 - z\|^2 + \| y^0 - y\|^2).
\end{align*}
Following the definition \eqref{eq:duality_gap}, we only need to take the maximum in the variable $y \in \cY$ and the minimum in $x \in \cX$ and $z \in \cZ$.
\begin{align}
    \label{eq:basic_crit1}
    \text{gap}(\bar x^{K}, \bar z^K, \bar y^K)
    &=
    \max_{y \in \cY}L(\bar x^{K}, \bar z^K, y) - \min_{x,z \in \cX, \cZ} L( x, z, \bar y^K)
    \notag\\
    &=\max_{y \in \cY} \left[\ell (\bar z^K, b) + \sum_{i=1}^n r_i (\bar x^{K}_i) +
    \langle A \bar x^{K} - \bar z^{K}, y\rangle \right]
    - \min_{x,z \in \cX, \cZ} \left[ \ell (z, b) +  \sum_{i=1}^n r_i (x_i) + \langle  A x - z, \bar y^{K} \rangle\right]
    \notag\\
    &=\max_{y \in \cY} \max_{x,z \in \cX, \cZ} \left[\ell (\bar z^K, b) - \ell (z, b) + \sum_{i=1}^n \left( r_i (\bar x^{K}_i) - r_i (x_i)\right) 
    +
    \langle A \bar x^{K} - \bar z^{K}, y\rangle
    - \langle  A x - z, \bar y^{K} \rangle \right]
    \notag\\
    &\leq
    \frac{1}{2\gamma K}(\max_{x \in \cX} \| x^0 - x\|^2 + \max_{z \in \cZ} \| z^0 - z\|^2 + \max_{y \in \cY} \| y^0 - y\|^2).
\end{align}
To complete the proof in the cases \eqref{eq:basic_crit1} 
, it remains to put $\gamma = \tfrac{1}{2} \cdot \min\left\{ 1; \tfrac{1}{\sqrt{\lambda_{\max}(A^T A)}}; \tfrac{1}{L_r}; \tfrac{1}{L_{\ell}}\right\}$.
\end{proof}

\subsection{Proof of Theorem \ref{th:EG_compressed_unbiased}}
\label{sec:proof_EG_compressed_unbiased} 
\label{app:th:EG_compressed_unbiased}

{
\begin{theorem}[Theorem \ref{th:EG_compressed_unbiased}]
Let Assumption \ref{as:convexity_smothness} hold. Let the problem (\ref{eq:vfl_lin_spp_1})
be solved by Algorithm~\ref{alg:EG_quantization} with operators and $Q$, which satisfy Definition \ref{def:quantization}. Then for $\tau = 1 - p$ and
$$
\textstyle{
\gamma = \frac{1}{4}\min\left\{ 1;\frac{1}{L_r}; \frac{1}{L_{\ell}};\sqrt{\frac{1-\tau}{\omega[\lambda_{\max}(A A^T) + \mathbb{I}(\text{diff. seed}) \max_{i}\{\lambda_{\max}(A_i A^T_i)\}]}}; \sqrt{\frac{1-\tau}{\omega\lambda_{\max}(A A^T)}} \right\},
}
$$
it holds that
\begin{align*}
    \E{\text{gap}(\bar x^K, \bar z^K, \bar y^K)} &= 
    \mathcal{O} \Bigg(  [ 1 + \sqrt{\frac{\omega}{p}}  (\sqrt{\lambda_{\max}(A A^T) } ) + L_{\ell} + L_r ] \cdot \frac{D^2}{K}  
    \\
    & \qquad\qquad    +     \sqrt{\frac{\omega}{p}}  
    \mathbb{I}(\text{diff. seed}) \max_{i = 1, \ldots, n} \{\sqrt{\lambda_{\max}(A_i A^T_i) }\}   \cdot \frac{D^2}{K} \Bigg),
\end{align*}
where the indicator function $\mathbb{I}(\text{diff. seed})$ is responsible for whether the different or same random seed is used on all devices, $\bar x^K := \tfrac{1}{K}\sum_{k=0}^{K-1} x^{k+1/2}$, $\bar z^K := \tfrac{1}{K}\sum_{k=0}^{K-1} z^{k+1/2}$, $\bar y^K := \tfrac{1}{K}\sum_{k=0}^{K-1} y^{k+1/2}$ and \\ $D^2 := \max_{x, z, y \in \cX, \cZ, \cY} \left[ \|x^0 - x \|^2 + \|z^0 - z \|^2 + \|y^0 - y \|^2 \right]$.
\end{theorem}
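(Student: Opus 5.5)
We follow the template of the proof of Theorem \ref{th:EG_basic_1}, combined with the loopless variance-reduced extragradient analysis of \cite{alacaoglu2021stochastic}. Write $\omega := (x,z,y)$ for the stacked variable; to avoid a clash with the compression constant, denote it $\zeta$ and its reference point $\hat\zeta^k := (w^k, z^k, u^k)$. We take $z$ as its own reference, since $z$ is never communicated. Denote the monotone operator of \eqref{eq:vfl_lin_spp_1} by
\[
F(\zeta) = \bigl(A^Ty + \nabla r(x),\ \nabla \ell(z,b) - y,\ z - Ax\bigr).
\]
In this notation Algorithm~\ref{alg:EG_quantization} reads
\begin{align*}
\zeta^{k+1/2} &= \tau\zeta^k + (1-\tau)\hat\zeta^k - \gamma \tilde F^k, \\
\zeta^{k+1} &= \tau\zeta^k + (1-\tau)\hat\zeta^k - \gamma g^k .
\end{align*}
Here $g^k$ is obtained from $F(\zeta^{k+1/2})$ by replacing the bilinear parts with $Q(\cdot)$ applied to the differences from the references $(w^k,u^k)$. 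Unbiasedness gives $\mathbb{E}_Q[g^k] = F(\zeta^{k+1/2})$, up to the $\nabla$ terms, which are exact.

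\textbf{Step 1 (one-step inequality).} For an arbitrary point $\zeta$, expand $\|\zeta^{k+1}-\zeta\|^2$ using the identity $\tau\zeta^k+(1-\tau)\hat\zeta^k - \zeta^{k+1} = \gamma g^k$, the convexity split $\|\tau a + (1-\tau)b\|^2 = \tau\|a\|^2+(1-\tau)\|b\|^2-\tau(1-\tau)\|a-b\|^2$, and the three-point identities used in the basic proof. We aim for the Lyapunov function
\[
\Phi^k(\zeta) = \tau\|\zeta^k-\zeta\|^2 + \tfrac{1-\tau}{p}\|\hat\zeta^k-\zeta\|^2 .
\]
The coin flip gives $\mathbb{E}\|\hat\zeta^{k+1}-\zeta\|^2 = p\|\zeta^k-\zeta\|^2+(1-p)\|\hat\zeta^k-\zeta\|^2$, which with $\tau = 1-p$ telescopes. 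The error terms are then
\begin{align*}
&2\gamma^2\,\mathbb{E}\|g^k - \tilde F^k\|^2 \ \text{(stochastic)}, \\
&2\gamma^2\|\nabla r(x^{k+1/2})-\nabla r(x^k)\|^2, \quad 2\gamma^2\|\nabla\ell(z^{k+1/2})-\nabla\ell(z^k)\|^2 .
\end{align*}
The smooth terms are absorbed by $-(1-\tau)\|\zeta^{k+1/2}-\hat\zeta^k\|^2$ and $-\tau\|\zeta^{k+1/2}-\zeta^k\|^2$ type terms. This is where $\gamma\le \tfrac14\min\{1,1/L_r,1/L_\ell\}$ enters, exactly as in Theorem \ref{th:EG_basic_1}.

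\textbf{Step 2 (variance of compression; the main obstacle).} By Definition \ref{def:quantization},
\[
\mathbb{E}\|A^T(Q(y^{k+1/2}-u^k)-(y^{k+1/2}-u^k))\|^2 \le \omega\lambda_{\max}(AA^T)\|y^{k+1/2}-u^k\|^2 .
\]
For the $y$-update we need $\mathbb{E}\|\sum_i (Q(A_i(x_i^{k+1/2}-w_i^k)) - A_i(x^{k+1/2}_i-w^k_i))\|^2$. If the devices use independent seeds, the cross terms vanish, giving $\omega\sum_i \lambda_{\max}(A_iA_i^T)\|x_i^{k+1/2}-w_i^k\|^2 \le \omega\max_i\lambda_{\max}(A_iA_i^T)\|x^{k+1/2}-w^k\|^2$; this produces the indicator term. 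With a common seed (a common sparsification mask), the compressor acts linearly on the sum, and the bound is $\omega\|A(x^{k+1/2}-w^k)\|^2\le\omega\lambda_{\max}(AA^T)\|x^{k+1/2}-w^k\|^2$. Both cases must be controlled by $-(1-\tau)\|\zeta^{k+1/2}-\hat\zeta^k\|^2$. That requires $\gamma^2\omega(\lambda_{\max}(AA^T)+\mathbb{I}\max_i\lambda_{\max}(A_iA_i^T))\lesssim 1-\tau$, which is the stepsize choice. The delicate part is arranging the Young inequalities so that the negative term really involves $\zeta^{k+1/2}-\hat\zeta^k$ rather than $\zeta^{k+1}-\hat\zeta^k$. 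I would follow Alacaoglu--Malitsky's decomposition $\langle g^k - F(\zeta^{k+1/2}), \zeta^{k+1/2}-\zeta\rangle$ plus a correction term. Taking expectation kills the first part only when $\zeta$ is fixed.

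\textbf{Step 3 (gap with a sup inside the expectation).} Summing over $k$ and using monotonicity and convexity (the Jensen step of the basic proof) yields
\[
\sum_k \bigl[L(x^{k+1/2},z^{k+1/2},y) - L(x,z,y^{k+1/2})\bigr] \le \tfrac{1}{2\gamma}\Phi^0 + \sum_k\langle F(\zeta^{k+1/2})-g^k,\zeta^{k+1/2}-\zeta\rangle .
\]
Since the gap takes the max over $\zeta$ before the expectation, handle the martingale term with the standard ghost-iterate trick. Introduce $v^{k+1}=v^k-\gamma(F(\zeta^{k+1/2})-g^k)$ with $v^0 = \zeta^0$, and bound $\sum_k\langle \cdot,\zeta-v^k\rangle\le \tfrac{1}{2\gamma}\|v^0-\zeta\|^2+\tfrac{\gamma}{2}\sum\|F-g^k\|^2$. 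The variance sum is again controlled by Step 2. Taking the max over the compact sets $\mathcal{X},\mathcal{Z},\mathcal{Y}$ gives $\mathbb{E}\,\text{gap}\lesssim D^2/(\gamma K)$, and substituting $\gamma$ gives the stated bound with $\sqrt{\omega/p}$ factors.
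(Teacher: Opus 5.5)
Your skeleton matches the paper's proof: the extragradient three-point identity with the $\tau/(1-\tau)$ mixing, a potential containing the reference points, the same-seed/different-seed variance bounds, and a Young-type treatment of the compression noise. The gap is in how the coin flip enters the gap bound. In Step~1 you telescope using $\mathbb{E}_{b_k}\|w^{k+1}-x\|^2=p\|x^k-x\|^2+(1-p)\|w^k-x\|^2$ (and the analogue for $u$), which holds only for a \emph{fixed} comparison point. Because $\text{gap}$ puts $\max_{x,y}$ inside the expectation, the inequality displayed in Step~3 does not follow. Pathwise there is an extra term: after the quadratic parts cancel, it is an $(x,y)$-independent zero-mean part plus
\[
\frac{1}{\gamma}\sum_{k=0}^{K-1}\Bigl[\langle \delta^x_k, x\rangle+\langle \delta^y_k, y\rangle\Bigr],\qquad \delta^x_k:=(1-\tau)x^k+\tau w^k-w^{k+1},\quad \delta^y_k:=(1-\tau)y^k+\tau u^k-u^{k+1}.
\]
These are martingale differences, but the expected maximum of this sum over $\cX\times\cY$ is not zero, and your ghost iterate only covers $F(\zeta^{k+1/2})-g^k$. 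The paper treats this term separately in \eqref{eq:unbiased_temp8}--\eqref{eq:unbiased_temp9}:
\begin{enumerate}
\item subtract the anchor $x^0$ (resp.\ $y^0$) and apply Young;
\item use orthogonality of martingale differences to get $\mathbb{E}\|\sum_k\delta^x_k\|^2=\sum_k\tau(1-\tau)\mathbb{E}\|w^k-x^k\|^2$;
\item absorb this, via $\|w^k-x^k\|^2\le 2\|w^k-x^{k+1/2}\|^2+2\|x^{k+1/2}-x^k\|^2$, into the negative terms $-(1-\tau)\|w^k-x^{k+1/2}\|^2$ and $-\tau\|x^{k+1/2}-x^k\|^2$.
\end{enumerate}
This consumes part of the same negative terms you use in Steps~1--2. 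That is why the coefficient on $\|x^{k+1/2}-x^k\|^2$ drops to $\frac{3\tau-1}{2}$ and the paper needs $\tau\ge\tfrac12$. It therefore has to be budgeted jointly with the compression variance and the $L_r, L_\ell$ terms.

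A second, smaller error concerns your potential $\Phi^k=\tau\|\zeta^k-\zeta\|^2+\frac{1-\tau}{p}\|\hat\zeta^k-\zeta\|^2$. This is the Alacaoglu--Malitsky potential for the rule ``reference $\leftarrow$ new iterate''. Algorithm~\ref{alg:EG_quantization} instead sets $w^{k+1}=x^k$ and $u^{k+1}=y^k$. With the one-step bound $\|x^{k+1}-x\|^2\le\tau\|x^k-x\|^2+(1-\tau)\|w^k-x\|^2+\dots$ and $\tau=1-p$, you get $\mathbb{E}_{b_k}\Phi^{k+1}-\Phi^k\le(1-\tau)^2\bigl(\|x^k-x\|^2-\|w^k-x\|^2\bigr)+\dots$. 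This extra term neither telescopes nor has a sign. The correct potential puts unit weight on the current iterate, as in the paper: $\|x^k-x\|^2+\|w^k-x\|^2+\|z^k-z\|^2+\|y^k-y\|^2+\|u^k-y\|^2$. Also keep $z$ out of the coin-flip identity: with $\hat z^k=z^k$, its ``reference'' changes every step, so $\mathbb{E}\|\hat\zeta^{k+1}-\zeta\|^2=p\|\zeta^k-\zeta\|^2+(1-p)\|\hat\zeta^k-\zeta\|^2$ fails for that component.
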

}
\begin{proof}
We start the proof with the following equations on the variables $x^{k+1}_i$, $x^{k+1/2}_i$, $x^k_i$ and any $x_i \in \R^{d_i}$:
\begin{align*}
    \|x^{k+1}_i - x_i \|^2
    &=
    \|x^{k}_i - x_i \|^2 + 2 \langle  x^{k+1}_i -  x^{k}_i,  x^{k+1}_i - x_i\rangle - \|  x^{k+1}_i -  x^{k}_i\|^2,
    \\
    \|x^{k+1/2}_i - x^{k+1}_i \|^2
    &=
    \| x^{k}_i - x^{k+1}_i \|^2 + 2 \langle x^{k+1/2}_i -   x^{k}_i, x^{k+1/2}_i - x^{k+1}_i\rangle - \|  x^{k+1/2}_i - x^k_i\|^2.
\end{align*}
Summing up two previous equations and making small rearrangements, we get
\begin{align*}
    \|x^{k+1}_i - x_i \|^2
    =&
    \|x^{k}_i - x_i \|^2 - \|  x^{k+1/2}_i - x^k_i\|^2 - \|x^{k+1/2}_i - x^{k+1}_i \|^2
    \\
    &+
    2 \langle  x^{k+1}_i -  x^{k}_i,  x^{k+1}_i - x_i\rangle + 2 \langle x^{k+1/2}_i -   x^{k}_i, x^{k+1/2}_i - x^{k+1}_i\rangle.
\end{align*}
Using that $x^{k+1}_i -  x^{k}_i = (1 - \tau) (w^k_i -x^k_i) - \gamma (A_i^T  [Q(y^{k+1/2} - u^k) + u^k] + \nabla r_i(x^{k+1/2}_i))$ and $x^{k+1/2}_i -   x^{k}_i = (1 - \tau) (w^k_i - x^k_i) - \gamma (A_i^T u^k + \nabla r_i (x^k_i) )$, we obtain
\begin{align*}
    \|x^{k+1}_i - x_i \|^2
    =&
    \|x^{k}_i - x_i \|^2 - \|  x^{k+1/2}_i - x^k_i\|^2 - \|x^{k+1/2}_i - x^{k+1}_i \|^2
    \\
    &+
    2 (1 - \tau) \langle  w^k_i -x^k_i,  x^{k+1}_i - x_i\rangle 
    \\
    &
    -
    2\gamma \langle  A_i^T  [Q(y^{k+1/2} - u^k) + u^k] + \nabla r_i(x^{k+1/2}_i),  x^{k+1}_i - x_i\rangle
    \\
    &
    + 2 (1 - \tau) \langle w^k_i - x^k_i, x^{k+1/2}_i - x^{k+1}_i\rangle
    \\
    &
    - 2 \gamma \langle A_i^T u^k + \nabla r_i (x^k_i) , x^{k+1/2}_i - x^{k+1}_i\rangle
    \\
    =&
    \|x^{k}_i - x_i \|^2 - \|  x^{k+1/2}_i - x^k_i\|^2 - \|x^{k+1/2}_i - x^{k+1}_i \|^2
    \\
    &+
    2 (1 - \tau) \langle  w^k_i -x^k_i,  x^{k+1/2}_i - x_i\rangle 
    \\
    &
    -
    2\gamma \langle  A_i^T  [Q(y^{k+1/2} - u^k) + u^k] + \nabla r_i(x^{k+1/2}_i),  x^{k+1/2}_i - x_i\rangle
    \\
    &
    + 2 \gamma \langle A_i^T  [Q(y^{k+1/2} - u^k) ] + \nabla r_i(x^{k+1/2}_i) - \nabla r_i (x^k_i) , x^{k+1/2}_i - x^{k+1}_i\rangle
    \\
    =&
    \|x^{k}_i - x_i \|^2 - \|  x^{k+1/2}_i - x^k_i\|^2 - \|x^{k+1/2}_i - x^{k+1}_i \|^2
    \\
    &+
    2 (1 - \tau) \langle  w^k_i -x^{k+1/2}_i,  x^{k+1/2}_i - x_i\rangle 
    \\
    &+
    2 (1 - \tau) \langle  x^{k+1/2}_i -x^k_i,  x^{k+1/2}_i - x_i\rangle 
    \\
    &
    -
    2\gamma \langle  A_i^T  [Q(y^{k+1/2} - u^k) + u^k] + \nabla r_i(x^{k+1/2}_i),  x^{k+1/2}_i - x_i\rangle
    \\
    &
    + 2 \gamma \langle A_i^T  [Q(y^{k+1/2} - u^k) ] + \nabla r_i(x^{k+1/2}_i) - \nabla r_i (x^k_i) , x^{k+1/2}_i - x^{k+1}_i\rangle.
\end{align*}
For the second and third lines we use identity $2\langle a, b \rangle = \| a  + b\|^2 - \| a\|^2 - \| b\|^2$, and get
\begin{align}
\label{eq:unbiased_temp22}
    \|x^{k+1}_i - x_i \|^2
    =&
    \|x^{k}_i - x_i \|^2 - \|  x^{k+1/2}_i - x^k_i\|^2 - \|x^{k+1/2}_i - x^{k+1}_i \|^2
    \notag\\
    &+
    (1 - \tau) ( \| w^k_i - x_i \|^2 - \| w^k_i -x^{k+1/2}_i \|^2 - \| x^{k+1/2}_i - x_i \|^2 ) 
    \notag\\
    &+
    (1 - \tau) ( \| x^{k+1/2}_i -x^k_i \|^2 + \| x^{k+1/2}_i - x_i \|^2 - \| x^k_i - x_i\|^2 )
    \notag\\
    &
    -
    2\gamma \langle  A_i^T  [Q(y^{k+1/2} - u^k) + u^k] + \nabla r_i(x^{k+1/2}_i),  x^{k+1/2}_i - x_i\rangle
    \notag\\
    &
    + 2 \gamma \langle A_i^T  [Q(y^{k+1/2} - u^k) ] + \nabla r_i(x^{k+1/2}_i) - \nabla r_i (x^k_i) , x^{k+1/2}_i - x^{k+1}_i\rangle
    \notag\\
    =&
    \tau \|x^{k}_i - x_i \|^2 + (1 - \tau) \| w^k_i - x_i \|^2 
    \notag\\
    &
    - \tau \|  x^{k+1/2}_i - x^k_i\|^2 - (1 - \tau) \| w^k_i -x^{k+1/2}_i \|^2 - \|x^{k+1/2}_i - x^{k+1}_i \|^2
    \notag\\
    &
    -
    2\gamma \langle  A_i  (x^{k+1/2}_i - x_i),  y^{k+1/2}\rangle
    -
    2\gamma \langle \nabla r_i(x^{k+1/2}_i),  x^{k+1/2}_i - x_i\rangle
    \notag\\
    &
    -
    2\gamma \langle  A_i (x^{k+1/2}_i - x_i)  ,  Q(y^{k+1/2} - u^k) - y^{k+1/2} + u^k \rangle
    \notag\\
    &
    + 2 \gamma \langle A_i (x^{k+1/2}_i - x^{k+1}_i),  Q(y^{k+1/2} - u^k) \rangle
    \notag\\
    &
    + 2 \gamma \langle \nabla r_i(x^{k+1/2}_i) - \nabla r_i (x^k_i) , x^{k+1/2}_i - x^{k+1}_i\rangle.
\end{align}
Summing over all $i$ from $1$ to $n$ and using the notation of $A = [A_1, \ldots, A_i, \ldots, A_n] $, $x = [x_1^T, \ldots,x_i^T, \ldots,x_n^T]^T$, $w = [w_1^T, \ldots,w_i^T, \ldots,w_n^T]^T$, we deduce
\begin{align*}
    \|x^{k+1} - x \|^2
    =&
    \tau \|x^{k} - x \|^2 + (1 - \tau) \| w^k - x \|^2 
    \\
    &
    - \tau \|  x^{k+1/2} - x^k\|^2 - (1 - \tau) \| w^k -x^{k+1/2} \|^2 - \|x^{k+1/2} - x^{k+1} \|^2
    \\
    &
    -
    2\gamma \langle  A  (x^{k+1/2} - x),  y^{k+1/2}\rangle
    -
    2\gamma \sum\limits_{i=1}^n\langle \nabla r_i(x^{k+1/2}_i),  x^{k+1/2}_i - x_i\rangle
    \\
    &
    -
    2\gamma \langle  A (x^{k+1/2} - x)  ,  Q(y^{k+1/2} - u^k) - y^{k+1/2} + u^k \rangle
    \\
    &
    + 2 \gamma \langle A (x^{k+1/2} - x^{k+1}),  Q(y^{k+1/2} - u^k) \rangle
    \\
    &
    + 2 \gamma \sum\limits_{i=1}^n \langle \nabla r_i(x^{k+1/2}_i) - \nabla r_i (x^k_i) , x^{k+1/2}_i - x^{k+1}_i\rangle
    \\
    =&
    \tau \|x^{k} - x \|^2 + (1 - \tau) \| w^k - x \|^2 
    \\
    &
    - \tau \|  x^{k+1/2} - x^k\|^2 - (1 - \tau) \| w^k -x^{k+1/2} \|^2 - \|x^{k+1/2} - x^{k+1} \|^2
    \\
    &
    -
    2\gamma \langle  A  (x^{k+1/2} - x),  y^{k+1/2}\rangle
    -
    2\gamma \sum\limits_{i=1}^n\langle \nabla r_i(x^{k+1/2}_i),  x^{k+1/2}_i - x_i\rangle
    \\
    &
    -
    2\gamma \langle  A (x^{k+1/2} - x)  ,  Q(y^{k+1/2} - u^k) - y^{k+1/2} + u^k \rangle
    \\
    &
    + 2 \gamma \langle A^T Q(y^{k+1/2} - u^k),  x^{k+1/2} - x^{k+1} \rangle
    \\
    &
    + 2 \gamma \sum\limits_{i=1}^n \langle \nabla r_i(x^{k+1/2}_i) - \nabla r_i (x^k_i) , x^{k+1/2}_i - x^{k+1}_i\rangle.
\end{align*}
By simple fact: $2 \langle a, b \rangle \leq \eta \| a \|^2 + \tfrac{1}{\eta}\| b \|^2$ with $a = A^T Q( y^{k+1/2} - u^{k})$, $b = x^{k+1/2} - x^{k+1}$, $\eta = 2\gamma$ and $a = \nabla r_i(x^{k+1/2}_i) - \nabla r_i (x^k_i)$, $b = x^{k+1/2}_i - x^{k+1}_i$, $\eta = 2\gamma$, we get
\begin{align*}
    \|x^{k+1} - x \|^2
    \leq&
    \tau \|x^{k} - x \|^2 + (1 - \tau) \| w^k - x \|^2 
    \\
    &
    - \tau \|  x^{k+1/2} - x^k\|^2 - (1 - \tau) \| w^k -x^{k+1/2} \|^2 - \|x^{k+1/2} - x^{k+1} \|^2
    \\
    &
    -
    2\gamma \langle  A  (x^{k+1/2} - x),  y^{k+1/2}\rangle
    -
    2\gamma \sum\limits_{i=1}^n\langle \nabla r_i(x^{k+1/2}_i),  x^{k+1/2}_i - x_i\rangle
    \\
    &
    -
    2\gamma \langle  A (x^{k+1/2} - x)  ,  Q(y^{k+1/2} - u^k) - y^{k+1/2} + u^k \rangle
    \\
    &
    + 2 \gamma^2 \| A^T Q(y^{k+1/2} - u^k) \|^2 + \frac{1}{2}  \|x^{k+1/2} - x^{k+1} \|^2
    \\
    &
    + 2 \gamma^2 \sum\limits_{i=1}^n \| \nabla r_i(x^{k+1/2}_i) - \nabla r_i (x^k_i) \|^2 + \frac{1}{2}\| x^{k+1/2} - x^{k+1} \|^2.
\end{align*}
Adding to the both sides $\| w^{k+1} - x\|^2$, one can obtain
\begin{align}
\label{eq:unbiased_proof_temp1}
    \|x^{k+1} - x \|^2 + \|w^{k+1} - x \|^2
    \leq&
    \|x^{k} - x \|^2 + \| w^k - x \|^2 
    \notag\\
    &- (1 - \tau) \|x^{k} - x \|^2 - \tau \| w^k - x \|^2  + \|w^{k+1} - x \|^2
    \notag\\
    &
    - \tau \|  x^{k+1/2} - x^k\|^2 - (1 - \tau) \| w^k -x^{k+1/2} \|^2 
    \notag\\
    &
    -
    2\gamma \langle  A  (x^{k+1/2} - x),  y^{k+1/2}\rangle
    -
    2\gamma \sum\limits_{i=1}^n\langle \nabla r_i(x^{k+1/2}_i),  x^{k+1/2}_i - x_i\rangle
    \notag\\
    &
    -
    2\gamma \langle  A (x^{k+1/2} - x)  ,  Q(y^{k+1/2} - u^k) - y^{k+1/2} + u^k \rangle
    \notag\\
    &
    + 2 \gamma^2 \| A^T Q(y^{k+1/2} - u^k) \|^2
    \notag\\
    &
    + 2 \gamma^2 \sum\limits_{i=1}^n \| \nabla r_i(x^{k+1/2}_i) - \nabla r_i (w^k_i) \|^2
    \notag\\
    =&
    \|x^{k} - x \|^2 + \| w^k - x \|^2 
    \notag\\
    &
    - \tau \|  x^{k+1/2} - x^k\|^2 - (1 - \tau) \| w^k -x^{k+1/2} \|^2
    \notag\\
    &
    -
    2\gamma \langle  A  (x^{k+1/2} - x),  y^{k+1/2}\rangle
    -
    2\gamma \sum\limits_{i=1}^n\langle \nabla r_i(x^{k+1/2}_i),  x^{k+1/2}_i - x_i\rangle
    \notag\\
    &- (1 - \tau) \|x^{k}\|^2 - \tau \| w^k \|^2  + \|w^{k+1} \|^2
    \notag\\
    &+ 2\langle (1 - \tau) x^k + \tau w^k - w^{k+1}, x\rangle 
    \notag\\
    &
    -
    2\gamma \langle  A (x^{k+1/2} - x^0)  ,  Q(y^{k+1/2} - u^k) - y^{k+1/2} + u^k \rangle
    \notag\\
    &
    -
    2\gamma \langle  A (x^{0} - x)  ,  Q(y^{k+1/2} - u^k) - y^{k+1/2} + u^k \rangle
    \notag\\
    &
    + 2 \gamma^2 \| A^T Q(y^{k+1/2} - u^k) \|^2
    \notag\\
    &
    + 2 \gamma^2 \sum\limits_{i=1}^n \| \nabla r_i(x^{k+1/2}_i) - \nabla r_i (x^k_i) \|^2.
\end{align}
Using the same steps, one can obtain for $z \in \R^s$,
\begin{align}
    \label{eq:unbiased_proof_temp2}
    \|z^{k+1} - z \|^2
    \leq&
    \|z^{k} - z \|^2 - \|  z^{k+1/2} - z^k\|^2 
    \notag\\
    &+
    2\gamma \langle  y^{k+1/2} ,  z^{k+1/2} - z\rangle
    -
    2\gamma \langle  \nabla \ell(z^{k+1/2}, b),  z^{k+1/2} - z\rangle
    \notag\\
    &+
    2\gamma^2 \|y^{k+1/2} - y^k\|^2 +
    2\gamma^2 \| \nabla \ell(z^{k+1/2}, b) -\nabla \ell(z^k, b)\|^2.
\end{align}
and for all $y \in \R^s$,
\begin{align}
\label{eq:unbiased_proof_temp3}
    \| y^{k+1} - y \|^2 + \|u^{k+1} - y \|^2
    \leq&
    \|y^{k} - y \|^2 + \| u^k - y \|^2 
    \notag\\
    &
    - \tau \|  y^{k+1/2} - y^k\|^2 - (1 - \tau) \| u^k -y^{k+1/2} \|^2
    \notag\\
    &
    -
    2\gamma \langle  z^{k+1/2},  y^{k+1/2} - y\rangle
    +
    2\gamma \langle \sum_{i=1}^n A_i x^{k+1/2}_i,  y^{k+1/2} - y\rangle
    \notag\\
    &
    +
    2\gamma \langle \sum_{i=1}^n [Q(A_i x^{k+1/2}_i - A_i w^{k}_i) + A_i w^{k}_i - A_i x^{k+1/2}_i ] ,  y^{k+1/2} - y^0\rangle
    \notag\\
    &
    +
    2\gamma \langle \sum_{i=1}^n [Q(A_i x^{k+1/2}_i - A_i w^{k}_i) + A_i w^{k}_i - A_i x^{k+1/2}_i ] ,  y^{0} - y\rangle
    \notag\\
    &- (1 - \tau) \|y^{k}\|^2 - \tau \| u^k \|^2  + \|y^{k+1} \|^2
    \notag\\
    &+ 2\langle (1 - \tau) y^k + \tau u^k - u^{k+1}, y\rangle 
    \notag\\
    &
    + 2 \gamma^2 \| \sum_{i=1}^n Q(A_i x^{k+1/2}_i - A_i w^{k}_i) \|^2
    + 2 \gamma^2 \| z^{k+1/2} - z^k \|^2.
\end{align}
Summing up (\ref{eq:unbiased_proof_temp1}), (\ref{eq:unbiased_proof_temp2}) and (\ref{eq:unbiased_proof_temp3}), we obtain
\begin{align*}
    \|x^{k+1} - x \|^2 + \|w^{k+1}  - x \|^2 + \|z^{k+1} - z \|^2  + & \|y^{k+1} - y \|^2 + \| u^{k+1} - y \|^2   
    \notag\\
    \leq&
    \|x^{k} - x \|^2 + \| w^k - x \|^2 + \|z^{k} - z \|^2 + \|y^{k} - y \|^2 + \| u^k - y \|^2 
    \notag\\
    &
    - \tau \|  x^{k+1/2} - x^k\|^2 - (1 - \tau) \| w^k -x^{k+1/2} \|^2 - \|  z^{k+1/2} - z^k\|^2 
    \notag\\
    &
    - \tau \|  y^{k+1/2} - y^k\|^2 - (1 - \tau) \| u^k -y^{k+1/2} \|^2
    \notag\\
    &
    -
    2\gamma \langle  A  (x^{k+1/2} - x),  y^{k+1/2}\rangle
    +
    2\gamma \langle  y^{k+1/2} ,  z^{k+1/2} - z\rangle
    \\
    &-
    2\gamma \langle  z^{k+1/2},  y^{k+1/2} - y\rangle
    +
    2\gamma \langle \sum_{i=1}^n A_i x^{k+1/2}_i,  y^{k+1/2} - y\rangle
    \\
    &
    -
    2\gamma \sum\limits_{i=1}^n\langle \nabla r_i(x^{k+1/2}_i),  x^{k+1/2}_i - x_i\rangle
    -
    2\gamma \langle  \nabla \ell(z^{k+1/2}, b),  z^{k+1/2} - z\rangle
    \notag\\
    &- (1 - \tau) \|x^{k}\|^2 - \tau \| w^k \|^2  + \|w^{k+1} \|^2
    \notag\\
    &+ 2\langle (1 - \tau) x^k + \tau w^k - w^{k+1}, x\rangle 
    \notag\\
    &- (1 - \tau) \|y^{k}\|^2 - \tau \| u^k \|^2  + \|y^{k+1} \|^2
    \notag\\
    &+ 2\langle (1 - \tau) y^k + \tau u^k - u^{k+1}, y\rangle 
    \notag\\
    &
    -
    2\gamma \langle  A (x^{k+1/2} - x^0)  ,  Q(y^{k+1/2} - u^k) - y^{k+1/2} + u^k \rangle
    \notag\\
    &
    -
    2\gamma \langle  A (x^{0} - x)  ,  Q(y^{k+1/2} - u^k) - y^{k+1/2} + u^k \rangle
    \notag\\
    &
    +
    2\gamma \langle \sum_{i=1}^n [Q(A_i x^{k+1/2}_i - A_i w^{k}_i) + A_i w^{k}_i - A_i x^{k+1/2}_i ] ,  y^{k+1/2} - y^0\rangle
    \notag\\
    &
    +
    2\gamma \langle \sum_{i=1}^n [Q(A_i x^{k+1/2}_i - A_i w^{k}_i) + A_i w^{k}_i - A_i x^{k+1/2}_i ] ,  y^{0} - y\rangle
    \notag\\
    &
    + 2 \gamma^2 \| A^T Q(y^{k+1/2} - u^k) \|^2
    + 2 \gamma^2 \| \sum_{i=1}^n Q(A_i x^{k+1/2}_i - A_i w^{k}_i) \|^2
    \notag\\
    &
    + 2 \gamma^2 \sum\limits_{i=1}^n \| \nabla r_i(x^{k+1/2}_i) - \nabla r_i (x^k_i) \|^2
    + 2\gamma^2 \|y^{k+1/2} - y^k\|^2
    \notag\\
    &
    +
    2 \gamma^2 \| z^{k+1/2} - z^k \|^2 
    +
    2\gamma^2 \| \nabla \ell(z^{k+1/2}, b) -\nabla \ell(z^k, b)\|^2.
\end{align*}
After small rearrangements, we have      
\begin{align*}
    2\gamma \bigg[\langle  \nabla \ell (z^{k+1/2}, b), & z^{k+1/2} - z\rangle + \sum\limits_{i=1}^n\langle \nabla r_i(x^{k+1/2}_i),  x^{k+1/2}_i - x_i\rangle
    \\
    +
     \langle A  x^{k+1/2}  - & z^{k+1/2} ,  y\rangle 
    - \langle  A  x  - z,   y^{k+1/2}\rangle
    \bigg]
    \\
    &\hspace{1.4cm}\leq
    \|x^{k} - x \|^2 + \| w^k - x \|^2 + \|z^{k} - z \|^2 + \|y^{k} - y \|^2 + \| u^k - y \|^2
    \notag\\
    &\hspace{1.8cm}
    - \left(\|x^{k+1} - x \|^2 + \|w^{k+1} - x \|^2 + \|z^{k+1} - z \|^2  + \|y^{k+1} - y \|^2 + \| u^{k+1} - y \|^2 \right) 
    \notag\\
    &\hspace{1.8cm}
    - \tau \|  x^{k+1/2} - x^k\|^2 - (1 - \tau) \| w^k -x^{k+1/2} \|^2 - \|  z^{k+1/2} - z^k\|^2 
    \notag\\
    &\hspace{1.8cm}
    - \tau \|  y^{k+1/2} - y^k\|^2 - (1 - \tau) \| u^k -y^{k+1/2} \|^2
    \notag\\
    &\hspace{1.8cm}
    - (1 - \tau) \|x^{k}\|^2 - \tau \| w^k \|^2  + \|w^{k+1} \|^2
    \notag\\
    &\hspace{1.8cm}
    + 2\langle (1 - \tau) x^k + \tau w^k - w^{k+1}, x\rangle 
    \notag\\
    &\hspace{1.8cm}
    - (1 - \tau) \|y^{k}\|^2 - \tau \| u^k \|^2  + \|y^{k+1} \|^2
    \notag\\
    &\hspace{1.8cm}
    + 2\langle (1 - \tau) y^k + \tau u^k - u^{k+1}, y\rangle 
    \notag\\
    &\hspace{1.8cm}
    -
    2\gamma \langle  A (x^{k+1/2} - x^0)  ,  Q(y^{k+1/2} - u^k) - y^{k+1/2} + u^k \rangle
    \notag\\
    &\hspace{1.8cm}
    -
    2\gamma \langle  A (x^{0} - x)  ,  Q(y^{k+1/2} - u^k) - y^{k+1/2} + u^k \rangle
    \notag\\
    &\hspace{1.8cm}
    +
    2\gamma \langle \sum_{i=1}^n [Q(A_i x^{k+1/2}_i - A_i w^{k}_i) + A_i w^{k}_i - A_i x^{k+1/2}_i ] ,  y^{k+1/2} - y^0\rangle
    \notag\\
    &\hspace{1.8cm}
    +
    2\gamma \langle \sum_{i=1}^n [Q(A_i x^{k+1/2}_i - A_i w^{k}_i) + A_i w^{k}_i - A_i x^{k+1/2}_i ] ,  y^{0} - y\rangle
    \notag\\
    &\hspace{1.8cm}
    + 2 \gamma^2 \| A^T Q(y^{k+1/2} - u^k) \|^2
    + 2 \gamma^2 \| \sum_{i=1}^n Q(A_i x^{k+1/2}_i - A_i w^{k}_i) \|^2
    \notag\\
    &\hspace{1.8cm}
    + 2 \gamma^2 \sum\limits_{i=1}^n \| \nabla r_i(x^{k+1/2}_i) - \nabla r_i (x^k_i) \|^2
    + 2\gamma^2 \|y^{k+1/2} - y^k\|^2
    \notag\\
    &\hspace{1.8cm}
    +
    2 \gamma^2 \| z^{k+1/2} - z^k \|^2 
    +
    2\gamma^2 \| \nabla \ell(z^{k+1/2}, b) -\nabla \ell(z^k, b)\|^2.
\end{align*}
Using convexity and $L_r$-smoothness of the function $r_i$ with convexity and $L_{\ell}$-smoothness of the function $\ell$ (Assumption \ref{as:convexity_smothness}), we have
\begin{align*}
    2\gamma \bigg[\ell(z^{k+1/2}, b) - \ell(z, b) &+ \sum\limits_{i=1}^n r_i(x^{k+1/2}_i) - r_i(x_i)
    \\
    +
    \langle A  x^{k+1/2} - z^{k+1/2}, & y\rangle 
    - \langle  A  x - z,  y^{k+1/2}\rangle
    \bigg]
    \\
    &\hspace{1.cm}\leq
    \|x^{k} - x \|^2 + \| w^k - x \|^2 + \|z^{k} - z \|^2 + \|y^{k} - y \|^2 + \| u^k - y \|^2
    \notag\\
    &\hspace{1.4cm}
    - \left(\|x^{k+1} - x \|^2 + \|w^{k+1} - x \|^2 + \|z^{k+1} - z \|^2  + \|y^{k+1} - y \|^2 + \| u^{k+1} - y \|^2 \right) 
    \notag\\
    &\hspace{1.4cm}
    - \tau \|  x^{k+1/2} - x^k\|^2 - (1 - \tau) \| w^k -x^{k+1/2} \|^2 - \|  z^{k+1/2} - z^k\|^2 
    \notag\\
    &\hspace{1.4cm}
    - \tau \|  y^{k+1/2} - y^k\|^2 - (1 - \tau) \| u^k -y^{k+1/2} \|^2
    \notag\\
    &\hspace{1.4cm}
    - (1 - \tau) \|x^{k}\|^2 - \tau \| w^k \|^2  + \|w^{k+1} \|^2
    \notag\\
    &\hspace{1.4cm}
    + 2\langle (1 - \tau) x^k + \tau w^k - w^{k+1}, x\rangle 
    \notag\\
    &\hspace{1.4cm}
    - (1 - \tau) \|y^{k}\|^2 - \tau \| u^k \|^2  + \|y^{k+1} \|^2
    \notag\\
    &\hspace{1.4cm}
    + 2\langle (1 - \tau) y^k + \tau u^k - u^{k+1}, y\rangle 
    \notag\\
    &\hspace{1.4cm}
    -
    2\gamma \langle  A (x^{k+1/2} - x^0)  ,  Q(y^{k+1/2} - u^k) - y^{k+1/2} + u^k \rangle
    \notag\\
    &\hspace{1.4cm}
    -
    2\gamma \langle  A (x^{0} - x)  ,  Q(y^{k+1/2} - u^k) - y^{k+1/2} + u^k \rangle
    \notag\\
    &\hspace{1.4cm}
    +
    2\gamma \langle \sum_{i=1}^n [Q(A_i x^{k+1/2}_i - A_i w^{k}_i) + A_i w^{k}_i - A_i x^{k+1/2}_i ] ,  y^{k+1/2} - y^0\rangle
    \notag\\
    &\hspace{1.4cm}
    +
    2\gamma \langle \sum_{i=1}^n [Q(A_i x^{k+1/2}_i - A_i w^{k}_i) + A_i w^{k}_i - A_i x^{k+1/2}_i ] ,  y^{0} - y\rangle
    \notag\\
    &\hspace{1.4cm}
    + 2 \gamma^2 \| A^T Q(y^{k+1/2} - u^k) \|^2
    + 2 \gamma^2 \| \sum_{i=1}^n Q(A_i x^{k+1/2}_i - A_i w^{k}_i) \|^2
    \notag\\
    &\hspace{1.4cm}
    + 2 \gamma^2 L_r^2 \|x^{k+1/2} - x^k \|^2
    + 2\gamma^2 \|y^{k+1/2} - y^k\|^2
    \notag\\
    &\hspace{1.4cm}
    +
    2 \gamma^2 \| z^{k+1/2} - z^k \|^2 
    +
    2\gamma^2 L_{\ell}^2 \| z^{k+1/2} - z^k \|^2.
\end{align*}
Then we sum all over $k$ from $0$ to $K-1$, divide by $K$, use Jensen inequality for convex functions $\ell$ and $r_i$ with notation $\bar x^K_i = \frac{1}{K} \sum\limits_{k=0}^{K-1} x^{k+1/2}_i$, $\bar z^K = \frac{1}{K} \sum\limits_{k=0}^{K-1} z^{k+1/2}$, $\bar y^K = \frac{1}{K} \sum\limits_{k=0}^{K-1} y^{k+1/2}$, and have
\begin{align*}
    2\gamma \bigg[\ell (\bar z^K, b) - \ell (z, b) + \sum_{i=1}^n &\left( r_i (\bar x^{K}_i) - r_i (x_i)\right) 
    +
    \langle A  \bar x^{K} - \bar z^{K},  y\rangle 
    - \langle  A  x - z,  \bar y^{K}\rangle
    \bigg]
    \\
    &\hspace{1.4cm}\leq
    \frac{1}{K}\left(\|x^{0} - x \|^2 + \| w^0 - x \|^2 + \|z^{0} - z \|^2 + \|y^{0} - y \|^2 + \| u^0 - y \|^2 \right)
    \notag\\
    &\hspace{1.8cm}
    - \frac{1}{K} \left(\|x^{K} - x \|^2 + \|w^{K} - x \|^2 + \|z^{K} - z \|^2  + \|y^{K} - y \|^2 + \| u^{K} - y \|^2 \right) 
    \notag\\
    &\hspace{1.8cm}
    -  \frac{\tau}{K} \sum\limits_{k=0}^{K-1} \|  x^{k+1/2} - x^k\|^2 -  \frac{1 - \tau}{K} \sum\limits_{k=0}^{K-1} \| w^k -x^{k+1/2} \|^2
    \notag\\
    &\hspace{1.8cm}
    - \frac{1}{K} \sum\limits_{k=0}^{K-1} \|  z^{k+1/2} - z^k\|^2 
    \notag\\
    &\hspace{1.8cm}
    - \frac{\tau}{K} \sum\limits_{k=0}^{K-1} \|  y^{k+1/2} - y^k\|^2 - \frac{1 - \tau}{K} \sum\limits_{k=0}^{K-1} \| u^k -y^{k+1/2} \|^2
    \notag\\
    &\hspace{1.8cm}
    +\frac{1}{K} \sum\limits_{k=0}^{K-1} \left[ \|w^{k+1} \|^2 - (1 - \tau) \|x^{k}\|^2 - \tau \| w^k \|^2 \right] 
    \notag\\
    &\hspace{1.8cm}
    + \frac{2}{K} \sum\limits_{k=0}^{K-1}\langle (1 - \tau) x^k + \tau w^k - w^{k+1}, x\rangle 
    \notag\\
    &\hspace{1.8cm}
    +\frac{1}{K} \sum\limits_{k=0}^{K-1} \left[  \|y^{k+1} \|^2 - (1 - \tau) \|y^{k}\|^2 - \tau \| u^k \|^2 \right]
    \notag\\
    &\hspace{1.8cm}
    + \frac{2}{K} \sum\limits_{k=0}^{K-1} \langle (1 - \tau) y^k + \tau u^k - u^{k+1}, y\rangle 
    \notag\\
    &\hspace{1.8cm}
    -
    \frac{2\gamma}{K} \sum\limits_{k=0}^{K-1} \langle  A (x^{k+1/2} - x^0)  ,  Q(y^{k+1/2} - u^k) - y^{k+1/2} + u^k \rangle
    \notag\\
    &\hspace{1.8cm}
    -
    \frac{2\gamma}{K} \sum\limits_{k=0}^{K-1} \langle  A (x^{0} - x)  ,  Q(y^{k+1/2} - u^k) - y^{k+1/2} + u^k \rangle
    \notag\\
    &\hspace{1.8cm}
    +
    \frac{2\gamma}{K} \sum\limits_{k=0}^{K-1} \langle \sum_{i=1}^n [Q(A_i x^{k+1/2}_i - A_i w^{k}_i) + A_i w^{k}_i - A_i x^{k+1/2}_i ] ,  y^{k+1/2} - y^0\rangle
    \notag\\
    &\hspace{1.8cm}
    +
    \frac{2\gamma}{K} \sum\limits_{k=0}^{K-1} \langle \sum_{i=1}^n [Q(A_i x^{k+1/2}_i - A_i w^{k}_i) + A_i w^{k}_i - A_i x^{k+1/2}_i ] ,  y^{0} - y\rangle
    \notag\\
    &\hspace{1.8cm}
    + \frac{2\gamma^2}{K} \sum\limits_{k=0}^{K-1} \| A^T Q(y^{k+1/2} - u^k) \|^2
    + \frac{2\gamma^2}{K} \sum\limits_{k=0}^{K-1} \| \sum_{i=1}^n Q(A_i x^{k+1/2}_i - A_i w^{k}_i) \|^2
    \notag\\
    &\hspace{1.8cm}
    + \frac{2\gamma^2 L_r^2}{K} \sum\limits_{k=0}^{K-1} \|x^{k+1/2} - x^k \|^2
    + \frac{2\gamma^2}{K} \sum\limits_{k=0}^{K-1} \|y^{k+1/2} - y^k\|^2
    \notag\\
    &\hspace{1.8cm}
    +
    \frac{2\gamma^2}{K} \sum\limits_{k=0}^{K-1} \| z^{k+1/2} - z^k \|^2 
    +
    \frac{2\gamma^2 L_{\ell}^2}{K} \sum\limits_{k=0}^{K-1} \| z^{k+1/2} - z^k \|^2.
\end{align*}
As in (\ref{eq:basic_crit1}) we pass to the gap criterion by taking the maximum in $y \in \cY$ and the minimum in $x\in \cX$ and $z \in \cZ$. Additionally, we also take the mathematical expectation
\begin{align}
    \label{eq:unbiased_temp5}
    2\gamma \E{\text{gap}(\bar x^K, \bar z^K, \bar y^K)}
    &\leq
    \frac{1}{K}\bigg(\max_{x \in \cX}\|x^{0} - x \|^2 + \max_{x \in \cX}\| w^0 - x \|^2 + \max_{z \in \cZ}\|z^{0} - z \|^2
    \notag\\
    &\hspace{.4cm}
    + \max_{y \in \cY}\|y^{0} - y \|^2 + \max_{y \in \cY}\| u^0 - y \|^2 \bigg)
    \notag\\
    &\hspace{.4cm}
    -  \frac{\tau}{K} \sum\limits_{k=0}^{K-1} \E{\|  x^{k+1/2} - x^k\|^2} -  \frac{1 - \tau}{K} \sum\limits_{k=0}^{K-1} \E{\| w^k -x^{k+1/2} \|^2} 
    \notag\\
    &\hspace{.4cm}
    - \frac{1}{K} \sum\limits_{k=0}^{K-1} \E{\|  z^{k+1/2} - z^k\|^2}
    \notag\\
    &\hspace{.4cm}
    - \frac{\tau}{K} \sum\limits_{k=0}^{K-1} \E{\|  y^{k+1/2} - y^k\|^2} - \frac{1 - \tau}{K} \sum\limits_{k=0}^{K-1} \E{\| u^k -y^{k+1/2} \|^2}
    \notag\\
    &\hspace{.4cm}
    +\frac{1}{K} \sum\limits_{k=0}^{K-1} \E{ \|w^{k+1} \|^2 - (1 - \tau) \|x^{k}\|^2 - \tau \| w^k \|^2} 
    \notag\\
    &\hspace{.4cm}
    + \frac{2}{K} \E{\max_{x\in \cX} \sum\limits_{k=0}^{K-1} \langle (1 - \tau) x^k + \tau w^k - w^{k+1}, x\rangle} 
    \notag\\
    &\hspace{.4cm}
    +\frac{1}{K} \sum\limits_{k=0}^{K-1} \E{  \|y^{k+1} \|^2 - (1 - \tau) \|y^{k}\|^2 - \tau \| u^k \|^2 }
    \notag\\
    &\hspace{.4cm}
    + \frac{2}{K} \E{\max_{y \in \cY}\sum\limits_{k=0}^{K-1} \langle (1 - \tau) y^k + \tau u^k - u^{k+1}, y\rangle }
    \notag\\
    &\hspace{.4cm}
    -
    \frac{2\gamma}{K} \sum\limits_{k=0}^{K-1} \E{\langle  A (x^{k+1/2} - x^0)  ,  Q(y^{k+1/2} - u^k) - y^{k+1/2} + u^k \rangle}
    \notag\\
    &\hspace{.4cm}
    +
    \frac{2\gamma}{K} \cdot \E{ \max_{x \in \cX}\sum\limits_{k=0}^{K-1} \langle  A (x - x^0)  ,  Q(y^{k+1/2} - u^k) - y^{k+1/2} + u^k \rangle}
    \notag\\
    &\hspace{.4cm}
    +
    \frac{2\gamma}{K} \sum\limits_{k=0}^{K-1} \E{\langle \sum_{i=1}^n [Q(A_i x^{k+1/2}_i - A_i w^{k}_i) + A_i w^{k}_i - A_i x^{k+1/2}_i ] ,  y^{k+1/2} - y^0\rangle}
    \notag\\
    &\hspace{.4cm}
    +
    \frac{2\gamma}{K} \cdot \E{ \max_{y \in \cY}\sum\limits_{k=0}^{K-1} \langle \sum_{i=1}^n [Q(A_i x^{k+1/2}_i - A_i w^{k}_i) + A_i w^{k}_i - A_i x^{k+1/2}_i ] ,  y^{0} - y\rangle}
    \notag\\
    &\hspace{.4cm}
    + \frac{2\gamma^2}{K} \sum\limits_{k=0}^{K-1} \E{\| A^T Q(y^{k+1/2} - u^k) \|^2}
    \notag\\
    &\hspace{.4cm}
    + \frac{2\gamma^2}{K} \sum\limits_{k=0}^{K-1} \E{\| \sum_{i=1}^n Q(A_i x^{k+1/2}_i - A_i w^{k}_i) \|^2}
    \notag\\
    &\hspace{.4cm}
    + \frac{2\gamma^2 L_r^2}{K} \sum\limits_{k=0}^{K-1} \E{\|x^{k+1/2} - x^k \|^2}
    + \frac{2\gamma^2}{K} \sum\limits_{k=0}^{K-1} \E{\|y^{k+1/2} - y^k\|^2}
    \notag\\
    &\hspace{.4cm}
    +
    \frac{2\gamma^2}{K} \sum\limits_{k=0}^{K-1} \E{\| z^{k+1/2} - z^k \|^2 }
    +
    \frac{2\gamma^2 L_{\ell}^2}{K} \sum\limits_{k=0}^{K-1} \E{\| z^{k+1/2} - z^k \|^2}.
\end{align}
Next, we work with the terms of (\ref{eq:unbiased_temp5}) separately. Using that $1 - \tau = p$ and lines \ref{lin_alg:unbiased_linws} -- \ref{lin_alg:unbiased_linwf}, we get
\begin{align}
\label{eq:unbiased_temp6}
\E{ \|w^{k+1} \|^2 - (1 - \tau) \|x^{k}\|^2 - \tau \| w^k \|^2} 
&=
\E{ \mathbb{E}_{b_k} \left[\|w^{k+1} \|^2\right] - (1 - \tau) \|x^{k}\|^2 - \tau \| w^k \|^2 }
\notag
\\
&=
\E{  p \|x^{k}\|^2 + (1-p)\|w^{k}\|^2 - (1 - \tau) \|x^{k}\|^2 - \tau \| w^k \|^2 } = 0.
\end{align}
The same way we can obtain 
\begin{align}
\label{eq:unbiased_temp7}
\E{ \|u^{k+1} \|^2 - (1 - \tau) \|y^{k}\|^2 - \tau \| u^k \|^2}  = 0.
\end{align}
With $1 - \tau = p$, one can also note
\begin{align*}
\E{\max_{x\in \cX} \sum\limits_{k=0}^{K-1} \langle (1 - \tau) x^k + \tau w^k - w^{k+1}, x\rangle} 
=&
\E{\max_{x\in \cX} \sum\limits_{k=0}^{K-1} \langle (1 - \tau) x^k + \tau w^k - w^{k+1}, x \rangle} + 0
\\
=&
\E{\max_{x\in \cX} \sum\limits_{k=0}^{K-1} \langle (1 - \tau) x^k + \tau w^k - w^{k+1}, x\rangle}
\\
&+
\E{\sum\limits_{k=0}^{K-1} \langle (1 - \tau) x^k + \tau w^k - \mathbb{E}_{b_k}[w^{k+1}], - x^0\rangle}
\\
=&
\E{\max_{x\in \cX} \sum\limits_{k=0}^{K-1} \langle (1 - \tau) x^k + \tau w^k - w^{k+1}, x - x^0\rangle}.
\end{align*}
By Cauchy Schwartz  inequality: $2 \langle a, b \rangle \leq \eta \| a \|^2 + \tfrac{1}{\eta}\| b \|^2$ with $a = \sum_{k=0}^{K-1}[(1 - \tau) x^k + \tau w^k - w^{k+1}]$, $b = x - x^0$ and $\eta = \tfrac{1}{4}$, one can obtain
\begin{align}
\label{eq:unbiased_temp8}
&\E{\max_{x\in \cX} \sum\limits_{k=0}^{K-1} \langle (1 - \tau) x^k + \tau w^k - w^{k+1}, x\rangle} 
\notag\\
&\hspace{1cm}\leq
\E{\max_{x\in \cX} \left[ \frac{1}{8}\|\sum\limits_{k=0}^{K-1} [(1 - \tau) x^k + \tau w^k - w^{k+1}] \|^2 + 2 \|x - x^0\|^2 \right]}
\notag\\
&\hspace{1cm}=
\E{\max_{x\in \cX} 2 \|x - x^0\|^2} +  \E{ \frac{1}{8} \|\sum\limits_{k=0}^{K-1} [(1 - \tau) x^k + \tau w^k - w^{k+1}] \|^2}
\notag\\
&\hspace{1cm}=
\E{\max_{x\in \cX} 2 \|x - x^0\|^2} +  \frac{1}{8} \sum\limits_{k=0}^{K-1}  \E{ \| (1 - \tau) x^k + \tau w^k - w^{k+1} \|^2} 
\notag\\
&\hspace{1.4cm}+ \frac{1}{4}\sum\limits_{k_1 < k_2}  \E{ \langle (1 - \tau) x^{k_1}+ \tau w^{k_1} - w^{k_1+1}, (1 - \tau) x^{k_2}+ \tau w^{k_2} - w^{k_2+1}\rangle } 
\notag\\
&\hspace{1cm}=
\E{\max_{x\in \cX} 2 \|x - x^0\|^2} +  \frac{1}{8} \sum\limits_{k=0}^{K-1}  \E{ \| (1 - \tau) x^k + \tau w^k - w^{k+1} \|^2} 
\notag\\
&\hspace{1.4cm}+ \frac{1}{4}\sum\limits_{k_1 < k_2}  \E{ \langle (1 - \tau) x^{k_1}+ \tau w^{k_1} - w^{k_1+1}, \mathbb{E}_{b_{k_2}}[(1 - \tau) x^{k_2}+ \tau w^{k_2} - w^{k_2+1} ]\rangle } 
\notag\\
&\hspace{1cm}=
\E{\max_{x\in \cX} 2 \|x - x^0\|^2} +  \frac{1}{8} \sum\limits_{k=0}^{K-1}  \E{ \| (1 - \tau) x^k + \tau w^k - w^{k+1} \|^2}
\notag\\
&\hspace{1cm}=
\E{\max_{x\in \cX} 2 \|x - x^0\|^2} +  \frac{1}{8}\sum\limits_{k=0}^{K-1}  \E{ \| \mathbb{E}_{b_k}[w^{k+1}] - w^{k+1} \|^2}
\notag\\
&\hspace{1cm}=
\E{\max_{x\in \cX} 2 \|x - x^0\|^2} +  \frac{1}{8} \sum\limits_{k=0}^{K-1}  \E{ \mathbb{E}_{b_k}[\| w^{k+1} \|^2] - \| \mathbb{E}_{b_k}[w^{k+1}]\|^2}
\notag\\
&\hspace{1cm}=
\E{\max_{x\in \cX} 2 \|x - x^0\|^2} +  \frac{1}{8}\sum\limits_{k=0}^{K-1}  \E{ \mathbb{E}_{b_k}[\| w^{k+1} \|^2] - \| \mathbb{E}_{b_k}[w^{k+1}]\|^2}
\notag\\
&\hspace{1cm}=
\E{\max_{x\in \cX} 2 \|x - x^0\|^2} +  \frac{1}{8}\sum\limits_{k=0}^{K-1}  \E{ \tau\| w^{k} \|^2 + (1 - \tau) \| x^{k} \|^2 - \| (1-\tau) x^k + \tau w^k\|^2}
\notag\\
&\hspace{1cm}=
\E{\max_{x\in \cX} 2 \|x - x^0\|^2} +  \frac{1}{8}\sum\limits_{k=0}^{K-1}  \E{ \tau (1 - \tau) \| w^{k} - x^k \|^2}.
\end{align}
Making the same steps, one can get
\begin{align}
\label{eq:unbiased_temp9}
\E{\max_{y\in \cY} \sum\limits_{k=0}^{K-1} \langle (1 - \tau) y^k + \tau u^k - u^{k+1}, y\rangle} 
\leq
\E{\max_{y\in \cY} 2 \|y - y^0\|^2} +  \frac{1}{8}\sum\limits_{k=0}^{K-1}  \E{ \tau (1 - \tau) \| u^{k} - y^k \|^2}.
\end{align}
With unbiasedness of $Q$, we have
\begin{align}
\label{eq:unbiased_temp10}
&\E{\langle  A (x^{k+1/2} - x^0)  ,  Q(y^{k+1/2} - u^k) - y^{k+1/2} + u^k \rangle} 
\notag\\
&\hspace{4cm}=
\E{\langle  A (x^{k+1/2} - x^0)  ,  \mathbb{E}_Q[Q(y^{k+1/2} - u^k) ]- y^{k+1/2} + u^k\rangle} = 0.
\end{align}
And
\begin{align}
\label{eq:unbiased_temp11}
\E{\langle \sum_{i=1}^n [Q(A_i x^{k+1/2}_i - A_i w^{k}_i) + A_i w^{k}_i - A_i x^{k+1/2}_i ] ,  y^{k+1/2} - y^0\rangle} = 0.
\end{align}
Also with Cauchy Schwartz  inequality: $2 \langle a, b \rangle \leq \eta \| a \|^2 + \tfrac{1}{\eta}\| b \|^2$ with $a = \sum_{k=0}^{K-1}A^T[ Q(y^{k+1/2} - u^k) - y^{k+1/2} + u^k]$, $b = x - x^0$ and $\eta = \gamma$, one can obtain
\begin{align}
\label{eq:unbiased_temp12}
&\E{ \max_{x \in \cX}\sum\limits_{k=0}^{K-1} \langle  x - x^0 ,  A^T Q(y^{k+1/2} - u^k) - y^{k+1/2} + u^k \rangle}
\notag\\
&\hspace{0.5cm}\leq \E{\max_{x \in \cX} \frac{1}{2 \gamma} \| x^0 - x \|^2} + \E{\frac{\gamma}{2} \| \sum\limits_{k=0}^{K-1} A^T [Q(y^{k+1/2} - u^k) - y^{k+1/2} + u^k] \|^2}
\notag\\
&\hspace{0.5cm}= \E{\max_{x \in \cX} \frac{1}{2\gamma} \| x^0 - x \|^2} + \E{\frac{\gamma}{2} \sum\limits_{k=0}^{K-1}  \| A^T [Q(y^{k+1/2} - u^k) - y^{k+1/2} + u^k] \|^2}
\notag\\
&\hspace{0.9cm}+ \mathbb{E}\Bigg[\gamma\sum\limits_{k_1 < k_2}  \langle A^T [Q(y^{k_1+1/2} - u^{k_1}) - y^{k_1+1/2} + u^{k_1}],
\notag\\
&\hspace{5cm}
A^T [Q(y^{k_2+1/2} - u^{k_2}) - y^{k_2+1/2} + u^{k_2}] \rangle \Bigg]
\notag\\
&\hspace{0.5cm}= \E{\max_{x \in \cX} \frac{1}{2\gamma} \| x^0 - x \|^2} + \E{\frac{\gamma}{2} \sum\limits_{k=0}^{K-1}  \| A^T [Q(y^{k+1/2} - u^k) - y^{k+1/2} + u^k] \|^2}
\notag\\
&\hspace{0.9cm}+ \mathbb{E}\Bigg[\gamma\sum\limits_{k_1 < k_2}  \langle A^T [Q(y^{k_1+1/2} - u^{k_1}) - y^{k_1+1/2} + u^{k_1}], 
\notag\\
&\hspace{5cm}
A^T \mathbb{E}_{Q_{k_2}}[Q(y^{k_2+1/2} - u^{k_2}) - y^{k_2+1/2} + u^{k_2}] \rangle \Bigg]
\notag\\
&\hspace{0.5cm}= \E{\max_{x \in \cX} \frac{1}{2\gamma} \| x^0 - x \|^2} + \E{\frac{\gamma}{2} \sum\limits_{k=0}^{K-1}  \| A^T [Q(y^{k+1/2} - u^k) - y^{k+1/2} + u^k] \|^2}
\notag\\
&\hspace{0.5cm}= \E{\max_{x \in \cX} \frac{1}{2\gamma} \| x^0 - x \|^2}
\notag\\
&\hspace{0.9cm}
+ \E{\frac{\gamma}{2} \sum\limits_{k=0}^{K-1}  \mathbb{E}_{Q}\left[\| A^T [Q(y^{k+1/2} - u^k)] - \mathbb{E}_{Q}[A^T[Q(y^{k+1/2} - u^k)]] \|^2 \right]}
\notag\\
&\hspace{0.5cm}\leq \E{\max_{x \in \cX} \frac{1}{2\gamma} \| x^0 - x \|^2} + \E{\frac{\gamma}{2} \sum\limits_{k=0}^{K-1}  \| A^T [Q(y^{k+1/2} - u^k)] \|^2}.
\end{align}
The same way one can note that
\begin{align}
\label{eq:unbiased_temp13}
&\E{ \max_{y \in \cY}\sum\limits_{k=0}^{K-1} \langle \sum_{i=1}^n [Q(A_i x^{k+1/2}_i - A_i w^{k}_i) + A_i w^{k}_i - A_i x^{k+1/2}_i ] ,  y^{0} - y\rangle}
\notag\\
&\hspace{4.5cm}\leq \E{\max_{y \in \cY} \frac{1}{2\gamma} \| y^0 - y\|^2} + \E{\frac{\gamma}{2} \sum\limits_{k=0}^{K-1}  \| \sum_{i=1}^n Q(A_i x^{k+1/2}_i - A_i w^{k}_i) \|^2}.
\end{align}
Combining (\ref{eq:unbiased_temp5}) with (\ref{eq:unbiased_temp6}), (\ref{eq:unbiased_temp7}), (\ref{eq:unbiased_temp8}), (\ref{eq:unbiased_temp9}), (\ref{eq:unbiased_temp10}), (\ref{eq:unbiased_temp11}), (\ref{eq:unbiased_temp12}), (\ref{eq:unbiased_temp13}), we obtain
\begin{align*}
    2\gamma \E{\text{gap}(\bar x^K, \bar z^K, \bar y^K)}
    \leq &
    \frac{1}{K}\bigg(\max_{x \in \cX}\|x^{0} - x \|^2 + \max_{x \in \cX}\| w^0 - x \|^2 + \max_{z \in \cZ}\|z^{0} - z \|^2
    \notag\\
    &
    + \max_{y \in \cY}\|y^{0} - y \|^2 + \max_{y \in \cY}\| u^0 - y \|^2 \bigg)
    \notag\\
    &
    -  \frac{\tau}{K} \sum\limits_{k=0}^{K-1} \E{\|  x^{k+1/2} - x^k\|^2} -  \frac{1 - \tau}{K} \sum\limits_{k=0}^{K-1} \E{\| w^k -x^{k+1/2} \|^2} 
    \notag\\
    &
    - \frac{1}{K} \sum\limits_{k=0}^{K-1} \E{\|  z^{k+1/2} - z^k\|^2}
    \notag\\
    &
    - \frac{\tau}{K} \sum\limits_{k=0}^{K-1} \E{\|  y^{k+1/2} - y^k\|^2} - \frac{1 - \tau}{K} \sum\limits_{k=0}^{K-1} \E{\| u^k -y^{k+1/2} \|^2}
    \notag\\
    &
    + \frac{4}{K} \E{\max_{x\in \cX} \|x - x^0\|^2} +  \frac{1}{4K}\sum\limits_{k=0}^{K-1}  \E{ \tau (1 - \tau) \| w^{k} - x^k \|^2}
    \notag\\
    &
    + \frac{4}{K} \E{\max_{y\in \cY} \|y - y^0\|^2} +  \frac{1}{4K}\sum\limits_{k=0}^{K-1}  \E{ \tau (1 - \tau) \| u^{k} - y^k \|^2}
    \notag\\
    &
    +
    \frac{1}{K} \E{\max_{x \in \cX} \| x^0 - x \|^2} + \frac{\gamma^2}{K}\E{ \sum\limits_{k=0}^{K-1}  \| A^T [Q(y^{k+1/2} - u^k)] \|^2}
    \notag\\
    &
    +
    \frac{1}{K} \E{\max_{y \in \cY} \| y^0 - y\|^2} + \frac{\gamma^2}{K}\E{\sum\limits_{k=0}^{K-1}  \| \sum_{i=1}^n Q(A_i x^{k+1/2}_i - A_i w^{k}_i) \|^2}
    \notag\\
    &
    + \frac{3\gamma^2}{K} \sum\limits_{k=0}^{K-1} \E{\| A^T Q(y^{k+1/2} - u^k) \|^2}
    \notag\\
    &
    + \frac{3\gamma^2}{K} \sum\limits_{k=0}^{K-1} \E{\| \sum_{i=1}^n Q(A_i x^{k+1/2}_i - A_i w^{k}_i) \|^2}
    \notag\\
    &
    + \frac{2\gamma^2 L_r^2}{K} \sum\limits_{k=0}^{K-1} \E{\|x^{k+1/2} - x^k \|^2}
    + \frac{2\gamma^2}{K} \sum\limits_{k=0}^{K-1} \E{\|y^{k+1/2} - y^k\|^2}
    \notag\\
    &
    +
    \frac{2\gamma^2}{K} \sum\limits_{k=0}^{K-1} \E{\| z^{k+1/2} - z^k \|^2 }
    +
    \frac{2\gamma^2 L_{\ell}^2}{K} \sum\limits_{k=0}^{K-1} \E{\| z^{k+1/2} - z^k \|^2}
    \notag\\
    \leq &
    \frac{1}{K}\bigg(6\max_{x \in \cX}\|x^{0} - x \|^2 + \max_{x \in \cX}\| w^0 - x \|^2 + \max_{z \in \cZ}\|z^{0} - z \|^2
    \notag\\
    &
    + 6\max_{y \in \cY}\|y^{0} - y \|^2 + \max_{y \in \cY}\| u^0 - y \|^2 \bigg)
    \notag\\
    &
    -  \frac{\tau}{K} \sum\limits_{k=0}^{K-1} \E{\|  x^{k+1/2} - x^k\|^2} -  \frac{1 - \tau}{K} \sum\limits_{k=0}^{K-1} \E{\| w^k -x^{k+1/2} \|^2} 
    \notag\\
    &
    - \frac{1}{K} \sum\limits_{k=0}^{K-1} \E{\|  z^{k+1/2} - z^k\|^2}
    \notag\\
    &
    - \frac{\tau}{K} \sum\limits_{k=0}^{K-1} \E{\|  y^{k+1/2} - y^k\|^2} - \frac{1 - \tau}{K} \sum\limits_{k=0}^{K-1} \E{\| u^k -y^{k+1/2} \|^2}
    \notag\\
    &
     +  \frac{1}{4K}\sum\limits_{k=0}^{K-1}  \E{ \tau (1 - \tau) \| w^{k} - x^k \|^2}
    +  \frac{1}{4K}\sum\limits_{k=0}^{K-1}  \E{ \tau (1 - \tau) \| u^{k} - y^k \|^2}
    \notag\\
    &
    + \frac{3\gamma^2}{K} \sum\limits_{k=0}^{K-1} \E{\| A^T Q(y^{k+1/2} - u^k) \|^2}
    \notag\\
    &
    + \frac{3\gamma^2}{K} \sum\limits_{k=0}^{K-1} \E{\| \sum_{i=1}^n Q(A_i x^{k+1/2}_i - A_i w^{k}_i) \|^2}
    \notag\\
    &
    + \frac{2\gamma^2 L_r^2}{K} \sum\limits_{k=0}^{K-1} \E{\|x^{k+1/2} - x^k \|^2}
    + \frac{2\gamma^2}{K} \sum\limits_{k=0}^{K-1} \E{\|y^{k+1/2} - y^k\|^2}
    \notag\\
    &
    +
    \frac{2\gamma^2 (1 + L_{\ell}^2)}{K} \sum\limits_{k=0}^{K-1} \E{\| z^{k+1/2} - z^k \|^2 }.
\end{align*}
Applying Cauchy Schwartz  inequality and using that $\tau \leq 1$, we get
\begin{align}
\label{eq:unbiased_temp14}
    2\gamma \E{\text{gap}(\bar x^K, \bar z^K, \bar y^K)}
    \leq&
    \frac{1}{K}\bigg(6\max_{x \in \cX}\|x^{0} - x \|^2 + \max_{x \in \cX}\| w^0 - x \|^2 + \max_{z \in \cZ}\|z^{0} - z \|^2
    \notag\\
    &
    + 6\max_{y \in \cY}\|y^{0} - y \|^2 + \max_{y \in \cY}\| u^0 - y \|^2 \bigg)
    \notag\\
    &
    -  \frac{\tau}{K} \sum\limits_{k=0}^{K-1} \E{\|  x^{k+1/2} - x^k\|^2} -  \frac{1 - \tau}{K} \sum\limits_{k=0}^{K-1} \E{\| w^k -x^{k+1/2} \|^2} 
    \notag\\
    &
    - \frac{1}{K} \sum\limits_{k=0}^{K-1} \E{\|  z^{k+1/2} - z^k\|^2}
    \notag\\
    &
    - \frac{\tau}{K} \sum\limits_{k=0}^{K-1} \E{\|  y^{k+1/2} - y^k\|^2} - \frac{1 - \tau}{K} \sum\limits_{k=0}^{K-1} \E{\| u^k -y^{k+1/2} \|^2}
    \notag\\
    &
     +  \frac{1}{2K}\sum\limits_{k=0}^{K-1}  \E{  (1 - \tau) \| w^{k} - x^{k+1/2}\|^2} +  \frac{1}{2K}\sum\limits_{k=0}^{K-1}  \E{ (1 - \tau) \| x^{k+1/2} - x^k \|^2}
     \notag\\
    &
    +  \frac{1}{2K}\sum\limits_{k=0}^{K-1}  \E{  (1 - \tau) \| u^{k} - y^{k+1/2} \|^2}
    +  \frac{1}{2K}\sum\limits_{k=0}^{K-1}  \E{  (1 - \tau) \| y^{k+1/2} - y^k \|^2}
    \notag\\
    &
    + \frac{3\gamma^2}{K} \sum\limits_{k=0}^{K-1} \E{\| A^T Q(y^{k+1/2} - u^k) \|^2}
    \notag\\
    &
    + \frac{3\gamma^2}{K} \sum\limits_{k=0}^{K-1} \E{\| \sum_{i=1}^n Q(A_i x^{k+1/2}_i - A_i w^{k}_i) \|^2}
    \notag\\
    &
    + \frac{2\gamma^2 L_r^2}{K} \sum\limits_{k=0}^{K-1} \E{\|x^{k+1/2} - x^k \|^2}
    + \frac{2\gamma^2}{K} \sum\limits_{k=0}^{K-1} \E{\|y^{k+1/2} - y^k\|^2}
    \notag\\
    &
    +
    \frac{2\gamma^2 (1 + L_{\ell}^2)}{K} \sum\limits_{k=0}^{K-1} \E{\| z^{k+1/2} - z^k \|^2 }
    \notag\\
    =&
    \frac{1}{K}\bigg(6\max_{x \in \cX}\|x^{0} - x \|^2 + \max_{x \in \cX}\| w^0 - x \|^2 + \max_{z \in \cZ}\|z^{0} - z \|^2
    \notag\\
    &
    + 6\max_{y \in \cY}\|y^{0} - y \|^2 + \max_{y \in \cY}\| u^0 - y \|^2 \bigg)
    \notag\\
    &
    -  \left( \frac{3\tau - 1}{2} - 2\gamma^2 L_r^2 \right) \cdot \frac{1}{K} \sum\limits_{k=0}^{K-1} \E{\|  x^{k+1/2} - x^k\|^2} 
    \notag\\
    &
    -  \frac{1 - \tau}{2K} \sum\limits_{k=0}^{K-1} \E{\| w^k -x^{k+1/2} \|^2} 
    \notag\\
    &
    - \left( 1 - 2\gamma^2 (1 + L_{\ell}^2) \right)\frac{1}{K} \sum\limits_{k=0}^{K-1} \E{\|  z^{k+1/2} - z^k\|^2}
    \notag\\
    &
    - \left( \frac{3\tau - 1}{2} - 2 \gamma^2 \right)\frac{1}{K} \sum\limits_{k=0}^{K-1} \E{\|  y^{k+1/2} - y^k\|^2} 
    \notag\\
    &
    - \frac{1 - \tau}{2K} \sum\limits_{k=0}^{K-1} \E{\| u^k -y^{k+1/2} \|^2}
    \notag\\
    &
    + \frac{3\gamma^2}{K} \sum\limits_{k=0}^{K-1} \E{\| A^T Q(y^{k+1/2} - u^k) \|^2}
    \notag\\
    &
    + \frac{3\gamma^2}{K} \sum\limits_{k=0}^{K-1} \E{\| \sum_{i=1}^n Q(A_i x^{k+1/2}_i - A_i w^{k}_i) \|^2}.
\end{align}
Using the notation of $\lambda_{\max}(\cdot)$ as a maximum eigenvalue and the definition of unbiased compression, we get
\begin{align*}
\E{\| A^T Q(y^{k+1/2} - u^k) \|^2} 
\leq& 
\lambda_{\max}(A A^T)\E{\| Q(y^{k+1/2} - u^k) \|^2}
\\
\leq& 
\lambda_{\max}(A A^T) \omega \E{\|y^{k+1/2} - u^k \|^2}.
\end{align*}
For $\E{\| \sum_{i=1}^n Q(A_i x^{k+1/2}_i - A_i w^{k}_i) \|^2}$ we have two options. If $\sum_{i=1}^n Q(A_i x^{k+1/2}_i - A_i w^{k}_i) = Q(\sum_{i=1}^n [A_i x^{k+1/2}_i - A_i w^{k}_i]) = Q(A x^{k+1/2} - A w^{k})$, then
\begin{align*}
\E{\| \sum_{i=1}^n Q(A_i x^{k+1/2}_i - A_i w^{k}_i) \|^2}
=& 
\E{\| Q(A x^{k+1/2} - A w^{k}) \|^2}
\\
\leq& 
\omega \E{\| A (x^{k+1/2} - w^{k}) \|^2}
\\
\leq& \lambda_{\max}(A^T A) \omega \E{\| x^{k+1/2} - w^{k} \|^2}.
\end{align*}
If $\sum_{i=1}^n Q(A_i x^{k+1/2}_i - A_i w^{k}_i) \neq Q(\sum_{i=1}^n [A_i x^{k+1/2}_i - A_i w^{k}_i])$, but $Q$ are independent, then
\begin{align*}
\E{\| \sum_{i=1}^n Q(A_i x^{k+1/2}_i - A_i w^{k}_i) \|^2}
=& 
\sum_{i=1}^n \E{\| Q(A_i x^{k+1/2}_i - A_i w^{k}_i) \|^2}
\\
&+\sum_{i \neq j} \E{\langle Q(A_i x^{k+1/2}_i - A_i w^{k}_i), Q(A_j x^{k+1/2}_j - A_j w^{k}_j) \rangle}
\\
=& 
\sum_{i=1}^n \E{\| Q(A_i x^{k+1/2}_i - A_i w^{k}_i) \|^2}
\\
&+\sum_{i \neq j} \E{\langle \mathbb{E}_{Q_i}[Q(A_i x^{k+1/2}_i - A_i w^{k}_i)], \mathbb{E}_{Q_j}[Q(A_j x^{k+1/2}_j - A_j w^{k}_j)] \rangle}
\\
=&  
\sum_{i=1}^n \E{\| Q(A_i x^{k+1/2}_i - A_i w^{k}_i) \|^2}
\\
&+\sum_{i \neq j} \E{\langle A_i x^{k+1/2}_i - A_i w^{k}_i, A_j x^{k+1/2}_j - A_j w^{k}_j \rangle}
\\
=& \sum_{i=1}^n \E{\| Q(A_i x^{k+1/2}_i - A_i w^{k}_i) \|^2}
\\
&+ \E{\| \sum_{i=1}^n [A_i x^{k+1/2}_i - A_i w^{k}_i] \|^2}
- \sum_{i=1}^n \E{\| A_i x^{k+1/2}_i - A_i w^{k}_i \|^2}
\\
\leq& \omega \sum_{i=1}^n \E{\|A_i x^{k+1/2}_i - A_i w^{k}_i \|^2}
 + \E{\| A (x^{k+1/2} - w^{k}) \|^2}
 \\
\leq& \omega \sum_{i=1}^n \lambda_{\max} (A_i^T A_i) \E{\|x^{k+1/2}_i - w^{k}_i \|^2}
 + \lambda_{\max} (A^T A) \E{\| x^{k+1/2} - w^{k} \|^2}
 \\
\leq& \omega \max_{i} \left\{\lambda_{\max} (A_i^T A_i) \right\}\sum_{i=1}^n \E{\|x^{k+1/2}_i - w^{k}_i \|^2}
 \\
&
 + \lambda_{\max} (A^T A) \E{\| x^{k+1/2} - w^{k} \|^2}
  \\
=& \left(\omega \max_{i} \left\{\lambda_{\max} (A_i^T A_i) \right\}+ \lambda_{\max} (A^T A) \right) \E{\| x^{k+1/2} - w^{k} \|^2}.
\end{align*}
Let us introduce
$$
\chi_{\text{compress}} =
\begin{cases}
\omega \lambda_{\max} (A^T A),\\
\omega \max_{i} \left\{\lambda_{\max} (A_i^T A_i) \right\}+ \lambda_{\max} (A^T A),
\end{cases}
$$
depending on the case $Q$ we consider.
Let us return to (\ref{eq:unbiased_temp14}) and obtain
\begin{align*}
    2\gamma \E{\text{gap}(\bar x^K, \bar z^K, \bar y^K)}
    \leq&
    \frac{1}{K}\bigg(6\max_{x \in \cX}\|x^{0} - x \|^2 + \max_{x \in \cX}\| w^0 - x \|^2 + \max_{z \in \cZ}\|z^{0} - z \|^2
    \notag\\
    &
    + 6\max_{y \in \cY}\|y^{0} - y \|^2 + \max_{y \in \cY}\| u^0 - y \|^2 \bigg)
    \notag\\
    &
    -  \left( \frac{3\tau - 1}{2} - 2\gamma^2 L_r^2 \right) \cdot \frac{1}{K} \sum\limits_{k=0}^{K-1} \E{\|  x^{k+1/2} - x^k\|^2} 
    \notag\\
    &
    -  \left( \frac{1 - \tau}{2} - 3 \chi_{\text{compress}} \gamma^2 \right) \frac{1}{K} \sum\limits_{k=0}^{K-1} \E{\| w^k -x^{k+1/2} \|^2} 
    \notag\\
    &
    - \left( 1 - 2\gamma^2 (1 + L_{\ell}^2) \right)\frac{1}{K} \sum\limits_{k=0}^{K-1} \E{\|  z^{k+1/2} - z^k\|^2}
    \notag\\
    &
    - \left( \frac{3\tau - 1}{2} - 2 \gamma^2 \right)\frac{1}{K} \sum\limits_{k=0}^{K-1} \E{\|  y^{k+1/2} - y^k\|^2}
    \notag\\
    &
    - \left( \frac{1 - \tau}{2} - 3\lambda_{\max}(A A^T) \omega \gamma^2 \right)\frac{1}{K} \sum\limits_{k=0}^{K-1} \E{\| u^k -y^{k+1/2} \|^2}.
\end{align*}
If we choose $\tau \geq \tfrac{1}{2}$ and $\gamma$ as follows
$$
\gamma \leq \frac{1}{4}\min\left\{ 1;\frac{1}{L_r}; \frac{1}{L_{\ell}};\sqrt{\frac{1-\tau}{\chi_{\text{compress}}}}; \sqrt{\frac{1-\tau}{\omega\lambda_{\max}(A A^T)}}; \right\},
$$
then one can obtain
\begin{align*}
    &\E{\text{gap}(\bar x^K, \bar z^K, \bar y^K)}
    \leq
    \frac{1}{2\gamma K}\bigg(6\max_{x \in \cX}\|x^{0} - x \|^2 + \max_{x \in \cX}\| w^0 - x \|^2 + \max_{z \in \cZ}\|z^{0} - z \|^2
    \notag\\
    &\hspace{7.5cm}
    + 6\max_{y \in \cY}\|y^{0} - y \|^2 + \max_{y \in \cY}\| u^0 - y \|^2 \bigg).
\end{align*}
With $\gamma = \frac{1}{4}\min\left\{ 1;\frac{1}{L_r}; \frac{1}{L_{\ell}};\sqrt{\frac{1-\tau}{\chi_{\text{compress}}}}; \sqrt{\frac{1-\tau}{\omega\lambda_{\max}(A A^T)}}; \right\}$, we finish the proof.
\end{proof}

\subsection{Proof of Theorem \ref{th:EG_compressed_biased}} \label{sec:proof_EG_compressed_biased} \label{app:th:EG_compressed_biased}

{
\begin{theorem}[Theorem \ref{th:EG_compressed_biased}]
Let Assumption \ref{as:convexity_smothness} hold. Let the problem (\ref{eq:vfl_lin_spp_1})
be solved by Algorithm~\ref{alg:EG_biased} with operators and $C$, which satisfy Definition \ref{def:biased}. Then for
$\tau = 1 - p$ and
$$
\gamma = \frac{1}{4}\min\left\{ 1;\frac{1}{L_r}; \frac{1}{L_{\ell}};\sqrt{\frac{1-\tau}{\delta^2[\lambda_{\max}(A A^T) + n \cdot \max_{i}\{\lambda_{\max}(A_i A^T_i)\}]}}; \sqrt{\frac{1-\tau}{\omega\lambda_{\max}(A A^T)}} \right\},
$$
it holds that
\begin{align*}
&\E{\text{gap}(\bar x^K, \bar z^K, \bar y^K)}
\\
& = \mathcal{O} \left(
     \left[1 + \frac{\delta}{\sqrt{p}}  \left(\sqrt{\lambda_{\max}(A A^T) } +  n \cdot \max\limits_{i = 1, \ldots, n} \{\sqrt{\lambda_{\max}(A_i A^T_i) } \} \right) + L_{\ell} + L_r \right] \cdot \frac{D^2}{K} \right),
\end{align*}
where $\bar x^K := \tfrac{1}{K}\sum_{k=0}^{K-1} x^{k+1/2}$, $\bar z^K := \tfrac{1}{K}\sum_{k=0}^{K-1} z^{k+1/2}$, $\bar y^K := \tfrac{1}{K}\sum_{k=0}^{K-1} y^{k+1/2}$ and \\ $D^2 := \max_{x, z, y \in \cX, \cZ, \cY} \left[ \|x^0 - x \|^2 + \|z^0 - z \|^2 + \|y^0 - y \|^2 \right]$.
\end{theorem}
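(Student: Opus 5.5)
We follow the proof of Theorem \ref{th:EG_compressed_unbiased} almost line by line, handling the bias of $C$ with an error-feedback ``virtual sequence''. The one-step identities for $x$, $z$, $y$ are written exactly as in \eqref{eq:unbiased_temp22} and \eqref{eq:unbiased_proof_temp1}--\eqref{eq:unbiased_proof_temp3}. The only change is that $Q(y^{k+1/2}-u^k)$ is replaced by the transmitted quantity $g^k := C(y^{k+1/2}-u^k+e^k)$, and $\sum_i Q(A_ix^{k+1/2}_i-A_iw^k_i)$ by $\sum_i g^k_i$ with $g^k_i := C(A_ix^{k+1/2}_i-A_iw^k_i+e^k_i)$.

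The error-feedback recursion gives
\[
g^k = (y^{k+1/2}-u^k) + e^k - e^{k+1},
\qquad
g^k_i = A_i(x^{k+1/2}_i-w^k_i) + e^k_i - e^{k+1}_i .
\]
So the ``noise'' terms that were mean-zero in \eqref{eq:unbiased_temp10}--\eqref{eq:unbiased_temp13} are now differences $e^k-e^{k+1}$ and $e^k_i-e^{k+1}_i$. When we sum the inner products over $k$ against a fixed point ($x^0-x$ or $y^0-y$), these differences telescope to $e^0-e^K$, with $e^0=0$. The residual $\langle e^K, \cdot\rangle$ is bounded by Young's inequality with weight $\gamma$: it costs $\tfrac{1}{2\gamma}\|x^0-x\|^2$ (a $D^2$ term) plus $\gamma\|A^Te^K\|^2$, respectively $\gamma\|\sum_i e^K_i\|^2$.

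The inner products against the moving points, such as $\langle A(x^{k+1/2}-x^0), e^k-e^{k+1}\rangle$, do not vanish in expectation. We handle them by summation by parts: $\sum_k \langle a^k, e^k-e^{k+1}\rangle = \sum_k \langle a^{k}-a^{k-1}, e^k\rangle$ plus boundary terms. The increments $a^k-a^{k-1}$ are controlled by step differences, and Young's inequality turns each term into $\gamma^2$ times an error norm plus a fraction of the negative terms. The $w,u$ bookkeeping terms \eqref{eq:unbiased_temp6}--\eqref{eq:unbiased_temp9} carry over unchanged.

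The main obstacle is a bound on the accumulated errors. From Definition \ref{def:biased} and Young's inequality with parameter $\tfrac{1}{2\delta}$,
\[
\E{\|e^{k+1}\|^2} \le \Bigl(1-\tfrac{1}{2\delta}\Bigr)\E{\|e^k\|^2} + 2\delta\,\E{\|y^{k+1/2}-u^k\|^2}.
\]
Unrolling and summing over $k$ gives
\[
\sum_k \E{\|e^k\|^2} \le 4\delta^2 \sum_k \E{\|y^{k+1/2}-u^k\|^2},
\]
and similarly
\[
\sum_k \E{\|e^k_i\|^2} \le 4\delta^2 \lambda_{\max}(A_i^TA_i) \sum_k \E{\|x^{k+1/2}_i-w^k_i\|^2}.
\]
The $n$ separate device errors enter through $\|\sum_i e^k_i\|^2 \le n\sum_i\|e^k_i\|^2$. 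This is the source of the factor $n\max_i\lambda_{\max}(A_iA_i^T)$, while $\lambda_{\max}(AA^T)$ comes from the $y$-side error via $\|A^Te\|^2\le\lambda_{\max}(AA^T)\|e\|^2$.

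Finally, $\|g^k\|^2 \le 2\|y^{k+1/2}-u^k\|^2 + 2\|e^k-e^{k+1}\|^2$, and likewise for the $x$-side, so every positive $\gamma^2$-term is a multiple of
\[
\gamma^2\delta^2\bigl[\lambda_{\max}(AA^T)+n\max_i\lambda_{\max}(A_iA_i^T)\bigr]
\]
times $\sum_k\|x^{k+1/2}-w^k\|^2$ or $\sum_k\|y^{k+1/2}-u^k\|^2$. With $\tau=1-p\ge\tfrac12$ and the stated $\gamma$, these are dominated by the negative terms $-\tfrac{1-\tau}{2}\sum_k\|w^k-x^{k+1/2}\|^2$ and $-\tfrac{1-\tau}{2}\sum_k\|u^k-y^{k+1/2}\|^2$, exactly as in the last step of the proof of Theorem \ref{th:EG_compressed_unbiased}. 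What remains is $\E{\text{gap}} \lesssim D^2/(\gamma K)$. Substituting $\gamma$ gives the rate, with $\sqrt{(1-\tau)^{-1}}=p^{-1/2}$ producing the factor $\tfrac{\delta}{\sqrt p}\bigl(\sqrt{\lambda_{\max}(AA^T)}+n\max_i\sqrt{\lambda_{\max}(A_iA_i^T)}\bigr)$, after using $\sqrt{a+b}\le\sqrt a+\sqrt b$ and a loose bound $\sqrt n\le n$.
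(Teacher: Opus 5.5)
Your proposal is correct in outline, but it takes a different route from the paper.

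\textbf{The paper's route.} The paper uses the perturbed-iterate device. It introduces shifted sequences $\hat x^k_i = x^k_i - \gamma A_i^T e^k$ and $\hat y^k$ (shifted by $\gamma\sum_i e^k_i$), and checks that $\hat x^{k+1}_i-\hat x^k_i=(1-\tau)(w^k_i-x^k_i)-\gamma\left(A_i^Ty^{k+1/2}+\nabla r_i(x^{k+1/2}_i)\right)$. So the shifted sequence moves exactly like uncompressed extragradient. The one-step analysis is then run on $\|\hat x^{k+1}-x\|^2$ and $\|\hat y^{k+1}-y\|^2$. The compression error enters only through squared norms $\gamma^2\|A^Te^k\|^2$ and $\gamma^2\|\sum_i e^k_i\|^2$, coming from $\hat x^k-x^k$ and $\hat x^{k+1}-x^{k+1/2}$. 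No inner products of errors with iterates ever appear, so there is no summation by parts and no boundary term.

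\textbf{Your route.} You keep the true iterates and reuse the unbiased proof almost literally. The mean-zero arguments are replaced by telescoping against fixed anchors and Abel summation against moving ones. This is the same mechanism in a different form: the virtual sequence is the algebraic encoding of your summation by parts. Both routes land on the same error-feedback estimate $\sum_k\mathbb{E}\|e^k\|^2\lesssim\delta^2\sum_k\mathbb{E}\|y^{k+1/2}-u^k\|^2$ (your constant $4\delta^2$, the paper's $6\delta^2$) and the same stepsize conditions. Your version makes explicit that only telescoping, not unbiasedness, is used. The paper's version is shorter because it never has to control cross-iteration increments or final-time terms.

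\textbf{Two bookkeeping points you still need to fill in.}
\begin{enumerate}
\item Abel summation of $\sum_k\langle A(x^{k+1/2}-x^0),e^k-e^{k+1}\rangle$ leaves the boundary term $-\langle A(x^{K-1/2}-x^0),e^K\rangle$. Here $\|x^{K-1/2}-x^0\|$ is not bounded by $D$, because $x^{K-1/2}$ is an iterate, not a point of $\cX$. You have to absorb it with the final-distance term $-\|x^K-x\|^2$ from the telescoped potential, which the unbiased proof discards. Combine this with $\|x-x^0\|^2\le D^2$ and a direct $\gamma^2$-bound on $\|x^K-x^{K-1/2}\|^2$. The $y$-side is handled the same way.
\item Your increments $x^{k+1/2}-x^{k-1/2}$ contain the piece $x^k-x^{k-1/2}$. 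Its negative term $-\|x^k-x^{k-1/2}\|^2$ is already fully consumed by the two Young inequalities in \eqref{eq:unbiased_proof_temp1}, so it is not a ``fraction of the negative terms'' as you claim. Either bound it directly from the update, $x^k-x^{k-1/2}=-\gamma\left(A^Tg^{k-1}+\nabla r(x^{k-1/2})-\nabla r(x^{k-1})\right)$, or use a larger Young weight there to reserve part of that negative term.
\end{enumerate}
With these two points handled, every extra term is again a $\gamma^2$-multiple of quantities you already control, and the stated rate follows.
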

}
To begin with, let us introduce the useful notation for the further proof:
\begin{equation}
    \label{eq:hat_add}
    \begin{split}
    \hat x^{k}_i = x^k_i - \gamma A_i^T e^k, \quad \hat x^{k+1/2}_i = x^{k+1/2}_i - \gamma A_i^T e^k, \\
    \hat y^{k} = y^k - \gamma \sum\limits_{i=1}^n e^k_i, \quad \hat y^{k+1/2} = y^{k+1/2} - \gamma \sum\limits_{i=1}^n e^k_i.
    \end{split}
\end{equation}
It is easy to verify that such sequences have useful properties:
\begin{align}
    \label{eq:hat_add_seq}
    \hat x^{k+1}_i 
    =& 
    x^{k+1}_i - \gamma A_i^T e^{k+1}
    \notag\\
    =& 
    \tau x^{k}_i + (1 - \tau) w^k_i - \gamma (A_i^T  [C(y^{k+1/2} - u^k + e^k) + u^k] + \nabla r_i(x^{k+1/2}_i)) - 
    \gamma A_i^T e^k
    \notag\\
    &
    - 
    \gamma A_i^T \left(y^{k+1/2} - u^k + e^k - C(y^{k+1/2} - u^k)\right)
    \notag\\
    =&
    \tau x^{k}_i + (1 - \tau) w^k_i - \gamma (A_i^T  u^k + \nabla r_i(x^{k}_i)) 
    \notag\\
    &
    - 
    \gamma A_i^T \left(y^{k+1/2} - u^k + e^k\right)
    - \gamma ( \nabla r_i(x^{k+1/2}_i) - \nabla r_i(x^{k}_i))
    \notag\\
    =&
    \hat x^{k+1/2}_i 
    - 
    \gamma A_i^T \left(y^{k+1/2} - u^k\right)
    - \gamma ( \nabla r_i(x^{k+1/2}_i) - \nabla r_i(x^{k}_i)),
\end{align}
and 
\begin{align*}
    \hat y^{k+1}
    =& 
    y^{k+1} + \gamma \sum\limits_{i=1}^n e^{k+1}_i
    \\
    =& 
    \tau y^k + (1 - \tau) u^k + \gamma \left(\sum_{i=1}^n [C(A_i x^{k+1/2}_i - A_i w^{k}_i + e^k_i) + A_i w^{k}_i ] - z^{k+1/2}\right)
    \\
    &
    + 
    \gamma \sum\limits_{i=1}^n [A_i x^{k+1/2}_i - A_i w^{k}_i + e^k_i - C(A_i x^{k+1/2}_i - A_i w^{k}_i + e^k_i) ]
    \\
    =&
    \tau y^k + (1 - \tau) u^k + \gamma \left(\sum_{i=1}^n A_i w^{k}_i  - z^{k}\right) - \gamma \sum\limits_{i=1}^n e^k_i
    \\
    &
    + 
    \gamma \sum\limits_{i=1}^n [A_i x^{k+1/2}_i - A_i w^{k}_i] - \gamma (z^{k+1/2} - z^k)
    \\
    =&
    \hat y^{k+1/2} + 
    \gamma \sum\limits_{i=1}^n [A_i x^{k+1/2}_i - A_i w^{k}_i] - \gamma (z^{k+1/2} - z^k).
\end{align*}
Now we are ready to start the proof.
\begin{proof}
We start the proof with the following equations on the variables $\hat x^{k+1}_i$, $x^{k+1/2}_i$, $\hat x^k_i$ and any $x_i \in \R^{d_i}$:
\begin{align*}
    \|\hat x^{k+1}_i - x_i \|^2
    &=
    \|x^{k+1/2}_i - x_i \|^2 + 2 \langle  \hat x^{k+1}_i -  x^{k+1/2}_i,  x^{k+1/2}_i - x_i\rangle + \|  \hat x^{k+1}_i -  x^{k+1/2}_i\|^2,
    \\
    \|x^{k+1/2}_i - x_i \|^2
    &=
    \| \hat x^{k}_i - x_i \|^2 + 2 \langle x^{k+1/2}_i -   \hat x^{k}_i, x^{k+1/2}_i - x_i\rangle - \|  x^{k+1/2}_i - \hat x^k_i\|^2.
\end{align*}
Summing up two previous equations and making small rearrangements, we get
\begin{align}
    \label{eq:biased_proof_0}
    \|\hat x^{k+1}_i - x_i \|^2
    =& \| \hat x^{k}_i - x_i \|^2
    + 2 \langle  \hat x^{k+1}_i -  \hat x^{k}_i,  x^{k+1/2}_i - x_i\rangle 
    \notag\\
    &
    + \|  \hat x^{k+1}_i -  x^{k+1/2}_i\|^2 - \|  x^{k+1/2}_i - \hat x^k_i\|^2.
\end{align}
Using the definitions (\ref{eq:hat_add}) and (\ref{eq:hat_add_seq}), one can obtain
\begin{align}
    \label{eq:biased_proof_1}
    \|  \hat x^{k+1}_i -   x^{k+1/2}_i\|^2 
    \leq& 
    2\|  \hat x^{k+1}_i -   \hat x^{k+1/2}_i\|^2 + 2\|  \hat x^{k+1/2}_i -   x^{k+1/2}_i\|^2
    \notag\\
    =&
    2\gamma^2\| A_i^T (y^{k+1/2} - u^k) -  (\nabla r_i(x^{k+1/2}_i) - \nabla r_i(x^{k}_i))\|^2 + 2 \gamma^2 \|   A_i^T e^k \|^2
    \notag\\
    \leq& 
    4\gamma^2\| A_i^T (y^{k+1/2} - u^k)\|^2 + 4\gamma^2\| \nabla r_i(x^{k+1/2}_i) - \nabla r_i(x^{k}_i)\|^2 
    \notag\\
    &+ 2 \gamma^2 \|   A_i^T e^k \|^2.
\end{align}
With  (\ref{eq:hat_add}), (\ref{eq:hat_add_seq}) and the update for $x^{k+1/2}_i$, we have
\begin{align}
    \label{eq:biased_proof_2}
    \hat x^{k+1}_i -  \hat x^{k}_i 
    =& 
    \hat x^{k+1}_i - \hat x^{k+1/2}_i + \hat x^{k+1/2}_i -  \hat x^{k}_i
    \notag\\
    =&
    \hat x^{k+1}_i - \hat x^{k+1/2}_i + x^{k+1/2}_i -  x^{k}_i
    \notag\\
    =& - 
    \gamma A_i^T \left(y^{k+1/2} - u^k\right)
    - \gamma ( \nabla r_i(x^{k+1/2}_i) - \nabla r_i(x^{k}_i))
    \notag\\
    &+ (1 - \tau) (w^k_i - x^k_i) - \gamma \left(A_i^T u^k + \nabla r_i (x^k_i) \right)
    \notag\\
    =& -
    \gamma (A_i^T y^{k+1/2}
    + \nabla r_i(x^{k+1/2}_i))
    + (1 - \tau) (w^k_i - x^k_i).
\end{align}
Combining (\ref{eq:biased_proof_0}), (\ref{eq:biased_proof_1}), (\ref{eq:biased_proof_2}), we get
\begin{align*}
    \|\hat x^{k+1}_i - x_i \|^2
    \leq& \| \hat x^{k}_i - x_i \|^2
    - 2 \langle  \gamma (A_i^T y^{k+1/2}
    + \nabla r_i(x^{k+1/2}_i))
    - (1 - \tau) (w^k_i - x^k_i),  x^{k+1/2}_i - x_i\rangle 
    \\
    &
    + 4\gamma^2\| A_i^T (y^{k+1/2} - u^k)\|^2 + 4\gamma^2\| \nabla r_i(x^{k+1/2}_i) - \nabla r_i(x^{k}_i)\|^2 
    \notag\\
    &+ 2 \gamma^2 \|   A_i^T e^k \|^2
    - \|  x^{k+1/2}_i - \hat x^k_i\|^2
    \\
    \leq&
    \| \hat x^{k}_i - x_i \|^2
    - 2  \langle  \gamma (A_i^T y^{k+1/2}
    + \nabla r_i(x^{k+1/2}_i)),  x^{k+1/2}_i - x_i\rangle 
    \\
    &+ 2 (1 - \tau) \langle 
    w^k_i - x^{k+1/2}_i,  x^{k+1/2}_i - x_i\rangle 
    \\
    &+ 2 (1 - \tau) \langle 
    x^{k+1/2}_i - x^k_i,  x^{k+1/2}_i - x_i\rangle 
    \\
    &
    + 4\gamma^2\| A_i^T (y^{k+1/2} - u^k)\|^2 + 4\gamma^2\| \nabla r_i(x^{k+1/2}_i) - \nabla r_i(x^{k}_i)\|^2 
    \notag\\
    &+ 2 \gamma^2 \|   A_i^T e^k \|^2
    - \frac{1}{2}\|  x^{k+1/2}_i - x^k_i\|^2 + \|  \hat x^{k}_i - x^k_i\|^2
    \\
    =&
    \| \hat x^{k}_i - x_i \|^2
    - 2  \langle  \gamma (A_i^T y^{k+1/2}
    + \nabla r_i(x^{k+1/2}_i)),  x^{k+1/2}_i - x_i\rangle 
    \\
    &+ 2 (1 - \tau) \langle 
    w^k_i - x^{k+1/2}_i,  x^{k+1/2}_i - x_i\rangle 
    \\
    &+ 2 (1 - \tau) \langle 
    x^{k+1/2}_i - x^k_i,  x^{k+1/2}_i - x_i\rangle 
    \\
    &
    + 4\gamma^2\| A_i^T (y^{k+1/2} - u^k)\|^2 + 4\gamma^2\| \nabla r_i(x^{k+1/2}_i) - \nabla r_i(x^{k}_i)\|^2 
    \notag\\
    &+ 2 \gamma^2 \|   A_i^T e^k \|^2
    - \frac{1}{2}\|  x^{k+1/2}_i - x^k_i\|^2 + \gamma^2 \|  A^T_i e^k\|^2.
\end{align*}
In the last two steps we use (\ref{eq:hat_add}) and Cauchy Schwartz inequality in the form 
$-\| a\|^2 \leq - \tfrac{1}{2} \| a + b\|^2 + \| b\|^2$ with $a = x^{k+1/2}_i - \hat x^k_i$ and $b = \hat x^k_i - x^k_i$. For the second and third lines we use identity $2\langle a, b \rangle = \| a  + b\|^2 - \| a\|^2 - \| b\|^2$, and have
\begin{align*}
    \|\hat x^{k+1}_i - x_i \|^2
    \leq& 
    \| \hat x^{k}_i - x_i \|^2
    - 2 \gamma \langle  A_i^T y^{k+1/2}
    + \nabla r_i(x^{k+1/2}_i),  x^{k+1/2}_i - x_i\rangle 
    \notag\\
    &+
    (1 - \tau) ( \| w^k_i - x_i \|^2 - \| w^k_i -x^{k+1/2}_i \|^2 - \| x^{k+1/2}_i - x_i \|^2 ) 
    \notag\\
    &+
    (1 - \tau) ( \| x^{k+1/2}_i -x^k_i \|^2 + \| x^{k+1/2}_i - x_i \|^2 - \| x^k_i - x_i\|^2 )
    \notag\\
    &
    + 4\gamma^2\| A_i^T (y^{k+1/2} - u^k)\|^2 + 4\gamma^2\| \nabla r_i(x^{k+1/2}_i) - \nabla r_i(x^{k}_i)\|^2 
    \notag\\
    &+ 3 \gamma^2 \|   A_i^T e^k \|^2
    - \frac{1}{2}\|  x^{k+1/2}_i - x^k_i\|^2
    \notag\\
    =&
    \| \hat x^{k}_i - x_i \|^2 - (1 - \tau) \| x^k_i - x_i\|^2 + (1 - \tau) \| w^k_i - x_i \|^2
    \notag\\
    &
    - 2 \gamma \langle  A_i^T y^{k+1/2}
    + \nabla r_i(x^{k+1/2}_i),  x^{k+1/2}_i - x_i\rangle 
    \notag\\
    &
    + 4\gamma^2\| A_i^T (y^{k+1/2} - u^k)\|^2 + 4\gamma^2\| \nabla r_i(x^{k+1/2}_i) - \nabla r_i(x^{k}_i)\|^2 
    \notag\\
    &+ 3 \gamma^2 \|   A_i^T e^k \|^2
    - \left( \tau - \frac{1}{2}\right)\|  x^{k+1/2}_i - x^k_i\|^2 - (1 - \tau) \| w^k_i -x^{k+1/2}_i \|^2.
\end{align*}
Summing over all $i$ from $1$ to $n$, we deduce
\begin{align*}
    \sum\limits_{i=1}^n\|\hat x^{k+1}_i - x_i \|^2
    \leq& 
    \sum\limits_{i=1}^n \| \hat x^{k}_i - x_i \|^2 - (1 - \tau) \sum\limits_{i=1}^n \| x^k_i - x_i\|^2 + (1 - \tau) \sum\limits_{i=1}^n \| w^k_i - x_i \|^2
    \notag\\
    &
    - 2 \gamma \sum\limits_{i=1}^n \langle  A_i^T y^{k+1/2}
    + \nabla r_i(x^{k+1/2}_i),  x^{k+1/2}_i - x_i\rangle 
    \notag\\
    &
    + 4\gamma^2 \sum\limits_{i=1}^n \| A_i^T (y^{k+1/2} - u^k)\|^2 + 4\gamma^2 \sum\limits_{i=1}^n \| \nabla r_i(x^{k+1/2}_i) - \nabla r_i(x^{k}_i)\|^2 
    \notag\\
    &+ 3 \gamma^2 \sum\limits_{i=1}^n \|   A_i^T e^k \|^2
    - \left( \tau - \frac{1}{2}\right) \sum\limits_{i=1}^n \|  x^{k+1/2}_i - x^k_i\|^2 
    \notag\\
    &- (1 - \tau) \sum\limits_{i=1}^n \| w^k_i -x^{k+1/2}_i \|^2.
\end{align*}
With notation of $A = [A_1, \ldots, A_i, \ldots, A_n] $, $x = [x_1^T, \ldots,x_i^T, \ldots,x_n^T]^T$, $\hat x = [\hat x_1^T, \ldots,\hat x_i^T, \ldots,\hat x_n^T]^T$ and $w = [w_1^T, \ldots,w_i^T, \ldots,w_n^T]^T$, one can obtain that $\sum_{i=1}^n A_i x_i = A x$, $\sum_{i=1}^n\| A^T_i e^k \| = \|A^T e^k \|^2$ and $\sum_{i=1}^n \| A_i^T (y^{k+1/2} - u^k)\|^2 = \| A^T (y^{k+1/2} - u^k) \|^2$:
\begin{align}
    \label{eq:biased_proof_3}
    \|\hat x^{k+1} - x \|^2
    \leq& 
    \| \hat x^{k} - x \|^2 - (1 - \tau) \| x^k - x\|^2 + (1 - \tau) \| w^k - x \|^2
    \notag\\
    &
    - 2 \gamma \langle  y^{k+1/2},  A (x^{k+1/2} - x)\rangle 
    - 2 \gamma \sum\limits_{i=1}^n \langle  \nabla r_i(x^{k+1/2}_i),  x^{k+1/2}_i - x_i\rangle 
    \notag\\
    &
    + 4\gamma^2 \| A^T (y^{k+1/2} - u^k)\|^2 + 4\gamma^2 \sum\limits_{i=1}^n \| \nabla r_i(x^{k+1/2}_i) - \nabla r_i(x^{k}_i)\|^2 
    \notag\\
    &+ 3 \gamma^2 \|   A^T e^k \|^2
    - \left( \tau - \frac{1}{2}\right) \|  x^{k+1/2} - x^k\|^2 - (1 - \tau) \| w^k -x^{k+1/2} \|^2.
\end{align}
One can note that the updates for the variable $z$ from lines \ref{lin_alg:biased_linz1} and \ref{lin_alg:biased_linz2} of Algorithm \ref{alg:EG_biased} are the same as those from lines \ref{lin_alg:EG_linz1} and \ref{lin_alg:EG_linz2} of Algorithm \ref{alg:EG}. Therefore, we can simply use (\ref{eq:basic_proof_temp2}), i.e. for $z \in \R^s$ it holds
\begin{align}
    \label{eq:biased_proof_temp2}
    \|z^{k+1} - z \|^2
    \leq&
    \|z^{k} - z \|^2 - \|  z^{k+1/2} - z^k\|^2 
    \notag\\
    &+
    2\gamma \langle  y^{k+1/2} ,  z^{k+1/2} - z\rangle
    -
    2\gamma \langle  \nabla \ell(z^{k+1/2}, b),  z^{k+1/2} - z\rangle
    \notag\\
    &+
    2\gamma^2 \|y^{k+1/2} - y^k\|^2 +
    2\gamma^2 \| \nabla \ell(z^{k+1/2}, b) -\nabla \ell(z^k, b)\|^2.
\end{align}
For the updates of the variable $y$ from lines (\ref{lin_alg:biased_liny1}), (\ref{lin_alg:biased_liny2}) and from (\ref{eq:hat_add}), we can repeat the same steps as in obtaining (\ref{eq:biased_proof_3}). In particular, for all $y \in \R^s$, we get
\begin{align}
    \label{eq:biased_proof_4}
    \|\hat y^{k+1} - y \|^2
    \leq& 
    \| \hat y^{k} - y \|^2 - (1 - \tau) \| y^k - y\|^2 + (1 - \tau) \| u^k - y \|^2
    \notag\\
    &
    + 2 \gamma \langle  \sum_{i=1}^n A_i x^{k+1/2}_i - z^{k+1/2} ,  y^{k+1/2} - y\rangle 
    \notag\\
    &
    + 4\gamma^2 \left\| \sum_{i=1}^n [A_i x^{k+1/2}_i - A_i w^{k}_i] \right\|^2 + 4\gamma^2\| z^{k+1/2} - z^{k}\|^2 
    \notag\\
    &+ 3 \gamma^2 \left\|   \sum\limits_{i=1}^n e^k_i \right\|^2
    - \left( \tau - \frac{1}{2}\right)\|  y^{k+1/2} - y^k\|^2 - (1 - \tau) \| u^k -y^{k+1/2} \|^2
    \notag\\
    =& 
    \| \hat y^{k} - y \|^2 - (1 - \tau) \| y^k - y\|^2 + (1 - \tau) \| u^k - y \|^2
    \notag\\
    &
    + 2 \gamma \langle  A x^{k+1/2} - z^{k+1/2} ,  y^{k+1/2} - y\rangle 
    \notag\\
    &
    + 4\gamma^2 \left\| A x^{k+1/2} - A w^{k} \right\|^2 + 4\gamma^2\| z^{k+1/2} - z^{k}\|^2 
    \notag\\
    &+ 3 \gamma^2 \left\|   \sum\limits_{i=1}^n e^k_i \right\|^2
    - \left( \tau - \frac{1}{2}\right)\|  y^{k+1/2} - y^k\|^2 - (1 - \tau) \| u^k -y^{k+1/2} \|^2.
\end{align}
Here we also use the notation of $A$ and $x$. Summing up (\ref{eq:biased_proof_3}), (\ref{eq:biased_proof_temp2}) and (\ref{eq:biased_proof_4}), we obtain
\begin{align*}
    \|\hat x^{k+1} - x \|^2 + \|z^{k+1} - z \|^2 &+ \|\hat y^{k+1} - y \|^2
    \\
    &\hspace{1cm}\leq
    \| \hat x^{k} - x \|^2 + \|z^{k} - z \|^2  + \| \hat y^{k} - y \|^2
    \notag\\
    &\hspace{1.4cm}
    - (1 - \tau) \| x^k - x\|^2 + (1 - \tau) \| w^k - x \|^2
    - (1 - \tau) \| y^k - y\|^2 + (1 - \tau) \| u^k - y \|^2
    \notag\\
    &\hspace{1.4cm}
    - 2 \gamma \langle  y^{k+1/2},  A (x^{k+1/2} - x)\rangle 
    - 2 \gamma \sum\limits_{i=1}^n \langle  \nabla r_i(x^{k+1/2}_i),  x^{k+1/2}_i - x_i\rangle
    \notag\\
    &\hspace{1.4cm}
    +
    2\gamma \langle  y^{k+1/2} ,  z^{k+1/2} - z\rangle
    -
    2\gamma \langle  \nabla \ell(z^{k+1/2}, b),  z^{k+1/2} - z\rangle
    \notag\\
    &\hspace{1.4cm}
    + 2 \gamma \langle  A x^{k+1/2} - z^{k+1/2} ,  y^{k+1/2} - y\rangle 
    \notag\\
    &\hspace{1.4cm}
    - \left( \tau - \frac{1}{2}\right) \|  x^{k+1/2} - x^k\|^2 - (1 - \tau) \| w^k -x^{k+1/2} \|^2 - \|  z^{k+1/2} - z^k\|^2
    \notag\\
    &\hspace{1.4cm}
    - \left( \tau - \frac{1}{2}\right)\|  y^{k+1/2} - y^k\|^2 - (1 - \tau) \| u^k -y^{k+1/2} \|^2
    \notag\\
    &\hspace{1.4cm}
    + 4\gamma^2 \| A^T (y^{k+1/2} - u^k)\|^2 + 4\gamma^2 \sum\limits_{i=1}^n \| \nabla r_i(x^{k+1/2}_i) - \nabla r_i(x^{k}_i)\|^2 
    \notag\\
    &\hspace{1.4cm}
    +
    2\gamma^2 \|y^{k+1/2} - y^k\|^2 +
    2\gamma^2 \| \nabla \ell(z^{k+1/2}, b) -\nabla \ell(z^k, b)\|^2
    \notag\\
    &\hspace{1.4cm}
    + 4\gamma^2 \left\| A x^{k+1/2} - A w^{k} \right\|^2 + 4\gamma^2\| z^{k+1/2} - z^{k}\|^2
    \notag\\
    &\hspace{1.4cm}
    + 3 \gamma^2 \|   A^T e^k \|^2 + 3 \gamma^2 \left\|   \sum\limits_{i=1}^n e^k_i \right\|^2.
\end{align*}
Using convexity and $L_r$-smoothness of the function $r_i$ with convexity and $L_{\ell}$-smoothness of the function $\ell$, we have
\begin{align*}
    \|\hat x^{k+1} - x \|^2 + \|z^{k+1} - z \|^2 &+ \|\hat y^{k+1} - y \|^2
    \\
    &\hspace{1cm}\leq
    \| \hat x^{k} - x \|^2 + \|z^{k} - z \|^2  + \| \hat y^{k} - y \|^2
    \notag\\
    &\hspace{1.4cm}
    - (1 - \tau) \| x^k - x\|^2 + (1 - \tau) \| w^k - x \|^2
    - (1 - \tau) \| y^k - y\|^2 + (1 - \tau) \| u^k - y \|^2
    \notag\\
    &\hspace{1.4cm}
    + 2 \gamma \langle  y^{k+1/2},  Ax - z\rangle  - 2 \gamma \langle  A x^{k+1/2} - z^{k+1/2} ,  y\rangle 
    \notag\\
    &\hspace{1.4cm}
    -
    2\gamma (\ell(z^{k+1/2}, b) - \ell(z, b)) - 2 \gamma \sum\limits_{i=1}^n ( r_i(x^{k+1/2}_i)  - r_i(x_i))
    \notag\\
    &\hspace{1.4cm}
    - \left( \tau - \frac{1}{2}\right) \|  x^{k+1/2} - x^k\|^2 - (1 - \tau) \| w^k -x^{k+1/2} \|^2 - \|  z^{k+1/2} - z^k\|^2
    \notag\\
    &\hspace{1.4cm}
    - \left( \tau - \frac{1}{2}\right)\|  y^{k+1/2} - y^k\|^2 - (1 - \tau) \| u^k -y^{k+1/2} \|^2
    \notag\\
    &\hspace{1.4cm}
    + 4\gamma^2 \lambda_{\max} (A A^T) \| y^{k+1/2} - u^k\|^2 + 4\gamma^2 L_r^2 \| x^{k+1/2} - x^{k}\|^2 
    \notag\\
    &\hspace{1.4cm}
    +
    2\gamma^2 \|y^{k+1/2} - y^k\|^2 +
    2\gamma^2 L^2_{\ell} \| z^{k+1/2} - z^k\|^2
    \notag\\
    &\hspace{1.4cm}
    + 4\gamma^2 \lambda_{\max} (A^T A) \| x^{k+1/2} - w^{k} \|^2 + 4\gamma^2\| z^{k+1/2} - z^{k}\|^2
    \notag\\
    &\hspace{1.4cm}
    + 3 \gamma^2 \lambda_{\max} (A A^T) \|  e^k \|^2 + 3 \gamma^2 \left\|   \sum\limits_{i=1}^n e^k_i \right\|^2.
\end{align*}
Also here we apply the definition of $\lambda_{\max}(\cdot)$ as a maximum eigenvalue. With Cauchy Schwartz inequality for $n$ summands: $\| \sum_{i=1}^n e^k_i\|^2 \leq n \sum_{i=1}^n \| e^k_i\|^2$ and after small rearrangements, we obtain
\begin{align*}
    &2\gamma \bigg[\ell(z^{k+1/2}, b) - \ell(z, b)) + \sum\limits_{i=1}^n ( r_i(x^{k+1/2}_i)  - r_i(x_i))
    \\
    &
    + \langle  A x^{k+1/2} - z^{k+1/2} ,  y\rangle - \langle  Ax - z, y^{k+1/2}\rangle \bigg]
    \\
    &\hspace{3cm}\leq
    \| \hat x^{k} - x \|^2 + \|z^{k} - z \|^2  + \| \hat y^{k} - y \|^2
    \notag\\
    &\hspace{3.4cm}
    -\left(\|\hat x^{k+1} - x \|^2 + \|z^{k+1} - z \|^2 + \|\hat y^{k+1} - y \|^2\right)
    \notag\\
    &\hspace{3.4cm}
    - (1 - \tau) \| x^k - x\|^2 + (1 - \tau) \| w^k - x \|^2
    - (1 - \tau) \| y^k - y\|^2 + (1 - \tau) \| u^k - y \|^2
    \notag\\
    &\hspace{3.4cm}
    - \left( \tau - \frac{1}{2} - 4\gamma^2 L_r^2\right) \|  x^{k+1/2} - x^k\|^2 - (1 - \tau - 4\gamma^2 \lambda_{\max} (A^T A)) \| w^k -x^{k+1/2} \|^2 
    \notag\\
    &\hspace{3.4cm}
    - (1 - 4\gamma^2 - 2\gamma^2 L^2_{\ell})\|  z^{k+1/2} - z^k\|^2
    \notag\\
    &\hspace{3.4cm}
    - \left( \tau - \frac{1}{2} - 2\gamma^2\right)\|  y^{k+1/2} - y^k\|^2 - (1 - \tau - 4\gamma^2 \lambda_{\max} (A A^T)) \| u^k -y^{k+1/2} \|^2
    \notag\\
    &\hspace{3.4cm}
    + 3 \gamma^2 \lambda_{\max} (A A^T) \|  e^k \|^2 + 3 \gamma^2 n\sum\limits_{i=1}^n \|   e^k_i \|^2.
\end{align*}
Then we sum all over $k$ from $0$ to $K-1$, divide by $K$, use Jensen inequality for convex functions $\ell$ and $r_i$ with notation $\bar x^K_i = \frac{1}{K} \sum\limits_{k=0}^{K-1} x^{k+1/2}_i$, $\bar z^K = \frac{1}{K} \sum\limits_{k=0}^{K-1} z^{k+1/2}$, $\bar y^K = \frac{1}{K} \sum\limits_{k=0}^{K-1} y^{k+1/2}$, and have
\begin{align}
    \label{eq:biased_temp5}
    2\gamma \bigg[\ell (\bar z^K, b) - \ell (z, b) + \sum_{i=1}^n &\left( r_i (\bar x^{K}_i) - r_i (x_i)\right) 
    +
    \langle A  \bar x^{K} - \bar z^{K},  y\rangle 
    - \langle  A  x - z,  \bar y^{K}\rangle
    \bigg]
    \notag\\
    &\hspace{2cm}\leq
    \frac{1}{K}\left(\|\hat x^{0} - x \|^2 + \|z^{0} - z \|^2 + \|\hat y^{0} - y \|^2\right)
    \notag\\
    &\hspace{2.4cm}
    - (1 - \tau) \cdot \frac{1}{K} \sum\limits_{k=0}^{K-1} \| x^k - x\|^2 + (1 - \tau) \cdot \frac{1}{K} \sum\limits_{k=0}^{K-1} \| w^k - x \|^2
    \notag\\
    &\hspace{2.4cm}
    - (1 - \tau) \cdot \frac{1}{K} \sum\limits_{k=0}^{K-1} \| y^k - y\|^2 + (1 - \tau) \cdot \frac{1}{K} \sum\limits_{k=0}^{K-1} \| u^k - y \|^2
    \notag\\
    &\hspace{2.4cm}
    - \left( \tau - \frac{1}{2} - 4\gamma^2 L_r^2\right) \cdot \frac{1}{K} \sum\limits_{k=0}^{K-1} \|  x^{k+1/2} - x^k\|^2 
    \notag\\
    &\hspace{2.4cm}
    - (1 - \tau - 4\gamma^2 \lambda_{\max} (A^T A)) \cdot \frac{1}{K} \sum\limits_{k=0}^{K-1} \| w^k -x^{k+1/2} \|^2 
    \notag\\
    &\hspace{2.4cm}
    - (1 - 4\gamma^2 - 2\gamma^2 L^2_{\ell})\frac{1}{K} \sum\limits_{k=0}^{K-1}\|  z^{k+1/2} - z^k\|^2
    \notag\\
    &\hspace{2.4cm}
    - \left( \tau - \frac{1}{2} - 2\gamma^2\right) \cdot \frac{1}{K} \sum\limits_{k=0}^{K-1}\|  y^{k+1/2} - y^k\|^2 
    \notag\\
    &\hspace{2.4cm}
    - (1 - \tau - 4\gamma^2 \lambda_{\max} (A A^T)) \cdot \frac{1}{K} \sum\limits_{k=0}^{K-1} \| u^k -y^{k+1/2} \|^2
    \notag\\
    &\hspace{2.4cm}
    + 3 \gamma^2 \lambda_{\max} (A A^T) \cdot \frac{1}{K} \sum\limits_{k=0}^{K-1} \|  e^k \|^2 + 3 \gamma^2 n \cdot \frac{1}{K}\sum\limits_{i=1}^n \sum\limits_{k=0}^{K-1} \|   e^k_i \|^2.
\end{align}
Using small rearrangements, we can deduce
\begin{align}
\label{eq:biased_temp10}
&- (1 - \tau) \cdot \frac{1}{K} \sum\limits_{k=0}^{K-1} \| x^k - x\|^2 + (1 - \tau) \cdot \frac{1}{K} \sum\limits_{k=0}^{K-1} \| w^k - x \|^2
\notag\\
&\hspace{3cm}
=
\frac{1}{K} \sum\limits_{k=0}^{K-1} \| w^{k} - x \|^2 - \frac{1}{K} \sum\limits_{k=0}^{K-1} [(1 - \tau) \| x^k - x\|^2 + \tau \| w^k - x \|^2]
\notag\\
&\hspace{3cm}=
\frac{1}{K}\| w^0 - x \|^2 - \frac{1}{K}\| w^K - x \|^2
\notag\\
&\hspace{3.4cm}
+ \frac{1}{K} \sum\limits_{k=0}^{K-1} [\| w^{k+1} - x \|^2 - (1 - \tau) \| x^k - x\|^2 - \tau \| w^k - x \|^2]
\notag\\
&\hspace{3cm}=
\frac{1}{K}\| w^0 - x \|^2 - \frac{1}{K}\| w^K - x \|^2 + \frac{1}{K} \sum\limits_{k=0}^{K-1} [\| w^{k+1} \|^2 - (1 - \tau) \| x^k \|^2 - \tau \| w^k \|^2]
\notag\\
&\hspace{3.4cm}+ \frac{2}{K} \sum\limits_{k=0}^{K-1} \langle (1 - \tau) x^k + \tau w^k - w^{k+1}, x\rangle .
\end{align}
The same way we can make
\begin{align}
\label{eq:biased_temp11}
&- (1 - \tau) \cdot \frac{1}{K} \sum\limits_{k=0}^{K-1} \| y^k - y\|^2 + (1 - \tau) \cdot \frac{1}{K} \sum\limits_{k=0}^{K-1} \| u^k - y \|^2
\notag\\
&\hspace{4cm}
=
\frac{1}{K}\| u^0 - y \|^2 - \frac{1}{K}\| u^K - y \|^2 + \frac{1}{K} \sum\limits_{k=0}^{K-1} [\| u^{k+1} \|^2 - (1 - \tau) \| y^k \|^2 - \tau \| u^k \|^2]
\notag\\
&\hspace{4.4cm}+ \frac{2}{K} \sum\limits_{k=0}^{K-1} \langle (1 - \tau) y^k + \tau u^k - u^{k+1}, y\rangle .
\end{align}
Substituting (\ref{eq:biased_temp10}) and (\ref{eq:biased_temp11}) to (\ref{eq:biased_temp5}), we obtain
\begin{align*}
    2\gamma \bigg[\ell (\bar z^K, b) - \ell (z, b) + \sum_{i=1}^n &\left( r_i (\bar x^{K}_i) - r_i (x_i)\right) 
    +
    \langle A  \bar x^{K} - \bar z^{K},  y\rangle 
    - \langle  A  x - z,  \bar y^{K}\rangle
    \bigg]
    \notag\\
    &\hspace{2cm}\leq
    \frac{1}{K}\left(\|\hat x^{0} - x \|^2 + \|z^{0} - z \|^2 + \|\hat y^{0} - y \|^2\right)
    \notag\\
    &\hspace{2.4cm}
    +\frac{1}{K}\| w^0 - x \|^2 - \frac{1}{K}\| w^K - x \|^2 
    \notag\\
    &\hspace{2.4cm}
    + \frac{1}{K} \sum\limits_{k=0}^{K-1} [\| w^{k+1} \|^2 - (1 - \tau) \| x^k \|^2 - \tau \| w^k \|^2]
    \notag\\
    &\hspace{2.4cm}+ \frac{2}{K} \sum\limits_{k=0}^{K-1} \langle (1 - \tau) x^k + \tau w^k - w^{k+1}, x\rangle 
    \notag\\
    &\hspace{2.4cm}
    +\frac{1}{K}\| u^0 - y \|^2 - \frac{1}{K}\| u^K - y \|^2 
    \notag\\
    &\hspace{2.4cm}
    + \frac{1}{K} \sum\limits_{k=0}^{K-1} [\| u^{k+1} \|^2 - (1 - \tau) \| y^k \|^2 - \tau \| u^k \|^2]
    \notag\\
    &\hspace{2.4cm}+ \frac{2}{K} \sum\limits_{k=0}^{K-1} \langle (1 - \tau) y^k + \tau u^k - u^{k+1}, y\rangle 
    \notag\\
    &\hspace{2.4cm}
    - \left( \tau - \frac{1}{2} - 4\gamma^2 L_r^2\right) \cdot \frac{1}{K} \sum\limits_{k=0}^{K-1} \|  x^{k+1/2} - x^k\|^2 
    \notag\\
    &\hspace{2.4cm}
    - (1 - \tau - 4\gamma^2 \lambda_{\max} (A^T A)) \cdot \frac{1}{K} \sum\limits_{k=0}^{K-1} \| w^k -x^{k+1/2} \|^2 
    \notag\\
    &\hspace{2.4cm}
    - (1 - 4\gamma^2 - 2\gamma^2 L^2_{\ell})\frac{1}{K} \sum\limits_{k=0}^{K-1}\|  z^{k+1/2} - z^k\|^2
    \notag\\
    &\hspace{2.4cm}
    - \left( \tau - \frac{1}{2} - 2\gamma^2\right) \cdot \frac{1}{K} \sum\limits_{k=0}^{K-1}\|  y^{k+1/2} - y^k\|^2 
    \notag\\
    &\hspace{2.4cm}
    - (1 - \tau - 4\gamma^2 \lambda_{\max} (A A^T)) \cdot \frac{1}{K} \sum\limits_{k=0}^{K-1} \| u^k -y^{k+1/2} \|^2
    \notag\\
    &\hspace{2.4cm}
    + 3 \gamma^2 \lambda_{\max} (A A^T) \cdot \frac{1}{K} \sum\limits_{k=0}^{K-1} \|  e^k \|^2 + 3 \gamma^2 n \cdot \frac{1}{K}\sum\limits_{i=1}^n \sum\limits_{k=0}^{K-1} \|   e^k_i \|^2.
\end{align*}
As in (\ref{eq:basic_crit1}) we pass to the gap criterion by taking the maximum in $y \in \cY$ and the minimum in $x\in \cX$ and $z \in \cZ$. Additionally, we also take the mathematical expectation
\begin{align*}
    2\gamma \E{\text{gap}(\bar x^K, \bar z^K, \bar y^K)}
    \leq&
    \frac{1}{K}\bigg(\max_{x\in \cX}\|\hat x^{0} - x \|^2 + \max_{x\in \cX}\| w^0 - x \|^2 + \max_{z\in \cX}\|z^{0} - z \|^2 
    \notag\\
    &
    + \max_{y\in \cY}\|\hat y^{0} - y \|^2 + \max_{y\in \cY}\| u^0 - y \|^2\bigg)
    \notag\\
    &
    + \frac{1}{K} \sum\limits_{k=0}^{K-1} \E{\| w^{k+1} \|^2 - (1 - \tau) \| x^k \|^2 - \tau \| w^k \|^2}
    \notag\\
    &
    + \frac{2}{K} \E{\max_{x\in \cX}\sum\limits_{k=0}^{K-1} \langle (1 - \tau) x^k + \tau w^k - w^{k+1}, x\rangle }
    \notag\\
    &
    + \frac{1}{K} \sum\limits_{k=0}^{K-1} \E{\| u^{k+1} \|^2 - (1 - \tau) \| y^k \|^2 - \tau \| u^k \|^2}
    \notag\\
    &
    + \frac{2}{K} \E{\max_{y\in \cY}\sum\limits_{k=0}^{K-1} \langle (1 - \tau) y^k + \tau u^k - u^{k+1}, y\rangle }
    \notag\\
    &
    - \left( \tau - \frac{1}{2} - 4\gamma^2 L_r^2\right) \cdot \frac{1}{K} \sum\limits_{k=0}^{K-1} \E{\|  x^{k+1/2} - x^k\|^2 }
    \notag\\
    &
    - (1 - \tau - 4\gamma^2 \lambda_{\max} (A^T A)) \cdot \frac{1}{K} \sum\limits_{k=0}^{K-1} \E{\| w^k -x^{k+1/2} \|^2 }
    \notag\\
    &
    - (1 - 4\gamma^2 - 2\gamma^2 L^2_{\ell})\frac{1}{K} \sum\limits_{k=0}^{K-1} \E{\|  z^{k+1/2} - z^k\|^2}
    \notag\\
    &
    - \left( \tau - \frac{1}{2} - 2\gamma^2\right) \cdot \frac{1}{K} \sum\limits_{k=0}^{K-1} \E{\|  y^{k+1/2} - y^k\|^2 }
    \notag\\
    &
    - (1 - \tau - 4\gamma^2 \lambda_{\max} (A A^T)) \cdot \frac{1}{K} \sum\limits_{k=0}^{K-1} \E{\| u^k -y^{k+1/2} \|^2}
    \notag\\
    &
    + 3 \gamma^2 \lambda_{\max} (A A^T) \cdot \frac{1}{K} \sum\limits_{k=0}^{K-1} \E{\|  e^k \|^2} + 3 \gamma^2 n \cdot \frac{1}{K}\sum\limits_{i=1}^n \sum\limits_{k=0}^{K-1} \E{\|   e^k_i \|^2}.
\end{align*}
Since lines \ref{lin_alg:biased_linws}--\ref{lin_alg:biased_linwf} of Algorithm \ref{alg:EG_biased} are equivalent to lines \ref{lin_alg:unbiased_linws}--\ref{lin_alg:unbiased_linwf} of Algorithm \ref{alg:EG_quantization}. Then, we can use (\ref{eq:unbiased_temp8}), (\ref{eq:unbiased_temp9}), (\ref{eq:unbiased_temp10}), (\ref{eq:unbiased_temp11}) and get
\begin{align*}
    2\gamma \E{\text{gap}(\bar x^K, \bar z^K, \bar y^K)}
    \leq &
    \frac{1}{K}\bigg(5\max_{x\in \cX}\|\hat x^{0} - x \|^2 + \max_{x\in \cX}\| w^0 - x \|^2 + \max_{z\in \cX}\|z^{0} - z \|^2 
    \notag\\
    &
    + 5\max_{y\in \cY}\|\hat y^{0} - y \|^2 + \max_{y\in \cY}\| u^0 - y \|^2\bigg)
    \notag\\
    &+ 
    \frac{1}{4}\sum\limits_{k=0}^{K-1}  \E{ \tau (1 - \tau) \| w^{k} - x^k \|^2} + \frac{1}{4}\sum\limits_{k=0}^{K-1}  \E{ \tau (1 - \tau) \| u^{k} - y^k \|^2}
    \notag\\
    &
    - \left( \tau - \frac{1}{2} - 4\gamma^2 L_r^2\right) \cdot \frac{1}{K} \sum\limits_{k=0}^{K-1} \E{\|  x^{k+1/2} - x^k\|^2 }
    \notag\\
    &
    - (1 - \tau - 4\gamma^2 \lambda_{\max} (A^T A)) \cdot \frac{1}{K} \sum\limits_{k=0}^{K-1} \E{\| w^k -x^{k+1/2} \|^2 }
    \notag\\
    &
    - (1 - 4\gamma^2 - 2\gamma^2 L^2_{\ell})\frac{1}{K} \sum\limits_{k=0}^{K-1} \E{\|  z^{k+1/2} - z^k\|^2}
    \notag\\
    &
    - \left( \tau - \frac{1}{2} - 2\gamma^2\right) \cdot \frac{1}{K} \sum\limits_{k=0}^{K-1} \E{\|  y^{k+1/2} - y^k\|^2 }
    \notag\\
    &
    - (1 - \tau - 4\gamma^2 \lambda_{\max} (A A^T)) \cdot \frac{1}{K} \sum\limits_{k=0}^{K-1} \E{\| u^k -y^{k+1/2} \|^2}
    \notag\\
    &
    + 3 \gamma^2 \lambda_{\max} (A A^T) \cdot \frac{1}{K} \sum\limits_{k=0}^{K-1} \E{\|  e^k \|^2} + 3 \gamma^2 n \cdot \frac{1}{K}\sum\limits_{i=1}^n \sum\limits_{k=0}^{K-1} \E{\|   e^k_i \|^2}.
\end{align*}
Next we work  with error feedback terms:
\begin{align*}
 \E{\| e^{k+1}\|^2} =& 
 \E{\|y^{k+1/2} - u^k + e^k - C(y^{k+1/2} - u^k + e^k) \|^2}
 \\
\leq&  
\left(1-\frac{1}{\delta}\right) \E{\|y^{k+1/2} - u^k + e^k\|^2}.
\end{align*}
With Cauchy Schwartz inequality in the form 
$\| a + b\|^2 \leq \left( 1 + \tfrac{1}{\eta} \right) \| a\|^2 + (1 + \eta)\| b\|^2$ with $a = e^k$, $b = y^{k+1/2} - u^k$ and $\eta = 2 \delta$, we get
\begin{align*}
 \E{\| e^{k+1}\|^2}
\leq&  
\left(1-\frac{1}{\delta}\right) \left(1+\frac{1}{2\delta}\right) \E{\|e^k\|^2} + (2\delta + 1)\left(1-\frac{1}{\delta}\right)\E{\|y^{k+1/2} - u^k\|^2}
\\
\leq&  
\left(1-\frac{1}{2\delta}\right) \E{\|e^k\|^2} + 3\delta\E{\|y^{k+1/2} - u^k\|^2}.
\end{align*}
Running the recursion and using that $e_0 = 0$, we have
\begin{align*}
 \E{\| e^{k+1}\|^2}
\leq&  
3\delta\sum\limits_{j=0}^k \left(1-\frac{1}{2\delta}\right)^{k-j} \E{\|y^{j+1/2} - u^j\|^2}.
\end{align*}
Then we sum all over $k$ from $0$ to $K-1$, divide by $K$.
\begin{align}
\label{eq:biased_temp6}
 \frac{1}{K} \sum\limits_{k=0}^{K-1}\E{\| e^{k}\|^2}
\leq&  
3\delta \cdot \frac{1}{K} \sum\limits_{k=0}^{K-1} \sum\limits_{j=0}^{k-1} \left(1-\frac{1}{2\delta}\right)^{k-1-j} \E{\|y^{j+1/2} - u^j\|^2}
\notag\\
\leq&  
3\delta \cdot \frac{1}{K} \sum\limits_{k=0}^{K-1} \E{\|y^{k+1/2} - u^k\|^2} \sum\limits_{j=0}^{\infty} \left(1-\frac{1}{2\delta}\right)^{j} 
\notag\\
\leq&  
6\delta^2 \cdot \frac{1}{K} \sum\limits_{k=0}^{K-1} \E{\|y^{k+1/2} - u^k\|^2}.
\end{align}
The same way we can make the following estimate:
\begin{align}
\label{eq:biased_temp7}
 \frac{1}{K} \sum\limits_{k=0}^{K-1}\E{\| e^{k}_i\|^2}
\leq&  
6\delta^2 \cdot \frac{1}{K} \sum\limits_{k=0}^{K-1} \E{\|A_i x^{k+1/2}_i - A_i w^{k}_i\|^2}.
\end{align}
Combining (\ref{eq:biased_temp5}) with (\ref{eq:biased_temp6}) and (\ref{eq:biased_temp7}), we have
\begin{align*}
    2\gamma \E{\text{gap}(\bar x^K, \bar z^K, \bar y^K)}
    \leq&
    \frac{1}{K}\bigg(5\max_{x\in \cX}\|\hat x^{0} - x \|^2 + \max_{x\in \cX}\| w^0 - x \|^2 + \max_{z\in \cX}\|z^{0} - z \|^2 
    \notag\\
    &
    + 5\max_{y\in \cY}\|\hat y^{0} - y \|^2 + \max_{y\in \cY}\| u^0 - y \|^2\bigg)
    \notag\\
    &+ 
    \frac{1}{4}\sum\limits_{k=0}^{K-1}  \E{ \tau (1 - \tau) \| w^{k} - x^k \|^2} + \frac{1}{4}\sum\limits_{k=0}^{K-1}  \E{ \tau (1 - \tau) \| u^{k} - y^k \|^2}
    \notag\\
    &
    - \left( \tau - \frac{1}{2} - 4\gamma^2 L_r^2\right) \cdot \frac{1}{K} \sum\limits_{k=0}^{K-1} \E{\|  x^{k+1/2} - x^k\|^2 }
    \notag\\
    &
    - (1 - \tau - 4\gamma^2 \lambda_{\max} (A^T A)) \cdot \frac{1}{K} \sum\limits_{k=0}^{K-1} \E{\| w^k -x^{k+1/2} \|^2 }
    \notag\\
    &
    - (1 - 4\gamma^2 - 2\gamma^2 L^2_{\ell})\frac{1}{K} \sum\limits_{k=0}^{K-1} \E{\|  z^{k+1/2} - z^k\|^2}
    \notag\\
    &
    - \left( \tau - \frac{1}{2} - 2\gamma^2\right) \cdot \frac{1}{K} \sum\limits_{k=0}^{K-1} \E{\|  y^{k+1/2} - y^k\|^2 }
    \notag\\
    &
    - (1 - \tau - 4\gamma^2 \lambda_{\max} (A A^T)) \cdot \frac{1}{K} \sum\limits_{k=0}^{K-1} \E{\| u^k -y^{k+1/2} \|^2}
    \notag\\
    &
    + 18 \gamma^2 \delta^2 \lambda_{\max} (A A^T) \cdot \frac{1}{K} \sum\limits_{k=0}^{K-1} \E{\|y^{k+1/2} - u^k\|^2} 
    \notag\\
    &
    + 18 \gamma^2 \delta^2 n \cdot \frac{1}{K}\sum\limits_{k=0}^{K-1} \sum\limits_{i=1}^n \E{\|A_i x^{k+1/2}_i - A_i w^{k}_i\|^2}.
\end{align*}
For $\sum\limits_{i=1}^n \E{\|A_i x^{k+1/2}_i - A_i w^{k}_i\|^2}$ we get
\begin{align*}
    \sum\limits_{i=1}^n \E{\|A_i x^{k+1/2}_i - A_i w^{k}_i\|^2} 
    \leq&
    \sum\limits_{i=1}^n \lambda_{\max} (A_i^T A_i) \E{\|x^{k+1/2}_i - w^{k}_i\|^2}
    \\
    \leq&
    \max_i \lambda_{\max} (A_i^T A_i) \sum\limits_{i=1}^n  \E{\|x^{k+1/2}_i - w^{k}_i\|^2}
    \\
    \leq&
    \max_i [\lambda_{\max} (A_i^T A_i)] \E{\|x^{k+1/2} - w^{k}\|^2}.
\end{align*}
Then one can deduce
\begin{align*}
    2\gamma \E{\text{gap}(\bar x^K, \bar z^K, \bar y^K)}
    \leq&
    \frac{1}{K}\bigg(5\max_{x\in \cX}\|\hat x^{0} - x \|^2 + \max_{x\in \cX}\| w^0 - x \|^2 + \max_{z\in \cX}\|z^{0} - z \|^2 
    \notag\\
    &
    + 5\max_{y\in \cY}\|\hat y^{0} - y \|^2 + \max_{y\in \cY}\| u^0 - y \|^2\bigg)
    \notag\\
    &+ 
    \frac{1}{4}\sum\limits_{k=0}^{K-1}  \E{ \tau (1 - \tau) \| w^{k} - x^k \|^2} + \frac{1}{4}\sum\limits_{k=0}^{K-1}  \E{ \tau (1 - \tau) \| u^{k} - y^k \|^2}
    \notag\\
    &
    - \left( \tau - \frac{1}{2} - 4\gamma^2 L_r^2\right) \cdot \frac{1}{K} \sum\limits_{k=0}^{K-1} \E{\|  x^{k+1/2} - x^k\|^2 }
    \notag\\
    &
    - (1 - \tau - 4\gamma^2 \lambda_{\max} (A^T A) - 18 \gamma^2 \delta^2 n \max_i [\lambda_{\max} (A_i^T A_i)]) \cdot \frac{1}{K} \sum\limits_{k=0}^{K-1} \E{\| w^k -x^{k+1/2} \|^2 }
    \notag\\
    &
    - (1 - 4\gamma^2 - 2\gamma^2 L^2_{\ell})\frac{1}{K} \sum\limits_{k=0}^{K-1} \E{\|  z^{k+1/2} - z^k\|^2}
    \notag\\
    &
    - \left( \tau - \frac{1}{2} - 2\gamma^2\right) \cdot \frac{1}{K} \sum\limits_{k=0}^{K-1} \E{\|  y^{k+1/2} - y^k\|^2 }
    \notag\\
    &
    - (1 - \tau - 4\gamma^2 \lambda_{\max} (A A^T) (1 + 5\delta^2)) \cdot \frac{1}{K} \sum\limits_{k=0}^{K-1} \E{\| u^k -y^{k+1/2} \|^2}.
\end{align*}
With $\tau \leq 1$ and Cauchy Schwartz inequality, we have
\begin{align*}
    2\gamma \E{\text{gap}(\bar x^K, \bar z^K, \bar y^K)}
    \leq&
    \frac{1}{K}\bigg(5\max_{x\in \cX}\|\hat x^{0} - x \|^2 + \max_{x\in \cX}\| w^0 - x \|^2 + \max_{z\in \cX}\|z^{0} - z \|^2 
    \notag\\
    &
    + 5\max_{y\in \cY}\|\hat y^{0} - y \|^2 + \max_{y\in \cY}\| u^0 - y \|^2\bigg)
    \notag\\
    &\
    - \left( \frac{3\tau - 2}{2} - 4\gamma^2 L_r^2\right) \cdot \frac{1}{K} \sum\limits_{k=0}^{K-1} \E{\|  x^{k+1/2} - x^k\|^2 }
    \notag\\
    &
    - \left(\frac{1 - \tau}{2} - 4\gamma^2 \lambda_{\max} (A^T A) - 18 \gamma^2 \delta^2 n \max_i [\lambda_{\max} (A_i^T A_i)]\right) \cdot \frac{1}{K} \sum\limits_{k=0}^{K-1} \E{\| w^k -x^{k+1/2} \|^2 }
    \notag\\
    &
    - (1 - 4\gamma^2 - 2\gamma^2 L^2_{\ell})\frac{1}{K} \sum\limits_{k=0}^{K-1} \E{\|  z^{k+1/2} - z^k\|^2}
    \notag\\
    &
    - \left(  \frac{3\tau - 2}{2} - 2\gamma^2\right) \cdot \frac{1}{K} \sum\limits_{k=0}^{K-1} \E{\|  y^{k+1/2} - y^k\|^2 }
    \notag\\
    &
    - \left(\frac{1 - \tau}{2} - 4\gamma^2 \lambda_{\max} (A A^T) (1 + 5\delta^2)\right) \cdot \frac{1}{K} \sum\limits_{k=0}^{K-1} \E{\| u^k -y^{k+1/2} \|^2}.
\end{align*}
If we choose $\tau \geq \tfrac{1}{2}$ and $\gamma$ as follows
$$
\gamma \leq \frac{1}{4}\min\left\{ 1;\frac{1}{L_r}; \frac{1}{L_{\ell}};\sqrt{\frac{1-\tau}{5\delta^2 n \max_i [\lambda_{\max} (A_i^T A_i)]}}; \sqrt{\frac{1-\tau}{3\delta^2\lambda_{\max}(A A^T)}}; \right\},
$$
then one can obtain
\begin{align*}
    &\E{\text{gap}(\bar x^K, \bar z^K, \bar y^K)}
    \leq
    \frac{1}{2\gamma K}\bigg(5\max_{x \in \cX}\|x^{0} - x \|^2 + \max_{x \in \cX}\| w^0 - x \|^2 + \max_{z \in \cZ}\|z^{0} - z \|^2
    \notag\\
    &\hspace{7.5cm}
    + 5\max_{y \in \cY}\|y^{0} - y \|^2 + \max_{y \in \cY}\| u^0 - y \|^2 \bigg).
\end{align*}
With $\gamma = \frac{1}{4}\min\left\{ 1;\frac{1}{L_r}; \frac{1}{L_{\ell}};\sqrt{\frac{1-\tau}{5\delta^2 n \max_i [\lambda_{\max} (A_i^T A_i)]}}; \sqrt{\frac{1-\tau}{3\delta^2\lambda_{\max}(A A^T)}}; \right\}$, we finish the proof.
\end{proof}

\subsection{Proof of Theorem \ref{th:pp}} \label{app:th:pp}

{
\begin{theorem}[Theorem \ref{th:pp}]
Let Assumption \ref{as:convexity_smothness} hold. Let the problem (\ref{eq:vfl_lin_spp_1})
be solved by Algorithm~\ref{alg:EG_pp}. Then for $\tau = 1 - p$ and
$$
\gamma = \frac{1}{4}\min\left\{ 1;\frac{1}{L_r}; \frac{1}{L_{\ell}};\sqrt{\frac{1-\tau}{\lambda_{\max}(A A^T) + n \cdot \max_{i}\{\lambda_{\max}(A_i A^T_i)\}}} \right\},
$$
it holds that
\begin{align*}
&\E{\text{gap}(\bar x^K, \bar z^K, \bar y^K)} 
\\
&= 
    \mathcal{O}\left(
    \left[ 1+  \frac{1}{\sqrt{p}}\left(\sqrt{\lambda_{\max}(A A^T) } +  n \cdot \max\limits_{i = 1, \ldots, n}\left\{\sqrt{\lambda_{\max}(A_i A^T_i) }\right\} \right) + L_{\ell} + L_r\right] \cdot \frac{D^2}{K}\right),  
\end{align*}
where $\bar x^K := \tfrac{1}{K}\sum_{k=0}^{K-1} x^{k+1/2}$, $\bar z^K := \tfrac{1}{K}\sum_{k=0}^{K-1} z^{k+1/2}$, $\bar y^K := \tfrac{1}{K}\sum_{k=0}^{K-1} y^{k+1/2}$ and \\ $D^2 := \max_{x, z, y \in \cX, \cZ, \cY} \left[ \|x^0 - x \|^2 + \|z^0 - z \|^2 + \|y^0 - y \|^2 \right]$.
\end{theorem}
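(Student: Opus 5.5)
The plan is to reuse the proof of Theorem \ref{th:EG_compressed_unbiased} almost verbatim. Algorithm~\ref{alg:EG_pp} is Algorithm~\ref{alg:EG_quantization} with two substitutions. On the $x$-side, $Q(y^{k+1/2}-u^k)$ is replaced by the exact difference $y^{k+1/2}-u^k$, because the first device broadcasts $y^{k+1/2}$ uncompressed. On the $y$-side, $\sum_i Q(A_ix^{k+1/2}_i - A_iw^k_i)$ is replaced by the estimator
\begin{equation*}
g^k := n\,[A_{i_k}x^{k+1/2}_{i_k} - A_{i_k}w^k_{i_k}].
\end{equation*}
With $i_k$ uniform on $\{1,\dots,n\}$, this estimator is unbiased: $\mathbb{E}_{i_k}[g^k] = A(x^{k+1/2}-w^k)$. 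So I would redo the three-point identities for $x$, $z$, $y$ together with the reference sequences $w^k,u^k$, exactly as in \eqref{eq:unbiased_proof_temp1}, \eqref{eq:unbiased_proof_temp2}, \eqref{eq:unbiased_proof_temp3}. The $x$-bound simplifies, since the noise term $\langle A(x^{k+1/2}-x), Q(\cdot)-(\cdot)\rangle$ vanishes identically. Its quadratic term becomes $2\gamma^2\|A^T(y^{k+1/2}-u^k)\|^2\le 2\gamma^2\lambda_{\max}(AA^T)\|y^{k+1/2}-u^k\|^2$. For the $y$-bound, $\sum_i Q(\cdot)$ is replaced by $g^k$ everywhere.

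Next I sum the three inequalities, apply convexity and smoothness of $r_i$ and $\ell$, average over $k$, and use Jensen's inequality to pass to the averaged iterates. Then I take $\max_{y\in\cY}$, $\min_{x\in\cX,z\in\cZ}$ and the expectation, arriving at an analogue of \eqref{eq:unbiased_temp5}. The coin-flip terms for $w,u$ are handled exactly as in \eqref{eq:unbiased_temp6}--\eqref{eq:unbiased_temp9}. The terms linear in the noise $g^k - A(x^{k+1/2}-w^k)$ are treated as before. The part paired with $y^{k+1/2}-y^0$ vanishes in expectation by unbiasedness in $i_k$, as in \eqref{eq:unbiased_temp11}. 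The part paired with $y^0-y$ sits inside the max. There I use Young's inequality with $\eta=\gamma$ plus the martingale-difference argument of \eqref{eq:unbiased_temp12}: cross terms vanish by conditioning on $i_{k_2}$. This bounds it by $\frac{1}{2\gamma}\max_y\|y^0-y\|^2 + \frac{\gamma}{2}\sum_k\mathbb{E}\|g^k\|^2$.

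The key step, and the one place the constant changes, is bounding the second moment $\mathbb{E}\|g^k\|^2$, which plays the role of $\chi_{\text{compress}}$. Directly,
\begin{equation*}
\mathbb{E}_{i_k}\|g^k\|^2 = n\sum_{i=1}^n\|A_i(x^{k+1/2}_i-w^k_i)\|^2 \le n\max_i\lambda_{\max}(A_i^TA_i)\,\|x^{k+1/2}-w^k\|^2 .
\end{equation*}
To match the stated constant I keep the looser form $\lambda_{\max}(AA^T)+n\max_i\lambda_{\max}(A_iA_i^T)$. This follows from writing $\mathbb{E}\|g^k\|^2 = \mathbb{E}\|g^k-\mathbb{E}g^k\|^2+\|A(x^{k+1/2}-w^k)\|^2$, which is harmless. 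The $u$-side term is controlled by $\lambda_{\max}(AA^T)\|y^{k+1/2}-u^k\|^2$.

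Finally, with $\tau = 1-p\ge \tfrac12$, the negative terms $-\frac{1-\tau}{2}\|w^k-x^{k+1/2}\|^2$ and $-\frac{1-\tau}{2}\|u^k-y^{k+1/2}\|^2$ must absorb these quadratic terms. This forces $\gamma^2\lesssim (1-\tau)/(\lambda_{\max}(AA^T)+n\max_i\lambda_{\max}(A_iA_i^T))$. The remaining conditions $\gamma\le\frac14\min\{1,1/L_r,1/L_\ell\}$ kill the $\|x^{k+1/2}-x^k\|^2$, $\|z^{k+1/2}-z^k\|^2$ and $\|y^{k+1/2}-y^k\|^2$ terms. What remains is $\mathbb{E}\,\text{gap}\le \frac{C D^2}{\gamma K}$, assuming $w^0=x^0$ and $u^0=y^0$. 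Substituting the stated $\gamma$ and using $\sqrt{a+b}\le\sqrt a+\sqrt b$ gives the claimed rate. The main obstacle I expect is bookkeeping the expectation inside the max for the $g^k$ noise, which needs the filtration ordered so that $i_k$ is independent of $y^{k+1/2}$ and $x^{k+1/2}$. I would verify this by conditioning on everything before the draw of $i_k$ at each step.
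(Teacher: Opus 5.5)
Your proposal is correct and follows the paper's proof essentially step for step. The paper also reruns the proof of Theorem~\ref{th:EG_compressed_unbiased} with $Q(y^{k+1/2}-u^k)$ replaced by $y^{k+1/2}-u^k$ and $\sum_i Q(\cdot)$ replaced by $n A_{i_k}(x^{k+1/2}_{i_k}-w^k_{i_k})$, reaches the analogue of \eqref{eq:unbiased_temp14}, and bounds $\mathbb{E}\|g^k\|^2\le n\max_i\lambda_{\max}(A_i^TA_i)\,\mathbb{E}\|x^{k+1/2}-w^k\|^2$ exactly as you do; your bias--variance detour is unnecessary, since the $\lambda_{\max}(AA^T)$ in the stated stepsize just comes from the separate $u$-side condition $3\gamma^2\lambda_{\max}(AA^T)\le\frac{1-\tau}{2}$, imposed alongside $3\gamma^2 n\max_i\lambda_{\max}(A_i^TA_i)\le\frac{1-\tau}{2}$.
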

}
\begin{proof}
The proof repeats almost the same steps as the proof of Theorem \ref{th:EG_compressed_unbiased} (Section \ref{sec:proof_EG_compressed_unbiased}). In particular, in the proof of Theorem \ref{th:EG_compressed_unbiased} we need to replace $Q(y^{k+1/2} - u^k)$ by $y^{k+1/2} - u^k$ and $\sum_{i=1}^n Q(A_i [x^{k+1/2}_i - w^{k}_i])$ by $n \cdot A_{i_k} [x^{k+1/2}_{i_k} - w^{k}_{i_k}]$, and use. In the end, we arrive at the analogue of (\ref{eq:unbiased_temp14}).
\begin{align*}
    2\gamma \E{\text{gap}(\bar x^K, \bar z^K, \bar y^K)}
    \leq&
    \frac{1}{K}\bigg(6\max_{x \in \cX}\|x^{0} - x \|^2 + \max_{x \in \cX}\| w^0 - x \|^2 + \max_{z \in \cZ}\|z^{0} - z \|^2
    \notag\\
    &
    + 6\max_{y \in \cY}\|y^{0} - y \|^2 + \max_{y \in \cY}\| u^0 - y \|^2 \bigg)
    \notag\\
    &
    -  \left( \frac{3\tau - 1}{2} - 2\gamma^2 L_r^2 \right) \cdot \frac{1}{K} \sum\limits_{k=0}^{K-1} \E{\|  x^{k+1/2} - x^k\|^2} -  \frac{1 - \tau}{2K} \sum\limits_{k=0}^{K-1} \E{\| w^k -x^{k+1/2} \|^2} 
    \notag\\
    &
    - \left( 1 - 2\gamma^2 (1 + L_{\ell}^2) \right)\frac{1}{K} \sum\limits_{k=0}^{K-1} \E{\|  z^{k+1/2} - z^k\|^2}
    \notag\\
    &
    - \left( \frac{3\tau - 1}{2} - 2 \gamma^2 \right)\frac{1}{K} \sum\limits_{k=0}^{K-1} \E{\|  y^{k+1/2} - y^k\|^2} - \frac{1 - \tau}{2K} \sum\limits_{k=0}^{K-1} \E{\| u^k -y^{k+1/2} \|^2}
    \notag\\
    &
    + \frac{3\gamma^2}{K} \sum\limits_{k=0}^{K-1} \E{\| A^T(y^{k+1/2} - u^k) \|^2}
    + \frac{3\gamma^2 n^2}{K} \sum\limits_{k=0}^{K-1} \E{\| A_{i_k} (x^{k+1/2}_{i_k}  - w^{k}_{i_k} ) \|^2}.
\end{align*}
Using the notation of $\lambda_{\max}(\cdot)$ as a maximum eigenvalue and the random choice of $i_k$, we get
\begin{align*}
\E{\| A_{i_k} (x^{k+1/2}_{i_k} - w^k_{i_k}) \|^2}
=& 
\E{\mathbb{E}_{i_k}\left[\| A_{i_k} (x^{k+1/2}_{i_k} - w^k_{i_k}) \|^2 \right]}
\\
=& 
\frac{1}{n} \sum\limits_{i=1}^n \E{\| A_{i} (x^{k+1/2}_{i} - w^k_{i}) \|^2}
\\
\leq& \frac{1}{n} \sum\limits_{i=1}^n \lambda_{\max}(A_i^T A_i) \E{\| x^{k+1/2}_i - w^{k}_i \|^2}
\\
\leq& \frac{\max_i \{\lambda_{\max}(A_i^T A_i)\} }{n} \E{\| x^{k+1/2} - w^{k}\|^2}.
\end{align*}
Therefore, we obtain
\begin{align*}
    2\gamma \E{\text{gap}(\bar x^K, \bar z^K, \bar y^K)}
    \leq&
    \frac{1}{K}\bigg(6\max_{x \in \cX}\|x^{0} - x \|^2 + \max_{x \in \cX}\| w^0 - x \|^2 + \max_{z \in \cZ}\|z^{0} - z \|^2
    \notag\\
    &
    + 6\max_{y \in \cY}\|y^{0} - y \|^2 + \max_{y \in \cY}\| u^0 - y \|^2 \bigg)
    \notag\\
    &
    -  \left( \frac{3\tau - 1}{2} - 2\gamma^2 L_r^2 \right) \cdot \frac{1}{K} \sum\limits_{k=0}^{K-1} \E{\|  x^{k+1/2} - x^k\|^2} 
    \notag\\
    &
    -  \left( \frac{1 - \tau}{2} - 3 \gamma^2 n \max_i \{\lambda_{\max}(A_i^T A_i)\}\right) \frac{1}{K} \sum\limits_{k=0}^{K-1} \E{\| w^k -x^{k+1/2} \|^2} 
    \notag\\
    &
    - \left( 1 - 2\gamma^2 (1 + L_{\ell}^2) \right)\frac{1}{K} \sum\limits_{k=0}^{K-1} \E{\|  z^{k+1/2} - z^k\|^2}
    \notag\\
    &
    - \left( \frac{3\tau - 1}{2} - 2 \gamma^2 \right)\frac{1}{K} \sum\limits_{k=0}^{K-1} \E{\|  y^{k+1/2} - y^k\|^2}
    \notag\\
    &
    - \left( \frac{1 - \tau}{2} - 3\lambda_{\max}(A A^T) \gamma^2 \right)\frac{1}{K} \sum\limits_{k=0}^{K-1} \E{\| u^k -y^{k+1/2} \|^2}.
\end{align*}
If we choose $\tau \geq \tfrac{1}{2}$ and $\gamma$ as follows
$$
\gamma \leq \frac{1}{4}\min\left\{ 1;\frac{1}{L_r}; \frac{1}{L_{\ell}};\sqrt{\frac{1-\tau}{ n \max_i \{\lambda_{\max}(A_i^T A_i)\}}}; \sqrt{\frac{1-\tau}{\lambda_{\max}(A A^T)}}; \right\},
$$
then one can obtain
\begin{align*}
    &\E{\text{gap}(\bar x^K, \bar z^K, \bar y^K)}
    \leq
    \frac{1}{2\gamma K}\bigg(6\max_{x \in \cX}\|x^{0} - x \|^2 + \max_{x \in \cX}\| w^0 - x \|^2 + \max_{z \in \cZ}\|z^{0} - z \|^2
    \notag\\
    &\hspace{7.5cm}
    + 6\max_{y \in \cY}\|y^{0} - y \|^2 + \max_{y \in \cY}\| u^0 - y \|^2 \bigg).
\end{align*}
With $\gamma = \frac{1}{4}\min\left\{ 1;\frac{1}{L_r}; \frac{1}{L_{\ell}};\sqrt{\frac{1-\tau}{ n \max_i \{\lambda_{\max}(A_i^T A_i)\}}}; \sqrt{\frac{1-\tau}{\lambda_{\max}(A A^T)}}; \right\}$, we finish the proof.
\end{proof}


\subsection{Proof of Theorem \ref{th:EG_coord}} \label{app:th:EG_coord}

{
\begin{theorem}[Theorem \ref{th:EG_coord}]
Let Assumption \ref{as:convexity_smothness} hold. Let the problem (\ref{eq:vfl_lin_spp_1})
be solved by Algorithm~\ref{alg:EG_coord}. Then for $\tau = 1 - p$  and
$$
\textstyle{
\gamma\!=\!\tfrac{1}{4}\!\min
 \left\{\!1;\!\frac{1}{L_r};\!\frac{1}{L_{\ell}};\sqrt{\frac{1-\tau}{s ( \lambda_{\max} (A^T A) + \mathbb{I}(\text{diff. seed}) \max_{i} \left\{\lambda_{\max} (A_i^T A_i) \right\} )}};\! \sqrt{\frac{1-\tau}{d \max_{i} \{\lambda_{\max} (A_i^T A_i)\}}} \right\},
}
$$
it holds that
\begin{align*}
&\E{\text{gap}(\bar x^K, \bar z^K, \bar y^K)}
\\
& =  
    \mathcal{O}\Bigg(
    \left[ \frac{1}{\sqrt{p}}\left(s \sqrt{\lambda_{\max} (A^T A) + \mathbb{I}(\text{diff. seed}) \max\limits_{i = 1, \ldots, n} \left\{\lambda_{\max} (A_i^T A_i) \right\} }+    \right) + L_{\ell} + L_r\right] \cdot \frac{D^2}{K}
\\
& \qquad \qquad
     +
    \left[ \frac{1}{\sqrt{p}}\left(  d \cdot \max\limits_{i = 1, \ldots, n}\left\{\sqrt{\lambda_{\max}(A_i A^T_i) }\right\} \right)  \right] \cdot \frac{D^2}{K}\Bigg),
\end{align*}

where $\bar x^K := \tfrac{1}{K}\sum_{k=0}^{K-1} x^{k+1/2}$, $\bar z^K := \tfrac{1}{K}\sum_{k=0}^{K-1} z^{k+1/2}$, $\bar y^K := \tfrac{1}{K}\sum_{k=0}^{K-1} y^{k+1/2}$ and \\ $D^2 := \max_{x, z, y \in \cX, \cZ, \cY} \left[ \|x^0 - x \|^2 + \|z^0 - z \|^2 + \|y^0 - y \|^2 \right]$.
\end{theorem}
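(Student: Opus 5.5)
The plan is to follow the proof of Theorem \ref{th:EG_compressed_unbiased} essentially line by line, with the compression operator replaced by coordinate sampling. Define the two random estimators
\[
g^k_x := \bigl[d_i \langle A_i^T (y^{k+1/2} - u^k), e_{j^k_i}\rangle e_{j^k_i}\bigr]_{i=1}^n, \qquad g^k_y := \sum_{i=1}^n s \langle A_i(x^{k+1/2}_i - w^k_i), e_{c^k_i}\rangle e_{c^k_i}.
\]
Algorithm \ref{alg:EG_coord} is then exactly Algorithm \ref{alg:EG_quantization} with these substitutions:
\begin{itemize}
\item $A_i^T Q(y^{k+1/2}-u^k)$ is replaced by the $i$-th block of $g^k_x$;
\item $\sum_i Q(A_i x^{k+1/2}_i - A_i w^k_i)$ is replaced by $g^k_y$.
\end{itemize}
With uniform coordinate choice, both estimators are conditionally unbiased:
\[
\mathbb{E}[g^k_x] = A^T(y^{k+1/2}-u^k), \qquad \mathbb{E}[g^k_y] = A(x^{k+1/2}-w^k).
\]

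\textbf{Unchanged part of the argument.} Unbiasedness is the only property used in (\ref{eq:unbiased_temp10}), (\ref{eq:unbiased_temp11}), (\ref{eq:unbiased_temp12}) and (\ref{eq:unbiased_temp13}). The reference-point identities (\ref{eq:unbiased_temp6})--(\ref{eq:unbiased_temp9}) do not involve the estimator at all. So the whole derivation up to the analogue of (\ref{eq:unbiased_temp14}) carries over verbatim. The two second-moment terms $3\gamma^2\E{\|A^TQ(\cdot)\|^2}$ and $3\gamma^2\E{\|\sum_i Q(\cdot)\|^2}$ become $3\gamma^2\E{\|g^k_x\|^2}$ and $3\gamma^2\E{\|g^k_y\|^2}$.

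\textbf{The new step: second moments of the estimators.} For $g^k_x$, set $v := A_i^T(y^{k+1/2}-u^k) \in \R^{d_i}$. Then
\[
\mathbb{E}\bigl\|d_i \langle v, e_j\rangle e_j\bigr\|^2 = d_i \|v\|^2 \le d\,\lambda_{\max}(A_iA_i^T)\,\|y^{k+1/2}-u^k\|^2 .
\]
Summing over $i$ should give
\[
\E{\|g^k_x\|^2} \le d \max_i\{\lambda_{\max}(A_i^TA_i)\}\,\E{\|y^{k+1/2}-u^k\|^2}.
\]
Bounding crudely (e.g. by $n\max_i$) would lose a factor $n$, so the summation over $i$ has to be done carefully to stay consistent with the stated constant.

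For $g^k_y$ there are two cases, matching the indicator $\mathbb{I}(\text{diff. seed})$.
\begin{itemize}
\item \emph{Same seed.} All devices share the coordinate $c^k$, so $g^k_y = s\langle A(x^{k+1/2}-w^k), e_c\rangle e_c$, and
\[
\E{\|g^k_y\|^2} = s\,\|A(x^{k+1/2}-w^k)\|^2 \le s\,\lambda_{\max}(A^TA)\,\|x^{k+1/2}-w^k\|^2 .
\]
\item \emph{Independent seeds.} Expand the square and use independence across $i$ to kill the cross terms, exactly as in the independent-$Q$ computation in the proof of Theorem \ref{th:EG_compressed_unbiased}. The diagonal terms give $s\sum_i\|A_i(x^{k+1/2}_i - w^k_i)\|^2 \le s\max_i\lambda_{\max}(A_i^TA_i)\,\|x^{k+1/2}-w^k\|^2$, and the cross terms contribute at most $\lambda_{\max}(A^TA)\,\|x^{k+1/2}-w^k\|^2$.
\end{itemize}
Together these give the constant
\[
\chi_{\text{coord}} = s\bigl(\lambda_{\max}(A^TA) + \mathbb{I}(\text{diff. seed})\max_i\lambda_{\max}(A_i^TA_i)\bigr).
\]

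\textbf{Closing the argument.} Substitute these bounds into the analogue of (\ref{eq:unbiased_temp14}). The coefficient of $\E{\|w^k - x^{k+1/2}\|^2}$ becomes $\tfrac{1-\tau}{2} - 3\gamma^2\chi_{\text{coord}}$, and the coefficient of $\E{\|u^k - y^{k+1/2}\|^2}$ becomes $\tfrac{1-\tau}{2} - 3\gamma^2 d\max_i\lambda_{\max}$. Take $\tau \ge 1/2$ and $\gamma$ as in the statement. Then all the negative quadratic terms are nonpositive, leaving
\[
\E{\text{gap}} \le \frac{\mathcal{O}(D^2)}{\gamma K}.
\]
Inverting $\gamma$, with $1-\tau = p$, gives the rate $\bigl[1 + L_\ell + L_r + p^{-1/2}\sqrt{\chi_{\text{coord}}} + p^{-1/2}\sqrt{d\max_i\lambda_{\max}}\bigr]D^2/K$. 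This is within the stated $\mathcal{O}$-bound.

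The main obstacle is the variance bookkeeping in the second-moment step. Done carelessly, the cross terms or the factor $n$ can inflate the constants, so that is where the care goes.
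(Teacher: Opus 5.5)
Your plan is the paper's own proof. You substitute the coordinate estimators for $Q$ in the proof of Theorem~\ref{th:EG_compressed_unbiased}, observe that only conditional unbiasedness is used up to the analogue of (\ref{eq:unbiased_temp14}), and then bound the two second moments, including the same-seed versus different-seed split for the $c^k_i$.

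\textbf{The one step that does not close as written.} This is the $g^k_x$ bound. Once you replace $d_i$ by $d$ in the per-block inequality, summing over $i$ can only give $n\,d\max_i\lambda_{\max}(A_i^TA_i)$. You flag this danger, but your displayed inequality has already incurred the loss. The fix is to keep $d_i$ and use $\sum_{i=1}^n d_i = d$:
\[
\sum_{i=1}^n d_i\,\lambda_{\max}(A_iA_i^T) \;\le\; d\,\max_i \lambda_{\max}(A_i^TA_i).
\]
This is exactly what the paper does.

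\textbf{A minor remark on comparing with the statement.} The $1$ in the stepsize gives your derived rate an additive constant, but the stated bracket omits it. So ``within the stated $\mathcal{O}$-bound'' holds only up to that term. This is a defect of the statement, and the paper's own derivation has it too. Conversely, the stated factors $s$ and $d$ are looser than the $\sqrt{s}$ and $\sqrt{d}$ you actually obtain.
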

}
\begin{proof}
Most of the proof is the same as that of Theorem \ref{th:EG_compressed_unbiased}. We note only some main steps of the proof and changes regarding Section \ref{sec:proof_EG_compressed_biased} with the proof of Theorem \ref{th:EG_compressed_unbiased}.
We start with an analogue of (\ref{eq:unbiased_temp22}) and get
\begin{align*}
    \|x^{k+1}_i - x_i \|^2
    =&
    \|x^{k}_i - x_i \|^2 - \|  x^{k+1/2}_i - x^k_i\|^2 - \|x^{k+1/2}_i - x^{k+1}_i \|^2
    \notag\\
    &+
    (1 - \tau) ( \| w^k_i - x_i \|^2 - \| w^k_i -x^{k+1/2}_i \|^2 - \| x^{k+1/2}_i - x_i \|^2 ) 
    \notag\\
    &+
    (1 - \tau) ( \| x^{k+1/2}_i -x^k_i \|^2 + \| x^{k+1/2}_i - x_i \|^2 - \| x^k_i - x_i\|^2 )
    \notag\\
    &
    -
    2\gamma \langle  d_i \cdot \langle A_i^T (y^{k+1/2} - u^k), e_{j^k_i} \rangle e_{j^k_i} +  A_i^T u^k + \nabla r_i(x^{k+1/2}_i),  x^{k+1/2}_i - x_i\rangle
    \notag\\
    &
    + 2 \gamma \langle d_i \cdot \langle A_i^T (y^{k+1/2} - u^k), e_{j^k_i} \rangle e_{j^k_i} + \nabla r_i(x^{k+1/2}_i) - \nabla r_i (x^k_i) , x^{k+1/2}_i - x^{k+1}_i\rangle
    \\
    =&
    \tau \|x^{k}_i - x_i \|^2 + (1 - \tau) \| w^k_i - x_i \|^2 
    \\
    &
    - \tau\|  x^{k+1/2}_i - x^k_i\|^2 -
    (1 - \tau) \| w^k_i -x^{k+1/2}_i \|^2   - \|x^{k+1/2}_i - x^{k+1}_i \|^2
    \\
    &
    -
    2\gamma \langle  d_i \cdot \langle A_i^T (y^{k+1/2} - u^k), e_{j^k_i} \rangle e_{j^k_i}  +  A_i^T u^k + \nabla r_i(x^{k+1/2}_i),  x^{k+1/2}_i - x_i\rangle
    \notag\\
    &
    + 2 \gamma \langle d_i \cdot \langle A_i^T (y^{k+1/2} - u^k), e_{j^k_i} \rangle e_{j^k_i} + \nabla r_i(x^{k+1/2}_i) - \nabla r_i (x^k_i) , x^{k+1/2}_i - x^{k+1}_i\rangle.
\end{align*}
Summing over all $i$ from $1$ to $n$ and using the notation of $A = [A_1, \ldots, A_i, \ldots, A_n] $, $x = [x_1^T, \ldots,x_i^T, \ldots,x_n^T]^T$, $w = [w_1^T, \ldots,w_i^T, \ldots,w_n^T]^T$, we deduce
\begin{align*}
    \|x^{k+1} - x \|^2
    =&
    \tau \|x^{k} - x \|^2 + (1 - \tau) \| w^k - x \|^2 
    \\
    &
    - \tau \|  x^{k+1/2} - x^k\|^2 - (1 - \tau) \| w^k -x^{k+1/2} \|^2 - \|x^{k+1/2} - x^{k+1} \|^2
    \\
    &
    -
    2\gamma \langle  A  (x^{k+1/2} - x),  y^{k+1/2}\rangle
    -
    2\gamma \sum\limits_{i=1}^n\langle \nabla r_i(x^{k+1/2}_i),  x^{k+1/2}_i - x_i\rangle
    \\
    &
    -
    2\gamma \sum\limits_{i=1}^n \langle  d_i \cdot \langle A_i^T (y^{k+1/2} - u^k), e_{j^k_i} \rangle e_{j^k_i} -  A_i^T (y^{k+1/2} - u^k),  x^{k+1/2}_i - x_i\rangle
    \\
    &
    + 2 \gamma \sum\limits_{i=1}^n \langle d_i \cdot \langle A_i^T (y^{k+1/2} - u^k), e_{j^k_i} \rangle e_{j^k_i}, x^{k+1/2}_i - x^{k+1}_i\rangle
    \\
    &
    + 2 \gamma \sum\limits_{i=1}^n \langle \nabla r_i(x^{k+1/2}_i) - \nabla r_i (x^k_i) , x^{k+1/2}_i - x^{k+1}_i\rangle.
\end{align*}
By simple fact: $2 \langle a, b \rangle \leq \eta \| a \|^2 + \tfrac{1}{\eta}\| b \|^2$ with $a = d_i \cdot [A_i^T (y^{k+1/2} - u^k)]_{(j^k_i)}$, $b = x^{k+1/2}_i - x^{k+1}_i$, $\eta = 2\gamma$ and $a = \nabla r_i(x^{k+1/2}_i) - \nabla r_i (x^k_i)$, $b = x^{k+1/2}_i - x^{k+1}_i$, $\eta = 2\gamma$, we get
\begin{align*}
    \|x^{k+1} - x \|^2
    =&
    \tau \|x^{k} - x \|^2 + (1 - \tau) \| w^k - x \|^2 
    \\
    &
    - \tau \|  x^{k+1/2} - x^k\|^2 - (1 - \tau) \| w^k -x^{k+1/2} \|^2 - \|x^{k+1/2} - x^{k+1} \|^2
    \\
    &
    -
    2\gamma \langle  A  (x^{k+1/2} - x),  y^{k+1/2}\rangle
    -
    2\gamma \sum\limits_{i=1}^n\langle \nabla r_i(x^{k+1/2}_i),  x^{k+1/2}_i - x_i\rangle
    \\
    &
    -
    2\gamma \sum\limits_{i=1}^n \langle  d_i \cdot \langle A_i^T (y^{k+1/2} - u^k), e_{j^k_i} \rangle e_{j^k_i} -  A_i^T (y^{k+1/2} - u^k),  x^{k+1/2}_i - x_i\rangle
    \\
    &
    + 2 \gamma^2 \sum\limits_{i=1}^n \| d_i \cdot \langle A_i^T (y^{k+1/2} - u^k), e_{j^k_i} \rangle e_{j^k_i} \|^2 + \frac{1}{2}\| x^{k+1/2} - x^{k+1} \|^2
    \\
    &
    + 2\gamma^2\sum\limits_{i=1}^n \|\nabla r_i(x^{k+1/2}_i) - \nabla r_i (x^k_i)\|^2 + \frac{1}{2}\| x^{k+1/2} - x^{k+1} \|^2
    \\
    =&
    \tau \|x^{k} - x \|^2 + (1 - \tau) \| w^k - x \|^2 
    \\
    &
    - \tau \|  x^{k+1/2} - x^k\|^2 - (1 - \tau) \| w^k -x^{k+1/2} \|^2 
    \\
    &
    -
    2\gamma \langle  A  (x^{k+1/2} - x),  y^{k+1/2}\rangle
    -
    2\gamma \sum\limits_{i=1}^n\langle \nabla r_i(x^{k+1/2}_i),  x^{k+1/2}_i - x_i\rangle
    \\
    &
    -
    2\gamma \sum\limits_{i=1}^n \langle  d_i \cdot \langle A_i^T (y^{k+1/2} - u^k), e_{j^k_i} \rangle e_{j^k_i} -  A_i^T (y^{k+1/2} - u^k),  x^{k+1/2}_i - x_i\rangle
    \\
    &
    + 2 \gamma^2 \sum\limits_{i=1}^n \| d_i \cdot \langle A_i^T (y^{k+1/2} - u^k), e_{j^k_i} \rangle e_{j^k_i}\|^2 
    \\
    &
    + 2\gamma^2\sum\limits_{i=1}^n \|\nabla r_i(x^{k+1/2}_i) - \nabla r_i (x^k_i)\|^2.
\end{align*}
The analogue of (\ref{eq:unbiased_proof_temp1}) is
\begin{align*}
    \|x^{k+1} - x \|^2 + \| w^{k+1}  - x \|^2
    \leq&
    \|x^{k} - x \|^2 + \| w^k - x \|^2 
    \notag\\
    &
    - \tau \|  x^{k+1/2} - x^k\|^2 - (1 - \tau) \| w^k -x^{k+1/2} \|^2
    \notag\\
    &
    -
    2\gamma \langle  A  (x^{k+1/2} - x),  y^{k+1/2}\rangle
    -
    2\gamma \sum\limits_{i=1}^n\langle \nabla r_i(x^{k+1/2}_i),  x^{k+1/2}_i - x_i\rangle
    \notag\\
    &- (1 - \tau) \|x^{k}\|^2 - \tau \| w^k \|^2  + \|w^{k+1} \|^2
    \notag\\
    &+ 2\langle (1 - \tau) x^k + \tau w^k - w^{k+1}, x\rangle 
    \notag\\
    &
    -
    2\gamma \sum\limits_{i=1}^n \langle  d_i \cdot \langle A_i^T (y^{k+1/2} - u^k), e_{j^k_i} \rangle e_{j^k_i} -  A_i^T (y^{k+1/2} - u^k),  x^{k+1/2}_i - x^0_i\rangle
    \notag\\
    &
    -
    2\gamma \sum\limits_{i=1}^n \langle  d_i \cdot \langle A_i^T (y^{k+1/2} - u^k), e_{j^k_i} \rangle e_{j^k_i} -  A_i^T (y^{k+1/2} - u^k),  x^{0}_i - x_i\rangle
    \notag\\
    &
    + 2 \gamma^2 \sum\limits_{i=1}^n \| d_i \cdot \langle A_i^T (y^{k+1/2} - u^k), e_{j^k_i} \rangle e_{j^k_i}\|^2 
    \notag\\
    &
    + 2 \gamma^2 \sum\limits_{i=1}^n \| \nabla r_i(x^{k+1/2}_i) - \nabla r_i (x^k_i) \|^2.
\end{align*}
(\ref{eq:unbiased_proof_temp2}) is absolutely the same. The analogue of (\ref{eq:unbiased_proof_temp3}) is 
\begin{align*}
    \| y^{k+1} - y \|^2 + \| u^{k+1} - y \|^2
    \leq&
    \|y^{k} - y \|^2 + \| u^k - y \|^2 
    \notag\\
    &
    - \tau \|  y^{k+1/2} - y^k\|^2 - (1 - \tau) \| u^k -y^{k+1/2} \|^2
    \notag\\
    &
    -
    2\gamma \langle  z^{k+1/2},  y^{k+1/2} - y\rangle
    +
    2\gamma \langle \sum_{i=1}^n A_i x^{k+1/2}_i,  y^{k+1/2} - y\rangle
    \notag\\
    &
    +
    2\gamma \langle \sum_{i=1}^n [s \cdot \langle A_i(x^{k+1/2}_i - w^{k}_i), e_{c^k_i} \rangle e_{c^k_i} + A_i w^{k}_i - A_i x^{k+1/2}_i ] ,  y^{k+1/2} - y^0\rangle
    \notag\\
    &
    +
    2\gamma \langle \sum_{i=1}^n [s \cdot \langle A_i(x^{k+1/2}_i - w^{k}_i), e_{c^k_i} \rangle e_{c^k_i} + A_i w^{k}_i - A_i x^{k+1/2}_i ] ,  y^{0} - y\rangle
    \notag\\
    &- (1 - \tau) \|y^{k}\|^2 - \tau \| u^k \|^2  + \|y^{k+1} \|^2
    \notag\\
    &+ 2\langle (1 - \tau) y^k + \tau u^k - u^{k+1}, y\rangle 
    \notag\\
    &
    + 2 \gamma^2 \| \sum_{i=1}^n s \cdot \langle A_i(x^{k+1/2}_i - w^{k}_i), e_{c^k_i} \rangle e_{c^k_i} \|^2
    + 2 \gamma^2 \| z^{k+1/2} - z^k \|^2.
\end{align*}
The analogue of (\ref{eq:unbiased_temp5}) is
\begin{align*}
    2\gamma \E{\text{gap}(\bar x^K, \bar z^K, \bar y^K)}
    \leq&
    \frac{1}{K}\bigg(\max_{x \in \cX}\|x^{0} - x \|^2 + \max_{x \in \cX}\| w^0 - x \|^2 + \max_{z \in \cZ}\|z^{0} - z \|^2
    \notag\\
    &
    + \max_{y \in \cY}\|y^{0} - y \|^2 + \max_{y \in \cY}\| u^0 - y \|^2 \bigg)
    \notag\\
    &
    -  \frac{\tau}{K} \sum\limits_{k=0}^{K-1} \E{\|  x^{k+1/2} - x^k\|^2} -  \frac{1 - \tau}{K} \sum\limits_{k=0}^{K-1} \E{\| w^k -x^{k+1/2} \|^2} 
    \notag\\
    &
    - \frac{1}{K} \sum\limits_{k=0}^{K-1} \E{\|  z^{k+1/2} - z^k\|^2}
    \notag\\
    &
    - \frac{\tau}{K} \sum\limits_{k=0}^{K-1} \E{\|  y^{k+1/2} - y^k\|^2} - \frac{1 - \tau}{K} \sum\limits_{k=0}^{K-1} \E{\| u^k -y^{k+1/2} \|^2}
    \notag\\
    &
    +\frac{1}{K} \sum\limits_{k=0}^{K-1} \E{ \|w^{k+1} \|^2 - (1 - \tau) \|x^{k}\|^2 - \tau \| w^k \|^2} 
    \notag\\
    &
    + \frac{2}{K} \E{\max_{x\in \cX} \sum\limits_{k=0}^{K-1} \langle (1 - \tau) x^k + \tau w^k - w^{k+1}, x\rangle} 
    \notag\\
    &
    +\frac{1}{K} \sum\limits_{k=0}^{K-1} \E{  \|y^{k+1} \|^2 - (1 - \tau) \|y^{k}\|^2 - \tau \| u^k \|^2 }
    \notag\\
    &
    + \frac{2}{K} \E{\max_{y \in \cY}\sum\limits_{k=0}^{K-1} \langle (1 - \tau) y^k + \tau u^k - u^{k+1}, y\rangle }
    \notag\\
    &
    -
    \frac{2\gamma}{K} \sum\limits_{k=0}^{K-1} \sum\limits_{i=1}^n \E{\langle  d_i \cdot [A_i^T (y^{k+1/2} - u^k)]_{(j^k_i)} -  A_i^T (y^{k+1/2} - u^k),  x^{k+1/2}_i - x^0_i\rangle}
    \notag\\
    &
    +
    \frac{2\gamma}{K} \cdot \E{ \max_{x \in \cX}\sum\limits_{k=0}^{K-1} \sum\limits_{i=1}^n \langle  d_i \cdot [A_i^T (y^{k+1/2} - u^k)]_{(j^k_i)} -  A_i^T (y^{k+1/2} - u^k),  x_i - x^0_i\rangle}
    \notag\\
    &
    +
    \frac{2\gamma}{K} \sum\limits_{k=0}^{K-1} \E{\langle \sum_{i=1}^n [s \cdot \langle A_i(x^{k+1/2}_i - w^{k}_i), e_{c^k_i} \rangle e_{c^k_i} + A_i w^{k}_i - A_i x^{k+1/2}_i ] ,  y^{k+1/2} - y^0\rangle}
    \notag\\
    &
    +
    \frac{2\gamma}{K} \cdot \E{ \max_{y \in \cY}\sum\limits_{k=0}^{K-1} \langle \sum_{i=1}^n [s \cdot \langle A_i(x^{k+1/2}_i - w^{k}_i), e_{c^k_i} \rangle e_{c^k_i} + A_i w^{k}_i - A_i x^{k+1/2}_i ] ,  y^{0} - y\rangle}
    \notag\\
    &
    + \frac{2\gamma^2}{K} \sum\limits_{k=0}^{K-1} \sum\limits_{i=1}^n \E{\| d_i \cdot [A_i^T (y^{k+1/2} - u^k)]_{(j^k_i)}\|^2}
    \notag\\
    &
    + \frac{2\gamma^2}{K} \sum\limits_{k=0}^{K-1} \E{\| \sum_{i=1}^n s \cdot \langle A_i(x^{k+1/2}_i - w^{k}_i), e_{c^k_i} \rangle e_{c^k_i} \|^2}
    \notag\\
    &
    + \frac{2\gamma^2 L_r^2}{K} \sum\limits_{k=0}^{K-1} \E{\|x^{k+1/2} - x^k \|^2}
    + \frac{2\gamma^2}{K} \sum\limits_{k=0}^{K-1} \E{\|y^{k+1/2} - y^k\|^2}
    \notag\\
    &
    +
    \frac{2\gamma^2}{K} \sum\limits_{k=0}^{K-1} \E{\| z^{k+1/2} - z^k \|^2 }
    +
    \frac{2\gamma^2 L_{\ell}^2}{K} \sum\limits_{k=0}^{K-1} \E{\| z^{k+1/2} - z^k \|^2}.
\end{align*}
(\ref{eq:unbiased_temp6}), (\ref{eq:unbiased_temp7}), (\ref{eq:unbiased_temp8}), (\ref{eq:unbiased_temp9}) are absolutely the same. The analogue of (\ref{eq:unbiased_temp10}) is
\begin{align*}
&\E{\langle   d_i \cdot \langle A_i^T (y^{k+1/2} - u^k), e_{j^k_i} \rangle e_{j^k_i} -  A_i^T (y^{k+1/2} - u^k),  x^{k+1/2}_i - x^0_i\rangle}
\notag\\
&\hspace{3.5cm}=
\E{\langle  \mathbb{E}_{j^k_i} \big[ d_i \cdot \langle A_i^T (y^{k+1/2} - u^k), e_{j^k_i} \rangle e_{j^k_i}\big] -  A_i^T (y^{k+1/2} - u^k),  x^{k+1/2}_i - x^0_i\rangle}
= 0.
\end{align*}
The analogue of (\ref{eq:unbiased_temp11}) is
\begin{align*}
&\E{\langle \sum_{i=1}^n [s \cdot \langle A_i(x^{k+1/2}_i - w^{k}_i), e_{c^k_i} \rangle e_{c^k_i} + A_i w^{k}_i - A_i x^{k+1/2}_i ] ,  y^{k+1/2} - y^0\rangle}
\notag\\
&\hspace{3.5cm}=
\E{\langle \sum_{i=1}^n \mathbb{E}_{c^k_i}\big[s \cdot \langle A_i(x^{k+1/2}_i - w^{k}_i), e_{c^k_i} \rangle e_{c^k_i}\big] + A_i w^{k}_i - A_i x^{k+1/2}_i ,  y^{k+1/2} - y^0\rangle} = 0.
\end{align*}
The analogue of (\ref{eq:unbiased_temp12}) is
\begin{align*}
&\E{ \max_{x \in \cX}\sum\limits_{i=1}^n \sum\limits_{k=0}^{K-1} \langle   d_i \cdot \langle A_i^T (y^{k+1/2} - u^k), e_{j^k_i} \rangle e_{j^k_i} -  A_i^T (y^{k+1/2} - u^k),  x_i - x^0_i\rangle}
\notag\\
&\hspace{2.5cm}\leq \E{\max_{x \in \cX} \frac{1}{2 \gamma}  \sum\limits_{i=1}^n\| x^0_i - x_i \|^2}
\notag\\
&\hspace{2.9cm}
+ \E{\frac{\gamma}{2} \sum\limits_{i=1}^n \| \sum\limits_{k=0}^{K-1}  d_i \cdot \langle A_i^T (y^{k+1/2} - u^k), e_{j^k_i} \rangle e_{j^k_i} -  A_i^T (y^{k+1/2} - u^k)\|^2}
\notag\\
&\hspace{2.5cm}= \E{\max_{x \in \cX} \frac{1}{2\gamma} \| x^0 - x \|^2} 
\notag\\
&\hspace{2.9cm}
+ \E{\frac{\gamma}{2} \sum\limits_{i=1}^n \sum\limits_{k=0}^{K-1}  \|  d_i \cdot \langle A_i^T (y^{k+1/2} - u^k), e_{j^k_i} \rangle e_{j^k_i} -  A_i^T (y^{k+1/2} - u^k) \|^2}
\notag\\
&\hspace{2.9cm}+ \mathbb{E}\bigg[\gamma \sum\limits_{i=1}^n \sum\limits_{k_1 < k_2}  \langle  d_i \cdot \langle A_i^T (y^{k+1/2} - u^k), e_{j^k_i} \rangle e_{j^k_i} -  A_i^T (y^{k_1+1/2} - u^{k_1}), 
\notag\\
&\hspace{4.9cm}
d_i \cdot [A_i^T (y^{k_2+1/2} - u^{k_2})]_{(j^{k_2}_i)} -  A_i^T (y^{k_2+1/2} - u^k) \rangle \bigg]
\notag\\
&\hspace{2.5cm}= \E{\max_{x \in \cX} \frac{1}{2\gamma} \| x^0 - x \|^2} 
\notag\\
&\hspace{2.9cm}
+ \E{\frac{\gamma}{2} \sum\limits_{i=1}^n \sum\limits_{k=0}^{K-1}  \|  d_i \cdot \langle A_i^T (y^{k+1/2} - u^k), e_{j^k_i} \rangle e_{j^k_i} -  A_i^T (y^{k+1/2} - u^k) \|^2}
\notag\\
&\hspace{2.9cm}+ \mathbb{E}\bigg[\gamma \sum\limits_{i=1}^n \sum\limits_{k_1 < k_2}  \langle  d_i \cdot \langle A_i^T (y^{k+1/2} - u^k), e_{j^k_i} \rangle e_{j^k_i} -  A_i^T (y^{k_1+1/2} - u^{k_1}), 
\notag\\
&\hspace{4.9cm}
\mathbb{E}_{j^{k_2}_i} \big[d_i \cdot [A_i^T (y^{k_2+1/2} - u^{k_2})]_{(j^{k_2}_i)}\big] -  A_i^T (y^{k_2+1/2} - u^k) \rangle \bigg]
\notag\\
&\hspace{2.5cm}= \E{\max_{x \in \cX} \frac{1}{2\gamma} \| x^0 - x \|^2}
\notag\\
&\hspace{2.9cm} + \E{\frac{\gamma}{2} \sum\limits_{i=1}^n \sum\limits_{k=0}^{K-1}  \|  d_i \cdot \langle A_i^T (y^{k+1/2} - u^k), e_{j^k_i} \rangle e_{j^k_i} -  A_i^T (y^{k+1/2} - u^k) \|^2}
\notag\\
&\hspace{2.5cm}= \E{\max_{x \in \cX} \frac{1}{2\gamma} \| x^0 - x \|^2} 
\notag\\
&\hspace{2.9cm}
+ \E{\frac{\gamma}{2} \sum\limits_{i=1}^n \sum\limits_{k=0}^{K-1}  \| d_i \cdot [A_i^T (y^{k+1/2} - u^k)]_{(j^k_i)} -  \mathbb{E}_{j^k_i} \big[ d_i \cdot \langle A_i^T (y^{k+1/2} - u^k), e_{j^k_i} \rangle e_{j^k_i} \big]\|^2}
\notag\\
&\hspace{2.5cm}\leq \E{\max_{x \in \cX} \frac{1}{2\gamma} \| x^0 - x \|^2} + \E{\frac{\gamma}{2} \sum\limits_{i=1}^n \sum\limits_{k=0}^{K-1}  \|  d_i \cdot \langle A_i^T (y^{k+1/2} - u^k), e_{j^k_i} \rangle e_{j^k_i}\|^2}.
\end{align*}
The analogue of (\ref{eq:unbiased_temp13}) is
\begin{align*}
&\E{ \max_{y \in \cY}\sum\limits_{k=0}^{K-1} \langle \sum_{i=1}^n [s \cdot \langle A_i(x^{k+1/2}_i - w^{k}_i), e_{c^k_i} \rangle e_{c^k_i} + A_i w^{k}_i - A_i x^{k+1/2}_i ]  ,  y^{0} - y\rangle}
\notag\\
&\hspace{3.5cm}\leq \E{\max_{y \in \cY} \frac{1}{2\gamma} \| y^0 - y\|^2} + \E{\frac{\gamma}{2} \sum\limits_{k=0}^{K-1}  \| \sum_{i=1}^n s \cdot \langle A_i(x^{k+1/2}_i - w^{k}_i), e_{c^k_i} \rangle e_{c^k_i} \|^2}.
\end{align*}
The analogue of (\ref{eq:unbiased_temp14}) is
\begin{align}
    \label{eq:coord_temp1}
    2\gamma \E{\text{gap}(\bar x^K, \bar z^K, \bar y^K)}
    &\leq
    \frac{1}{K}\bigg(6\max_{x \in \cX}\|x^{0} - x \|^2 + \max_{x \in \cX}\| w^0 - x \|^2 + \max_{z \in \cZ}\|z^{0} - z \|^2
    \notag\\
    &\hspace{.4cm}
    + 6\max_{y \in \cY}\|y^{0} - y \|^2 + \max_{y \in \cY}\| u^0 - y \|^2 \bigg)
    \notag\\
    &\hspace{.4cm}
    -  \left( \frac{3\tau - 1}{2} - 2\gamma^2 L_r^2 \right) \cdot \frac{1}{K} \sum\limits_{k=0}^{K-1} \E{\|  x^{k+1/2} - x^k\|^2} 
    \notag\\
    &\hspace{.4cm}
    -  \frac{1 - \tau}{2K} \sum\limits_{k=0}^{K-1} \E{\| w^k -x^{k+1/2} \|^2} 
    \notag\\
    &\hspace{.4cm}
    - \left( 1 - 2\gamma^2 (1 + L_{\ell}^2) \right)\frac{1}{K} \sum\limits_{k=0}^{K-1} \E{\|  z^{k+1/2} - z^k\|^2}
    \notag\\
    &\hspace{.4cm}
    - \left( \frac{3\tau - 1}{2} - 2 \gamma^2 \right)\frac{1}{K} \sum\limits_{k=0}^{K-1} \E{\|  y^{k+1/2} - y^k\|^2} 
    \notag\\
    &\hspace{.4cm}
    - \frac{1 - \tau}{2K} \sum\limits_{k=0}^{K-1} \E{\| u^k -y^{k+1/2} \|^2}
    \notag\\
    &\hspace{.4cm}
    + \frac{3\gamma^2}{K} \sum\limits_{k=0}^{K-1} \sum\limits_{i=1}^n \E{\|  d_i \cdot \langle A_i^T (y^{k+1/2} - u^k), e_{j^k_i} \rangle e_{j^k_i}\|^2}
    \notag\\
    &\hspace{.4cm}
    + \frac{3\gamma^2}{K} \sum\limits_{k=0}^{K-1} \E{\| \sum_{i=1}^n s \cdot \langle A_i(x^{k+1/2}_i - w^{k}_i), e_{c^k_i} \rangle e_{c^k_i} \|^2}.
\end{align}
Let us estimate two last lines. Here we use that coordinates  $j_i$ and $c_i$ are chosen uniformly and independently. 
\begin{align*}
    \E{\|  d_i \cdot \langle A_i^T (y^{k+1/2} - u^k), e_{j^k_i} \rangle e_{j^k_i}\|^2} 
    =&
    d_i^2 \E{ \mathbb{E}_{e_{j^k_i}}\left[\| \langle A_i^T (y^{k+1/2} - u^k), e_{j^k_i} \rangle e_{j^k_i}\|^2\right]} 
    \\
    =&
    d_i^2 \E{ \frac{1}{d_i} \sum\limits_{r=1}^{d_i}\left[\| \langle A_i^T (y^{k+1/2} - u^k), e_{r} \rangle e_{r}\|^2\right]} 
    \\
    =&
    d_i \E{\| A_i^T (y^{k+1/2} - u^k)\|^2} 
    \\
    \leq&
    d_i \lambda_{\max}(A_i A_i^T) \E{\| y^{k+1/2} - u^k\|^2}.
\end{align*}
For $\E{\| \sum_{i=1}^n s \cdot \langle A_i(x^{k+1/2}_i - w^{k}_i), e_{c^k_i} \rangle e_{c^k_i}\|^2}$ we have two options. If $c^k_i = c^k$ for all $i$, then 
$\sum_{i=1}^n \sum_{i=1}^n s \cdot \langle A_i(x^{k+1/2}_i - w^{k}_i), e_{c^k_i} \rangle e_{c^k_i} = s\langle\sum_{i=1}^n [A_i (x^{k+1/2}_i-  w^{k}_i)], e_{c^k} \rangle e_{c^k} = s\langle A (x^{k+1/2} -  w^{k}), e_{c^k} \rangle e_{c^k}$, then
\begin{align*}
\E{\| \sum_{i=1}^n s \cdot \langle A_i(x^{k+1/2}_i - w^{k}_i), e_{c^k_i} \rangle e_{c^k_i} \|^2}
=& 
\E{\| s\langle A (x^{k+1/2}  - w^{k}), e_{c^k} \rangle e_{c^k} \|^2}
\\
=& 
s^2 \E{ \mathbb{E}_{c^k} \left[\| \langle A (x^{k+1/2}  - w^{k}), e_{c^k} \rangle e_{c^k} \|^2\right]}
\\
=& s^2 \E{ \frac{1}{s} \sum\limits_{r=1}^s \| \langle A (x^{k+1/2}  - w^{k}), e_{r} \rangle e_{r} \|^2}
\\
=& s \E{ \left[\| A (x^{k+1/2}  - w^{k}) \|^2\right]}
\\
\leq& s \lambda_{\max}(A^T A) \E{ \| x^{k+1/2}  - w^{k}\|^2}.
\end{align*}
If $c^k_i$ are chosen independently (i.e. $c^k_i \neq c^k_j$), then
\begin{align*}
&\E{\|\sum_{i=1}^n s \cdot \langle A_i(x^{k+1/2}_i - w^{k}_i), e_{c^k_i} \rangle e_{c^k_i} \|^2}
\\
&\hspace{3.4cm}= 
\sum_{i=1}^n \E{\| s \cdot \langle A_i(x^{k+1/2}_i - w^{k}_i), e_{c^k_i} \rangle e_{c^k_i} \|^2}
\\
&\hspace{3.8cm}+\sum_{i \neq j} \E{\langle s\cdot \langle A_i(x^{k+1/2}_i - w^{k}_i), e_{c^k_i} \rangle e_{c^k_i} , s \cdot \langle A_j(x^{k+1/2}_j - w^{k}_j), e_{c^k_j} \rangle e_{c^k_j} \rangle}
\\
&\hspace{3.4cm}= 
\sum_{i=1}^n \E{\| s \cdot \langle A_i(x^{k+1/2}_i - w^{k}_i), e_{c^k_i} \rangle e_{c^k_i} \|^2}
\\
&\hspace{3.8cm}+\sum_{i \neq j} \mathbb{E} \bigg[\langle \mathbb{E}_{c^k_i} \left[s\cdot \langle A_i(x^{k+1/2}_i - w^{k}_i), e_{c^k_i} \rangle e_{c^k_i} \right], 
\\
&\hspace{5.8cm}
\mathbb{E}_{c^k_j} \left[s \cdot \langle A_j(x^{k+1/2}_j - w^{k}_j), e_{c^k_j} \rangle e_{c^k_j} \rangle \right] \bigg]
\\
&\hspace{3.4cm}=  
\sum_{i=1}^n \E{\| s \cdot \langle A_i(x^{k+1/2}_i - w^{k}_i), e_{c^k_i} \rangle e_{c^k_i} \|^2}
\\
&\hspace{3.8cm}+\sum_{i \neq j} \E{\langle A_i(x^{k+1/2}_i - w^{k}_i), A_j(x^{k+1/2}_j - w^{k}_j)}
\\
&\hspace{3.4cm}= s^2\sum_{i=1}^n \E{\| \langle A_i(x^{k+1/2}_i - w^{k}_i), e_{c^k_i} \rangle e_{c^k_i} \|^2}
\\
&\hspace{1.8cm}+ \E{\| \sum_{i=1}^n [A_i x^{k+1/2}_i - A_i w^{k}_i] \|^2}
- \sum_{i=1}^n \E{\| A_i x^{k+1/2}_i - A_i w^{k}_i \|^2}
\\
&\hspace{1.4cm}\leq s \sum_{i=1}^n \E{\|A_i x^{k+1/2}_i - A_i w^{k}_i \|^2}
 + \E{\| A (x^{k+1/2} - w^{k}) \|^2}
 \\
&\hspace{1.4cm}\leq s \sum_{i=1}^n \lambda_{\max} (A_i^T A_i) \E{\|x^{k+1/2}_i - w^{k}_i \|^2}
 + \lambda_{\max} (A^T A) \E{\| x^{k+1/2} - w^{k} \|^2}
 \\
&\hspace{1.4cm}\leq s \max_{i} \left\{\lambda_{\max} (A_i^T A_i) \right\}\sum_{i=1}^n \E{\|x^{k+1/2}_i - w^{k}_i \|^2}
 \\
&\hspace{1.8cm} 
 + \lambda_{\max} (A^T A) \E{\| x^{k+1/2} - w^{k} \|^2}
  \\
&\hspace{1.4cm}= \left(s \max_{i} \left\{\lambda_{\max} (A_i^T A_i) \right\}+ \lambda_{\max} (A^T A) \right) \E{\| x^{k+1/2} - w^{k} \|^2}.
\end{align*}
Let us introduce
$$
\chi_{\text{coord}} =
\begin{cases}
s \lambda_{\max} (A^T A),\\
s \max_{i} \left\{\lambda_{\max} (A_i^T A_i) \right\}+ \lambda_{\max} (A^T A),
\end{cases}
$$
depending on the case $c_i$ we consider. It remains to come back to (\ref{eq:coord_temp1}) and get
\begin{align*}
    2\gamma \E{\text{gap}(\bar x^K, \bar z^K, \bar y^K)}
    \leq&
    \frac{1}{K}\bigg(6\max_{x \in \cX}\|x^{0} - x \|^2 + \max_{x \in \cX}\| w^0 - x \|^2 + \max_{z \in \cZ}\|z^{0} - z \|^2
    \notag\\
    &
    + 6\max_{y \in \cY}\|y^{0} - y \|^2 + \max_{y \in \cY}\| u^0 - y \|^2 \bigg)
    \notag\\
    &
    -  \left( \frac{3\tau - 1}{2} - 2\gamma^2 L_r^2 \right) \cdot \frac{1}{K} \sum\limits_{k=0}^{K-1} \E{\|  x^{k+1/2} - x^k\|^2} 
    \notag\\
    &
    -  \frac{1 - \tau}{2K} \sum\limits_{k=0}^{K-1} \E{\| w^k -x^{k+1/2} \|^2} 
    \notag\\
    &
    - \left( 1 - 2\gamma^2 (1 + L_{\ell}^2) \right)\frac{1}{K} \sum\limits_{k=0}^{K-1} \E{\|  z^{k+1/2} - z^k\|^2}
    \notag\\
    &
    - \left( \frac{3\tau - 1}{2} - 2 \gamma^2 \right)\frac{1}{K} \sum\limits_{k=0}^{K-1} \E{\|  y^{k+1/2} - y^k\|^2} 
    \notag\\
    &
    - \frac{1 - \tau}{2K} \sum\limits_{k=0}^{K-1} \E{\| u^k -y^{k+1/2} \|^2}
    \notag\\
    &
    + \frac{3\gamma^2}{K} \sum\limits_{k=0}^{K-1} \sum\limits_{i=1}^n d_i \lambda_{\max}(A_i A_i^T) \E{\| y^{k+1/2} - u^k\|^2}
    \notag\\
    &
    + \frac{3\gamma^2 \chi_{\text{coord}} }{K} \sum\limits_{k=0}^{K-1} \E{\| x^{k+1/2} - w^{k} \|^2}
    \notag\\
    \leq&
    \frac{1}{K}\bigg(6\max_{x \in \cX}\|x^{0} - x \|^2 + \max_{x \in \cX}\| w^0 - x \|^2 + \max_{z \in \cZ}\|z^{0} - z \|^2
    \notag\\
    &
    + 6\max_{y \in \cY}\|y^{0} - y \|^2 + \max_{y \in \cY}\| u^0 - y \|^2 \bigg)
    \notag\\
    &
    -  \left( \frac{3\tau - 1}{2} - 2\gamma^2 L_r^2 \right) \cdot \frac{1}{K} \sum\limits_{k=0}^{K-1} \E{\|  x^{k+1/2} - x^k\|^2} 
    \notag\\
    &
    -  \frac{1 - \tau}{2K} \sum\limits_{k=0}^{K-1} \E{\| w^k -x^{k+1/2} \|^2} 
    \notag\\
    &
    - \left( 1 - 2\gamma^2 (1 + L_{\ell}^2) \right)\frac{1}{K} \sum\limits_{k=0}^{K-1} \E{\|  z^{k+1/2} - z^k\|^2}
    \notag\\
    &
    - \left( \frac{3\tau - 1}{2} - 2 \gamma^2 \right)\frac{1}{K} \sum\limits_{k=0}^{K-1} \E{\|  y^{k+1/2} - y^k\|^2} 
    \notag\\
    &
    - \frac{1 - \tau}{2K} \sum\limits_{k=0}^{K-1} \E{\| u^k -y^{k+1/2} \|^2}
    \notag\\
    &
    + \frac{3\gamma^2}{K} \cdot d \max_{i} \left\{\lambda_{\max} (A_i^T A_i) \right\} \sum\limits_{k=0}^{K-1} \E{\| y^{k+1/2} - u^k\|^2}
    \notag\\
    &
    + \frac{3\gamma^2 \chi_{\text{coord}} }{K} \sum\limits_{k=0}^{K-1} \E{\| x^{k+1/2} - w^{k} \|^2}.
\end{align*}
If we choose $\tau \geq \tfrac{1}{2}$ and $\gamma$ as follows
$$
\gamma \leq \frac{1}{4}\min\left\{ 1;\frac{1}{L_r}; \frac{1}{L_{\ell}};\sqrt{\frac{1-\tau}{\chi_{\text{coord}}}}; \sqrt{\frac{1-\tau}{d \max_{i} \{\lambda_{\max} (A_i^T A_i)\}}}; \right\},
$$
then one can obtain
\begin{align*}
    &\E{\text{gap}(\bar x^K, \bar z^K, \bar y^K)}
    \leq
    \frac{1}{2\gamma K}\bigg(6\max_{x \in \cX}\|x^{0} - x \|^2 + \max_{x \in \cX}\| w^0 - x \|^2 + \max_{z \in \cZ}\|z^{0} - z \|^2
    \notag\\
    &\hspace{4.5cm}
    + 6\max_{y \in \cY}\|y^{0} - y \|^2 + \max_{y \in \cY}\| u^0 - y \|^2 \bigg).
\end{align*}
With $\gamma = \frac{1}{4}\min\left\{ 1;\frac{1}{L_r}; \frac{1}{L_{\ell}};\sqrt{\frac{1-\tau}{\chi_{\text{coord}}}}; \sqrt{\frac{1-\tau}{d \max_{i} \{\lambda_{\max} (A_i^T A_i)\}}}; \right\}$, we finish the proof.
\end{proof}

\subsection{Proof of Theorem \ref{th:EG_basic_2}}
\label{app:th:EG_basic_2}

{
\begin{theorem}[Theorem \ref{th:EG_basic_2}]
Let Assumption \ref{as:convexity_smothness} hold. Let the problem (\ref{eq:vfl_lin_spp_2})
be solved by Algorithm~\ref{alg:EG_basic_2}. Then for 
$$
\gamma = \frac{1}{2} \cdot \min\left\{ 1; \frac{1}{\sqrt{\max_i\{ \lambda_{\max}(A_i^T A_i) \}}}; \frac{1}{L_r}; \frac{1}{n L_{\ell}}\right\},
$$ 
it holds that
$$
\text{gap}_1(\bar x^{K}, \bar z^K, \bar y^K) = \mathcal{O}\left(\frac{\left( 1+ \sqrt{\max_{i = 1, \ldots, n}\{ \lambda_{\max}(A_i^T A_i) \}} + nL_{\ell} + L_r\right)D^2}{K} \right),
$$
where $\text{gap}_1(x,y) := \max_{\tilde y_i \in \mathcal{ \tilde Y}} \tilde L(x,z, \tilde y) - \min_{\tilde x,z \in \cX, \mathcal{\tilde Z}} \tilde L(\tilde x, \tilde z, y)$ and $\bar x^K := \tfrac{1}{K}\sum_{k=0}^{K-1} x^{k+1/2}$, $\bar z^K := \tfrac{1}{K}\sum_{k=0}^{K-1} z^{k+1/2}$, $\bar y^K := \tfrac{1}{K}\sum_{k=0}^{K-1} y^{k+1/2}$ and $D^2 := \max_{x, z, y \in \cX, \cZ, \cY} [ \|x^0 - x \|^2 + \|z^0 - z \|^2 + \|y^0 - y \|^2 ]$.
\end{theorem}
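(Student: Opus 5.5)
The plan is to repeat the proof of Theorem~\ref{th:EG_basic_1} almost verbatim, now for the saddle point problem \eqref{eq:vfl_lin_spp_2} with blocks $x=(x_1,\dots,x_n)$, $z=(z_1,\dots,z_n)$ and $y=(y_1,\dots,y_n)$. Algorithm~\ref{alg:EG_basic_2} is exactly \texttt{ExtraGradient} applied to the monotone operator
\[
F(x,z,y)=\bigl(\nabla r_i(x_i)+A_i^T y_i;\ \nabla \ell(\textstyle\sum_j z_j,b)-y_i;\ -(A_i x_i - z_i)\bigr)_{i=1}^n .
\]
Hence only two things must be redone: the per-block three-point identity, and the bound on the "extrapolation error" terms, which is where the new constants $\max_i \lambda_{\max}(A_i^TA_i)$ and $nL_\ell$ enter.

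For each $i$, I would write the same identity as \eqref{eq:basic_proof_temp0} for $x_i$, $z_i$ and $y_i$. I would then bound the cross terms $2\gamma\langle \cdot,\cdot\rangle$ involving differences at $k$ and $k+1/2$ by Young's inequality with $\eta=2\gamma$, and sum over $i$. The bilinear parts $-2\gamma\langle A_i(x_i^{k+1/2}-x_i),y_i^{k+1/2}\rangle$, $2\gamma\langle y_i^{k+1/2},z_i^{k+1/2}-z_i\rangle$ and $2\gamma\langle A_ix_i^{k+1/2}-z_i^{k+1/2},y_i^{k+1/2}-y_i\rangle$ combine, exactly as before, into
\[
2\gamma\textstyle\sum_i\bigl[\langle A_ix_i-z_i,y_i^{k+1/2}\rangle-\langle A_ix_i^{k+1/2}-z_i^{k+1/2},y_i\rangle\bigr].
\]
For the gradient parts I would use convexity. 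Convexity of $r_i$ handles $x$ as before. Since $\tilde\ell(z):=\ell(\sum_i z_i,b)$ is convex in the stacked $z$ with gradient $(\nabla\ell(\sum_j z_j,b))_i$, we get $\sum_i\langle\nabla\ell(\sum_j z^{k+1/2}_j,b),z_i^{k+1/2}-z_i\rangle\ge \tilde\ell(z^{k+1/2})-\tilde\ell(z)$.

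The main step, and the only real change, is the error terms. Here $\|A_i^T(y_i^{k+1/2}-y_i^k)\|^2\le \lambda_{\max}(A_i^TA_i)\|y_i^{k+1/2}-y_i^k\|^2$, and similarly for $A_i(x_i^{k+1/2}-x_i^k)$, so block-wise we never need $\lambda_{\max}(A^TA)$. This is precisely why the constant becomes $\max_i\lambda_{\max}(A_i^TA_i)$. For $z$, I would bound
\[
\textstyle\sum_{i=1}^n\|\nabla\ell(\sum_j z_j^{k+1/2},b)-\nabla\ell(\sum_j z_j^{k},b)\|^2\le nL_\ell^2\|\sum_j(z_j^{k+1/2}-z_j^k)\|^2\le n^2L_\ell^2\|z^{k+1/2}-z^k\|^2,
\]
using Cauchy--Schwarz for $n$ summands. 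So $\tilde\ell$ is $nL_\ell$-smooth, which explains the $1/(nL_\ell)$ in $\gamma$. Cross couplings between $z_i$ and $y_i$ contribute $2\gamma^2\|y^{k+1/2}-y^k\|^2$ and $2\gamma^2\|z^{k+1/2}-z^k\|^2$ as before. With $\gamma\le\frac12\min\{1;\lambda_{\max}^{-1/2};L_r^{-1};(nL_\ell)^{-1}\}$, each coefficient $2\gamma^2(\cdot)$ is at most $\frac12$. To be safe I would double-check the constants, since the $z$-block collects both $2\gamma^2\|z^{k+1/2}-z^k\|^2$ and $2\gamma^2n^2L_\ell^2\|z^{k+1/2}-z^k\|^2$, totalling at most $1$. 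These terms are therefore absorbed by $-\|x^{k+1/2}-x^k\|^2-\|z^{k+1/2}-z^k\|^2-\|y^{k+1/2}-y^k\|^2$. If that accounting is off by a constant, I would shrink the $\frac12$ factor, which does not affect the $\mathcal O$ bound.

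Finally, I would telescope over $k=0,\dots,K-1$, divide by $2\gamma K$, and apply Jensen to $\tilde\ell$ and $r_i$ at the averages $\bar x^K,\bar z^K,\bar y^K$. Taking $\max$ over $\tilde y\in\tilde{\cY}$ and $\min$ over $(\tilde x,\tilde z)\in\cX\times\tilde{\cZ}$ then gives $\text{gap}_1\le \tilde D^2/(2\gamma K)$. Substituting $\gamma$ yields the stated rate.
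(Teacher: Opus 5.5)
Your proposal is correct and follows the paper's proof essentially step for step. The paper also uses the per-block three-point identity with Young's inequality at $\eta = 2\gamma$, and block-wise eigenvalue bounds that yield $\max_i \lambda_{\max}(A_i^T A_i)$ instead of $\lambda_{\max}(A^T A)$. The Cauchy--Schwarz step $\|\sum_j (z_j^{k+1/2} - z_j^k)\|^2 \le n \|z^{k+1/2} - z^k\|^2$ gives the $2\gamma^2 n^2 L_\ell^2$ coefficient, and the argument ends with telescoping, Jensen, and passing to $\text{gap}_1$. Your constant accounting (two coefficients per block, each at most $\tfrac{1}{2}$) is exactly what makes the choice $\gamma = \tfrac{1}{2}\min\{\cdot\}$ work, so no shrinking of the factor is needed.
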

}
\begin{proof}
We start the proof from (\ref{eq:basic_proof_temp0}), since the updates for $x_i$ variables in Algorithms \ref{alg:EG}, \ref{alg:EG_basic_2} are the same (with a slight modification $y$ to $y_i$):
\begin{align*}
    \|x^{k+1}_i - x_i \|^2
    =&
    \|x^{k}_i - x_i \|^2 - \|  x^{k+1/2}_i - x^k_i\|^2 - \|x^{k+1/2}_i - x^{k+1}_i \|^2
    \notag\\
    &-
    2\gamma \langle  A_i (x^{k+1/2}_i - x_i) ,  y^{k+1/2}_i\rangle
    -
    2\gamma \langle  \nabla r_i(x^{k+1/2}_i),  x^{k+1/2}_i - x_i\rangle
    \notag\\
    &-
    2\gamma \langle  x^{k+1}_i - x^{k+1/2}_i ,  A_i^T (y^{k+1/2}_i - y^k_i)\rangle
    \notag\\
    &-
    2\gamma \langle  \nabla r_i(x^{k+1/2}_i) - \nabla r_i (x^k_i),  x^{k+1}_i - x^{k+1/2}_i\rangle.
\end{align*}
By simple fact: $2 \langle a, b \rangle \leq \eta \| a \|^2 + \tfrac{1}{\eta}\| b \|^2$ with $a = A^T_i ( y^{k+1/2}_i - y^{k}_i)$, $b = x^{k+1/2}_i - x^{k+1}_i$, $\eta = 2\gamma$ and $a = \nabla r_i(x^{k+1/2}_i) - \nabla r_i (x^k_i)$, $b = x^{k+1/2}_i - x^{k+1}_i$, $\eta = 2\gamma$, we get
\begin{align*}
    \|x^{k+1}_i - x_i \|^2
    =&
    \|x^{k}_i - x_i \|^2 - \|  x^{k+1/2}_i - x^k_i\|^2
    \notag\\
    &-
    2\gamma \langle  A_i (x^{k+1/2}_i - x_i) ,  y^{k+1/2}_i\rangle
    -
    2\gamma \langle  \nabla r_i(x^{k+1/2}_i),  x^{k+1/2}_i - x_i\rangle
    \notag\\
    &+ 
    2\gamma^2 \| A_i^T (y^{k+1/2}_i - y^k_i)\|^2 +
    2\gamma^2 \|\nabla r_i(x^{k+1/2}_i) - \nabla r_i (x^k_i)\|^2.
\end{align*}

Summing over all $i$ from $1$ to $n$ and using the notation of $A = [A_1, \ldots, A_i, \ldots, A_n] $, $x = [x_1^T, \ldots,x_i^T, \ldots,x_n^T]^T$, we deduce
\begin{align}
    \label{eq:basic_2_proof_temp1}
    \|x^{k+1} - x \|^2
    =&
    \|x^{k} - x \|^2 - \|  x^{k+1/2} - x^k\|^2 - \|x^{k+1/2} - x^{k+1} \|^2
    \notag\\
    &-
    2\gamma \sum\limits_{i=1}^n \langle  A_i (x^{k+1/2}_i - x_i) ,  y^{k+1/2}_i\rangle
    -
    2\gamma \sum\limits_{i=1}^n \langle  \nabla r_i(x^{k+1/2}_i),  x^{k+1/2}_i - x_i\rangle
    \notag\\
    &-
    2\gamma \sum\limits_{i=1}^n \langle  x^{k+1}_i - x^{k+1/2}_i ,  A_i^T (y^{k+1/2}_i - y^k_i)\rangle
    \notag\\
    &-
    2\gamma \sum\limits_{i=1}^n \langle  \nabla r_i(x^{k+1/2}_i) - \nabla r_i (x^k_i),  x^{k+1}_i - x^{k+1/2}_i\rangle.
\end{align}
Using the same steps as for (\ref{eq:basic_2_proof_temp1}), one can obtain for the notation of $z = [z_1^T, \ldots,z_i^T, \ldots,z_n^T]^T$ and \\ $y = [y_1^T, \ldots,y_i^T, \ldots,y_n^T]^T$, 
\begin{align}
\label{eq:basic_2_proof_temp2}
    \|z^{k+1} - z \|^2
    \leq&
    \|z^{k} - z \|^2 - \|  z^{k+1/2} - z^k\|^2 - \|  z^{k+1/2} - z^{k+1}\|^2 
    \notag\\
    &+
    2\gamma \sum\limits_{i=1}^n \langle  y^{k+1/2}_i ,  z^{k+1/2}_i - z_i\rangle
    -
    2\gamma  \langle  \nabla \ell \left( \sum_{j=1}^n z^{k+1/2}_j, b \right),  \sum\limits_{j=1}^n z^{k+1/2}_j - \sum\limits_{j=1}^n z_j\rangle
    \notag\\
    &-
    2\gamma^2 n \|  \nabla \ell\left(\sum_{j=1}^n z^{k+1/2}_j, b \right) -\nabla \ell \left(\sum_{j=1}^n z^{k}_j, b\right) \|^2 +
    2\gamma^2 \|  y^{k+1/2} - y^k \|^2,
\end{align}
and,
\begin{align}
    \label{eq:basic_2_proof_temp3}
    \|y^{k+1} - y \|^2
    \leq&
    \|y^{k} - y \|^2 - \|  y^{k+1/2} - y^k\|^2 - \|  y^{k+1/2} - y^{k+1}\|^2
    \notag\\
    &-
    2\gamma \sum\limits_{i=1}^n\langle  z^{k+1/2}_i ,  y^{k+1/2}_i - y_i\rangle
    +
    2\gamma \sum\limits_{i=1}^n \langle  A_i x^{k+1/2}_i,  y^{k+1/2}_i - y_i\rangle
     \notag\\
    &+
    2 \gamma^2 \sum\limits_{i=1}^n \| A_i (x^{k+1/2}_i - x^k_i) \|^2
     +
    2 \gamma^2 \| z^{k+1/2} - z^k \|^2.
\end{align}
Summing up (\ref{eq:basic_2_proof_temp1}), (\ref{eq:basic_2_proof_temp2}) and (\ref{eq:basic_2_proof_temp3}), we obtain
\begin{align*}
    \|x^{k+1} - x  \|^2 + \|z^{k+1} - z \|^2 + &\|y^{k+1} - y \|^2
    \\
    \leq&
    \|x^{k} - x \|^2 + \|z^{k} - z \|^2 + \|y^{k} - y \|^2
    \notag\\
    &
    - \| x^{k+1/2} - x^k\|^2 - \|  z^{k+1/2} - z^k\|^2 - \|  y^{k+1/2} - y^k\|^2 
    \notag\\
    &-
    2\gamma \sum\limits_{i=1}^n\langle  A_i (x^{k+1/2}_i - x_i) ,  y^{k+1/2}_i\rangle
    +
    2\gamma \sum\limits_{i=1}^n\langle  y^{k+1/2}_i ,  z^{k+1/2}_i - z_i\rangle
    \notag\\
    &-
    2\gamma \sum\limits_{i=1}^n\langle  z^{k+1/2}_i ,  y^{k+1/2}_i - y_i\rangle
    +
    2\gamma \sum\limits_{i=1}^n\langle  A_i x^{k+1/2}_i,  y^{k+1/2}_i - y_i\rangle
    \notag\\
    &-
    2\gamma \sum\limits_{i=1}^n \langle  \nabla r_i(x^{k+1/2}_i),  x^{k+1/2}_i - x_i\rangle
    -
    2\gamma \langle  \nabla \ell \left( \sum_{j=1}^n z^{k+1/2}_j, b \right),  \sum\limits_{j=1}^n z^{k+1/2}_j - \sum\limits_{j=1}^n z_j\rangle
    \notag\\
    &+
    2\gamma^2 \sum\limits_{i=1}^n \|A^T_i (y^{k+1/2}_i - y^k_i)\|^2 +
    2\gamma^2 \sum\limits_{i=1}^n \| \nabla r_i(x^{k+1/2}_i) - \nabla r_i (x^k_i)\|^2
    \notag\\
    &+
    2\gamma^2 \|y^{k+1/2} - y^k\|^2 +
    2\gamma^2 n \| \nabla \ell \left( \sum_{j=1}^n z^{k+1/2}_j, b \right) -\nabla \ell \left( \sum_{j=1}^n z^{k}_j, b \right)\|^2
    \notag\\
    &+
    2\gamma^2 \|z^{k+1/2} - z^k\|^2 +
    2\gamma^2 \sum\limits_{i=1}^n \| A_i (x^{k+1/2}_i - x^k_i) \|^2.
\end{align*}
Using the definition of $\lambda_{\max}(\cdot)$ as a maximum eigenvalue, we get
\begin{align*}
    \|x^{k+1} - x  \|^2 + \|z^{k+1} - z \|^2 + &\|y^{k+1} - y \|^2
    \\
    \leq&
    \|x^{k} - x \|^2 + \|z^{k} - z \|^2 + \|y^{k} - y \|^2
    \notag\\
    &
    - \| x^{k+1/2} - x^k\|^2 - \|  z^{k+1/2} - z^k\|^2 - \|  y^{k+1/2} - y^k\|^2 
    \notag\\
    &+
    2\gamma \sum\limits_{i=1}^n\langle  A_i x_i - z_i,  y^{k+1/2}_i\rangle
    -
    2\gamma \sum\limits_{i=1}^n\langle  A_i x^{k+1/2}_i - z^{k+1/2}_i,  y_i\rangle
    \notag\\
    &-
    2\gamma \sum\limits_{i=1}^n \langle  \nabla r_i(x^{k+1/2}_i),  x^{k+1/2}_i - x_i\rangle
    -
    2\gamma \langle  \nabla \ell \left( \sum_{j=1}^n z^{k+1/2}_j, b \right),  \sum\limits_{j=1}^n z^{k+1/2}_j - \sum\limits_{j=1}^n z_j\rangle
    \notag\\
    &+
    2\gamma^2 \sum\limits_{i=1}^n \lambda_{\max}(A_i A^T_i) \|y^{k+1/2}_i - y^k_i\|^2 +
    2\gamma^2 \sum\limits_{i=1}^n \| \nabla r_i(x^{k+1/2}_i) - \nabla r_i (x^k_i)\|^2
    \notag\\
    &+
    2\gamma^2 \|y^{k+1/2} - y^k\|^2 +
    2\gamma^2 n \| \nabla \ell \left( \sum_{j=1}^n z^{k+1/2}_j, b \right) -\nabla \ell \left( \sum_{j=1}^n z^{k}_j, b \right)\|^2
    \notag\\
    &+
    2\gamma^2 \|z^{k+1/2} - z^k\|^2 +
    2\gamma^2 \sum\limits_{i=1}^n \lambda_{\max}(A_i^T A_i) \| x^{k+1/2}_i - x^k_i \|^2
    \\
    \leq&
    \|x^{k} - x \|^2 + \|z^{k} - z \|^2 + \|y^{k} - y \|^2
    \notag\\
    &
    - \| x^{k+1/2} - x^k\|^2 - \|  z^{k+1/2} - z^k\|^2 - \|  y^{k+1/2} - y^k\|^2 
    \notag\\
    &+
    2\gamma \sum\limits_{i=1}^n\langle  A_i x_i - z_i,  y^{k+1/2}_i\rangle
    -
    2\gamma \sum\limits_{i=1}^n\langle  A_i x^{k+1/2}_i - z^{k+1/2}_i,  y_i\rangle
    \notag\\
    &-
    2\gamma \sum\limits_{i=1}^n \langle  \nabla r_i(x^{k+1/2}_i),  x^{k+1/2}_i - x_i\rangle
    -
    2\gamma \langle  \nabla \ell \left( \sum_{j=1}^n z^{k+1/2}_j, b \right),  \sum\limits_{j=1}^n z^{k+1/2}_j - \sum\limits_{j=1}^n z_j\rangle
    \notag\\
    &+
    2\gamma^2 \max_i\{ \lambda_{\max}(A_i A^T_i) \} \|y^{k+1/2} - y^k\|^2 +
    2\gamma^2 \sum\limits_{i=1}^n \| \nabla r_i(x^{k+1/2}_i) - \nabla r_i (x^k_i)\|^2
    \notag\\
    &+
    2\gamma^2 \|y^{k+1/2} - y^k\|^2 +
    2\gamma^2 n \| \nabla \ell \left( \sum_{j=1}^n z^{k+1/2}_j, b \right) -\nabla \ell \left( \sum_{j=1}^n z^{k}_j, b \right)\|^2
    \notag\\
    &+
    2\gamma^2 \|z^{k+1/2} - z^k\|^2 +
    2\gamma^2 \max_i\{ \lambda_{\max}(A_i^T A_i) \}\| x^{k+1/2} - x^k \|^2.
\end{align*}
Using convexity and $L_{r}$-smoothness of the function $r_i$ with convexity and $L_{\ell}$-smoothness of the function $\ell$, we have
\begin{align*}
    \|x^{k+1} - x  \|^2 + \|z^{k+1} - z \|^2 +& \|y^{k+1} - y \|^2
    \\
    \leq&
    \|x^{k} - x \|^2 + \|z^{k} - z \|^2 + \|y^{k} - y \|^2
    \notag\\
    &
    - \| x^{k+1/2} - x^k\|^2 - \|  z^{k+1/2} - z^k\|^2 - \|  y^{k+1/2} - y^k\|^2 
    \notag\\
    &+
    2\gamma \sum\limits_{i=1}^n\langle  A_i x_i - z_i,  y^{k+1/2}_i\rangle
    -
    2\gamma \sum\limits_{i=1}^n\langle  A_i x^{k+1/2}_i - z^{k+1/2}_i,  y_i\rangle
    \notag\\
    &-
    2\gamma \sum\limits_{i=1}^n (r_i(x^{k+1/2}_i) - r_i(x_i))
    -
    2\gamma (\ell \left( \sum_{j=1}^n z^{k+1/2}_j, b \right) - \ell \left( \sum_{j=1}^n z_j, b \right))
    \notag\\
    &+
    2\gamma^2 \max_i\{ \lambda_{\max}(A_i A^T_i) \} \|y^{k+1/2} - y^k\|^2 +
    2\gamma^2 L_r^2 \| x^{k+1/2} - x^k\|^2
    \notag\\
    &+
    2\gamma^2 \|y^{k+1/2} - y^k\|^2 +
    2\gamma^2 n L^2_{\ell} \| \sum_{j=1}^n z^{k+1/2}_j  - \sum_{j=1}^n z^{k}_j\|^2
    \notag\\
    &+
    2\gamma^2 \|z^{k+1/2} - z^k\|^2 +
    2\gamma^2 \max_i\{ \lambda_{\max}(A_i^T A_i) \}\| x^{k+1/2} - x^k \|^2.
\end{align*}
Cauchy Schwartz inequality in the form: $\|\sum_{j=1}^n (z^{k+1/2}_j - z^{k}_j)\|^2 \leq n \sum_{j=1}^n \|z^{k+1/2}_j - z^{k}_j\|^2$, gives
\begin{align*}
    \|x^{k+1} - x  \|^2 + \|z^{k+1} - z \|^2 + \|y^{k+1} &- y \|^2
    \\
    \leq&
    \|x^{k} - x \|^2 + \|z^{k} - z \|^2 + \|y^{k} - y \|^2
    \notag\\
    &
    - \| x^{k+1/2} - x^k\|^2 - \|  z^{k+1/2} - z^k\|^2 - \|  y^{k+1/2} - y^k\|^2 
    \notag\\
    &+
    2\gamma \sum\limits_{i=1}^n\langle  A_i x_i - z_i,  y^{k+1/2}_i\rangle
    -
    2\gamma \sum\limits_{i=1}^n\langle  A_i x^{k+1/2}_i - z^{k+1/2}_i,  y_i\rangle
    \notag\\
    &-
    2\gamma \sum\limits_{i=1}^n (r_i(x^{k+1/2}_i) - r_i(x_i))
    -
    2\gamma (\ell \left( \sum_{j=1}^n z^{k+1/2}_j, b \right) - \ell \left( \sum_{j=1}^n z_j, b \right))
    \notag\\
    &+
    2\gamma^2 \max_i\{ \lambda_{\max}(A_i A^T_i) \} \|y^{k+1/2} - y^k\|^2 +
    2\gamma^2 L_r^2 \| x^{k+1/2} - x^k\|^2
    \notag\\
    &+
    2\gamma^2 \|y^{k+1/2} - y^k\|^2 +
    2\gamma^2 n^2 L^2_{\ell} \| z^{k+1/2}  - z^{k}\|^2
    \notag\\
    &+
    2\gamma^2 \|z^{k+1/2} - z^k\|^2 +
    2\gamma^2 \max_i\{ \lambda_{\max}(A_i^T A_i) \}\| x^{k+1/2} - x^k \|^2.
\end{align*}
With the choice of $\gamma \leq \tfrac{1}{2} \cdot \min\left\{ 1; \tfrac{1}{\sqrt{\max_i\{ \lambda_{\max}(A_i^T A_i) \}}}; \tfrac{1}{L_r}; \tfrac{1}{n L_{\ell}}\right\}$, we get
\begin{align*}
    \|x^{k+1} - x  \|^2 + \|z^{k+1} - z \|^2 +& \|y^{k+1} - y \|^2
    \\
    \leq&
    \|x^{k} - x \|^2 + \|z^{k} - z \|^2 + \|y^{k} - y \|^2
    \notag\\
    &+
    2\gamma \sum\limits_{i=1}^n\langle  A_i x_i - z_i,  y^{k+1/2}_i\rangle
    -
    2\gamma \sum\limits_{i=1}^n\langle  A_i x^{k+1/2}_i - z^{k+1/2}_i,  y_i\rangle
    \notag\\
    &-
    2\gamma \sum\limits_{i=1}^n (r_i(x^{k+1/2}_i) - r_i(x_i))
    -
    2\gamma (\ell \left( \sum_{j=1}^n z^{k+1/2}_j, b \right) - \ell \left( \sum_{j=1}^n z_j, b \right)).
\end{align*}
After small rearrangements, we obtain
\begin{align*}
    &(\ell \left( \sum_{j=1}^n z^{k+1/2}_j, b \right) - \ell \left( \sum_{j=1}^n z_j, b \right)) + \sum_{i=1}^n \left( r_i (x^{k+1/2}_i) - r_i (x_i)\right) 
    \\
    &+
    \sum\limits_{i=1}^n\langle  A_i x^{k+1/2}_i - z^{k+1/2}_i,  y_i\rangle
    - \sum\limits_{i=1}^n\langle  A_i x_i - z_i,  y^{k+1/2}_i\rangle
    \\
    &\hspace{7cm}\leq
    \frac{1}{2\gamma}\Big(\| x^k - x\|^2 + \| z^k - z\|^2 + \| y^k - y\|^2 
    \notag\\
    &\hspace{8.4cm} -\| x^{k+1} - x\|^2 - \| z^{k+1} - z \|^2 - \| y^{k+1} -  y \|^2\Big).
\end{align*}
Then we sum all over $k$ from $0$ to $K-1$, divide by $K$, and have
\begin{align*}
    &\frac{1}{K} \sum\limits_{k=0}^{K-1}(\ell \left( \sum_{j=1}^n z^{k+1/2}_j, b \right) - \ell \left( \sum_{j=1}^n z_j, b \right)) + \sum_{i=1}^n \frac{1}{K} \sum\limits_{k=0}^{K-1} \left( r_i (x^{k+1/2}_i) - r_i (x_i)\right) 
    \\
    &+
   \sum\limits_{i=1}^n\langle  A_i x^{k+1/2}_i - z^{k+1/2}_i,  y_i\rangle
    - \sum\limits_{i=1}^n\langle  A_i x_i - z_i,  y^{k+1/2}_i\rangle
    \\
    &\hspace{8cm}\leq
    \frac{1}{2\gamma K}\Big(\| x^0 - x\|^2 + \| z^0 - z\|^2 + \| y^0 - y\|^2 
    \notag\\
    &\hspace{8.4cm} -\| x^{K} - x\|^2 - \| z^{K} - z \|^2 - \| y^{K} -  y \|^2\Big)
    \\
    &\hspace{8cm}\leq
    \frac{1}{2\gamma K}(\| x^0 - x\|^2 + \| z^0 - z\|^2 + \| y^0 - y\|^2).
\end{align*}
With Jensen inequality for convex functions $\ell$ and $r_i$, one can note that
\begin{align*}
 \ell \left( \frac{1}{K} \sum\limits_{k=0}^{K-1} \sum_{j=1}^n z^{k+1/2}_j, b \right) \leq \frac{1}{K} \sum\limits_{k=0}^{K-1} \ell \left(\sum_{j=1}^n z^{k+1/2}_j, b\right), \\
 r_i \left( \frac{1}{K} \sum\limits_{k=0}^{K-1} x^{k+1/2}_i \right) \leq \frac{1}{K} \sum\limits_{k=0}^{K-1} r_i (x^{k+1/2}_i).
\end{align*}
Then, with notation $\bar x^K_i = \frac{1}{K} \sum\limits_{k=0}^{K-1} x^{k+1/2}_i$, $\bar z^K_i = \frac{1}{K} \sum\limits_{k=0}^{K-1} z^{k+1/2}_i$, $\bar y^K_i = \frac{1}{K} \sum\limits_{k=0}^{K-1} y^{k+1/2}_i$, we have
\begin{align*}
    &\ell \left( \sum_{i=1}^n \bar z^K_i, b\right) - \ell \left( \sum_{i=1}^n z_i, b \right) + \sum_{i=1}^n \left( r_i (\bar x^{K}_i) - r_i (x_i)\right)
    \\
    &+
   \sum\limits_{i=1}^n\langle  A_i x^{k+1/2}_i - z^{k+1/2}_i,  y_i\rangle
    - \sum\limits_{i=1}^n\langle  A_i x_i - z_i,  y^{k+1/2}_i\rangle
    \leq
    \frac{1}{2\gamma K}(\| x^0 - x\|^2 + \| z^0 - z\|^2 + \| y^0 - y\|^2).
\end{align*}
Following the definition of $\text{gap}_1$, we only need to take the maximum in the variable $y_i \in \cY$ and the minimum in $x \in \cX$ and $z_i \in \cZ$.
\begin{align*}
    \text{gap}_1(\bar x^{K}, \bar z^K, \bar y^K)
    &=
    \max_{\tilde y \in \mathcal{\tilde Y}}\tilde L(\bar x^K,\bar z^K,\tilde y) - \min_{\tilde x,z \in \cX, \mathcal{\tilde Z}} \tilde L(\tilde x, \tilde z, \bar y^K)
    \notag\\
    &=\max_{y \in \mathcal{\tilde Y}} \left[\ell \left( \sum_{i=1}^n \bar z^K_i, b\right) + \sum_{i=1}^n r_i (\bar x^{K}_i) +
    \sum\limits_{i=1}^n\langle  A_i x^{k+1/2}_i - z^{k+1/2}_i,  y_i\rangle \right] 
    \\
    &\hspace{0.4cm}- \min_{x,z \in \cX, \mathcal{\tilde Z}} \left[ \ell \left( \sum_{i=1}^n z_i, b \right) +  \sum_{i=1}^n r_i (x_i) + \sum\limits_{i=1}^n\langle  A_i x_i - z_i,  y^{k+1/2}_i\rangle\right]
    \notag\\
    &=\max_{y \in \mathcal{\tilde Y}} \max_{x,z \in \cX, \mathcal{\tilde Z}} \Bigg[\ell \left( \sum_{i=1}^n \bar z^K_i, b\right) - \ell \left( \sum_{i=1}^n z_i, b \right) + \sum_{i=1}^n \left( r_i (\bar x^{K}_i) - r_i (x_i)\right)
    \\
    &\hspace{0.4cm}+
   \sum\limits_{i=1}^n\langle  A_i x^{k+1/2}_i - z^{k+1/2}_i,  y_i\rangle
    - \sum\limits_{i=1}^n\langle  A_i x_i - z_i,  y^{k+1/2}_i\rangle \Bigg]
    \notag\\
    &\leq
    \frac{1}{2\gamma K}\left(\max_{x \in \cX} \| x^0 - x\|^2 + \max_{z \in \mathcal{\tilde Z}} \| z^0 - z\|^2 + \max_{y \in \mathcal{\tilde Y}} \| y^0 - y\|^2\right).
\end{align*}
\end{proof}

\subsection{Proof of Theorem \ref{th:EG_prox}} \label{app:th:EG_prox}

{
\begin{theorem}[Theorem \ref{th:EG_prox}]
Let $\ell$ and $r$ be proximal-friendly and convex functions. Let the problem (\ref{eq:vfl_lin_spp_1})
be solved by Algorithm \ref{alg:prox_EG}. Then for 
$$
\gamma = \frac{1}{\sqrt{2}} \cdot \min \left\{ 1; \frac{1}{\sqrt{\lambda_{\max}(A^T A)}} \right\},
$$
it holds that
$$
\text{gap}(\bar x^K, \bar z^K,\bar y^K) = \mathcal{O} \left( \frac{(1 + \sqrt{\lambda_{\max}(A^T A)} ) D^2}{K}   \right),
$$
where $\bar x^K := \tfrac{1}{K}\sum_{k=0}^{K-1} x^{k+1/2}$, $\bar z^K := \tfrac{1}{K}\sum_{k=0}^{K-1} z^{k+1/2}$, $\bar y^K := \tfrac{1}{K}\sum_{k=0}^{K-1} y^{k+1/2}$ and \\ $D^2 := \max_{x, z, y \in \cX, \cZ, \cY} \left[ \|x^0 - x \|^2 + \|z^0 - z \|^2 + \|y^0 - y \|^2 \right]$.
\end{theorem}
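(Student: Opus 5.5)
We treat Algorithm~\ref{alg:prox_EG} as a proximal \texttt{ExtraGradient} (Mirror-Prox with Euclidean prox) step for the convex--concave Lagrangian \eqref{eq:vfl_lin_spp_1}. The smooth part is the bilinear coupling $y^T(Ax - z)$. The nonsmooth convex parts $\ell(z,b)$ and $\sum_i r_i(x_i)$ are handled exactly by the prox operators. The only Lipschitz constant that enters is that of the bilinear operator $F(x,z,y) = (A^T y, -y, -(Ax - z))$, namely $\|\begin{pmatrix} A & -I\end{pmatrix}\|$. This explains why $L_\ell, L_r$ disappear from the step size. We repeat the skeleton of the proof of Theorem~\ref{th:EG_basic_1}, replacing explicit gradient identities by the prox optimality inequality.

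\textbf{Step 1 (prox inequality).} For $x_i^{k+1} = \prox_{\gamma r_i}(x^k_i - \gamma A_i^T y^{k+1/2})$ and any $x_i$, optimality gives
\[
\gamma\big(r_i(x^{k+1}_i) - r_i(x_i)\big) + \gamma\langle A_i^T y^{k+1/2}, x^{k+1}_i - x_i\rangle \le \tfrac12\|x^k_i - x_i\|^2 - \tfrac12\|x^{k+1}_i - x_i\|^2 - \tfrac12\|x^{k+1}_i - x^k_i\|^2 .
\]
Similarly, $x^{k+1/2}_i$ is compared with the point $x_i = x^{k+1}_i$. Adding the two inequalities yields the standard three-point bound. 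In it, $r_i(x^{k+1/2}_i) - r_i(x_i)$ and $\langle A_i^T y^{k+1/2}, x^{k+1/2}_i - x_i\rangle$ appear on the left. On the right we get a telescoping distance term, the negative terms $-\tfrac12\|x^{k+1/2}_i - x^k_i\|^2 - \tfrac12\|x^{k+1}_i - x^{k+1/2}_i\|^2$, and a cross term $\gamma\langle A_i^T(y^{k+1/2} - y^k), x^{k+1/2}_i - x^{k+1}_i\rangle$. The same computation applies to $z$ with $\prox_{\gamma\ell}$, whose cross term is $-\gamma\langle y^{k+1/2} - y^k, z^{k+1/2} - z^{k+1}\rangle$. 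For $y$ the update is linear, so the identities from the proof of Theorem~\ref{th:EG_basic_1} are used verbatim, with cross term $\gamma\langle A(x^{k+1/2} - x^k) - (z^{k+1/2} - z^k), y^{k+1} - y^{k+1/2}\rangle$.

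\textbf{Step 2 (sum and absorb).} Summing over $i$ and over the three blocks, the linear terms at $(x^{k+1/2}, z^{k+1/2}, y^{k+1/2})$ combine exactly as in the basic proof into $\langle Ax^{k+1/2} - z^{k+1/2}, y\rangle - \langle Ax - z, y^{k+1/2}\rangle$. Each cross term is bounded by Young's inequality, $\gamma\langle a, b\rangle \le \tfrac{\gamma^2}{2}\|a\|^2 + \tfrac12\|b\|^2$. The $\tfrac12\|b\|^2$ parts cancel the negative $\|\cdot^{k+1} - \cdot^{k+1/2}\|^2$ terms. The remaining pieces are bounded as
\[
\tfrac{\gamma^2}{2}\|A^T(y^{k+1/2} - y^k)\|^2 \le \tfrac{\gamma^2}{2}\lambda_{\max}(AA^T)\|y^{k+1/2} - y^k\|^2,
\qquad
\|A\Delta x - \Delta z\|^2 \le 2\lambda_{\max}(A^TA)\|\Delta x\|^2 + 2\|\Delta z\|^2 .
\]
These must be absorbed by the negative terms $-\tfrac12\|\cdot^{k+1/2} - \cdot^k\|^2$. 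This holds when $\gamma^2 \max\{1, \lambda_{\max}(A^TA)\}$ is at most a constant, which is where the choice $\gamma = \tfrac{1}{\sqrt2}\min\{1, 1/\sqrt{\lambda_{\max}(A^TA)}\}$ comes from. The main obstacle is this constant bookkeeping: the $z$ and $x$ contributions to the $y$-cross term share the same negative budget. If $\tfrac{1}{\sqrt2}$ turns out to be too tight, I would bound the cross term more carefully, using $\|\begin{pmatrix} A & -I\end{pmatrix}\|^2 \le \lambda_{\max}(A^TA) + 1$ directly rather than the factor-2 split.

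\textbf{Step 3 (average and pass to the gap).} Telescoping over $k = 0, \dots, K-1$, dividing by $2\gamma K$, and applying Jensen's inequality to the convex $\ell, r_i$ at the averages $\bar x^K, \bar z^K, \bar y^K$ gives the same inequality as before \eqref{eq:basic_crit1}. Taking $\max_{y\in\cY}$ and $\min_{x, z\in\cX,\cZ}$ then yields $\text{gap} \le D^2/(2\gamma K) = \mathcal{O}\big((1 + \sqrt{\lambda_{\max}(A^TA)})D^2/K\big)$.
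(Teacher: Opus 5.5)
Your proposal is correct and follows the paper's proof essentially step for step. Both arguments apply the prox three-point inequality twice per block; the paper's prox lemma combined with the polarization identity is exactly your strongly convex form, multiplied by $2$. Both then use Young's inequality on the cross terms, the split $\|A\Delta x - \Delta z\|^2 \le 2\lambda_{\max}(A^TA)\|\Delta x\|^2 + 2\|\Delta z\|^2$, telescoping, Jensen, and passage to the gap.

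Your hedge about $\tfrac{1}{\sqrt2}$ is unnecessary, because with $\gamma^2 = \tfrac12\min\{1, 1/\lambda_{\max}(A^TA)\}$ the constants close exactly. The budget that is actually shared is $-\tfrac12\|y^{k+1/2}-y^k\|^2$, not one fed by the $y$-cross term. The $x$- and $z$-cross terms draw on it jointly as $\tfrac{\gamma^2}{2}(\lambda_{\max}(AA^T)+1) \le \tfrac12$. The $y$-cross term only needs $\gamma^2\lambda_{\max}(A^TA) \le \tfrac12$ and $\gamma^2 \le \tfrac12$ against the separate $x$ and $z$ budgets.
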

}

Before we start proving Theorem \ref{th:EG_prox}, we need a small lemma concerning the proximal operator.

\begin{lemma} \label{lem:tech_prox}
Let $h$ be convex and $z^+ = \prox_{\gamma h} (z)$ with some $\gamma > 0$. Then for all $x \in \R^d$ the following inequality holds
\begin{equation*}
    \langle z^+ - z, x - z^+\rangle \geq \gamma \left( h(z^+) - h(x) \right).
\end{equation*}
\end{lemma}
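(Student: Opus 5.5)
The plan is to read the inequality off the first-order optimality condition of the strongly convex problem defining the proximal operator, and then use convexity of $h$ through a subgradient inequality. By definition, $z^+ = \arg\min_{y}\left(\gamma h(y) + \tfrac12\|z-y\|^2\right)$. The objective is the sum of the convex function $\gamma h$ and the differentiable strongly convex quadratic $\tfrac12\|z-y\|^2$, so a minimizer exists, is unique, and is characterized by $0 \in \gamma \partial h(z^+) + (z^+ - z)$. Equivalently, the vector $g := \tfrac{1}{\gamma}(z - z^+)$ is a subgradient of $h$ at $z^+$. If $h$ happens to be differentiable, as in the smooth setting of Assumption \ref{as:convexity_smothness}, this reads $\nabla h(z^+) = \tfrac1\gamma(z - z^+)$.

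Next, I apply the convexity inequality with this subgradient. For any $x$,
\[
h(x) \geq h(z^+) + \langle g, x - z^+\rangle = h(z^+) + \tfrac1\gamma \langle z - z^+, x - z^+\rangle .
\]
Multiplying by $\gamma>0$ and rearranging gives $\gamma\left(h(z^+) - h(x)\right) \leq \langle z^+ - z, x - z^+\rangle$ after the sign flip $\langle z - z^+, x-z^+\rangle = -\langle z^+ - z, x - z^+\rangle$.

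I have to be careful with this sign. Doing the arithmetic literally yields $\gamma(h(z^+) - h(x)) \le \langle z - z^+, x - z^+\rangle$, i.e. $\langle z^+ - z, x - z^+\rangle \geq \gamma(h(z^+)-h(x))$ only after matching orientations. So I will recheck the orientation against the way the lemma is used in the proof of Theorem \ref{th:EG_prox}, where it is applied with $z = x^k - \gamma A^T y$ and the roles of $z^+ - z$ mirror $-\gamma$ times the gradient step. If there is a mismatch, the intended statement is the version $\langle z - z^+, x - z^+\rangle \le \gamma(h(x) - h(z^+))$, which the argument above proves.

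The main obstacle is the optimality condition when $h$ is nonsmooth, which is the whole point of the proximal section. There I would invoke the subdifferential sum rule; it is valid because the quadratic is finite everywhere. Alternatively, I can avoid subdifferentials entirely. Compare the prox objective at $z^+$ with its value at $z^+ + t(x - z^+)$ for $t\in(0,1]$, use convexity $h(z^+ + t(x-z^+)) \le (1-t)h(z^+) + t h(x)$, divide by $t$, and let $t\to 0$. This gives exactly the same inequality and is the route I would write down.
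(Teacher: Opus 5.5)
Your argument is correct. The first route is exactly the paper's proof: read off $z - z^+ \in \gamma\,\partial h(z^+)$ from the optimality condition, then plug $g = \tfrac{1}{\gamma}(z - z^+)$ into the subgradient inequality. Your bookkeeping of the factor $\gamma$ is cleaner than the paper's write-up, which pairs a subgradient of $h$ with the convexity inequality for $\gamma h$.

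Your third paragraph, however, is a self-inflicted confusion and should be deleted. Multiplying $h(x) \ge h(z^+) + \tfrac{1}{\gamma}\langle z - z^+, x - z^+\rangle$ by $\gamma$ gives $\gamma(h(z^+) - h(x)) \le -\langle z - z^+, x - z^+\rangle = \langle z^+ - z, x - z^+\rangle$, which is the lemma verbatim. The claimed ``literal'' outcome with $\langle z - z^+, x - z^+\rangle$ on the right is itself the sign error. The ``intended version'' $\langle z - z^+, x - z^+\rangle \le \gamma(h(x) - h(z^+))$ you fall back on is just the stated inequality multiplied by $-1$. So there is no orientation mismatch. The application in the proof of Theorem~\ref{th:EG_prox}, with $z^+ - z = x^{k+1}_i - x^k_i + \gamma A_i^T y^{k+1/2}$, uses the statement exactly as written.

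The segment argument you say you would actually write down is a genuinely different route, and it is sound. Let $\Phi(y) = \gamma h(y) + \tfrac12\|z-y\|^2$. From $\Phi(z^+) \le \Phi(z^+ + t(x - z^+))$, convexity of $h$ along the segment and expanding the square give $0 \le \gamma(h(x) - h(z^+)) - \langle z - z^+, x - z^+\rangle + \tfrac{t}{2}\|x - z^+\|^2$. Letting $t \to 0$ finishes.

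Compared with the paper's route, this uses only the minimizing property of $z^+$ and convexity. It needs neither Fermat's rule for nonsmooth functions nor the subdifferential sum rule. It also applies verbatim to extended-valued $h$, such as the indicator functions mentioned for the constrained setting; when $h(x) = +\infty$ the claim is trivial. The paper's route is a one-liner once the standard characterization $z - z^+ \in \gamma\,\partial h(z^+)$ is taken as known.
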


\begin{proof}[Proof of Lemma \ref{lem:tech_prox}]
We use convexity of the function $\gamma h$ and get for any $h'(z^+) \in  \partial h(z^+)$
\begin{equation*}
    \gamma (h(x) - h(z^+)) - \langle  h'(z^+), x - z^+ \rangle \geq 0.
\end{equation*}
With definition of the proximal operator and the optimality condition, one can note that $z - z^+ \in \gamma \partial h(z^+)$. The only thing left to do is to take $\gamma h'(z^+) = z - z^+ $ and finish the proof.
\end{proof}

\begin{proof}[Proof of Theorem \ref{th:EG_prox}]
By Lemma \ref{lem:tech_prox} for convex function $ h = r_i$, $z^+ = x^{k+1}_i$, $z = x^{k}_i - \gamma A_i^T y^{k+1/2}$ (see line \ref{lin_alg:EG_prox_linx2} of Algorithm \ref{alg:prox_EG}) and $x = x_i \in \R^{d_i}$, we get
\begin{align*}
    \langle x^{k+1}_i - x^{k}_i + \gamma A_i^T y^{k+1/2} , x_i - x^{k+1}_i  \rangle
    \geq 
    \gamma \left( r_i (x^{k+1}_i) - r_i (x_i)\right),
\end{align*}
and for $z^+ = x^{k+1/2}_i$, $z =  x^{k}_i - \gamma A_i^T y^k$ (see line \ref{lin_alg:EG_prox_linx2} of Algorithm \ref{alg:prox_EG}), $x = x^{k+1}_i$,
\begin{align*}
    \langle x^{k+1/2}_i - x^{k}_i + \gamma A_i^T y^k, & x^{k+1}_i - x^{k+1/2}_i  \rangle
    \geq 
    \gamma \left( r_i (x^{k+1/2}_i) - r_i (x^{k+1}_i)\right).
\end{align*}
Summing up two previous inequalities, we get
\begin{align*}
    &\langle x^{k+1}_i - x^{k}_i + \gamma A_i^T y^{k+1/2}, x_i - x^{k+1}_i  \rangle
    +\langle x^{k+1/2}_i - x^{k}_i + \gamma A_i^T y^k , x^{k+1}_i - x^{k+1/2}_i  \rangle
    \geq 
    \gamma \left( r_i (x^{k+1/2}_i) - r_i (x_i)\right).
\end{align*}
After small rearrangements and multiplying by 2, we have
\begin{align*}
    &2 \langle x^{k+1}_i - x^{k}_i, x_i - x^{k+1}_i  \rangle
    +2\langle x^{k+1/2}_i - x^{k}_i , x^{k+1}_i - x^{k+1/2}_i  \rangle
    \\
    &+
    2\gamma \langle A_i^T y^{k+1/2}, x_i - x^{k+1/2}_i  \rangle
    +
    2\gamma \langle A_i^T (y^{k+1/2} - y^{k}), x^{k+1/2}_i - x^{k+1}_i  \rangle
    \geq 
    2\gamma \left( r_i (x^{k+1/2}_i) - r_i (x_i)\right).
\end{align*}
For the first line we use identity $2\langle a, b \rangle = \| a  + b\|^2 - \| a\|^2 - \| b\|^2$, and get
\begin{align*}
    &\left( \| x^k_i - x_i\|^2 - \| x^{k+1}_i - x_i \|^2 - \|x^{k+1}_i - x^k_i \|^2 \right)
    \\
    &+ \left( \| x^{k+1}_i - x^{k}_i\|^2 - \| x^{k+1/2}_i - x^{k}_i \|^2 - \| x^{k+1}_i - x^{k+1/2}_i \|^2\right)
    \\
    &+
    2\gamma \langle A_i^T y^{k+1/2}  , x_i - x^{k+1/2}_i  \rangle
    +
    2\gamma \langle A_i^T (y^{k+1/2} - y^{k}), x^{k+1/2}_i - x^{k+1}_i  \rangle
    \geq 
    2\gamma \left( r_i (x^{k+1/2}_i) - r_i (x_i)\right).
\end{align*}
A small rearrangement gives
\begin{align*}
    \| x^{k+1}_i - x_i\|^2 
    \leq&
    \| x^k_i - x_i\|^2 - \| x^{k+1/2}_i - x^{k}_i \|^2 - \| x^{k+1}_i - x^{k+1/2}_i \|^2
    \\
    &+
    2\gamma \langle A_i^T y^{k+1/2} , x_i - x^{k+1/2}_i  \rangle
    \\
    &+
    2\gamma \langle A_i^T (y^{k+1/2} - y^{k}), x^{k+1/2}_i - x^{k+1}_i  \rangle
    - 2 \gamma \left( r_i (x^{k+1/2}_i) - r_i (x_i)\right)
    \\
    =&
    \| x^k_i - x_i\|^2 - \| x^{k+1/2}_i - x^{k}_i \|^2 - \| x^{k+1}_i - x^{k+1/2}_i \|^2
    \\
    &+
    2\gamma \langle A_i (x_i - x^{k+1/2}_i), y^{k+1/2} \rangle
    \\
    &+
    2\gamma \langle A_i (x^{k+1/2}_i - x^{k+1}_i),   y^{k+1/2} - y^{k}\rangle
    - 2 \gamma \left( r_i (x^{k+1/2}_i) - r_i (x_i)\right).
\end{align*}
Summing over all $i$ from $1$ to $n$, we deduce
\begin{align*}
    \sum\limits_{i=1}^n \| x^{k+1}_i - x_i\|^2 
    \leq&
    \sum\limits_{i=1}^n \| x^k_i - x_i\|^2 - \sum\limits_{i=1}^n \| x^{k+1/2}_i - x^{k}_i \|^2 - \sum\limits_{i=1}^n \| x^{k+1}_i - x^{k+1/2}_i \|^2
    \\
    &+
    2\gamma\langle \sum\limits_{i=1}^n  A_i (x_i - x^{k+1/2}_i), y^{k+1/2} \rangle
    \\
    &+
    2 \gamma\langle \sum\limits_{i=1}^n A_i (x^{k+1/2}_i - x^{k+1}_i),   y^{k+1/2} - y^{k}\rangle
    \\
    &
    - 2 \gamma \sum\limits_{i=1}^n \left( r_i (x^{k+1/2}_i) - r_i (x_i)\right).
\end{align*}
With notation of $A = [A_1, \ldots, A_i, \ldots, A_n] $ and notation of $x = [x_1^T, \ldots,x_i^T, \ldots,x_n^T]^T$ from (\ref{eq:main_problem}) and (\ref{eq:main_problem_vfl}), one can obtain that $\sum_{i=1}^n A_i x_i = A x$:
\begin{align*}
    \| x^{k+1} - x\|^2 
    \leq&
    \| x^k - x\|^2 - \| x^{k+1/2} - x^{k}\|^2 - \| x^{k+1} - x^{k+1/2} \|^2
    \\
    &+
    2\gamma \langle A (x - x^{k+1/2}), y^{k+1/2} \rangle
    \\
    &+
    2 \gamma \langle A (x^{k+1/2} - x^{k+1}),   y^{k+1/2} - y^{k}\rangle
    - 2 \gamma \sum\limits_{i=1}^n \left( r_i (x^{k+1/2}_i) - r_i (x_i)\right)
    \\
    =&
    \| x^k - x\|^2 - \| x^{k+1/2} - x^{k}\|^2 - \| x^{k+1} - x^{k+1/2} \|^2
    \\
    &+
    2 \gamma \langle A (x - x^{k+1/2}), y^{k+1/2} \rangle
    \\
    &+
    2 \gamma \langle A^T ( y^{k+1/2} - y^{k}), x^{k+1/2} - x^{k+1}\rangle
    - 2 \gamma \sum\limits_{i=1}^n \left( r_i (x^{k+1/2}_i) - r_i (x_i)\right).
\end{align*}
By Cauchy Schwartz  inequality: $2 \langle a, b \rangle \leq \eta \| a \|^2 + \tfrac{1}{\eta}\| b \|^2$ with $a = A^T ( y^{k+1/2} - y^{k})$, $b = x^{k+1/2} - x^{k+1}$ and $\eta = \gamma$, we get
\begin{align}
    \label{eq:prox_proof_temp1}
    \| x^{k+1} - x\|^2
    \leq&
    \| x^k - x\|^2 - \| x^{k+1/2} - x^{k}\|^2
    \notag\\
    &+
    2\gamma \langle A (x - x^{k+1/2}), y^{k+1/2} \rangle
    \notag\\
    &+
    \gamma \| A^T ( y^{k+1/2} - y^{k}) \|^2
    - 2 \gamma \sum\limits_{i=1}^n \left( r_i (x^{k+1/2}_i) - r_i (x_i)\right).
\end{align}
Using the same steps, one can obtain for $z \in \R^s$,
\begin{align}
    \label{eq:prox_proof_temp2}
    \| z^{k+1} - z \|^2 
    \leq&
    \| z^k - z\|^2 - \| z^{k+1/2} - z^{k} \|^2
    \notag\\
    &-
    2\gamma_z \langle y^{k+1/2} , z - z^{k+1/2}  \rangle
    \notag\\
    &+
    \gamma^2_z \| y^{k+1/2} - y^{k}\|^2  - 2 \gamma \left( \ell (z^{k+1/2}, b) - \ell (z, b)\right),
\end{align}
and for all $y \in \R^s$,
\begin{align}
    \label{eq:prox_proof_temp3}
    \| y^{k+1} -  y \|^2 
    \leq&
    \| y^k - y\|^2 - \| y^{k+1/2} - y^{k} \|^2 
    \notag\\
    &-
    2\gamma \langle \sum_{i=1}^n A_i x^{k+1/2}_i  - z^{k+1/2} , y - y^{k+1/2}  \rangle
    \notag\\
    &+
    \gamma^2 \left\| \sum_{i=1}^n A_i (x^{k+1/2}_i - x^{k}_i) - (z^{k+1/2} - z^{k})\right\|^2
    \notag\\
    =&
    \| y^k - y\|^2 - \| y^{k+1/2} - y^{k} \|^2 
    \notag\\
    &-
    2 \gamma \langle A x^{k+1/2}  - z^{k+1/2} , y - y^{k+1/2}  \rangle
    \notag\\
    &+
    \gamma^2 \| A (x^{k+1/2} - x^{k}) - (z^{k+1/2} - z^{k})\|^2.
\end{align}
Here we also use notation of $A$ and $x$. 
Summing up (\ref{eq:prox_proof_temp1}), (\ref{eq:prox_proof_temp2}) and (\ref{eq:prox_proof_temp3}), we obtain
\begin{align*}
    \| x^{k+1} - x\|^2 + \| z^{k+1} - z \|^2& + \| y^{k+1} -  y \|^2
    \\
    \leq&
    \| x^k - x\|^2 + \| z^k - z\|^2 + \| y^k - y\|^2 
    \\
    &
    - \| x^{k+1/2} - x^{k} \|^2 - \| z^{k+1/2} - z^{k} \|^2 - \| y^{k+1/2} - y^{k} \|^2 
    \notag\\
    &+
    2 \gamma\langle A (x - x^{k+1/2} ), y^{k+1/2} \rangle
    -
    2 \gamma \langle y^{k+1/2} , z - z^{k+1/2}  \rangle
    \notag\\
    &-
    2 \gamma \langle A x^{k+1/2}  - z^{k+1/2} , y - y^{k+1/2}  \rangle
    \notag\\
    &+
    \gamma^2 \| A^T ( y^{k+1/2} - y^{k}) \|^2 +
    \gamma^2 \| y^{k+1/2} - y^{k}\|^2 
    \notag\\
    &+
    \gamma^2 \left\| A (x^{k+1/2} - x^{k}) - (z^{k+1/2} - z^{k})\right\|^2 
    \notag\\
    &
    - 2 \gamma \left( \ell (z^{k+1/2}, b) - \ell (z, b)\right) - 2 \gamma \sum_{i=1}^n \left( r_i (x^{k+1/2}_i) - r_i (x_i)\right).
\end{align*}
Again by Cauchy Schwartz  inequality: $\| a - b \|^2 \leq 2\| a \|^2 + 2\| b \|^2$ with $a = A (x^{k+1/2} - x^{k})$, $b = \gamma (z^{k+1/2} - z^{k})$, we get
\begin{align*}
    \| x^{k+1} - x\|^2 + \| z^{k+1} - z \|^2 + \| y^{k+1} &-  y \|^2
    \\
    \leq&
    \| x^k - x\|^2 + \| z^k - z\|^2 + \| y^k - y\|^2 
    \\
    &
    - \| x^{k+1/2} - x^{k} \|^2 - \| z^{k+1/2} - z^{k} \|^2 - \| y^{k+1/2} - y^{k} \|^2 
    \notag\\
    &+
    2 \gamma \langle A (x - x^{k+1/2}), y^{k+1/2} \rangle
    -
    2 \gamma \langle y^{k+1/2} , z - z^{k+1/2}  \rangle
    \notag\\
    &-
    2 \gamma \langle A x^{k+1/2}  - z^{k+1/2} , y - y^{k+1/2}  \rangle
    \notag\\
    &+
    \gamma^2 \| A^T ( y^{k+1/2} - y^{k}) \|^2 +
    \gamma^2 \| y^{k+1/2} - y^{k}\|^2 
    \notag\\
    &+
    2 \gamma^2 \left\| A (x^{k+1/2} - x^{k})\right\|^2
    + 2\gamma^2\left\| z^{k+1/2} - z^{k}\right\|^2
    \notag\\
    &
    - 2 \gamma \left( \ell (z^{k+1/2}, b) - \ell (z, b)\right) - 2 \gamma \sum_{i=1}^n \left( r_i (x^{k+1/2}_i) - r_i (x_i)\right).
\end{align*}
Using the definition of $\lambda_{\max}(\cdot)$ as a maximum eigenvalue, we get
\begin{align*}
    \| x^{k+1} - x\|^2 + \| z^{k+1} - z \|^2 + \| y^{k+1} &-  y \|^2
    \\
    \leq&
    \| x^k - x\|^2 + \| z^k - z\|^2 + \| y^k - y\|^2 
    \\
    &
    - \| x^{k+1/2} - x^{k} \|^2 - \| z^{k+1/2} - z^{k} \|^2 - \| y^{k+1/2} - y^{k} \|^2 
    \notag\\
    &+
    2 \langle \tilde  A (x - x^{k+1/2}), y^{k+1/2} \rangle
    -
    2 \gamma \langle  y^{k+1/2} , z - z^{k+1/2}  \rangle
    \notag\\
    &-
    2\gamma \langle A x^{k+1/2}  - z^{k+1/2} , y - y^{k+1/2}  \rangle
    \notag\\
    &+ 
    \lambda_{\max}(A A^T)\| y^{k+1/2} - y^{k} \|^2 +
    \gamma^2 \| y^{k+1/2} - y^{k}\|^2 
    \notag\\
    &+
    2 \lambda_{\max}(A^T A) \gamma^2 \left\| x^{k+1/2} - x^{k}\right\|^2
    + 2\gamma^2\left\| z^{k+1/2} - z^{k}\right\|^2
    \notag\\
    &
    - 2 \gamma \left( \ell (z^{k+1/2}, b) - \ell (z, b)\right) - 2 \gamma \sum_{i=1}^n \left( r_i (x^{k+1/2}_i) - r_i (x_i)\right).
\end{align*}
With the choice of $\gamma \leq \tfrac{1}{\sqrt{2}} \cdot \min\left\{ 1; \tfrac{1}{\sqrt{\lambda_{\max}(A^T A)}}\right\}$, we get
\begin{align*}
    \| x^{k+1} - x\|^2 + \| z^{k+1} - z \|^2 + \| y^{k+1} &-  y \|^2
    \\
    \leq&
    \| x^k - x\|^2 + \| z^k - z\|^2 + \| y^k - y\|^2 
    \notag\\
    &+
    2 \gamma \langle A (x - x^{k+1/2}), y^{k+1/2} \rangle
    -
    2 \gamma\langle  y^{k+1/2} , z - z^{k+1/2}  \rangle
    \notag\\
    &-
    2\gamma \langle A x^{k+1/2}  - z^{k+1/2} , y - y^{k+1/2}  \rangle
    \notag\\
    &
    - 2 \gamma \left( \ell (z^{k+1/2}, b) - \ell (z, b)\right) - 2 \gamma \sum_{i=1}^n \left( r_i (x^{k+1/2}_i) - r_i (x_i)\right)
    \\
    =&
    \| x^k - x\|^2 + \| z^k - z\|^2 + \| y^k - y\|^2 
    \notag\\
    &+
    2 \gamma \langle  A x - z, y^{k+1/2} \rangle
    -
    2\gamma \langle A x^{k+1/2} - z^{k+1/2}, y\rangle
    \notag\\
    &
    - 2 \gamma \left( \ell (z^{k+1/2}, b) - \ell (z, b)\right) - 2 \gamma \sum_{i=1}^n \left( r_i (x^{k+1/2}_i) - r_i (x_i)\right).
\end{align*}
After small rearrangements, we obtain
\begin{align*}
    &\left( \ell (z^{k+1/2}, b) - \ell (z, b)\right) + \sum_{i=1}^n \left( r_i (x^{k+1/2}_i) - r_i (x_i)\right) 
    \\
    &+
    \langle A x^{k+1/2} - z^{k+1/2}, y\rangle
    - \langle  A x - z, y^{k+1/2} \rangle
    \\
    &\hspace{8cm}\leq
    \frac{1}{2\gamma}\Big(\| x^k - x\|^2 + \| z^k - z\|^2 + \| y^k - y\|^2 
    \notag\\
    &\hspace{8.4cm} -\| x^{k+1} - x\|^2 - \| z^{k+1} - z \|^2 - \| y^{k+1} -  y \|^2\Big).
\end{align*}
Then we sum all over $k$ from $0$ to $K-1$, divide by $K$, and have
\begin{align*}
    &\frac{1}{K} \sum\limits_{k=0}^{K-1}\left( \ell (z^{k+1/2}, b) - \ell (z, b)\right) + \sum_{i=1}^n \frac{1}{K} \sum\limits_{k=0}^{K-1} \left( r_i (x^{k+1/2}_i) - r_i (x_i)\right) 
    \\
    &+
    \langle A \cdot \frac{1}{K} \sum\limits_{k=0}^{K-1} x^{k+1/2} - \frac{1}{K} \sum\limits_{k=0}^{K-1} z^{k+1/2}, y\rangle
    - \langle  A x - z, \frac{1}{K} \sum\limits_{k=0}^{K-1} y^{k+1/2} \rangle
    \\
    &\hspace{9cm}\leq
    \frac{1}{2\gamma K}\Big(\| x^0 - x\|^2 + \| z^0 - z\|^2 + \| y^0 - y\|^2 
    \notag\\
    &\hspace{9.4cm} -\| x^{K} - x\|^2 - \| z^{K} - z \|^2 - \| y^{K} -  y \|^2\Big)
    \\
    &\hspace{9cm}\leq
    \frac{1}{2\gamma K}(\| x^0 - x\|^2 + \| z^0 - z\|^2 + \| y^0 - y\|^2).
\end{align*}
With Jensen inequality for convex functions $\ell$ and $r_i$, one can note that
\begin{align*}
 \ell \left( \frac{1}{K} \sum\limits_{k=0}^{K-1} z^{k+1/2}, b \right) \leq \frac{1}{K} \sum\limits_{k=0}^{K-1} \ell (z^{k+1/2}, b), \\
 r_i \left( \frac{1}{K} \sum\limits_{k=0}^{K-1} x^{k+1/2}_i \right) \leq \frac{1}{K} \sum\limits_{k=0}^{K-1} r_i (x^{k+1/2}_i).
\end{align*}
Then, with notation $\bar x^K_i = \frac{1}{K} \sum\limits_{k=0}^{K-1} x^{k+1/2}_i$, $\bar z^K = \frac{1}{K} \sum\limits_{k=0}^{K-1} z^{k+1/2}$, $\bar y^K = \frac{1}{K} \sum\limits_{k=0}^{K-1} y^{k+1/2}$, we have
\begin{align*}
    &\ell (\bar z^K, b) - \ell (z, b) + \sum_{i=1}^n \left( r_i (\bar x^{K}_i) - r_i (x_i)\right) 
    +
    \langle A \bar x^{K} - \bar z^{K}, y\rangle
    - \langle  A x - z, \bar y^{K} \rangle
    \\
    &\hspace{8cm}\leq
    \frac{1}{2\gamma K}(\| x^0 - x\|^2 + \| z^0 - z\|^2 + \| y^0 - y\|^2).
\end{align*}
To complete the proof of the theorem, it is sufficient to do the same steps as when obtaining (\ref{eq:basic_crit1}).
Finally, we need to put $\gamma = \tfrac{1}{\sqrt{2}} \cdot \min\left\{ 1; \tfrac{1}{\sqrt{\lambda_{\max}(A^T A)}}\right\}$.
\end{proof}

\subsection{Proof of Theorem \ref{th:EG_add_noise}}
\label{app:th:EG_add_noise}

{
\begin{theorem}[Theorem \ref{th:EG_add_noise}]
Let Assumption \ref{as:convexity_smothness} hold. Let $\mathbb{E}[\xi^k] = 0$, $\mathbb{E}[\xi^k_i] = 0$, $\mathbb{E}[\|\xi^k\|^2] \leq \sigma^2$ and $\mathbb{E}[\|\xi^k_i\|^2] \leq \sigma^2$. Let the problem (\ref{eq:vfl_lin_spp_1})
be solved by Algorithm~\ref{alg:EG_noise}. Then for 
$$
\gamma = \min \left\{ \frac{1}{2}; \frac{1}{\sqrt{8\lambda_{\max}(A^T A)}}; \frac{1}{2L_r}; \frac{1}{2L_{\ell}}; \sqrt{\frac{D^2}{8(\lambda_{\max}(A A^T) + n)\sigma^2 K}}  \right\},
$$
it holds that
\begin{align*}
    \E{\text{gap}(\bar x^K, \bar z^K, \bar y^K)}
    =
    \cO\left(\frac{ (1 + \sqrt{\lambda_{\max}(A^T A)} + L_r + L_{\ell} ) D^2}{ K}
    +
    \sqrt{\frac{(\lambda_{\max}(A A^T) + n)\sigma^2 D^2}{K}} \right),
\end{align*}
where $\bar x^K := \tfrac{1}{K}\sum_{k=0}^{K-1} x^{k+1/2}$, $\bar z^K := \tfrac{1}{K}\sum_{k=0}^{K-1} z^{k+1/2}$, $\bar y^K := \tfrac{1}{K}\sum_{k=0}^{K-1} y^{k+1/2}$ and \\ $D^2 := \max_{x, z, y \in \cX, \cZ, \cY} \left[ \|x^0 - x \|^2 + \|z^0 - z \|^2 + \|y^0 - y \|^2 \right]$.
\end{theorem}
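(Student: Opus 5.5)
I will follow the template of the proof of Theorem \ref{th:EG_basic_1} and treat the noise as an additive perturbation of the operator. Algorithm \ref{alg:EG_noise} is exactly Algorithm \ref{alg:EG} with the gradient field $F(x,z,y) = (A^T y + \nabla r(x),\ \nabla \ell(z,b) - y,\ -(Ax - z))$ replaced by a stochastic field. The perturbation is $\delta^k := (A^T\xi^k,\ 0,\ -\sum_i \xi^k_i)$ at the extrapolation step and $\delta^{k+1/2} := (A^T\xi^{k+1/2},\ 0,\ -\sum_i \xi^{k+1/2}_i)$ at the main step. Note that $z$ is untouched, because the first device uses its own exact $y$. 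Each perturbation has zero mean conditionally on the past. Its second moment satisfies
\[
\E{\|\delta\|^2} \leq \lambda_{\max}(AA^T)\sigma^2 + n\sigma^2 ,
\]
using the independence of the $\xi_i$, or Cauchy--Schwarz with $n$ summands to get $n^2\sigma^2$ otherwise. The theorem's constant $(\lambda_{\max}(AA^T)+n)$ suggests the independent case, so I will assume this.

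First, I repeat the three-point identity (\ref{eq:basic_proof_temp0}) for $x$, $z$, $y$ with the noisy updates. This yields the per-iteration inequality of Theorem \ref{th:EG_basic_1} plus extra terms.

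Second, the cross terms $2\gamma\langle F(u^{k+1/2}) - F(u^k) + \delta^{k+1/2}-\delta^k,\, u^{k+1/2}-u^{k+1}\rangle$ are handled by Young's inequality with $\eta=\gamma$ instead of $2\gamma$. This produces $4\gamma^2\|F(u^{k+1/2})-F(u^k)\|^2 + 4\gamma^2(\|\delta^k\|^2+\|\delta^{k+1/2}\|^2)$. The deterministic part is absorbed by $-\|u^{k+1/2}-u^k\|^2$ thanks to the stepsize $\gamma\le \min\{\tfrac12, 1/\sqrt{8\lambda_{\max}}, \tfrac{1}{2L_r}, \tfrac{1}{2L_\ell}\}$. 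This leaves $\mathcal{O}(\gamma^2(\lambda_{\max}(AA^T)+n)\sigma^2)$ per iteration in expectation.

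Third comes the main-step term $-2\gamma\langle \delta^{k+1/2}, u^{k+1/2}-u\rangle$. For fixed $u$ it vanishes in expectation, since $u^{k+1/2}$ is independent of $\xi^{k+1/2}$. However, the gap takes a maximum over $u\in\cX\times\cZ\times\cY$ before the expectation. I split $u^{k+1/2}-u = (u^{k+1/2}-u^0) + (u^0-u)$. The first part has zero expectation. For the second I use the same device as (\ref{eq:unbiased_temp12}): Young gives $\frac{1}{2\gamma}\max_u\|u^0-u\|^2 + \frac{\gamma}{2}\|\sum_k \delta^{k+1/2}\|^2$. The martingale property kills the cross terms, so the expectation equals $\frac{\gamma}{2}\sum_k\E{\|\delta^{k+1/2}\|^2}$. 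A further subtlety is that $y^{k+1/2}$ depends on $\xi^k_i$ and $x^{k+1/2}$ on $\xi^k$. These enter only through the extrapolation-step noise, which appears solely in the Young-bounded cross term. This is the step I expect to be the main obstacle: the bookkeeping of which noise is independent of which iterate, handled in the same way as the unbiased-compression proof.

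Finally, I sum over $k$, apply Jensen to $\ell$ and $r_i$, take the max/min as in (\ref{eq:basic_crit1}), and then take expectations. This gives
\[
\E{\text{gap}} \lesssim \frac{D^2}{\gamma K} + \gamma(\lambda_{\max}(AA^T)+n)\sigma^2 .
\]
Substituting the stated $\gamma$ balances the two terms when the noise component is active. When it is not, the deterministic stepsize bounds dominate. Together these give the claimed $\mathcal{O}\big((1+\sqrt{\lambda_{\max}}+L_r+L_\ell)D^2/K + \sqrt{(\lambda_{\max}(AA^T)+n)\sigma^2D^2/K}\big)$.
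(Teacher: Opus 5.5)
Your proposal is correct and is essentially the paper's proof. Both use block-wise three-point identities, then Young's inequality on the extrapolation cross terms with the noise differences split off via $\|a+b\|^2\le 2\|a\|^2+2\|b\|^2$, then the split $u^{k+1/2}-u=(u^{k+1/2}-u^0)+(u^0-u)$. The first piece vanishes in expectation, the second is handled by Young plus orthogonality of the independent noises, and the argument closes with Jensen, max/min, expectation and balancing $\gamma$. The paper, too, invokes independence of all the $\xi$'s inside its proof to get the factor $n$ rather than $n^2$.

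One small caution concerns constants. A single lumped Young step producing $4\gamma^2\|F(u^{k+1/2})-F(u^k)\|^2$ is too lossy to be absorbed with exactly the stated $\gamma$. Applying Young block by block, as the paper does with $\eta=2\gamma$ separately on the $A^T$ and $\nabla r_i$ pieces, gives coefficients such as $4\gamma^2\lambda_{\max}(A^TA)+2\gamma^2\le 1$, which close with the prescribed constants.
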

}
\begin{proof}
We start the proof with the following equations on the variables $x^{k+1}_i$, $x^{k+1/2}_i$, $x^k_i$ and any $x_i \in \R^{d_i}$:
\begin{align*}
    \|x^{k+1}_i - x_i \|^2
    &=
    \|x^{k}_i - x_i \|^2 + 2 \langle  x^{k+1}_i -  x^{k}_i,  x^{k+1}_i - x_i\rangle - \|  x^{k+1}_i -  x^{k}_i\|^2,
    \\
    \|x^{k+1/2}_i - x^{k+1}_i \|^2
    &=
    \| x^{k}_i - x^{k+1}_i \|^2 + 2 \langle x^{k+1/2}_i -   x^{k}_i, x^{k+1/2}_i - x^{k+1}_i\rangle - \|  x^{k+1/2}_i - x^k_i\|^2.
\end{align*}
Summing up two previous inequalities and making small rearrangements, we get
\begin{align*}
    \|x^{k+1}_i - x_i \|^2
    =&
    \|x^{k}_i - x_i \|^2 - \|  x^{k+1/2}_i - x^k_i\|^2 - \|x^{k+1/2}_i - x^{k+1}_i \|^2
    \\
    &+
    2 \langle  x^{k+1}_i -  x^{k}_i,  x^{k+1}_i - x_i\rangle + 2 \langle x^{k+1/2}_i -   x^{k}_i, x^{k+1/2}_i - x^{k+1}_i\rangle.
\end{align*}
Using that $x^{k+1}_i -  x^{k}_i = - \gamma (A_i^T (y^{k+1/2} +\xi^{k+1/2}) + \nabla r_i(x^{k+1/2}_i))$ and $x^{k+1/2}_i -   x^{k}_i = - \gamma (A_i^T (y^{k} +\xi^{k}) + \nabla r_i (x^k_i))$, we obtain
\begin{align*}
    \|x^{k+1}_i - x_i \|^2
    =&
    \|x^{k}_i - x_i \|^2 - \|  x^{k+1/2}_i - x^k_i\|^2 - \|x^{k+1/2}_i - x^{k+1}_i \|^2
    \\
    &-
    2\gamma \langle  A_i^T (y^{k+1/2} +\xi^{k+1/2}) + \nabla r_i(x^{k+1/2}_i),  x^{k+1}_i - x_i\rangle 
    \\
    &- 2 \gamma \langle A_i^T (y^{k} +\xi^{k}) + \nabla r_i (x^k_i), x^{k+1/2}_i - x^{k+1}_i\rangle
    \\
    =&
    \|x^{k}_i - x_i \|^2 - \|  x^{k+1/2}_i - x^k_i\|^2 - \|x^{k+1/2}_i - x^{k+1}_i \|^2
    \\
    &-
    2\gamma \langle  A_i^T (y^{k+1/2} +\xi^{k+1/2}) + \nabla r_i(x^{k+1/2}_i),  x^{k+1/2}_i - x_i\rangle
    \\
    &-
    2\gamma \langle  A_i^T (y^{k+1/2} +\xi^{k+1/2}  - y^k - \xi^{k})
    \\
    &-
    2\gamma \langle  \nabla r_i(x^{k+1/2}_i) - \nabla r_i (x^k_i),  x^{k+1}_i - x^{k+1/2}_i\rangle
    \\
    =&
    \|x^{k}_i - x_i \|^2 - \|  x^{k+1/2}_i - x^k_i\|^2 - \|x^{k+1/2}_i - x^{k+1}_i \|^2
    \\
    &-
    2\gamma \langle  A_i (x^{k+1/2}_i - x_i) ,  y^{k+1/2} +\xi^{k+1/2}\rangle
    -
    2\gamma \langle  \nabla r_i(x^{k+1/2}_i),  x^{k+1/2}_i - x_i\rangle
    \\
    &-
    2\gamma \langle  A_i (x^{k+1}_i - x^{k+1/2}_i) ,  y^{k+1/2} +\xi^{k+1/2} - y^k - \xi^{k}\rangle
    \\
    &-
    2\gamma \langle  \nabla r_i(x^{k+1/2}_i) - \nabla r_i (x^k_i),  x^{k+1}_i - x^{k+1/2}_i\rangle.
\end{align*}
Summing over all $i$ from $1$ to $n$, we deduce
\begin{align*}
    \sum\limits_{i=1}^n \|x^{k+1}_i - x_i \|^2
    =&
    \sum\limits_{i=1}^n \|x^{k}_i - x_i \|^2 - \sum\limits_{i=1}^n\|  x^{k+1/2}_i - x^k_i\|^2 - \sum\limits_{i=1}^n \|x^{k+1/2}_i - x^{k+1}_i \|^2
    \\
    &-
    2\gamma \langle  \sum\limits_{i=1}^n A_i (x^{k+1/2}_i - x_i) ,  y^{k+1/2} +\xi^{k+1/2}\rangle
    \\
    &
    -
    2\gamma \sum\limits_{i=1}^n \langle  \nabla r_i(x^{k+1/2}_i),  x^{k+1/2}_i - x_i\rangle
    \\
    &-
    2\gamma \langle  \sum\limits_{i=1}^n A_i (x^{k+1}_i - x^{k+1/2}_i) ,  y^{k+1/2} +\xi^{k+1/2} - y^k - \xi^{k}\rangle
    \\
    &-
    2\gamma \sum\limits_{i=1}^n \langle  \nabla r_i(x^{k+1/2}_i) - \nabla r_i (x^k_i),  x^{k+1}_i - x^{k+1/2}_i\rangle.
\end{align*}
With notation of $A = [A_1, \ldots, A_i, \ldots, A_n] $ and notation of $x = [x_1^T, \ldots,x_i^T, \ldots,x_n^T]^T$, one can obtain that $\sum_{i=1}^n A_i x_i = A x$:
\begin{align*}
    \|x^{k+1} - x \|^2
    =&
    \|x^{k} - x \|^2 - \|  x^{k+1/2} - x^k\|^2 - \|x^{k+1/2} - x^{k+1} \|^2
    \\
    &-
    2\gamma \langle  A (x^{k+1/2} - x) ,  y^{k+1/2} +\xi^{k+1/2}\rangle
    -
    2\gamma \sum\limits_{i=1}^n \langle  \nabla r_i(x^{k+1/2}_i),  x^{k+1/2}_i - x_i\rangle
    \\
    &-
    2\gamma \langle  A (x^{k+1} - x^{k+1/2}) ,  y^{k+1/2} +\xi^{k+1/2} - y^k -\xi^{k}\rangle
    \\
    &-
    2\gamma \sum\limits_{i=1}^n \langle  \nabla r_i(x^{k+1/2}_i) - \nabla r_i (x^k_i),  x^{k+1}_i - x^{k+1/2}_i\rangle
    \\
    =&
    \|x^{k} - x \|^2 - \|  x^{k+1/2} - x^k\|^2 - \|x^{k+1/2} - x^{k+1} \|^2
    \\
    &-
    2\gamma \langle  A (x^{k+1/2} - x) ,  y^{k+1/2} +\xi^{k+1/2}\rangle
    -
    2\gamma \sum\limits_{i=1}^n \langle  \nabla r_i(x^{k+1/2}_i),  x^{k+1/2}_i - x_i\rangle
    \\
    &-
    2\gamma \langle  A^T (y^{k+1/2} +\xi^{k+1/2} - y^k - \xi^{k}) , x^{k+1} - x^{k+1/2} \rangle
    \\
    &-
    2\gamma \sum\limits_{i=1}^n \langle  \nabla r_i(x^{k+1/2}_i) - \nabla r_i (x^k_i),  x^{k+1}_i - x^{k+1/2}_i\rangle.
\end{align*}
By simple fact: $2 \langle a, b \rangle \leq \eta \| a \|^2 + \tfrac{1}{\eta}\| b \|^2$ with $a = A^T ( y^{k+1/2} +\xi^{k+1/2} - y^{k} - \xi^{k})$, $b = x^{k+1/2} - x^{k+1}$, $\eta = 2\gamma$ and $a = \nabla r_i(x^{k+1/2}_i) - \nabla r_i (x^k_i)$, $b = x^{k+1/2}_i - x^{k+1}_i$, $\eta = 2\gamma$, we get
\begin{align}
    \label{eq:noise_proof_temp1}
    \|x^{k+1} - x \|^2
    \leq&
    \|x^{k} - x \|^2 - \|  x^{k+1/2} - x^k\|^2 - \|x^{k+1/2} - x^{k+1} \|^2
    \notag\\
    &-
    2\gamma \langle  A (x^{k+1/2} - x) ,  y^{k+1/2} +\xi^{k+1/2}\rangle
    -
    2\gamma \sum\limits_{i=1}^n \langle  \nabla r_i(x^{k+1/2}_i),  x^{k+1/2}_i - x_i\rangle
    \notag\\
    &+
    2\gamma^2 \|A^T (y^{k+1/2} - y^k) + A^T (\xi^{k+1/2} - \xi^{k})\|^2 + \frac{1}{2}\|x^{k+1} - x^{k+1/2} \|^2
    \notag\\
    &+
    2\gamma^2 \sum\limits_{i=1}^n \| \nabla r_i(x^{k+1/2}_i) - \nabla r_i (x^k_i)\|^2 + \frac{1}{2}  \sum\limits_{i=1}^n \| x^{k+1}_i - x^{k+1/2}_i\|^2
    \notag\\
    =&
    \|x^{k} - x \|^2 - \|  x^{k+1/2} - x^k\|^2 
    \notag\\
    &-
    2\gamma \langle  A (x^{k+1/2} - x) ,  y^{k+1/2} +\xi^{k+1/2}\rangle
    -
    2\gamma \sum\limits_{i=1}^n \langle  \nabla r_i(x^{k+1/2}_i),  x^{k+1/2}_i - x_i\rangle
    \notag\\
    &+
    2\gamma^2 \|A^T (y^{k+1/2} - y^k) + A^T (\xi^{k+1/2} - \xi^{k})\|^2
    \notag\\
    &
    +
    2\gamma^2 \sum\limits_{i=1}^n \| \nabla r_i(x^{k+1/2}_i) - \nabla r_i (x^k_i)\|^2
    \notag\\
    \leq&
    \|x^{k} - x \|^2 - \|  x^{k+1/2} - x^k\|^2 
    \notag\\
    &-
    2\gamma \langle  A (x^{k+1/2} - x) ,  y^{k+1/2} +\xi^{k+1/2}\rangle
    -
    2\gamma \sum\limits_{i=1}^n \langle  \nabla r_i(x^{k+1/2}_i),  x^{k+1/2}_i - x_i\rangle
    \notag\\
    &+
    4\gamma^2 \|A^T (y^{k+1/2} - y^k)\|^2
    +
    4\gamma^2 \|A^T (\xi^{k+1/2} - \xi^{k})\|^2
    \notag\\
    &
    +
    2\gamma^2 \sum\limits_{i=1}^n \| \nabla r_i(x^{k+1/2}_i) - \nabla r_i (x^k_i)\|^2.
\end{align}
Here we also take into account the simple fact (Cauchy Schwartz inequality): $\| a + b \|^2 \leq 2\| a \|^2 + 2\| b \|^2$. Using the same steps, one can obtain for $z \in \R^s$, $(\nabla \ell(z^k, b) - y^k)$
\begin{align}
    \label{eq:noise_proof_temp2}
    \|z^{k+1} - z \|^2
    \leq&
    \|z^{k} - z \|^2 - \|  z^{k+1/2} - z^k\|^2 
    \notag\\
    &+
    2\gamma \langle  y^{k+1/2} ,  z^{k+1/2} - z\rangle
    -
    2\gamma \langle  \nabla \ell(z^{k+1/2}, b),  z^{k+1/2} - z\rangle
    \notag\\
    &+
    2\gamma^2 \|y^{k+1/2} - y^k\|^2 +
    2\gamma^2 \| \nabla \ell(z^{k+1/2}, b) -\nabla \ell(z^k, b)\|^2.
\end{align}
and for all $y \in \R^s$,
\begin{align}
    \label{eq:noise_proof_temp3}
    \|y^{k+1} - y \|^2
    \leq&
    \|y^{k} - y \|^2 - \|  y^{k+1/2} - y^k\|^2 
    \notag\\
    &-
    2\gamma \langle  z^{k+1/2} ,  y^{k+1/2} - y\rangle
    +
    2\gamma \langle  \sum_{i=1}^n A_i x^{k+1/2}_i + \sum_{i=1}^n \xi^{k+1/2}_i,  y^{k+1/2} - y\rangle
    \notag\\
    &+
    2\gamma^2 \|z^{k+1/2} - z^k\|^2 +
    2\gamma^2 \left\| \sum_{i=1}^n A_i (x^{k+1/2}_i - x^k_i) + \sum_{i=1}^n (\xi^{k+1/2}_i - \xi^{k}_i)\right\|^2
    \notag\\
    \leq&
    \|y^{k} - y \|^2 - \|  y^{k+1/2} - y^k\|^2 
    \notag\\
    &-
    2\gamma \langle  z^{k+1/2} ,  y^{k+1/2} - y\rangle
    +
    2\gamma \langle  A x^{k+1/2} + \sum_{i=1}^n \xi^{k+1/2}_i,  y^{k+1/2} - y\rangle
    \notag\\
    &+
    2\gamma^2 \|z^{k+1/2} - z^k\|^2 +
    4\gamma^2 \| A (x^{k+1/2} - x^k) \|^2 
    \notag\\
    &
    + 4\gamma^2 \left\|\sum_{i=1}^n (\xi^{k+1/2}_i - \xi^{k}_i)\right\|^2.
\end{align}
Here we also use notation of $A$, $x$ and Cauchy Schwartz inequality. 
Summing up (\ref{eq:noise_proof_temp1}), (\ref{eq:noise_proof_temp2}) and (\ref{eq:noise_proof_temp3}), we obtain
\begin{align*}
    \|x^{k+1} - x \|^2 &+ \|z^{k+1} - z \|^2 + \|y^{k+1} - y \|^2
    \\
    \leq&
    \|x^{k} - x \|^2 + \|z^{k} - z \|^2 + \|y^{k} - y \|^2
    \notag\\
    &
    - \|  x^{k+1/2} - x^k\|^2 - \|  z^{k+1/2} - z^k\|^2 - \|  y^{k+1/2} - y^k\|^2 
    \notag\\
    &-
    2\gamma \langle  A (x^{k+1/2} - x) ,  y^{k+1/2} +\xi^{k+1/2}\rangle
    +
    2\gamma \langle  y^{k+1/2} ,  z^{k+1/2} - z\rangle
    \notag\\
    &-
    2\gamma \langle  z^{k+1/2} ,  y^{k+1/2} - y\rangle
    +
    2\gamma \langle  A x^{k+1/2} +\sum_{i=1}^n \xi^{k+1/2}_i,  y^{k+1/2} - y\rangle
    \notag\\
    &-
    2\gamma \sum\limits_{i=1}^n \langle  \nabla r_i(x^{k+1/2}_i),  x^{k+1/2}_i - x_i\rangle
    -
    2\gamma \langle  \nabla \ell(z^{k+1/2}, b),  z^{k+1/2} - z\rangle
    \notag\\
    &+
    4\gamma^2 \|A^T (y^{k+1/2} - y^k)\|^2
    +
    4\gamma^2 \|A^T (\xi^{k+1/2} - \xi^{k})\|^2
    \notag\\
    &+
    2\gamma^2 \sum\limits_{i=1}^n \| \nabla r_i(x^{k+1/2}_i) - \nabla r_i (x^k_i)\|^2
    +
    2\gamma^2 \|y^{k+1/2} - y^k\|^2
    \notag\\
    &+
    2\gamma^2 \| \nabla \ell(z^{k+1/2}, b) -\nabla \ell(z^k, b)\|^2
    +
    2\gamma^2 \|z^{k+1/2} - z^k\|^2 
    \notag\\
    &+
    4\gamma^2 \| A (x^{k+1/2} - x^k) \|^2 + 4\gamma^2 \left\|\sum_{i=1}^n (\xi^{k+1/2}_i - \xi^{k}_i)\right\|^2.
\end{align*}
Using convexity and $L_r$-smoothness of the function $r_i$ with convexity and $L_{\ell}$-smoothness of the function $\ell$, we have
\begin{align*}
    \|x^{k+1} - x \|^2 &+ \|z^{k+1} - z \|^2 + \|y^{k+1} - y \|^2
    \\
    \leq&
    \|x^{k} - x \|^2 + \|z^{k} - z \|^2 + \|y^{k} - y \|^2
    \notag\\
    &
    - \|  x^{k+1/2} - x^k\|^2 - \|  z^{k+1/2} - z^k\|^2 - \|  y^{k+1/2} - y^k\|^2 
    \notag\\
    &-
    2\gamma \langle  A (x^{k+1/2} - x) ,  y^{k+1/2} +\xi^{k+1/2}\rangle
    +
    2\gamma \langle  y^{k+1/2} ,  z^{k+1/2} - z\rangle
    \notag\\
    &-
    2\gamma \langle  z^{k+1/2} ,  y^{k+1/2} - y\rangle
    +
    2\gamma \langle  A x^{k+1/2} +\sum_{i=1}^n \xi^{k+1/2}_i,  y^{k+1/2} - y\rangle
    \notag\\
    &-
    2\gamma \sum\limits_{i=1}^n \langle  \nabla r_i(x^{k+1/2}_i),  x^{k+1/2}_i - x_i\rangle
    -
    2\gamma \langle  \nabla \ell(z^{k+1/2}, b),  z^{k+1/2} - z\rangle
    \notag\\
    &+
    4\gamma^2 \|A^T (y^{k+1/2} - y^k)\|^2
    +
    4\gamma^2 \|A^T (\xi^{k+1/2} - \xi^{k})\|^2
    \notag\\
    &+
    2\gamma^2 L_r^2 \| x^{k+1/2} - x^k\|^2
    +
    2\gamma^2 \|y^{k+1/2} - y^k\|^2
    \notag\\
    &+
    2\gamma^2 L_{\ell}^2 \| z^{k+1/2} - z^k \|^2
    +
    2\gamma^2 \|z^{k+1/2} - z^k\|^2 
    \notag\\
    &+
    4\gamma^2 \| A (x^{k+1/2} - x^k) \|^2 + 4\gamma^2 \left\|\sum_{i=1}^n (\xi^{k+1/2}_i - \xi^{k}_i)\right\|^2.
\end{align*}
Using the definition of $\lambda_{\max}(\cdot)$ as a maximum eigenvalue, we get
\begin{align*}
    \|x^{k+1} - x \|^2 + \|z^{k+1} - z \|^2 + \|y^{k+1} & - y \|^2
    \\
    \leq&
    \|x^{k} - x \|^2 + \|z^{k} - z \|^2 + \|y^{k} - y \|^2
    \notag\\
    &
    - \|  x^{k+1/2} - x^k\|^2 - \|  z^{k+1/2} - z^k\|^2 - \|  y^{k+1/2} - y^k\|^2 
    \notag\\
    &-
    2\gamma \langle  A (x^{k+1/2} - x) ,  y^{k+1/2} +\xi^{k+1/2}\rangle
    +
    2\gamma \langle  y^{k+1/2} ,  z^{k+1/2} - z\rangle
    \notag\\
    &-
    2\gamma \langle  z^{k+1/2} ,  y^{k+1/2} - y\rangle
    +
    2\gamma \langle  A x^{k+1/2} +\sum_{i=1}^n \xi^{k+1/2}_i,  y^{k+1/2} - y\rangle
    \notag\\
    &-
    2\gamma \sum\limits_{i=1}^n \langle  \nabla r_i(x^{k+1/2}_i),  x^{k+1/2}_i - x_i\rangle
    -
    2\gamma \langle  \nabla \ell(z^{k+1/2}, b),  z^{k+1/2} - z\rangle
    \notag\\
    &+
    4\gamma^2 \lambda_{\max}(A A^T) \|y^{k+1/2} - y^k\|^2
    +
    4\gamma^2 \lambda_{\max}(A A^T) \|\xi^{k+1/2} - \xi^{k}\|^2
    \notag\\
    &+
    2\gamma^2 L_r^2 \| x^{k+1/2} - x^k\|^2
    +
    2\gamma^2 \|y^{k+1/2} - y^k\|^2
    \notag\\
    &+
    2\gamma^2 L_{\ell}^2 \| z^{k+1/2} - z^k \|^2
    +
    2\gamma^2 \|z^{k+1/2} - z^k\|^2 
    \notag\\
    &+
    4\gamma^2 \lambda_{\max}(A^T A)\| x^{k+1/2} - x^k \|^2 + 4\gamma^2 \left\|\sum_{i=1}^n (\xi^{k+1/2}_i - \xi^{k}_i)\right\|^2.
\end{align*}
With the choice of $\gamma \leq \tfrac{1}{2} \cdot \min\left\{ 1; \tfrac{1}{\sqrt{2\lambda_{\max}(A^T A)}}; \tfrac{1}{L_r}; \tfrac{1}{L_{\ell}}\right\}$, we get
\begin{align*}
    \|x^{k+1} - x \|^2 + \|z^{k+1} - z \|^2 + \|y^{k+1} &- y \|^2
    \\
    \leq&
    \|x^{k} - x \|^2 + \|z^{k} - z \|^2 + \|y^{k} - y \|^2
    \notag\\
    &-
    2\gamma \langle  A (x^{k+1/2} - x) ,  y^{k+1/2} +\xi^{k+1/2}\rangle
    +
    2\gamma \langle  y^{k+1/2} ,  z^{k+1/2} - z\rangle
    \notag\\
    &-
    2\gamma \langle  z^{k+1/2} ,  y^{k+1/2} - y\rangle
    +
    2\gamma \langle  A x^{k+1/2} +\sum_{i=1}^n \xi^{k+1/2}_i,  y^{k+1/2} - y\rangle
    \notag\\
    &-
    2\gamma \sum\limits_{i=1}^n \left(r_i(x^{k+1/2}_i) - r_i(x_i) \right)
    -
    2\gamma \left(  l(z^{k+1/2}, b)  - l(z, b)\right)
    \\
    &+
    4\gamma^2 \lambda_{\max}(A A^T) \|\xi^{k+1/2} - \xi^{k}\|^2
    + 4\gamma^2 \left\|\sum_{i=1}^n (\xi^{k+1/2}_i - \xi^{k}_i)\right\|^2.
\end{align*}
After small rearrangements, we obtain
\begin{align*}
    &\left( \ell (z^{k+1/2}, b) - \ell (z, b)\right) + \sum_{i=1}^n \left( r_i (x^{k+1/2}_i) - r_i (x_i)\right) 
    \\
    &+
    \langle A x^{k+1/2} - z^{k+1/2}, y\rangle
    - \langle  A x - z, y^{k+1/2} \rangle
    \\
    &\hspace{5cm}\leq
    \frac{1}{2\gamma}\Big(\| x^k - x\|^2 + \| z^k - z\|^2 + \| y^k - y\|^2 
    \notag\\
    &\hspace{5.4cm} -\| x^{k+1} - x\|^2 - \| z^{k+1} - z \|^2 - \| y^{k+1} -  y \|^2\Big)
    \\
    &\hspace{5.4cm}
    -
    \langle  A (x^{k+1/2} - x) ,  \xi^{k+1/2}\rangle
    +
    \langle \sum_{i=1}^n \xi^{k+1/2}_i,  y^{k+1/2} - y\rangle
    \notag\\
    &\hspace{5.4cm}
    +
    2\gamma \lambda_{\max}(A A^T) \|\xi^{k+1/2} - \xi^{k}\|^2
    + 2\gamma \left\|\sum_{i=1}^n (\xi^{k+1/2}_i - \xi^{k}_i)\right\|^2
    \\
    &\hspace{5cm}\leq
    \frac{1}{2\gamma}\Big(\| x^k - x\|^2 + \| z^k - z\|^2 + \| y^k - y\|^2 
    \notag\\
    &\hspace{5.4cm} -\| x^{k+1} - x\|^2 - \| z^{k+1} - z \|^2 - \| y^{k+1} -  y \|^2\Big)
    \\
    &\hspace{5.4cm}
    -
    \langle  A (x^{k+1/2} - x^0) ,  \xi^{k+1/2}\rangle
    +
    \langle \sum_{i=1}^n \xi^{k+1/2}_i,  y^{k+1/2} - y^0\rangle
    \\
    &\hspace{5.4cm}
    -
    \langle  A (x^{0} - x) ,  \xi^{k+1/2}\rangle
    +
    \langle \sum_{i=1}^n \xi^{k+1/2}_i,  y^{0} - y\rangle
    \notag\\
    &\hspace{5.4cm}
    +
    2\gamma \lambda_{\max}(A A^T) \|\xi^{k+1/2} - \xi^{k}\|^2
    + 2\gamma \left\|\sum_{i=1}^n (\xi^{k+1/2}_i - \xi^{k}_i)\right\|^2.
\end{align*}
Then we sum all over $k$ from $0$ to $K-1$, divide by $K$, and have
\begin{align*}
    &\frac{1}{K} \sum\limits_{k=0}^{K-1}\left( \ell (z^{k+1/2}, b) - \ell (z, b)\right) + \sum_{i=1}^n \frac{1}{K} \sum\limits_{k=0}^{K-1} \left( r_i (x^{k+1/2}_i) - r_i (x_i)\right) 
    \\
    &+
    \langle A x^{k+1/2} - z^{k+1/2}, y\rangle
    - \langle  A x - z, y^{k+1/2} \rangle
    \\
    &\hspace{5cm}\leq
    \frac{1}{2\gamma K}\Big(\| x^0 - x\|^2 + \| z^0 - z\|^2 + \| y^0 - y\|^2 
    \notag\\
    &\hspace{5.4cm} -\| x^{K} - x\|^2 - \| z^{K} - z \|^2 - \| y^{K} -  y \|^2\Big)
    \\
    &\hspace{5.4cm}
    -
    \frac{1}{K} \sum\limits_{k=0}^{K-1}\langle  A (x^{k+1/2} - x^0) ,  \xi^{k+1/2}\rangle
    +
    \frac{1}{K} \sum\limits_{k=0}^{K-1} \langle \sum_{i=1}^n \xi^{k+1/2}_i,  y^{k+1/2} - y^0\rangle
    \\
    &\hspace{5.4cm}
    -
    \langle  A (x^{0} - x) ,  \frac{1}{K} \sum\limits_{k=0}^{K-1} \xi^{k+1/2}\rangle
    +
    \langle \frac{1}{K} \sum\limits_{k=0}^{K-1} \sum_{i=1}^n \xi^{k+1/2}_i,  y^{0} - y\rangle
    \notag\\
    &\hspace{5.4cm}
    +
    2\gamma \lambda_{\max}(A A^T) \cdot \frac{1}{K} \sum\limits_{k=0}^{K-1} \|\xi^{k+1/2} - \xi^{k}\|^2
    \\
    &\hspace{5.4cm}
    + 2\gamma \cdot \frac{1}{K} \sum\limits_{k=0}^{K-1} \left\|\sum_{i=1}^n (\xi^{k+1/2}_i - \xi^{k}_i)\right\|^2.
\end{align*}
With Jensen inequality for convex functions $\ell$ and $r_i$, one can note that
\begin{align*}
 \ell \left( \frac{1}{K} \sum\limits_{k=0}^{K-1} z^{k+1/2}, b \right) \leq \frac{1}{K} \sum\limits_{k=0}^{K-1} \ell (z^{k+1/2}, b), \\
 r_i \left( \frac{1}{K} \sum\limits_{k=0}^{K-1} x^{k+1/2}_i \right) \leq \frac{1}{K} \sum\limits_{k=0}^{K-1} r_i (x^{k+1/2}_i).
\end{align*}
Then, with notation $\bar x^K_i = \frac{1}{K} \sum\limits_{k=0}^{K-1} x^{k+1/2}_i$, $\bar z^K = \frac{1}{K} \sum\limits_{k=0}^{K-1} z^{k+1/2}$, $\bar y^K = \frac{1}{K} \sum\limits_{k=0}^{K-1} y^{k+1/2}$, we have
\begin{align*}
    &\ell (\bar z^K, b) - \ell (z, b) + \sum_{i=1}^n \left( r_i (\bar x^{K}_i) - r_i (x_i)\right) 
    +
    \langle A \bar x^{K} - \bar z^{K}, y\rangle
    - \langle  A x - z, \bar y^{K} \rangle
    \\
    &\hspace{3cm}\leq
    \frac{1}{2\gamma K}(\| x^0 - x\|^2 + \| z^0 - z\|^2 + \| y^0 - y\|^2)
    \\
    &\hspace{3.4cm}
    -
    \frac{1}{K} \sum\limits_{k=0}^{K-1}\langle  A (x^{k+1/2} - x^0) ,  \xi^{k+1/2}\rangle
    +
    \frac{1}{K} \sum\limits_{k=0}^{K-1} \langle \sum_{i=1}^n \xi^{k+1/2}_i,  y^{k+1/2} - y^0\rangle
    \\
    &\hspace{3.4cm}
    - \frac{1}{K} \sum\limits_{k=0}^{K-1}
    \langle  A (x^{0} - x) ,   \xi^{k+1/2}\rangle
    +\frac{1}{K} \sum\limits_{k=0}^{K-1}
    \langle  \sum_{i=1}^n \xi^{k+1/2}_i,  y^{0} - y\rangle
    \notag\\
    &\hspace{3.4cm}
    +
    2\gamma \lambda_{\max}(A A^T) \cdot \frac{1}{K} \sum\limits_{k=0}^{K-1} \|\xi^{k+1/2} - \xi^{k}\|^2
    \\
    &\hspace{3.4cm}
    + 2\gamma \cdot \frac{1}{K} \sum\limits_{k=0}^{K-1} \left\|\sum_{i=1}^n (\xi^{k+1/2}_i - \xi^{k}_i)\right\|^2 
    \\
    &\hspace{3cm}=
    \frac{1}{2\gamma K}(\| x^0 - x\|^2 + \| z^0 - z\|^2 + \| y^0 - y\|^2)
    \\
    &\hspace{3.4cm}
    -
    \frac{1}{K} \sum\limits_{k=0}^{K-1}\langle  A (x^{k+1/2} - x^0) ,  \xi^{k+1/2}\rangle
    +
    \frac{1}{K} \sum\limits_{k=0}^{K-1} \langle \sum_{i=1}^n \xi^{k+1/2}_i,  y^{k+1/2} - y^0\rangle
    \\
    &\hspace{3.4cm}
    - \frac{1}{K} 
    \langle  x^{0} - x ,  \sum\limits_{k=0}^{K-1} A^T \xi^{k+1/2}\rangle
    +\frac{1}{K} 
    \langle  \sum\limits_{k=0}^{K-1} \sum_{i=1}^n \xi^{k+1/2}_i,  y^{0} - y\rangle
    \notag\\
    &\hspace{3.4cm}
    +
    2\gamma \lambda_{\max}(A A^T) \cdot \frac{1}{K} \sum\limits_{k=0}^{K-1} \|\xi^{k+1/2} - \xi^{k}\|^2
    \\
    &\hspace{3.4cm}
    + 2\gamma \cdot \frac{1}{K} \sum\limits_{k=0}^{K-1} \left\|\sum_{i=1}^n (\xi^{k+1/2}_i - \xi^{k}_i)\right\|^2.
\end{align*}
With Cauchy Schwartz inequality $2 \langle a, b \rangle \leq \eta \| a \|^2 + \tfrac{1}{\eta}\| b \|^2$ with $a =  \sum_{k=0}^{K-1} A^T \xi^{k+1/2}$, $b = x - x^{0}$, $\eta = 2\gamma$ and $a = \sum\limits_{k=0}^{K-1} \sum_{i=1}^n \xi^{k+1/2}_i$, $b = y^{0} - y$, $\eta = 2\gamma$, we get
\begin{align*}
    &\ell (\bar z^K, b) - \ell (z, b) + \sum_{i=1}^n \left( r_i (\bar x^{K}_i) - r_i (x_i)\right) 
    +
    \langle A \bar x^{K} - \bar z^{K}, y\rangle
    - \langle  A x - z, \bar y^{K} \rangle
    \\
    &\hspace{7cm}\leq
    \frac{1}{2\gamma K}(\| x^0 - x\|^2 + \| z^0 - z\|^2 + \| y^0 - y\|^2)
    \\
    &\hspace{7.4cm}
    -
    \frac{1}{K} \sum\limits_{k=0}^{K-1}\langle  A (x^{k+1/2} - x^0) ,  \xi^{k+1/2}\rangle
    +
    \frac{1}{K} \sum\limits_{k=0}^{K-1} \langle \sum_{i=1}^n \xi^{k+1/2}_i,  y^{k+1/2} - y^0\rangle
    \\
    &\hspace{7.4cm}
    + \frac{2 \gamma}{ K} 
    \left\|\sum\limits_{k=0}^{K-1} A^T \xi^{k+1/2} \right\|^2 + \frac{1}{2 \gamma K} \| x^{0} - x \|^2
    \notag\\
    &\hspace{7.4cm}
    +\frac{2 \gamma}{K} 
     \left\|\sum\limits_{k=0}^{K-1} \sum_{i=1}^n \xi^{k+1/2}_i \right\|^2  + \frac{1}{2\gamma K} \| y^{0} - y\|^2
    \notag\\
    &\hspace{7.4cm}
    +
    2\gamma \lambda_{\max}(A A^T) \cdot \frac{1}{K} \sum\limits_{k=0}^{K-1} \|\xi^{k+1/2} - \xi^{k}\|^2
    \\
    &\hspace{7.4cm}
    + 2\gamma \cdot \frac{1}{K} \sum\limits_{k=0}^{K-1} \left\|\sum_{i=1}^n (\xi^{k+1/2}_i - \xi^{k}_i)\right\|^2.
\end{align*}
As in (\ref{eq:basic_crit1}) we pass to the gap criterion by taking the maximum in $y \in \cY$ and the minimum in $x\in \cX$ and $z \in \cZ$. Additionally, we also take the mathematical expectation
\begin{align*}
    \E{\text{gap}(\bar x^K, \bar z^K, \bar y^K)}
    &\leq
    \frac{1}{\gamma K}(\max_{x \in \cX}\| x^0 - x\|^2 + \max_{z \in \cZ}\| z^0 - z\|^2 + \max_{y \in \cY}\| y^0 - y\|^2)
    \\
    &\hspace{.4cm}
    -
    \frac{1}{K} \sum\limits_{k=0}^{K-1} \E{\langle  A (x^{k+1/2} - x^0) ,  \xi^{k+1/2}\rangle}
    +
    \frac{1}{K} \sum\limits_{k=0}^{K-1} \E{\langle \sum_{i=1}^n \xi^{k+1/2}_i,  y^{k+1/2} - y^0\rangle}
    \\
    &\hspace{.4cm}
    + \frac{2 \gamma}{ K} 
   \E{\left\|\sum\limits_{k=0}^{K-1} A^T \xi^{k+1/2} \right\|^2}
    +\frac{2 \gamma}{K} 
     \E{\left\|\sum\limits_{k=0}^{K-1} \sum_{i=1}^n \xi^{k+1/2}_i \right\|^2}
    \notag\\
    &\hspace{.4cm}
    +
    2\gamma \lambda_{\max}(A A^T) \cdot \frac{1}{K} \sum\limits_{k=0}^{K-1} \E{\|\xi^{k+1/2} - \xi^{k}\|^2}
    \\
    &\hspace{.4cm}
    + 2\gamma \cdot \frac{1}{K} \sum\limits_{k=0}^{K-1} \E{\left\|\sum_{i=1}^n (\xi^{k+1/2}_i - \xi^{k}_i)\right\|^2}.
\end{align*}
Using the fact that all $\xi$ and $\xi_i$ are independent and have $0$ in expectation, we have
\begin{align*}
    \E{\text{gap}(\bar x^K, \bar z^K, \bar y^K)}
    \leq&
    \frac{1}{\gamma K}(\max_{x \in \cX}\| x^0 - x\|^2 + \max_{z \in \cZ}\| z^0 - z\|^2 + \max_{y \in \cY}\| y^0 - y\|^2)
    \\
    &
    -
    \frac{1}{K} \sum\limits_{k=0}^{K-1} \E{\langle  A (x^{k+1/2} - x^0) ,  \mathbb{E}_{\xi^{k+1/2}}[\xi^{k+1/2}]\rangle}
    \\
    &
    +
    \frac{1}{K} \sum\limits_{k=0}^{K-1} \E{\langle \sum_{i=1}^n \mathbb{E}_{\xi^{k+1/2}_i}[\xi^{k+1/2}_i],  y^{k+1/2} - y^0\rangle}
    \\
    &
    + \frac{2 \gamma}{ K} \sum\limits_{k=0}^{K-1}
   \E{\|A^T \xi^{k+1/2}\|^2}
    +\frac{2 \gamma}{K} 
     \sum\limits_{k=0}^{K-1} \sum_{i=1}^n \E{\| \xi^{k+1/2}_i\|^2}
    \notag\\
    &
    +
    2\gamma \lambda_{\max}(A A^T) \cdot \frac{1}{K} \sum\limits_{k=0}^{K-1} \E{\|\xi^{k+1/2} \|^2} + \E{ \|\xi^{k}\|^2}
    \\
    &
    + 2\gamma \cdot \frac{1}{K} \sum\limits_{k=0}^{K-1} \sum_{i=1}^n \left[\E{\|\xi^{k+1/2}_i \|^2} + \E{ \|\xi^{k}_i\|^2} \right]
    \\
    \leq&
    \frac{1}{\gamma K}(\max_{x \in \cX}\| x^0 - x\|^2 + \max_{z \in \cZ}\| z^0 - z\|^2 + \max_{y \in \cY}\| y^0 - y\|^2)
    \\
    &
    +
    4\gamma \lambda_{\max}(A A^T) \cdot \frac{1}{K} \sum\limits_{k=0}^{K-1} \E{\|\xi^{k+1/2} \|^2} + \E{ \|\xi^{k}\|^2}
    \\
    &
    + 4\gamma \cdot \frac{1}{K} \sum\limits_{k=0}^{K-1} \sum_{i=1}^n \left[\E{\|\xi^{k+1/2}_i \|^2} + \E{ \|\xi^{k}_i\|^2}\right].
\end{align*}
With the bounded variance assumption of $\xi$ and $\xi_i$, we have
\begin{align*}
    \E{\text{gap}(\bar x^K, \bar z^K, \bar y^K)}
    \leq&
    \frac{1}{\gamma K} \left(\max_{x \in \cX}\| x^0 - x\|^2 + \max_{z \in \cZ}\| z^0 - z\|^2 + \max_{y \in \cY}\| y^0 - y\|^2 \right)
    \\
    &
    +
    8\gamma (\lambda_{\max}(A A^T) + n)\sigma^2.
\end{align*}
It remains to take 
$$\gamma = \min\left\{ \tfrac{1}{2}; \tfrac{1}{\sqrt{8\lambda_{\max}(A^T A)}}; \tfrac{1}{2L_r}; \tfrac{1}{2L_{\ell}}; \sqrt{\tfrac{\max_{x \in \cX}\| x^0 - x\|^2 + \max_{z \in \cZ}\| z^0 - z\|^2 + \max_{y \in \cY}\| y^0 - y\|^2}{8(\lambda_{\max}(A A^T) + n)\sigma^2 K}} \right\}$$
and get
\begin{align*}
    &\E{\text{gap}(\bar x^K, \bar z^K, \bar y^K)}
    \\
    &=
    \cO\Bigg(\frac{1 + L_r + L_{\ell} + \sqrt{\lambda_{\max}(A^T A)}}{ K} \left(\max_{x \in \cX}\| x^0 - x\|^2 + \max_{z \in \cZ}\| z^0 - z\|^2 + \max_{y \in \cY}\| y^0 - y\|^2 \right)
    \\
    &
    +
    \sqrt{\frac{8(\lambda_{\max}(A A^T) + n)\sigma^2\left(\max_{x \in \cX}\| x^0 - x\|^2 + \max_{z \in \cZ}\| z^0 - z\|^2 + \max_{y \in \cY}\| y^0 - y\|^2\right)}{K}} \Bigg).
\end{align*}
\end{proof}

\subsection{Proof of Theorem \ref{th:EG_aug}}
\label{app:th:EG_aug}

{
\begin{theorem}[Theorem \ref{th:EG_aug}]
Let Assumption \ref{as:convexity_smothness} hold. Let problem (\ref{eq:vfl_lin_spp_aug}) be solved by Algorithm \ref{alg:EG_aug}. Then for 
$$
\gamma = \frac{1}{4} \cdot \min\left\{ 1; \frac{1}{\rho}; \frac{1}{\sqrt{\lambda_{\max}(A^T A)}}; \frac{1}{\sqrt{\rho\lambda_{\max}(A^T A)}}; \frac{1}{\rho\lambda_{\max}(A^T A)}; \frac{1}{L_r}; \frac{1}{L_{\ell}}\right\},
$$
it holds that
\begin{align*}
    \text{gap}_{\text{aug}}(\bar x^{K}, \bar z^K, \bar y^K)
    =
    \mathcal{O} \left(\frac{ \left( 1 + \rho + \sqrt{(1 + \rho)\lambda_{\max}(A^T A)} + \rho \lambda_{\max}(A^T A) + L_{\ell} + L_r\right) D^2}{K} \right),
\end{align*}
where $\text{gap}_{\text{aug}}(x,z,y) := \max_{\tilde y \in \cY} L_{\text{aug}}(x, z, \tilde y) - \min_{\tilde x, \tilde z \in \cX, \cZ} L_{\text{aug}}(\tilde x, \tilde z, y)$ and \\
$\bar x^K := \tfrac{1}{K}\sum_{k=0}^{K-1} x^{k+1/2}$, $\bar z^K := \tfrac{1}{K}\sum_{k=0}^{K-1} z^{k+1/2}$, $\bar y^K := \tfrac{1}{K}\sum_{k=0}^{K-1} y^{k+1/2}$ and $D^2 := \max_{x, z, y \in \cX, \cZ, \cY} [ \|x^0 - x \|^2 +  \|z^0 - z \|^2 + \|y^0 - y \|^2 ]$.
\end{theorem}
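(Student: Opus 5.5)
I would prove Theorem~\ref{th:EG_aug} by repeating the argument of Theorem~\ref{th:EG_basic_1}, treating the augmented term as part of the smooth convex component. The key observation is that $L_{\text{aug}}(x,z,y) = \ell(z,b) + \sum_i r_i(x_i) + \tfrac{\rho}{2}\|Ax - z\|^2 + y^T(Ax - z)$. The function $h(x,z) := \tfrac{\rho}{2}\|Ax-z\|^2$ is convex in $(x,z)$, and its gradient $(\rho A^T(Ax-z), \rho(z - Ax))$ is Lipschitz with constant $\rho\,\lambda_{\max}\big([A,-I]^T[A,-I]\big) \le \rho(1+\lambda_{\max}(A^T A))$. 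So the problem remains convex--concave. The operator $F(x,z,y) = (\nabla_x L_{\text{aug}}, \nabla_z L_{\text{aug}}, -\nabla_y L_{\text{aug}})$ is monotone and Lipschitz with constant of order $1 + \sqrt{\lambda_{\max}(A^TA)} + \rho + \rho\lambda_{\max}(A^TA) + L_\ell + L_r$. Algorithm~\ref{alg:EG_aug} is then \texttt{ExtraGradient} applied to $F$, and the standard Mirror-Prox estimate gives the $\mathcal{O}(\cdot\, D^2/K)$ rate.

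Concretely, I would first write, exactly as in (\ref{eq:basic_proof_temp0})--(\ref{eq:basic_proof_temp3}), the three-point identities for $x$, $z$ and $y$. These produce the inner products of the half-step operator with $(x^{k+1/2}-x, z^{k+1/2}-z, y^{k+1/2}-y)$, plus cross terms $\langle F(w^{k+1/2}) - F(w^k), w^{k+1/2}-w^{k+1}\rangle$. Each cross term is split by Young's inequality into $2\gamma^2\|F(w^{k+1/2})-F(w^k)\|^2 + \tfrac12\|w^{k+1/2}-w^{k+1}\|^2$.

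The new pieces to bound are $\rho A^T(A\Delta x - \Delta z)$ in the $x$-block and $\rho(\Delta z - A\Delta x)$ in the $z$-block. Their squared norms are at most $2\rho^2\lambda_{\max}(A^TA)^2\|\Delta x\|^2 + 2\rho^2\lambda_{\max}(A^TA)\|\Delta z\|^2$ and $2\rho^2\|\Delta z\|^2 + 2\rho^2\lambda_{\max}(A^TA)\|\Delta x\|^2$, respectively. The mixed products $\rho^2\lambda_{\max}$ are dominated using $\rho\sqrt{\lambda_{\max}} \le \tfrac12(\rho + \rho\lambda_{\max})$, which is how the $\sqrt{\rho\lambda_{\max}(A^TA)}$ entry enters the stepsize. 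With $\gamma$ as stated, all $\gamma^2$-terms are absorbed by $-\|w^{k+1/2}-w^k\|^2$.

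Next I would use convexity. The $\nabla r_i$ and $\nabla\ell$ inner products turn into function differences, as before. The $\nabla h$ part gives $\langle \nabla h(x^{k+1/2},z^{k+1/2}), (x^{k+1/2}-x, z^{k+1/2}-z)\rangle \ge h(x^{k+1/2},z^{k+1/2}) - h(x,z)$. The bilinear terms cancel exactly as in Theorem~\ref{th:EG_basic_1} to $\langle Ax^{k+1/2}-z^{k+1/2}, y\rangle - \langle Ax - z, y^{k+1/2}\rangle$.

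Telescoping over $k$, applying Jensen to the convex $\ell$, $r_i$ and $h$, and maximizing over $\cX,\cZ,\cY$ bounds $\text{gap}_{\text{aug}}$ by $D^2/(2\gamma K)$. Substituting $\gamma$ finishes the argument, since $1/\gamma$ is of order $1+\rho+\sqrt{\lambda_{\max}}+\sqrt{\rho\lambda_{\max}}+\rho\lambda_{\max}+L_\ell+L_r$, which matches the claimed form up to constants.

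The main obstacle is a mismatch between the algorithm and the exact extragradient on $F$. Lines~\ref{lin_alg:EG_aug_linx2} and \ref{lin_alg:EG_aug_linz2} of Algorithm~\ref{alg:EG_aug} evaluate the augmentation term at $(x^k,z^k)$ rather than at the half point. I would handle this by writing the second step as extragradient plus an error $e^k = \rho\big(A^T(Ax^k - z^k) - A^T(Ax^{k+1/2}-z^{k+1/2})\big)$ (and similarly in $z$). I would then move $\langle e^k, w^{k+1/2}-w\rangle$ into the cross terms. If that term does not telescope, I would instead analyze the intended variant with half-point evaluation, which is the natural reading of the method.
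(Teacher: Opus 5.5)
Your proposal is essentially the paper's proof. The paper also analyzes the half-point variant: its proof puts $\rho A_i^T(\sum_{j} A_j x^{k+1/2}_j - z^{k+1/2})$ into the second step, not the $(x^k,z^k)$ term listed in Algorithm~\ref{alg:EG_aug}. It bounds the cross terms by Young's inequality and $\lambda_{\max}$ estimates. It converts the augmentation via the exact identity $-2\rho\gamma\langle a,a-b\rangle=-\rho\gamma\|a\|^2+\rho\gamma\|b\|^2-\rho\gamma\|a-b\|^2$ with $a=Ax^{k+1/2}-z^{k+1/2}$, $b=Ax-z$, which is your convexity-of-$h$ step. It then telescopes, applies Jensen and maximizes over $\cX,\cZ,\cY$.

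Your contingency for the algorithm as literally written does not close as sketched. If you use plain convexity of $h$ at the half point, the term $\langle e^k,w^{k+1/2}-w\rangle$ is first order in the distance to an arbitrary comparator, so it neither telescopes nor can be absorbed by the squared negative terms.

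The fix is to keep the remainder $-\rho\gamma\|a-b\|^2$, which the paper derives and then discards. Cancel it against
$2\rho\gamma\,|\langle A(x^k-x^{k+1/2})-(z^k-z^{k+1/2}),\,a-b\rangle| \le \rho\gamma\|A(x^k-x^{k+1/2})-(z^k-z^{k+1/2})\|^2+\rho\gamma\|a-b\|^2$.

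Because the $(x^k,z^k)$ augmentation appears in both steps, it also drops out of the extragradient cross term. The only new cost is therefore $\rho\gamma(1+\lambda_{\max}(A^TA))(\|x^{k+1/2}-x^k\|^2+\|z^{k+1/2}-z^k\|^2)$. Its coefficient is at most $\tfrac12$ under the stated $\gamma$, so together with the usual extragradient terms it is absorbed by $-\|x^{k+1/2}-x^k\|^2-\|z^{k+1/2}-z^k\|^2$. Hence Theorem~\ref{th:EG_aug} holds for Algorithm~\ref{alg:EG_aug} exactly as written, and you do not need to switch to the half-point variant.
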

}
To prove the convergence it is sufficient to show that the problem is convex--concave (Lemma \ref{lem:tech2}), to estimate the Lipschitz constant of gradients and use the general results from \cite{nemirovski2004prox}. But for completeness, we give the proof of our special case here. 
\begin{proof}
We start the proof with the following equations on the variables $x^{k+1}_i$, $x^{k+1/2}_i$, $x^k_i$ and any $x_i \in \R^{d_i}$:
\begin{align*}
    \|x^{k+1}_i - x_i \|^2
    &=
    \|x^{k}_i - x_i \|^2 + 2 \langle  x^{k+1}_i -  x^{k}_i,  x^{k+1}_i - x_i\rangle - \|  x^{k+1}_i -  x^{k}_i\|^2,
    \\
    \|x^{k+1/2}_i - x^{k+1}_i \|^2
    &=
    \| x^{k}_i - x^{k+1}_i \|^2 + 2 \langle x^{k+1/2}_i -   x^{k}_i, x^{k+1/2}_i - x^{k+1}_i\rangle - \|  x^{k+1/2}_i - x^k_i\|^2.
\end{align*}
Summing up two previous equations and making small rearrangements, we get
\begin{align*}
    \|x^{k+1}_i - x_i \|^2
    =&
    \|x^{k}_i - x_i \|^2 - \|  x^{k+1/2}_i - x^k_i\|^2 - \|x^{k+1/2}_i - x^{k+1}_i \|^2
    \\
    &+
    2 \langle  x^{k+1}_i -  x^{k}_i,  x^{k+1}_i - x_i\rangle + 2 \langle x^{k+1/2}_i -   x^{k}_i, x^{k+1/2}_i - x^{k+1}_i\rangle.
\end{align*}
Using that $x^{k+1}_i -  x^{k}_i = - \gamma (A_i^T y^{k+1/2} + \nabla r_i(x^{k+1/2}_i) + \rho A_i^T (\sum_{i=1}^n A_i x^{k+1/2}_i - z^{k+1/2}))$ and $x^{k+1/2}_i -   x^{k}_i = - \gamma (A_i^T y^k + \nabla r_i (x^k_i) + \rho A_i^T (\sum_{i=1}^n A_i x^{k}_i - z^k) )$ (see lines \ref{lin_alg:EG_aug_linx1} and \ref{lin_alg:EG_aug_linx2} of Algorithm \ref{alg:EG_aug}), we obtain
\begin{align}
    \label{eq:aug_proof_temp0}
    \|x^{k+1}_i - x_i \|^2
    =&
    \|x^{k}_i - x_i \|^2 - \|  x^{k+1/2}_i - x^k_i\|^2 - \|x^{k+1/2}_i - x^{k+1}_i \|^2
    \notag\\
    &-
    2\gamma \langle  A_i^T y^{k+1/2} + \nabla r_i(x^{k+1/2}_i) + \rho A_i^T (\sum_{i=1}^n A_i x^{k+1/2}_i - z^{k+1/2}),  x^{k+1}_i - x_i\rangle 
    \notag\\
    &- 2 \gamma \langle A_i^T y^k + \nabla r_i (x^k_i) + \rho A_i^T (\sum_{i=1}^n A_i x^{k}_i - z^k), x^{k+1/2}_i - x^{k+1}_i\rangle
    \notag\\
    =&
    \|x^{k}_i - x_i \|^2 - \|  x^{k+1/2}_i - x^k_i\|^2 - \|x^{k+1/2}_i - x^{k+1}_i \|^2
    \notag\\
    &-
    2\gamma \langle  A_i^T y^{k+1/2} + \nabla r_i(x^{k+1/2}_i) + \rho A_i^T (\sum_{i=1}^n A_i x^{k+1/2}_i - z^{k+1/2}),  x^{k+1/2}_i - x_i\rangle
    \notag\\
    &-
    2\gamma \langle  A_i^T (y^{k+1/2} - y^k) + \nabla r_i(x^{k+1/2}_i) - \nabla r_i (x^k_i),  x^{k+1}_i - x^{k+1/2}_i\rangle
    \notag\\
    &-
    2\gamma \rho \langle   A_i^T (\sum_{i=1}^n A_i (x^{k+1/2}_i - x^k_i) + z^k - z^{k+1/2}),  x^{k+1}_i - x^{k+1/2}_i\rangle
    \notag\\
    =&
    \|x^{k}_i - x_i \|^2 - \|  x^{k+1/2}_i - x^k_i\|^2 - \|x^{k+1/2}_i - x^{k+1}_i \|^2
    \notag\\
    &-
    2\gamma \langle  A_i (x^{k+1/2}_i - x_i) ,  y^{k+1/2}\rangle
    -
    2\gamma \langle  \nabla r_i(x^{k+1/2}_i),  x^{k+1/2}_i - x_i\rangle
    \notag\\
    &-
    2 \rho \gamma \langle   \sum_{i=1}^n A_i x^{k+1/2}_i - z^{k+1/2} ,  A_i (x^{k+1/2}_i - x_i)\rangle
    \notag\\
    &-
    2\gamma \langle  A_i (x^{k+1}_i - x^{k+1/2}_i) ,  y^{k+1/2} - y^k\rangle
    \notag\\
    &-
    2\gamma \langle  \nabla r_i(x^{k+1/2}_i) - \nabla r_i (x^k_i),  x^{k+1}_i - x^{k+1/2}_i\rangle
    \notag\\
    &-
    2\gamma \rho \langle  A_i (x^{k+1}_i - x^{k+1/2}_i) ,  \sum_{i=1}^n A_i (x^{k+1/2}_i - x^k_i) \rangle
    \notag\\
    &-
    2\gamma \rho \langle  A_i (x^{k+1}_i - x^{k+1/2}_i) ,  z^k - z^{k+1/2} \rangle.
\end{align}
Summing over all $i$ from $1$ to $n$, we deduce
\begin{align*}
    \sum\limits_{i=1}^n \|x^{k+1}_i - x_i \|^2
    =&
    \sum\limits_{i=1}^n \|x^{k}_i - x_i \|^2 - \sum\limits_{i=1}^n\|  x^{k+1/2}_i - x^k_i\|^2 - \sum\limits_{i=1}^n \|x^{k+1/2}_i - x^{k+1}_i \|^2
    \\
    &-
    2\gamma \langle  \sum\limits_{i=1}^n A_i (x^{k+1/2}_i - x_i) ,  y^{k+1/2}\rangle
    -
    2\gamma \sum\limits_{i=1}^n \langle  \nabla r_i(x^{k+1/2}_i),  x^{k+1/2}_i - x_i\rangle
    \notag\\
    &-
    2 \rho \gamma \sum\limits_{i=1}^n \langle   \sum_{i=1}^n A_i x^{k+1/2}_i - z^{k+1/2} ,  A_i (x^{k+1/2}_i - x_i)\rangle
    \\
    &-
    2\gamma \sum\limits_{i=1}^n \langle  \nabla r_i(x^{k+1/2}_i) - \nabla r_i (x^k_i),  x^{k+1}_i - x^{k+1/2}_i\rangle
    \\
    &-
    2\gamma \langle  \sum\limits_{i=1}^n A_i (x^{k+1}_i - x^{k+1/2}_i) ,  y^{k+1/2} - y^k\rangle
    \notag\\
    &-
    2\gamma \rho \sum\limits_{i=1}^n \langle  A_i (x^{k+1}_i - x^{k+1/2}_i) ,  \sum_{i=1}^n A_i (x^{k+1/2}_i - x^k_i) \rangle
    \notag\\
    &-
    2\gamma \rho \sum\limits_{i=1}^n \langle  A_i (x^{k+1}_i - x^{k+1/2}_i) ,  z^k - z^{k+1/2} \rangle.
\end{align*}
With notation of $A = [A_1, \ldots, A_i, \ldots, A_n] $ and notation of $x = [x_1^T, \ldots,x_i^T, \ldots,x_n^T]^T$ from \eqref{eq:main_problem} and \eqref{eq:main_problem_vfl}, one can obtain that $\sum_{i=1}^n A_i x_i = A x$:
\begin{align*}
    \|x^{k+1} - x \|^2
    =&
    \|x^{k} - x \|^2 - \|  x^{k+1/2} - x^k\|^2 - \|x^{k+1/2} - x^{k+1} \|^2
    \\
    &-
    2\gamma \langle  A (x^{k+1/2} - x) ,  y^{k+1/2}\rangle
    -
    2\gamma \sum\limits_{i=1}^n \langle  \nabla r_i(x^{k+1/2}_i),  x^{k+1/2}_i - x_i\rangle
    \notag\\
    &-
    2 \rho \gamma \langle   A x^{k+1/2}  - z^{k+1/2} ,  A (x^{k+1/2} - x)\rangle
    \\
    &-
    2\gamma \langle  A (x^{k+1} - x^{k+1/2}) ,  y^{k+1/2} - y^k\rangle
    \\
    &-
    2\gamma \sum\limits_{i=1}^n \langle  \nabla r_i(x^{k+1/2}_i) - \nabla r_i (x^k_i),  x^{k+1}_i - x^{k+1/2}_i\rangle
    \notag\\
    &-
    2\gamma \rho \langle  A (x^{k+1} - x^{k+1/2}) ,  A (x^{k+1/2} - x^k) \rangle
    \notag\\
    &-
    2\gamma \rho \langle  A (x^{k+1} - x^{k+1/2}) ,  z^k - z^{k+1/2} \rangle
    \\
    =&
    \|x^{k} - x \|^2 - \|  x^{k+1/2} - x^k\|^2 - \|x^{k+1/2} - x^{k+1} \|^2
    \\
    &-
    2\gamma \langle  A (x^{k+1/2} - x) ,  y^{k+1/2}\rangle
    -
    2\gamma \sum\limits_{i=1}^n \langle  \nabla r_i(x^{k+1/2}_i),  x^{k+1/2}_i - x_i\rangle
    \notag\\
    &-
    2 \rho \gamma \langle   A x^{k+1/2}  - z^{k+1/2} ,  A (x^{k+1/2} - x)\rangle
    \\
    &-
    2\gamma \langle  A^T (y^{k+1/2} - y^k) , x^{k+1} - x^{k+1/2} \rangle
    \\
    &-
    2\gamma \sum\limits_{i=1}^n \langle  \nabla r_i(x^{k+1/2}_i) - \nabla r_i (x^k_i),  x^{k+1}_i - x^{k+1/2}_i\rangle
    \notag\\
    &-
    2\gamma \rho \langle  A (x^{k+1} - x^{k+1/2}) ,  A (x^{k+1/2} - x^k) \rangle
    \notag\\
    &-
    2\gamma \rho \langle  A (x^{k+1} - x^{k+1/2}) ,  z^k - z^{k+1/2} \rangle.
\end{align*}
By Cauchy Schwartz inequality, we get
\begin{align}
    \label{eq:aug_proof_temp1}
    \|x^{k+1} - x \|^2
    \leq&
    \|x^{k} - x \|^2 - \|  x^{k+1/2} - x^k\|^2 - \|x^{k+1/2} - x^{k+1} \|^2
    \notag\\
    &-
    2\gamma \langle  A (x^{k+1/2} - x) ,  y^{k+1/2}\rangle
    -
    2\gamma \sum\limits_{i=1}^n \langle  \nabla r_i(x^{k+1/2}_i),  x^{k+1/2}_i - x_i\rangle
    \notag\\
    &-
    2 \rho \gamma \langle   A x^{k+1/2}  - z^{k+1/2} ,  A (x^{k+1/2} - x)\rangle
    \notag\\
    &+
    4\gamma^2 \|A^T (y^{k+1/2} - y^k)\|^2 + \frac{1}{4}\|x^{k+1} - x^{k+1/2} \|^2
    \notag\\
    &+
    4\gamma^2 \sum\limits_{i=1}^n \| \nabla r_i(x^{k+1/2}_i) - \nabla r_i (x^k_i)\|^2 + \frac{1}{4}  \sum\limits_{i=1}^n \| x^{k+1}_i - x^{k+1/2}_i\|^2
    \notag\\
    &+
    4\gamma^2 \rho^2 \|A^T (z^{k+1/2} - z^k)\|^2 + \frac{1}{4}\|x^{k+1} - x^{k+1/2} \|^2
    \notag\\
    &+
    4\gamma^2 \rho^2 \|A^T A(x^{k+1/2} - x^k)\|^2 + \frac{1}{4}\|x^{k+1} - x^{k+1/2} \|^2
    \notag\\
    =&
    \|x^{k} - x \|^2 - \|  x^{k+1/2} - x^k\|^2 
    \notag\\
    &-
    2\gamma \langle  A (x^{k+1/2} - x) ,  y^{k+1/2}\rangle
    -
    2\gamma \sum\limits_{i=1}^n \langle  \nabla r_i(x^{k+1/2}_i),  x^{k+1/2}_i - x_i\rangle
    \notag\\
    &-
    2 \rho \gamma \langle   A x^{k+1/2}  - z^{k+1/2} ,  A (x^{k+1/2} - x)\rangle
    \notag\\
    &+
    4\gamma^2 \|A^T (y^{k+1/2} - y^k)\|^2 +
    4\gamma^2 \sum\limits_{i=1}^n \| \nabla r_i(x^{k+1/2}_i) - \nabla r_i (x^k_i)\|^2
    \notag\\
    &+ 4\gamma^2 \rho^2 \|A^T (z^{k+1/2} - z^k)\|^2 + 4\gamma^2 \rho^2 \|A^T A(x^{k+1/2} - x^k)\|^2.
\end{align}
Using the same steps, one can obtain for $z \in \R^s$, 
\begin{align}
    \label{eq:aug_proof_temp2}
    \|z^{k+1} - z \|^2
    \leq&
    \|z^{k} - z \|^2 - \|  z^{k+1/2} - z^k\|^2 
    \notag\\
    &+
    2\gamma \langle  y^{k+1/2} ,  z^{k+1/2} - z\rangle
    -
    2\gamma \langle  \nabla \ell(z^{k+1/2}, b),  z^{k+1/2} - z\rangle
    \notag\\
    &-
    2\gamma \rho \langle z^{k+1/2} - A x^{k+1/2}, z^{k+1/2} - z\rangle
    \notag\\
    &+
    4\gamma^2 \|y^{k+1/2} - y^k\|^2 +
    4\gamma^2 \| \nabla \ell(z^{k+1/2}, b) -\nabla \ell(z^k, b)\|^2
    \notag\\
    &+
    4\gamma^2 \rho^2 \|z^{k+1/2} - z^k\|^2
    +
    4\gamma^2  \rho^2 \|A (x^{k+1/2} - x^k)\|^2.
\end{align}
and for all $y \in \R^s$,
\begin{align}
    \label{eq:aug_proof_temp3}
    \|y^{k+1} - y \|^2
    \leq&
    \|y^{k} - y \|^2 - \|  y^{k+1/2} - y^k\|^2 
    \notag\\
    &-
    2\gamma \langle  z^{k+1/2} ,  y^{k+1/2} - y\rangle
    +
    2\gamma \langle  A x^{k+1/2},  y^{k+1/2} - y\rangle
    \notag\\
    &+
    2\gamma^2 \|z^{k+1/2} - z^k\|^2 +
    2\gamma^2 \| A (x^{k+1/2} - x^k) \|^2.
\end{align}
Summing up \eqref{eq:aug_proof_temp1}, \eqref{eq:aug_proof_temp2} and \eqref{eq:aug_proof_temp3}, we obtain
\begin{align*}
    \|x^{k+1} - x \|^2 + \|z^{k+1} - z \|^2 + \|y^{k+1} &- y \|^2
    \\
    \leq&
    \|x^{k} - x \|^2 + \|z^{k} - z \|^2 + \|y^{k} - y \|^2
    \notag\\
    &
    - \|  x^{k+1/2} - x^k\|^2 - \|  z^{k+1/2} - z^k\|^2 - \|  y^{k+1/2} - y^k\|^2 
    \notag\\
    &-
    2\gamma \langle  A (x^{k+1/2} - x) ,  y^{k+1/2}\rangle
    +
    2\gamma \langle  y^{k+1/2} ,  z^{k+1/2} - z\rangle
    \notag\\
    &-
    2\gamma \langle  z^{k+1/2} ,  y^{k+1/2} - y\rangle
    +
    2\gamma \langle  A x^{k+1/2},  y^{k+1/2} - y\rangle
    \notag\\
    &-
    2\gamma \sum\limits_{i=1}^n \langle  \nabla r_i(x^{k+1/2}_i),  x^{k+1/2}_i - x_i\rangle
    -
    2\gamma \langle  \nabla \ell(z^{k+1/2}, b),  z^{k+1/2} - z\rangle
    \notag\\
    &-
    2 \rho \gamma \langle   A x^{k+1/2}  - z^{k+1/2} ,  A (x^{k+1/2} - x) - (z^{k+1/2} - z)\rangle
    \notag\\
    &-
    2\gamma \rho \langle z^{k+1/2} - A x^{k+1/2}, z^{k+1/2} - z\rangle
    \notag\\
    &+
    2\gamma^2 \|A^T (y^{k+1/2} - y^k)\|^2 +
    2\gamma^2 \sum\limits_{i=1}^n \| \nabla r_i(x^{k+1/2}_i) - \nabla r_i (x^k_i)\|^2
    \notag\\
    &+
    4\gamma^2 \|y^{k+1/2} - y^k\|^2 +
    4\gamma^2 \| \nabla \ell(z^{k+1/2}, b) -\nabla \ell(z^k, b)\|^2
    \notag\\
    &+
    4\gamma^2 \|z^{k+1/2} - z^k\|^2 +
    4\gamma^2 \| A (x^{k+1/2} - x^k) \|^2
    \notag\\
    &+ 4\gamma^2 \rho^2 \|A^T (z^{k+1/2} - z^k)\|^2 + 4\gamma^2 \rho^2 \|A^T A(x^{k+1/2} - x^k)\|^2
    \notag\\
    &+
    4\gamma^2 \rho^2 \|z^{k+1/2} - z^k\|^2
    +
    4\gamma^2  \rho^2 \|A (x^{k+1/2} - x^k)\|^2.
\end{align*}
Using convexity and $L_r$-smoothness of the function $r_i$ with convexity and $L_{\ell}$-smoothness of the function $\ell$ (Assumption \ref{as:convexity_smothness}), we have
\begin{align*}
    \|x^{k+1} - x \|^2 + \|z^{k+1} - z \|^2 + \|y^{k+1} & - y \|^2
    \\
    \leq&
    \|x^{k} - x \|^2 + \|z^{k} - z \|^2 + \|y^{k} - y \|^2
    \notag\\
    &
    - \|  x^{k+1/2} - x^k\|^2 - \|  z^{k+1/2} - z^k\|^2 - \|  y^{k+1/2} - y^k\|^2 
    \notag\\
    &-
    2\gamma \langle  A (x^{k+1/2} - x) ,  y^{k+1/2}\rangle
    +
    2\gamma \langle  y^{k+1/2} ,  z^{k+1/2} - z\rangle
    \notag\\
    &-
    2\gamma \langle  z^{k+1/2} ,  y^{k+1/2} - y\rangle
    +
    2\gamma \langle  A x^{k+1/2},  y^{k+1/2} - y\rangle
    \notag\\
    &-
    2\gamma \sum\limits_{i=1}^n \left(r_i(x^{k+1/2}_i) - r_i(x_i) \right)
    -
    2\gamma \left(  l(z^{k+1/2}, b)  - l(z, b)\right)
    \notag\\
    &-
    2 \rho \gamma \langle   A x^{k+1/2}  - z^{k+1/2} ,  A (x^{k+1/2} - x) - (z^{k+1/2} - z)\rangle
    \notag\\
    &+
    4\gamma^2 \|A^T (y^{k+1/2} - y^k)\| +
    4\gamma^2 L_r^2 \sum\limits_{i=1}^n  \| x^{k+1/2}_i - x^k_i\|^2
    \notag\\
    &+
    4\gamma^2 \|y^{k+1/2} - y^k\|^2 +
    4\gamma^2 L_{\ell}^2\| z^{k+1/2} - z^k\|^2
    \notag\\
    &+
    4\gamma^2 \|z^{k+1/2} - z^k\|^2 +
    4\gamma^2 \| A (x^{k+1/2} - x^k) \|^2
    \notag\\
    &+ 4\gamma^2 \rho^2 \|A^T (z^{k+1/2} - z^k)\|^2 + 4\gamma^2 \rho^2 \|A^T A(x^{k+1/2} - x^k)\|^2
    \notag\\
    &+
    4\gamma^2 \rho^2 \|z^{k+1/2} - z^k\|^2
    +
    4\gamma^2  \rho^2 \|A (x^{k+1/2} - x^k)\|^2.
\end{align*}
Using the definition of $\lambda_{\max}(\cdot)$ as a maximum eigenvalue, we get
\begin{align*}
    \|x^{k+1} - x \|^2 + \|z^{k+1} - z \|^2 + \|y^{k+1} & - y \|^2
    \\
    \leq&
    \|x^{k} - x \|^2 + \|z^{k} - z \|^2 + \|y^{k} - y \|^2
    \notag\\
    &
    - \|  x^{k+1/2} - x^k\|^2 - \|  z^{k+1/2} - z^k\|^2 - \|  y^{k+1/2} - y^k\|^2 
    \notag\\
    &-
    2\gamma \langle  A (x^{k+1/2} - x) ,  y^{k+1/2}\rangle
    +
    2\gamma \langle  y^{k+1/2} ,  z^{k+1/2} - z\rangle
    \notag\\
    &-
    2\gamma \langle  z^{k+1/2} ,  y^{k+1/2} - y\rangle
    +
    2\gamma \langle  A x^{k+1/2},  y^{k+1/2} - y\rangle
    \notag\\
    &-
    2\gamma \sum\limits_{i=1}^n \left(r_i(x^{k+1/2}_i) - r_i(x_i) \right)
    -
    2\gamma \left(  l(z^{k+1/2}, b)  - l(z, b)\right)
    \notag\\
    &-
    2 \rho \gamma \langle   A x^{k+1/2}  - z^{k+1/2} ,  A (x^{k+1/2} - x) - (z^{k+1/2} - z)\rangle
    \notag\\
    &+
    4\gamma^2 \lambda_{\max} (AA^T) \|y^{k+1/2} - y^k\| +
    4\gamma^2 L_r^2 \| x^{k+1/2} - x^k\|^2
    \notag\\
    &+
    4\gamma^2 \|y^{k+1/2} - y^k\|^2 +
    4\gamma^2 L_{\ell}^2\| z^{k+1/2} - z^k\|^2
    \notag\\
    &+
    4\gamma^2 \|z^{k+1/2} - z^k\|^2 +
    4\gamma^2 \lambda_{\max} (A^T A)\| x^{k+1/2} - x^k \|^2
    \notag\\
    &+ 4\gamma^2 \rho^2 \lambda_{\max} (AA^T) \|z^{k+1/2} - z^k\|^2 + 4\gamma^2 \rho^2 \lambda^2_{\max} (AA^T) \|x^{k+1/2} - x^k\|^2
    \notag\\
    &+
    4\gamma^2 \rho^2 \|z^{k+1/2} - z^k\|^2
    +
    4\gamma^2  \rho^2 \lambda_{\max} (AA^T) \|x^{k+1/2} - x^k\|^2.
\end{align*}
With the choice of $\gamma \leq \tfrac{1}{4} \cdot \min\left\{ 1; \tfrac{1}{\rho}; \tfrac{1}{\sqrt{\lambda_{\max}(A^T A)}}; \tfrac{1}{\sqrt{\rho\lambda_{\max}(A^T A)}}; \tfrac{1}{\rho\lambda_{\max}(A^T A)}; \tfrac{1}{L_r}; \tfrac{1}{L_{\ell}}\right\}$, we get
\begin{align*}
    \|x^{k+1} - x \|^2 + \|z^{k+1} - z \|^2 &+ \|y^{k+1} - y \|^2
    \\
    \leq&
    \|x^{k} - x \|^2 + \|z^{k} - z \|^2 + \|y^{k} - y \|^2
    \notag\\
    &-
    2\gamma \langle  A (x^{k+1/2} - x) ,  y^{k+1/2}\rangle
    +
    2\gamma \langle  y^{k+1/2} ,  z^{k+1/2} - z\rangle
    \notag\\
    &-
    2\gamma \langle  z^{k+1/2} ,  y^{k+1/2} - y\rangle
    +
    2\gamma \langle  A x^{k+1/2},  y^{k+1/2} - y\rangle
    \notag\\
    &-
    2\gamma \sum\limits_{i=1}^n \left(r_i(x^{k+1/2}_i) - r_i(x_i) \right)
    -
    2\gamma \left(  l(z^{k+1/2}, b)  - l(z, b)\right)
    \notag\\
    &-
    2 \rho \gamma \langle   A x^{k+1/2}  - z^{k+1/2} ,  A (x^{k+1/2} - x) - (z^{k+1/2} - z)\rangle
    \\
    =&
    \|x^{k} - x \|^2 + \|z^{k} - z \|^2 + \|y^{k} - y \|^2
    \notag\\
    &+
    2\gamma \langle  A x - z,  y^{k+1/2}\rangle
    -
    2\gamma \langle  A x^{k+1/2} - z^{k+1/2}, y\rangle
    \notag\\
    &-
    2\gamma \sum\limits_{i=1}^n \left(r_i(x^{k+1/2}_i) - r_i(x_i) \right)
    -
    2\gamma \left(  l(z^{k+1/2}, b)  - l(z, b)\right)
    \notag\\
    &-
    \rho \gamma \| A x^{k+1/2}  - z^{k+1/2} \|^2 
    +
    \rho \gamma \| A x - z\|^2 
    - \rho \gamma \| A (x^{k+1/2} - x) - (z^{k+1/2} - z) \|^2.
\end{align*}
After small rearrangements, we obtain
\begin{align*}
    &\left( \ell (z^{k+1/2}, b) - \ell (z, b)\right) + \sum_{i=1}^n \left( r_i (x^{k+1/2}_i) - r_i (x_i)\right) 
    \\
    &+
    \langle A x^{k+1/2} - z^{k+1/2}, y\rangle
    - \langle  A x - z, y^{k+1/2} \rangle
    \\
    &+
    \frac{\rho}{2} \| A x^{k+1/2}  - z^{k+1/2} \|^2 
    -
    \frac{\rho}{2} \| A x - z\|^2
    \\
    &\hspace{5cm}\leq
    \frac{1}{2\gamma}\Big(\| x^k - x\|^2 + \| z^k - z\|^2 + \| y^k - y\|^2 
    \notag\\
    &\hspace{5.4cm} -\| x^{k+1} - x\|^2 - \| z^{k+1} - z \|^2 - \| y^{k+1} -  y \|^2\Big).
\end{align*}
Then we sum all over $k$ from $0$ to $K-1$, divide by $K$, and have
\begin{align*}
    &\frac{1}{K} \sum\limits_{k=0}^{K-1}\left( \ell (z^{k+1/2}, b) - \ell (z, b)\right) + \sum_{i=1}^n \frac{1}{K} \sum\limits_{k=0}^{K-1} \left( r_i (x^{k+1/2}_i) - r_i (x_i)\right) 
    \\
    &+
    \langle A \cdot \frac{1}{K} \sum\limits_{k=0}^{K-1} x^{k+1/2} - \frac{1}{K} \sum\limits_{k=0}^{K-1} z^{k+1/2}, y\rangle
    - \langle  A x - z, \frac{1}{K} \sum\limits_{k=0}^{K-1} y^{k+1/2} \rangle
    \\
    &+
    \frac{\rho}{2} \frac{1}{K} \sum\limits_{k=0}^{K-1}\| A x^{k+1/2}  - z^{k+1/2} \|^2 
    -
    \frac{\rho}{2} \| A x - z\|^2
    \\
    &\hspace{7cm}\leq
    \frac{1}{2\gamma K}\Big(\| x^0 - x\|^2 + \| z^0 - z\|^2 + \| y^0 - y\|^2 
    \notag\\
    &\hspace{7.4cm} -\| x^{K} - x\|^2 - \| z^{K} - z \|^2 - \| y^{K} -  y \|^2\Big)
    \\
    &\hspace{7cm}\leq
    \frac{1}{2\gamma K}(\| x^0 - x\|^2 + \| z^0 - z\|^2 + \| y^0 - y\|^2).
\end{align*}
With Jensen inequality for convex functions $\ell$, $r_i$ and $\| \cdot \|^2$, one can note that
\begin{align*}
 &\ell \left( \frac{1}{K} \sum\limits_{k=0}^{K-1} z^{k+1/2}, b \right) \leq \frac{1}{K} \sum\limits_{k=0}^{K-1} \ell (z^{k+1/2}, b), \\
 &r_i \left( \frac{1}{K} \sum\limits_{k=0}^{K-1} x^{k+1/2}_i \right) \leq \frac{1}{K} \sum\limits_{k=0}^{K-1} r_i (x^{k+1/2}_i), \\
 &\left\| A  \frac{1}{K} \sum\limits_{k=0}^{K-1} x^{k+1/2}  -  \frac{1}{K} \sum\limits_{k=0}^{K-1} z^{k+1/2} \right\|^2 \leq  \frac{1}{K} \sum\limits_{k=0}^{K-1} \| A x^{k+1/2}  - z^{k+1/2} \|^2 .
\end{align*}
Then, with notation $\bar x^K_i = \frac{1}{K} \sum\limits_{k=0}^{K-1} x^{k+1/2}_i$, $\bar z^K = \frac{1}{K} \sum\limits_{k=0}^{K-1} z^{k+1/2}$, $\bar y^K = \frac{1}{K} \sum\limits_{k=0}^{K-1} y^{k+1/2}$, we have
\begin{align*}
    &\ell (\bar z^K, b) - \ell (z, b) + \sum_{i=1}^n \left( r_i (\bar x^{K}_i) - r_i (x_i)\right) 
    +
    \langle A \bar x^{K} - \bar z^{K}, y\rangle
    - \langle  A x - z, \bar y^{K} \rangle
    \\
    &+
    \frac{\rho}{2} \| A \bar x^{K}  - \bar z^{K} \|^2 
    -
    \frac{\rho}{2} \| A x - z\|^2\leq
    \frac{1}{2\gamma K}(\| x^0 - x\|^2 + \| z^0 - z\|^2 + \| y^0 - y\|^2).
\end{align*}
Following the definition $\text{gap}_{\text{aug}}$, we only need to take the maximum in the variable $y \in \cY$ and the minimum in $x \in \cX$ and $z \in \cZ$.
\begin{align}
\label{eq:basic_crit1_aug}
    \text{gap}_{\text{aug}}(\bar x^{K}, \bar z^K, \bar y^K)
    &=
    \max_{y \in \cY}L_{\text{aug}}(\bar x^{K}, \bar z^K, y) - \min_{x,z \in \cX, \cZ} L_{\text{aug}}( x, z, \bar y^K)
    \notag\\
    &=\max_{y \in \cY} \left[\ell (\bar z^K, b) + \sum_{i=1}^n r_i (\bar x^{K}_i) +
    \langle A \bar x^{K} - \bar z^{K}, y\rangle +
    \frac{\rho}{2} \| A \bar x^{K}  - \bar z^{K} \|^2  \right] 
    \notag\\
    &\hspace{0.4cm}
    - \min_{x,z \in \cX, \cZ} \left[ \ell (z, b) +  \sum_{i=1}^n r_i (x_i) + \langle  A x - z, \bar y^{K} \rangle + \frac{\rho}{2} \| A x - z\|^2 \right]
    \notag\\
    &\leq
    \frac{1}{2\gamma K}(\max_{x \in \cX} \| x^0 - x\|^2 + \max_{z \in \cZ} \| z^0 - z\|^2 + \max_{y \in \cY} \| y^0 - y\|^2).
\end{align}
To complete the proof in the cases \eqref{eq:basic_crit1_aug} 
, it remains to put \\$\gamma \leq \tfrac{1}{4} \min\left\{ 1; \tfrac{1}{\rho}; \tfrac{1}{\sqrt{\lambda_{\max}(A^T A)}}; \tfrac{1}{\sqrt{\rho\lambda_{\max}(A^T A)}}; \tfrac{1}{\rho\lambda_{\max}(A^T A)}; \tfrac{1}{L_r}; \tfrac{1}{L_{\ell}}\right\}$.
\end{proof}

\subsection{Proof of Theorem \ref{th:EG_basic_3}}
\label{app:th:EG_basic_3}

{
\begin{theorem}[Theorem \ref{th:EG_basic_3}]
Let $l^*$ be $L_{\ell^*}$-smooth and convex, $r$ be $L_r$-smooth and convex. Let the problem (\ref{eq:vfl_lin_spp_3}) be solved by Algorithm \ref{alg:EG_basic_3}. Then for 
$$
\gamma = \frac{1}{2} \min\left\{ 1; \frac{1}{\sqrt{\lambda_{\max}(A^T A)}}; \frac{1}{L_r}; \frac{1}{L_{\ell^*}}\right\},
$$
it holds that
\begin{align*}
    \text{gap}_2(\bar x^{K}, \bar y^K)
    =
    \mathcal{O} \left(\frac{ \left( 1 + \sqrt{\lambda_{\max}(A^T A)} + L_{\ell^*} + L_r\right) \hat D^2}{K} \right),
\end{align*}
where $\text{gap}_2(x,y) := \max_{\tilde y \in \cY} \hat L(x, \tilde y) - \min_{\tilde x \in \cX} \hat L(\tilde x, y)$ and $\bar x^K := \tfrac{1}{K}\sum_{k=0}^{K-1} x^{k+1/2}$, $\bar y^K := \tfrac{1}{K}\sum_{k=0}^{K-1} y^{k+1/2}$ and $\hat D^2 := \max_{x, y \in \cX, \cY} \left[ \|x^0 - x \|^2 + \|y^0 - y \|^2 \right]$.
\end{theorem}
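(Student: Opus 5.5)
The plan is to repeat the argument of the proof of Theorem \ref{th:EG_basic_1} essentially verbatim. The problem (\ref{eq:vfl_lin_spp_3}) has the same bilinear coupling $y^T A x$. The only changes are that there is no $z$-block, and the dual block now carries the smooth convex term $\ell^*(\cdot,b)$ with a minus sign. So $\hat L$ is convex in $x$ (since $r$ is convex) and concave in $y$ (since $\ell^*$ is convex).

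\textbf{Step 1: one-step bound for $x$.} For the $x$-updates of Algorithm \ref{alg:EG_basic_3}, which coincide with those of Algorithm \ref{alg:EG}, I start from the three-point identity behind (\ref{eq:basic_proof_temp0}). I sum over $i$ and apply Young's inequality with $\eta=2\gamma$, as in (\ref{eq:basic_proof_temp1}). This gives
\begin{align*}
\|x^{k+1}-x\|^2 \le{}& \|x^k-x\|^2-\|x^{k+1/2}-x^k\|^2-2\gamma\langle A(x^{k+1/2}-x),y^{k+1/2}\rangle\\
&-2\gamma\sum_i\langle\nabla r_i(x^{k+1/2}_i),x^{k+1/2}_i-x_i\rangle\\
&+2\gamma^2\|A^T(y^{k+1/2}-y^k)\|^2+2\gamma^2\sum_i\|\nabla r_i(x^{k+1/2}_i)-\nabla r_i(x^k_i)\|^2 .
\end{align*}

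\textbf{Step 2: one-step bound for $y$.} The update uses the operator $\nabla\ell^*(y,b)-Ax$. Repeating the same steps gives the analogous bound. Its linear terms are
\[
-2\gamma\langle\nabla\ell^*(y^{k+1/2},b),y^{k+1/2}-y\rangle+2\gamma\langle Ax^{k+1/2},y^{k+1/2}-y\rangle .
\]
Its error terms are $2\gamma^2\|\nabla\ell^*(y^{k+1/2},b)-\nabla\ell^*(y^k,b)\|^2+2\gamma^2\|A(x^{k+1/2}-x^k)\|^2$.

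\textbf{Step 3: absorb the quadratic terms.} I add the two inequalities. The cross terms $-2\gamma\langle A(x^{k+1/2}-x),y^{k+1/2}\rangle+2\gamma\langle Ax^{k+1/2},y^{k+1/2}-y\rangle$ collapse to $2\gamma\langle Ax,y^{k+1/2}\rangle-2\gamma\langle Ax^{k+1/2},y\rangle$. Convexity gives
\[
\langle\nabla r_i(x^{k+1/2}_i),x^{k+1/2}_i-x_i\rangle\ge r_i(x^{k+1/2}_i)-r_i(x_i)
\]
and the analogous inequality for $\ell^*$. The quadratic error terms are bounded by $\lambda_{\max}(A^TA)$, $L_r^2$, $L_{\ell^*}^2$ times $\|x^{k+1/2}-x^k\|^2$ or $\|y^{k+1/2}-y^k\|^2$. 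With $\gamma\le\frac12\min\{1;\lambda_{\max}^{-1/2}(A^TA);L_r^{-1};L_{\ell^*}^{-1}\}$, each coefficient $2\gamma^2(\cdot)\le\frac12$, so the errors are absorbed by the negative terms $-\|x^{k+1/2}-x^k\|^2-\|y^{k+1/2}-y^k\|^2$.

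The result is
\[
2\gamma\bigl[\hat L(x^{k+1/2},y)-\hat L(x,y^{k+1/2})\bigr]\le \|x^k-x\|^2+\|y^k-y\|^2-\|x^{k+1}-x\|^2-\|y^{k+1}-y\|^2 .
\]
This uses the identity
\[
\hat L(x^{k+1/2},y)-\hat L(x,y^{k+1/2})=\sum_i\bigl(r_i(x_i^{k+1/2})-r_i(x_i)\bigr)-\ell^*(y,b)+\ell^*(y^{k+1/2},b)+\langle y,Ax^{k+1/2}\rangle-\langle y^{k+1/2},Ax\rangle .
\]

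\textbf{Step 4: telescope and pass to the gap.} I telescope over $k$ and divide by $K$. Jensen's inequality then applies: $\hat L(\cdot,y)$ is convex in $x$ and $\hat L(x,\cdot)$ is concave in $y$, so the average passes inside. Taking $\max_{y\in\cY}$ and $\min_{x\in\cX}$ as in (\ref{eq:basic_crit1}) yields $\text{gap}_2(\bar x^K,\bar y^K)\le \hat D^2/(2\gamma K)$. Substituting $\gamma$ gives the stated rate.

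\textbf{Main obstacle.} The only delicate point is bookkeeping the sign of the $\ell^*$ term in the $y$-step, which is a descent step on $\ell^*$. I must make sure it produces $+\ell^*(y^{k+1/2})-\ell^*(y)$ with the correct orientation for the gap of $\hat L$, which contains $-\ell^*$. Everything else is routine.
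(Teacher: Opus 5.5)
Your proposal is correct and follows the paper's proof essentially step for step. The paper also reuses the $x$-block bound (\ref{eq:basic_proof_temp1}) from Theorem \ref{th:EG_basic_1} and derives the analogous $y$-block inequality with the $\nabla\ell^*$ and $A(x^{k+1/2}-x^k)$ error terms, then absorbs the quadratic terms through the stepsize choice, telescopes, applies Jensen, and takes $\max_{y\in\cY}$, $\min_{x\in\cX}$ to get $\text{gap}_2(\bar x^K,\bar y^K)\le \hat D^2/(2\gamma K)$, with the same sign handling of the $-\ell^*$ term that you describe.
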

}
\begin{proof}
We start the proof from (\ref{eq:basic_proof_temp1}), since the updates for $x$ variables in Algorithms \ref{alg:EG}, \ref{alg:EG_basic_3} are the same:
\begin{align}
    \label{eq:basic_3_proof_temp1}
    \|x^{k+1} - x \|^2
    \leq&
    \|x^{k} - x \|^2 - \|  x^{k+1/2} - x^k\|^2 
    \notag\\
    &-
    2\gamma \langle  A (x^{k+1/2} - x) ,  y^{k+1/2}\rangle
    -
    2\gamma \sum\limits_{i=1}^n \langle  \nabla r_i(x^{k+1/2}_i),  x^{k+1/2}_i - x_i\rangle
    \notag\\
    &+
    2\gamma^2 \|A^T (y^{k+1/2} - y^k)\|^2 +
    2\gamma^2 \sum\limits_{i=1}^n \| \nabla r_i(x^{k+1/2}_i) - \nabla r_i (x^k_i)\|^2.
\end{align}
Using the same steps as for (\ref{eq:basic_proof_temp1}), one can obtain for $y \in \R^s$ from Algorithms \ref{alg:EG_basic_3}, 
\begin{align}
    \label{eq:basic_3_proof_temp3}
    \|y^{k+1} - y\|^2
    \leq&
    \|y^{k} - y \|^2 - \|  y^{k+1/2} - y^k \|^2 
    \notag\\
    &-
    2\gamma \langle  \nabla \ell^* (y^{k+1/2}, b),  y^{k+1/2} - y\rangle
    +
    2\gamma \langle  \sum_{i=1}^n A_i x^{k+1/2}_i,  y^{k+1/2} - y\rangle
    \notag\\
    &+
    2\gamma^2 \|\nabla \ell^* (y^{k+1/2}, b) - \nabla \ell^* (y^{k}, b)\|^2 +
    2\gamma^2 \left\| \sum_{i=1}^n A_i (x^{k+1/2}_i - x^k_i) \right\|^2
    \notag\\
    =&
    \|y^{k} - y \|^2 - \|  y^{k+1/2} - y^k \|^2 
    \notag\\
    &-
    2\gamma \langle  \nabla \ell^* (y^{k+1/2}, b),  y^{k+1/2} - y\rangle
    +
    2\gamma \langle  Ax^{k+1/2},  y^{k+1/2} - y\rangle
    \notag\\
    &+
    2\gamma^2 \|\nabla \ell^* (y^{k+1/2}, b) - \nabla \ell^* (y^{k}, b)\|^2 +
    2\gamma^2 \| A (x^{k+1/2} - x^k) \|^2.
\end{align}
Here we also use notation of $A$ and $x$. Summing up (\ref{eq:basic_3_proof_temp1}) and (\ref{eq:basic_3_proof_temp3}), we obtain
\begin{align*}
    \|x^{k+1} - x \|^2 + \|y^{k+1} - y\|^2
    \leq&
    \|x^{k} - x \|^2 + \|y^{k} - y \|^2 - \|  x^{k+1/2} - x^k\|^2 - \|  y^{k+1/2} - y^k \|^2 
    \notag\\
    &-
    2\gamma \langle  A (x^{k+1/2} - x) ,  y^{k+1/2}\rangle
    -
    2\gamma \sum\limits_{i=1}^n \langle  \nabla r_i(x^{k+1/2}_i),  x^{k+1/2}_i - x_i\rangle
    \notag\\
    &-
    2\gamma \langle  \nabla \ell^* (y^{k+1/2}, b),  y^{k+1/2} - y\rangle
    +
    2\gamma \langle  Ax^{k+1/2},  y^{k+1/2} - y\rangle
    \notag\\
    &+
    2\gamma^2 \|A^T (y^{k+1/2} - y^k)\|^2 +
    2\gamma^2 \sum\limits_{i=1}^n \| \nabla r_i(x^{k+1/2}_i) - \nabla r_i (x^k_i)\|^2
    \notag\\
    &+
    2\gamma^2 \|\nabla \ell^* (y^{k+1/2}, b) - \nabla \ell^* (y^{k}, b)\|^2 +
    2\gamma^2 \| A (x^{k+1/2} - x^k) \|^2.
\end{align*}
Using convexity and $L_{r}$-smoothness of the function $r_i$ with convexity and $L_{\ell^*}$-smoothness of the function $\ell$ and with the definition of $\lambda_{\max}(\cdot)$ as a maximum eigenvalue, we have
\begin{align*}
    \|x^{k+1} - x \|^2 + \|y^{k+1} - y\|^2
    \leq&
    \|x^{k} - x \|^2 + \|y^{k} - y \|^2 - \|  x^{k+1/2} - x^k\|^2 - \|  y^{k+1/2} - y^k \|^2 
    \notag\\
    &+
    2\gamma \langle  A x ,  y^{k+1/2}\rangle
    -
    2\gamma \sum\limits_{i=1}^n  [r_i(x^{k+1/2}_i)  - r_i(x_i)]
    \notag\\
    &-
    2\gamma (l^* (y^{k+1/2}, b) - l^* (y, b))
    -
    2\gamma \langle  Ax^{k+1/2},  y \rangle
    \notag\\
    &+
    2\gamma^2 \lambda_{\max} (A A^T) \|y^{k+1/2} - y^k\|^2 +
    2\gamma^2 L_{r}^2 \sum\limits_{i=1}^n \| x^{k+1/2}_i - x^k_i\|^2
    \notag\\
    &+
    2\gamma^2 L_{\ell^*}^2 \|y^{k+1/2} - y^{k}\|^2 +
    2\gamma^2 \lambda_{\max} (A^T A) \| x^{k+1/2} - x^k \|^2.
\end{align*}
With the choice of $\gamma \leq \tfrac{1}{2} \cdot \min\left\{ 1; \tfrac{1}{\sqrt{\lambda_{\max}(A^T A)}}; \tfrac{1}{L_r}; \tfrac{1}{L_{\ell^*}}\right\}$, we get
\begin{align*}
    \|x^{k+1} - x \|^2 + \|y^{k+1} - y\|^2
    \leq&
    \|x^{k} - x \|^2 + \|y^{k} - y \|^2
    \notag\\
    &+
    2\gamma \langle  A x ,  y^{k+1/2}\rangle
    -
    2\gamma \sum\limits_{i=1}^n  [r_i(x^{k+1/2}_i)  - r_i(x_i)]
    \notag\\
    &-
    2\gamma (l^* (y^{k+1/2}, b) - l^* (y, b))
    -
    2\gamma \langle  Ax^{k+1/2},  y \rangle.
\end{align*}
After small rearrangements, we obtain
\begin{align*}
l^* (y^{k+1/2}, b) - l^* (y, b) &+ \sum\limits_{i=1}^n  [r_i(x^{k+1/2}_i)  - r_i(x_i)] + \langle  Ax^{k+1/2},  y \rangle - \langle  A x ,  y^{k+1/2}\rangle
    \\
    \leq&
    \frac{1}{2\gamma}\left(\|x^{k} - x \|^2 + \|y^{k} - y \|^2  - \|x^{k+1} - x \|^2 - \|y^{k+1} - y\|^2\right).
\end{align*}
Then we sum all over $k$ from $0$ to $K-1$, divide by $K$, and have
\begin{align*}
\frac{1}{K}\sum_{k=0}^{K-1} \big[l^* (y^{k+1/2}, b) - l^* (y, b) + \sum\limits_{i=1}^n  [r_i(x^{k+1/2}_i)  - r_i(x_i)] + \langle  Ax^{k+1/2},  & y \rangle - \langle  A x ,  y^{k+1/2}\rangle \big]
    \\
    \leq&
    \frac{1}{2\gamma K}\left(\|x^{0} - x \|^2 + \|y^{0} - y \|^2\right).
\end{align*}
With Jensen inequality for convex functions $\ell$ and $r_i$, one can note that
\begin{align*}
 \ell^* \left( \frac{1}{K} \sum\limits_{k=0}^{K-1} y^{k+1/2}, b \right) \leq \frac{1}{K} \sum\limits_{k=0}^{K-1} \ell^* (y^{k+1/2}, b), \\
 r_i \left( \frac{1}{K} \sum\limits_{k=0}^{K-1} x^{k+1/2}_i \right) \leq \frac{1}{K} \sum\limits_{k=0}^{K-1} r_i (x^{k+1/2}_i).
\end{align*}
Then, with notation $\bar x^K_i = \frac{1}{K} \sum\limits_{k=0}^{K-1} x^{k+1/2}_i$, $\bar y^K = \frac{1}{K} \sum\limits_{k=0}^{K-1} y^{k+1/2}$, we have
\begin{align*}
\ell^* (\bar y^{K}, b) - \ell^* (y, b) + \sum\limits_{i=1}^n  [r_i(\bar x^K_i)  - r_i(x_i)] + \langle  A \bar x^{K},  y \rangle - \langle  A x ,  \bar y^{K}\rangle
    \leq
    \frac{1}{2\gamma K}\left(\|x^{0} - x \|^2 + \|y^{0} - y \|^2\right).
\end{align*}
Following the definition of $\text{gap}_2$, we only need to take the maximum in the variable $y \in \cY$ and the minimum in $x \in \cX$.
\begin{align*}
    \text{gap}_2(\bar x^{K}, \bar y^K)
    &=
    \max_{y \in \cY}\hat L(\bar x^{K}, y) - \min_{x \in \cX} \hat L( x, \bar y^K)
    \notag\\
    &=\max_{y \in \cY} \left[-\ell^*\left(y,b\right) + \sum\limits_{i=1}^n r_i(\bar x^K_i) + y^T \left(\sum_{i=1}^n A_i \bar x^K_i \right) \right] 
    \\
    &\hspace{4.1cm}- \min_{x \in \cX} \left[ -\ell (\bar y^K, b) +  \sum_{i=1}^n r_i (x_i) +  \bar (y^K)^T \left(\sum_{i=1}^n A_i x_i \right) \right]
    \notag\\
    &=\max_{y \in \cY} \max_{x \in \cX} \left[\ell^* (\bar y^{K}, b) - \ell^* (y, b) + \sum\limits_{i=1}^n  [r_i(\bar x^K_i)  - r_i(x_i)] + \langle  A \bar x^{K},  y \rangle - \langle  A x ,  \bar y^{K}\rangle \right]
    \notag\\
    &\leq
    \frac{1}{2\gamma K}\left(\max_{x \in \cX} \| x^0 - x\|^2 + \max_{y \in \cY} \| y^0 - y\|^2 \right).
\end{align*}
To complete the proof, it remains to put $\gamma = \tfrac{1}{2} \cdot \min\left\{ 1; \tfrac{1}{\sqrt{\lambda_{\max}(A^T A)}}; \tfrac{1}{L_r}; \tfrac{1}{L_{\ell^*}}\right\}$.
\end{proof}


\subsection{Three lemmas}
\begin{lemma} \label{lem:tech1}
If $\ell$ and $r_i$ are convex, then $L (x,z,y)$ from (\ref{eq:vfl_lin_spp_1}) is convex-concave.
\end{lemma}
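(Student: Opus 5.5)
The claim is that $L(x,z,y)=\ell(z,b)+\sum_{i=1}^n r_i(x_i)+y^T(\sum_{i=1}^n A_i x_i - z)$ is convex in the joint primal variable $(x,z)$ for every fixed $y$, and concave in $y$ for every fixed $(x,z)$. I will check the two properties separately, using only the definition of convexity and the fact that sums of convex functions are convex.

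\textbf{Convexity in $(x,z)$.} Fix $y\in\R^s$ and split $L$ into three summands.
\begin{itemize}
\item[(i)] The term $(x,z)\mapsto \ell(z,b)$ is the convex function $\ell(\cdot,b)$ composed with the linear coordinate projection $(x,z)\mapsto z$. A convex function composed with a linear map is convex, so this term is convex in $(x,z)$.
\item[(ii)] The term $(x,z)\mapsto \sum_i r_i(x_i)$ is a separable sum. Each $r_i$ is convex and depends only on the block $x_i$, i.e. it is $r_i$ composed with the projection onto block $i$. Each summand is therefore convex, and so is their sum.
\item[(iii)] The coupling term $(x,z)\mapsto y^T(Ax - z)$, with $A=[A_1,\ldots,A_n]$, is linear in $(x,z)$ for fixed $y$. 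A linear function is both convex and concave.
\end{itemize}
Adding (i)–(iii) gives convexity of $L(\cdot,\cdot,y)$.

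If one wants to verify this against the gradient-based definition used in the paper, the check reduces to a single equality. For the coupling term, the first-order condition holds with equality:
$$
y^T(Ax'-z') = y^T(Ax-z) + \langle A^T y,\, x'-x\rangle + \langle -y,\, z'-z\rangle .
$$
For the $\ell$ and $r_i$ terms, the first-order condition holds as an inequality by assumption. Summing these gives the condition for $L$.

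\textbf{Concavity in $y$.} Fix $(x,z)$. Then $y\mapsto L(x,z,y)$ equals a constant, namely $\ell(z,b)+\sum_i r_i(x_i)$, plus the linear function $y\mapsto \langle Ax-z,\, y\rangle$. An affine function is concave, so $L$ is concave in $y$.

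\textbf{Main care point.} The one step that needs attention is that convexity must hold jointly in $(x,z)$, not merely separately in $x$ and in $z$. Joint convexity follows here because the bilinear coupling term $y^T(Ax-z)$ is linear in the combined variable $(x,z)$ once $y$ is fixed.
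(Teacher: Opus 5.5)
Your proof is correct and rests on the same key observation as the paper's: for fixed $y$ the coupling term $y^T(Ax-z)$ is linear in $(x,z)$, and for fixed $(x,z)$ it is linear in $y$. The difference is the characterization of convexity you use.

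The paper argues at first order. It writes out $\nabla_{(x,z)}L$ and invokes the monotone-gradient criterion (Theorem 2.1.3 of \cite{nesterov2003introductory}). It then notes that the entries $A_i^T y$ and $-y$ cancel in $\langle \nabla_{(x,z)}L(x_1,z_1,y)-\nabla_{(x,z)}L(x_2,z_2,y),(x_1,z_1)-(x_2,z_2)\rangle$, leaving only the monotone gradients of the $r_i$ and $\ell$. Concavity in $y$ is the identity $\langle \nabla_y L(x,z,y_1)-\nabla_y L(x,z,y_2),y_1-y_2\rangle=0$.

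You work with function values instead, using closure of convexity under sums and under precomposition with linear maps. Your route is more elementary and more general, because it needs no differentiability. It proves the lemma under exactly its stated hypothesis (convexity of $\ell$ and the $r_i$ only). It also covers the nonsmooth proximal setting of Theorem~\ref{th:EG_prox}, e.g.\ an $\ell_1$ regularizer, where the paper's gradient computation would have to be redone with subdifferentials.

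The paper's version, in exchange, is phrased as monotonicity of the gradient field. That is the language of the extragradient/Mirror-Prox theory of \cite{nemirovski2004prox}, which the paper says it could invoke directly.

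Your optional first-order check (equality for the coupling term, inequality for $\ell$ and the $r_i$) is also valid. It is closer to the paper's own definition of convexity than the monotone-gradient criterion its proof actually uses.
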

\begin{proof}
We start from checking of convexity.
\begin{equation*}
    \nabla_{(x,z)} L (x,z,y) = \left(\begin{array}{c}
       A_1^T y + \nabla r_1 (x_1)\\
       \ldots \\
       A_i^T y + \nabla r_i (x_i) \\
       \ldots \\
       A_n^T y + \nabla r_n (x_n)\\
       \nabla \ell (z, b) - y
    \end{array}\right).
\end{equation*}
Then, we need to check the condition of Theorem 2.1.3 from \cite{nesterov2003introductory}:
\begin{align*}
    &\langle \nabla_{(x,z)} L (x_1,z_1,y) - \nabla_{(x,z)} L(x_2,z_2,y), (x_1, z_1) - (x_2, z_2)\rangle 
    \\
    &\hspace{2cm}= \langle \left(\begin{array}{c}
       \nabla r_1 (x_{1,1}) - \nabla r_1 (x_{1,2}) \\
       \ldots \\
       \nabla r_i (x_{i,1}) - \nabla r_i (x_{i,2}) \\
       \ldots \\
       \nabla r_n (x_{n,1}) - \nabla r_n (x_{n,2}) \\
       \nabla \ell (z_1, b) - \nabla \ell (z_2, b) 
    \end{array}\right) , \left(\begin{array}{c}
       x_{1,1}- x_{1,2} \\
       \ldots \\
       x_{i,1} - x_{i,2} \\
       \ldots \\
       x_{n,1} - x_{n,2} \\
       z_1 - z_2 
    \end{array}\right)\rangle \geq 0.
\end{align*}
Here we also use that $\ell$ and $r_i$ are convex. It means that the problem (\ref{eq:vfl_lin_spp_1}) is convex on $(x,z)$. Next, we move to check concavity.
\begin{equation*}
    \nabla_{y} L(x,z,y) = \left(
       \sum_{i=1}^n A_i x_i - z \right).
\end{equation*}
Then, again with Theorem 2.1.3 from \cite{nesterov2003introductory}:
\begin{equation*}
    \langle \nabla_{y} L(x,z,y_1) - \nabla_{y} L(x,z,y_2), y_1 - y_2\rangle = 0 \leq 0,
\end{equation*}
we get that the problem (\ref{eq:vfl_lin_spp_1}) is concave on $y$.
\end{proof}

\begin{lemma} \label{lem:tech2}
If $\ell$ and $r_i$ are convex, then $L_{\text{aug}} (x,z,y)$ from (\ref{eq:vfl_lin_spp_aug}) is convex-concave.
\end{lemma}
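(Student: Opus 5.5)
The plan is to mirror the proof of Lemma~\ref{lem:tech1}: I would verify convexity in $(x,z)$ via the monotonicity criterion of Theorem 2.1.3 from \cite{nesterov2003introductory}, and concavity in $y$ via the same criterion applied to $-L_{\text{aug}}$. The only new ingredient compared to (\ref{eq:vfl_lin_spp_1}) is the augmentation term $\frac{\rho}{2}\|\sum_{i=1}^n A_i x_i - z\|^2$. This term does not depend on $y$, and it is a convex quadratic in $(x,z)$ because it is the composition of the convex function $\frac{\rho}{2}\|\cdot\|^2$ (with $\rho \geq 0$) with the linear map $(x,z)\mapsto Ax - z$.

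First, I would write out the gradient in $(x,z)$:
\begin{equation*}
\nabla_{x_i} L_{\text{aug}} = A_i^T y + \nabla r_i(x_i) + \rho A_i^T (Ax - z), \qquad \nabla_z L_{\text{aug}} = \nabla \ell(z,b) - y - \rho (Ax - z).
\end{equation*}
Then, for two points $(x_1,z_1)$ and $(x_2,z_2)$ with the same $y$, I would compute the monotonicity inner product. The $y$-terms cancel exactly as in Lemma~\ref{lem:tech1}. The terms involving $\nabla r_i$ and $\nabla \ell$ are nonnegative by convexity. The $\rho$-terms combine to
\begin{equation*}
\rho\langle A(x_1-x_2) - (z_1-z_2),\, (Ax_1 - z_1) - (Ax_2 - z_2)\rangle = \rho\|A(x_1-x_2)-(z_1-z_2)\|^2 \geq 0.
\end{equation*}
This gives convexity in $(x,z)$.

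For concavity, I would note that $\nabla_y L_{\text{aug}} = Ax - z$ is independent of $y$. The monotonicity inner product in $y$ is therefore $0 \leq 0$, exactly as before, and the function is linear (hence concave) in $y$.

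The only step needing care is regrouping the $\rho$-terms: the blocks $A_i^T(\cdot)$ summed against $x_{i,1}-x_{i,2}$ must be recombined into $\langle Ax_1-Ax_2-(z_1-z_2),\cdot\rangle$, using $\sum_i A_i x_i = Ax$ as done throughout the paper. Once this is written out, there is no real obstacle.
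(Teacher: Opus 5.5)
Your proposal is correct and matches the paper's proof essentially step for step. You use the same $(x,z)$-gradient and the same monotonicity check via Theorem 2.1.3 of \cite{nesterov2003introductory}, with the $\rho$-terms collapsing to $\rho\|A(x_1-x_2)-(z_1-z_2)\|^2 \geq 0$. The check in $y$ is likewise trivial, since $\nabla_y L_{\text{aug}} = Ax - z$ does not depend on $y$.
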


\begin{proof}
We start from checking of convexity.
\begin{equation*}
    \nabla_{(x,z)} L (x,z,y) = \left(\begin{array}{c}
       A_1^T y + \nabla r_1 (x_1) + \rho A_1^T (Ax - z)\\
       \ldots \\
       A_i^T y + \nabla r_i (x_i) + \rho A_i^T (Ax - z)\\
       \ldots \\
       A_n^T y + \nabla r_n (x_n) + \rho A_n^T (Ax - z)\\
       \nabla \ell (z, b) - y + \rho (z - Ax)
    \end{array}\right).
\end{equation*}
Then, we need to check the condition of Theorem 2.1.3 from \cite{nesterov2003introductory}:
\begin{align*}
    &\langle \nabla_{(x,z)} L (x_1,z_1,y) - \nabla_{(x,z)} L(x_2,z_2,y), (x_1, z_1) - (x_2, z_2)\rangle 
    \\
    &\hspace{1cm}= \langle \left(\begin{array}{c}
       \nabla r_1 (x_{1,1}) - \nabla r_1 (x_{1,2}) + \rho A_1^T [A(x_1 - x_2) - (z_1 - z_2)]\\
       \ldots \\
       \nabla r_i (x_{i,1}) - \nabla r_i (x_{i,2}) + \rho A_i^T [A(x_1 - x_2) - (z_1 - z_2)]\\
       \ldots \\
       \nabla r_n (x_{n,1}) - \nabla r_n (x_{n,2}) + \rho A_n^T [A(x_1 - x_2) - (z_1 - z_2)]\\
       \nabla \ell (z_1, b) - \nabla \ell (z_2, b) + \rho [z_1 - z_2 - A(x_1 - x_2)]
    \end{array}\right) , \left(\begin{array}{c}
       x_{1,1}- x_{1,2} \\
       \ldots \\
       x_{i,1} - x_{i,2} \\
       \ldots \\
       x_{n,1} - x_{n,2} \\
       z_1 - z_2 
    \end{array}\right)\rangle
    \\
    &\hspace{1cm}=
    \langle \left(\begin{array}{c}
       \nabla r_1 (x_{1,1}) - \nabla r_1 (x_{1,2}) \\
       \ldots \\
       \nabla r_i (x_{i,1}) - \nabla r_i (x_{i,2}) \\
       \ldots \\
       \nabla r_n (x_{n,1}) - \nabla r_n (x_{n,2}) \\
       \nabla \ell (z_1, b) - \nabla \ell (z_2, b)
    \end{array}\right) , \left(\begin{array}{c}
       x_{1,1}- x_{1,2} \\
       \ldots \\
       x_{i,1} - x_{i,2} \\
       \ldots \\
       x_{n,1} - x_{n,2} \\
       z_1 - z_2 
    \end{array}\right)\rangle
    \\
    &\hspace{1.4cm} + \rho \left( \| z_1 - z_2 \|^2 - 2 (z_1 - z_2)^T A (x_1 - x_2) + \| A (x_1 - x_2) \|^2\right)
    \\
    &\hspace{1cm}=
    \langle \left(\begin{array}{c}
       \nabla r_1 (x_{1,1}) - \nabla r_1 (x_{1,2}) \\
       \ldots \\
       \nabla r_i (x_{i,1}) - \nabla r_i (x_{i,2}) \\
       \ldots \\
       \nabla r_n (x_{n,1}) - \nabla r_n (x_{n,2}) \\
       \nabla \ell (z_1, b) - \nabla \ell (z_2, b)
    \end{array}\right) , \left(\begin{array}{c}
       x_{1,1}- x_{1,2} \\
       \ldots \\
       x_{i,1} - x_{i,2} \\
       \ldots \\
       x_{n,1} - x_{n,2} \\
       z_1 - z_2 
    \end{array}\right)\rangle
    \\
    &\hspace{1.4cm} + \rho \| z_1 - z_2  - A (x_1 - x_2)\|^2
    \\&\hspace{1cm}\geq 0.
\end{align*}
Here we also use that $\ell$ and $r_i$ are convex. It means that the problem (\ref{eq:vfl_lin_spp_aug}) is convex on $(x,z)$. Next, we move to check concavity.
\begin{equation*}
    \nabla_{y} L(x,z,y) = \left(
       \sum_{i=1}^n A_i x_i - z \right).
\end{equation*}
Then, again with Theorem 2.1.3 from \cite{nesterov2003introductory}:
\begin{equation*}
    \langle \nabla_{y} L(x,z,y_1) - \nabla_{y} L(x,z,y_2), y_1 - y_2\rangle = 0 \leq 0,
\end{equation*}
we get that the problem (\ref{eq:vfl_lin_spp_aug}) is concave on $y$.
\end{proof}

\begin{lemma} \label{lem:matrix}
{
For any matrix $A = [A_1 \ldots A_n]$ it holds that $\| A \| \leq \sqrt{ \sum_{i=1}^n \| A_i \|^2}.$
}
\end{lemma}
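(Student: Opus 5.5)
The inequality $\|A\| \le \sqrt{\sum_{i=1}^n \|A_i\|^2}$ concerns the spectral norm of a column-block matrix. I would prove it directly from the definition $\|A\| = \max_{\|x\|=1}\|Ax\|$, using the block structure $Ax = \sum_{i=1}^n A_i x_i$ already exploited throughout the proofs above (for instance, in the proof of Theorem \ref{th:EG_basic_1}, where $\sum_i A_i x_i = Ax$).

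First, fix any $x = [x_1^T,\ldots,x_n^T]^T \in \R^d$ with $\|x\|=1$, so that $\sum_{i=1}^n \|x_i\|^2 = 1$. Second, apply the triangle inequality and the definition of each block norm:
\begin{align*}
\|Ax\| = \Big\|\sum_{i=1}^n A_i x_i\Big\| \le \sum_{i=1}^n \|A_i\|\,\|x_i\|.
\end{align*}
Third, bound the right-hand side by the Cauchy--Schwarz inequality in $\R^n$, applied to the vectors $(\|A_i\|)_{i=1}^n$ and $(\|x_i\|)_{i=1}^n$:
\begin{align*}
\sum_{i=1}^n \|A_i\|\,\|x_i\| \le \Big(\sum_{i=1}^n \|A_i\|^2\Big)^{1/2}\Big(\sum_{i=1}^n \|x_i\|^2\Big)^{1/2} = \Big(\sum_{i=1}^n \|A_i\|^2\Big)^{1/2}.
\end{align*}
Taking the maximum over unit $x$ then gives the claim.

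An equivalent route goes through $AA^T = \sum_{i=1}^n A_i A_i^T$ and the subadditivity of $\lambda_{\max}$ on positive semidefinite matrices. It yields $\lambda_{\max}(AA^T) \le \sum_{i=1}^n \lambda_{\max}(A_iA_i^T)$, which is the same statement squared. From either route one reads off the corollary used in the main text, $\lambda_{\max}(AA^T) \le n \max_i \lambda_{\max}(A_iA_i^T)$.

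There is no real obstacle here. The only point that needs care is keeping the norm of the stacked vector consistent with the partition of $x$ into blocks, so that $\sum_{i=1}^n \|x_i\|^2 = \|x\|^2 = 1$.
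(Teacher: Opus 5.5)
Your proof is correct and uses the same idea as the paper: bound $\|Ax\|$ by $\sum_i \|A_i\|\,\|x_i\|$ with the triangle inequality, then maximize over how the unit norm of $x$ is split among the blocks. The paper carries out this maximization explicitly for two blocks (setting $\|x_1\|^2=\alpha$ and optimizing over $\alpha\in[0;1]$) and then inducts on $n$, whereas your single Cauchy--Schwarz step in $\R^n$ handles all $n$ blocks at once and is slightly cleaner.
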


{
\begin{proof}
Let us consider $A = [A_1 A_2]$. Then, we have 
\begin{align*}
    \| A \| 
    &= \sup_{\|x\|^2 = 1} \left[ \| A x \| \right] = \sup_{ \|x_1 \|^2 + \|x_2 \|^2 = 1} \left[ \| A_1 x_1 + A_2 x_2\| \right] 
    \\
    &\leq \sup_{ \|x_1 \|^2 + \|x_2 \|^2 = 1} \left[ \| A_1 x_1 \| + \| A_2 x_2 \| \right]
    \\
    &= \sup_{ \alpha \in [0;1]} \left[ \sup_{ \|x_1 \|^2 = \alpha } \| A_1 x_1 \| +\sup_{ \|x_1 \|^2 = 1 - \alpha } \| A_2 x_2 \| \right] 
    \\
    &= 
    \sup_{ \alpha \in [0;1]} \left[ \sqrt{\alpha} \cdot \sup_{ \|x_1 \|^2 = 1 } \| A_1 x_1 \| + \sqrt{1 - \alpha} \cdot \sup_{ \|x_1 \|^2 = 1} \| A_2 x_2 \| \right]
    \\
    &=
    \sup_{ \alpha \in [0;1]} \left[ \sqrt{\alpha} \| A_1 \| + \sqrt{1 - \alpha} \| A_2 \| \right].
\end{align*}
 Optimizing $\alpha \in [0;1]$, we get that $\alpha^* = \frac{\| A_1\|^2}{\| A_1\|^2 + \| A_2\|^2}$ and 
 $$ 
 \| A \| \leq \sqrt{\| A_1 \|^2 + \| A_2 \|^2}.
 $$ 
 This result can be extended to any $n$ by induction. In more details, if $A = [\tilde A_{n-1} A_n]$ with $\tilde A_{n-1} = [A_1 \ldots A_{n-1}]$ and we assume that $ \| \tilde A_{n-1} \| \leq \sqrt{ \sum_{i=1}^{n-1} \| A_i\|^2}$, then we have
 $$ 
 \| A \| \leq \sqrt{\| \tilde A_{n-1} \|^2 + \| A_n \|^2} \leq \sqrt{ \sum_{i=1}^{n-1} \| A_i\|^2 + \| A_n \|^2} = \sqrt{ \sum_{i=1}^{n} \| A_i\|^2}.
 $$
\end{proof}
}

\subsection{On convergence gap} \label{sec:gap}

{
In our theoretical analysis, we use the criterion: $\text{gap}(x, z, y) := \max_{\tilde y \in \cY} L(x, z, \tilde y) -$  \\ $ \min_{\tilde x, \tilde z \in \cX, \cZ} L(\tilde x, \tilde z, y)$, where $L(x, z, y) = \ell (z,b) +r(x) + y^T (Ax - z )$. Since \\ $\min_{\tilde x, \tilde z \in \cX, \cZ} L(\tilde x, \tilde z, y) \leq L(x^*, z^*, y)$, we get 
$$ \max_{\tilde y \in \cY} L(x, z, \tilde y) - L(x^*, z^*, y) \leq \text{gap}(x, z, y). 
$$ 
We note that $Ax^* = z^*$, then 
\begin{align*}
    \max_{\tilde y \in \cY} L(x, z, \tilde y) - L(x^*, z^*, y) 
    =& [ \ell(z, b) + r(x) + \max_{\tilde y \in \cY} \tilde y^T(Ax -z) ] 
    \\
    &- [ \ell (z^*, b) + r(x^*) + (y)^T(Ax^* - z^*) ] 
    \\
    =& [ \ell ( z, b) + r(x) + \max_{\tilde y \in \cY} \tilde y^T(Ax-z) ] - [ \ell (Ax^*, b) + r(x^*)]. 
\end{align*}
When taking maximum for $y \in \cY$ we can define $\cY$ as we need. In particular, we can choose $\cY = [ y \in R^s ~| ~|| y ||_{\infty} \leq C ]$ for some $C > 0$. Then 
$$ 
\max_{\tilde y \in \cY} \tilde y^T(Ax-z) = C \| A x - z^k \|_1 \geq C \| A x - z \|. 
$$ 
Finally, we get 
$$ 
\text{gap}(x, z, y) \geq [ \ell(z, b) + r(x) - \ell (Ax^*, b) + r(x^*)] + C \| A x^k - z^k \| = \text{newgap}(x, z). 
$$
If it holds that $\text{gap}(x, z, y) \leq \varepsilon$, we guarantee that $\text{newgap}(x, z, y) \leq \varepsilon$. The question that arises is whether $\text{newgap}(x, z, y) \leq \varepsilon$ implies that $[ \ell(z, b) + r(x) - \ell (Ax^*, b) + r(x^*) ]$ as well as $\| A x - z\|$ are also “small” in the sense that they are smaller than $\varepsilon$ (up to constants). In general, the answer is no: $[ \ell(z, b) + r(x) - \ell (Ax^*, b) + r(x^*)]$ might be very small (and negative), and $\| A x - z \|_2$ can be very large. But Theorem 3.60 from \cite{beck2017first} shows that if $C$ is large enough such a conclusion can be drawn. In particular, if $\text{newgap}(x^k, z^k, y^k) \leq \varepsilon$ then $C \| A x^k - z^k \|_2 \leq \varepsilon$ and we have $A x^k \to z^k$. 
}

\subsection{On tuning of stepsize} \label{sec:beta}
{
We can rewrite the original problem (\ref{eq:main_problem}) in the following way:
\begin{equation*}
        \min_{x \in \R^d} ~~ \left[\ell\left(Ax,b\right) + r(x)\right] = \left[ \ell\left( \frac{1}{\beta} \cdot \beta Ax,b\right) + r(x)\right] = \left[ \tilde \ell\left(\tilde Ax,b\right) + r(x)\right],
\end{equation*}
where $\tilde \ell \left(y,b\right) = \ell \left( \frac{y}{\beta}, b\right)$ and $\tilde A = \beta A$. Next, we can estimate $L_{\tilde \ell}$ and $\lambda_{\max} (\tilde A^T \tilde A)$:
\begin{align*}
    \| \nabla \tilde \ell (y_1 , b) -  \nabla \tilde \ell (y_2, b)  \| &= \| \nabla_y \ell \left(\frac{y_1}{\beta} , b \right) -  \nabla_y \ell \left(\frac{y_2}{\beta}, b \right)  \| 
    \\
    &= \frac{1}{\beta}\| \nabla \ell \left(\frac{y_1}{\beta} , b \right) -  \nabla \ell \left(\frac{y_2}{\beta}, b \right)  \| 
    \\
    &\leq \frac{L_{\ell}}{\beta^2} \| y_1 - y_2\|,
    \\
    \lambda_{\max} (\tilde A^T \tilde A) &= \lambda_{\max} (\beta^2 A^T A) = \beta^2 \lambda_{\max} (A^T A).
\end{align*}
We get that $L_{\tilde \ell} = \frac{L_{\ell}}{\beta^2}$ and $\lambda_{\max} (\tilde A^T \tilde A) = \beta^2 \lambda_{\max} (A^T A)$. 

Our goal is to equivalize $L_{\tilde \ell}$ and $\sqrt{\lambda_{\max} (\tilde A^T \tilde A)}$ in Theorem \ref{th:EG_basic_1} to make stepsize bigger for free. Then
$$
\frac{L_{\ell}}{\beta^2}= L_{\tilde \ell} = \sqrt{\lambda_{\max} (\tilde A^T \tilde A)} = \beta \sqrt{\lambda_{\max} (A^T A)}
$$
$$
\Rightarrow \quad \beta = \frac{L^{1/3}_{\ell}}{\lambda^{1/6}_{\max} (A^T A)} \quad \Rightarrow \quad L_{\tilde \ell} = L^{1/3}_{\ell} \lambda^{1/3}_{\max} (A^T A).
$$
Hence, the bound on the stepsize in Theorem \ref{th:EG_basic_1} become
$$
\gamma = \frac{1}{2} \cdot \min \left\{ 1; \frac{1}{\sqrt[3]{L_{\ell} \lambda_{\max} (A^T A)}}; \frac{1}{L_r}\right\}.
$$
This, in turn, modifies the convergence result of the theorem as follows:
$$ 
\text{gap}(\bar x^K, \bar z^K,\bar y^K) = \mathcal{O} \left(  \frac{ ( 1 + \sqrt[3]{L_{\ell} \lambda_{\max} (A^T A)} + L_r  ) D^2}{K}  \right).
$$
}

\newpage

\section{Additional Experiments} \label{app:exp}

In the main part (Figure \ref{fig:comparison1} of Section \ref{sec:exp}) we shown that the concept of the saddle point reformulation and Algorithm \ref{alg:EG} for its solution is competitive in the deterministic case. Here we present additional experiments. 

As in the main part, we conduct experiments on the linear regression problem:
$$
\min_{x \in \R^d} f(x) = \tfrac{1}{2}\|Ax - b \|^2 + \lambda \| x \|^2_2.
$$

We take \texttt{mushrooms}, \texttt{a9a}, \texttt{w8a} and \texttt{MNIST} datasets from LibSVM library \cite{chang2011libsvm}. We vertically (by features) uniformly divide the whole dataset between 5 devices. 

First we repeat the same experiments as in the main part, but now for each method we tune the parameters using a grid search. The results are shown in Figure \ref{fig:comparison1_tuned}. If we compare Figure \ref{fig:comparison1} and Figure \ref{fig:comparison1_tuned}, the one method that accelerates the most is Algorithm \ref{alg:EG}. 

\begin{figure*}[h]
\centering
\begin{minipage}{0.49\textwidth}
  \centering
\includegraphics[width =  \textwidth ]{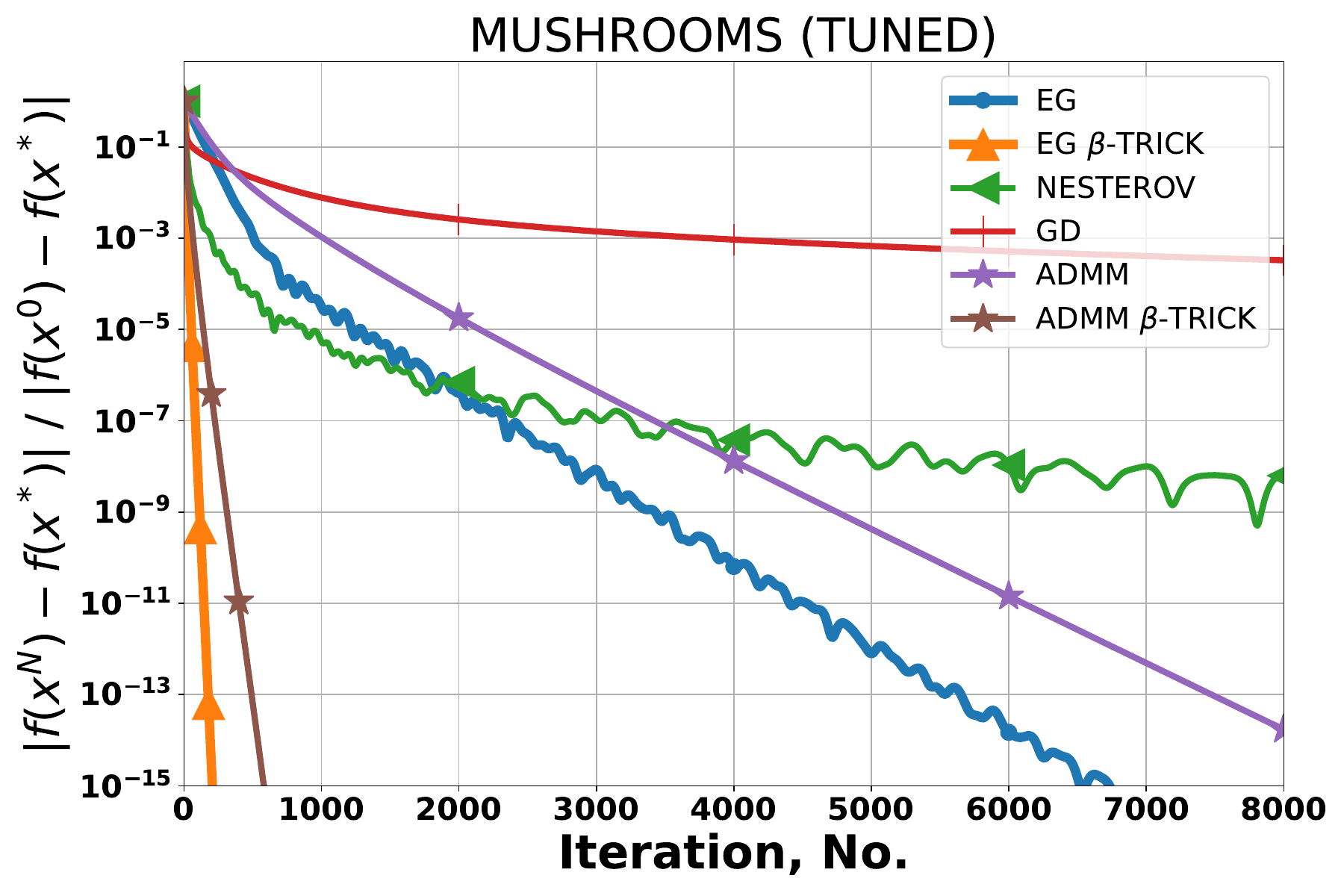}
\end{minipage}%
\begin{minipage}{0.49\textwidth}
  \centering
\includegraphics[width =  \textwidth ]{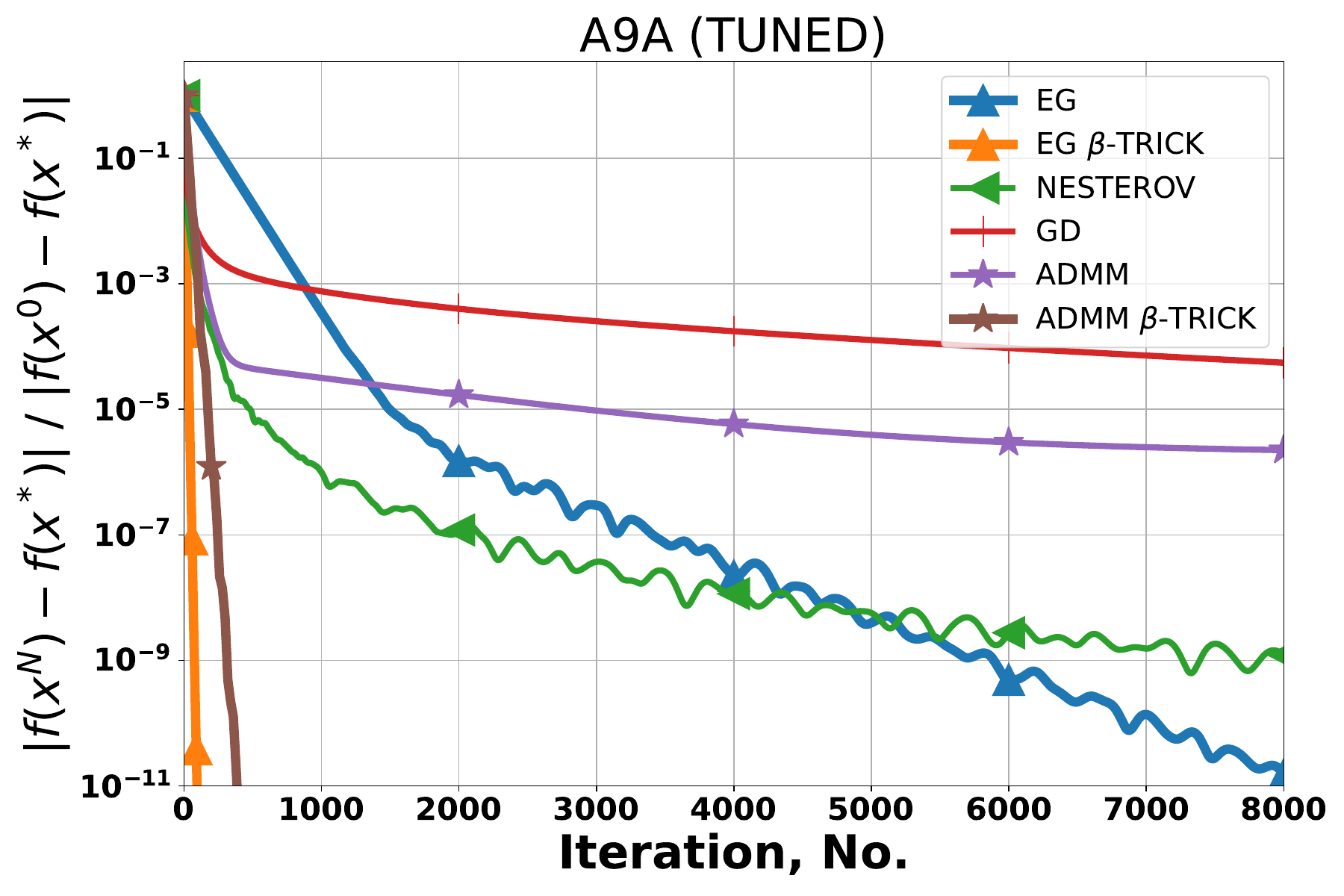}
\end{minipage}%
\\
\begin{minipage}{0.01\textwidth}
\quad
\end{minipage}%
\begin{minipage}{0.49\textwidth}
  \centering
(a) \texttt{mushrooms}
\end{minipage}%
\begin{minipage}{0.49\textwidth}
\centering
 (b) \texttt{a9a}
\end{minipage}%
\\
\begin{minipage}{0.49\textwidth}
  \centering
\includegraphics[width =  \textwidth ]{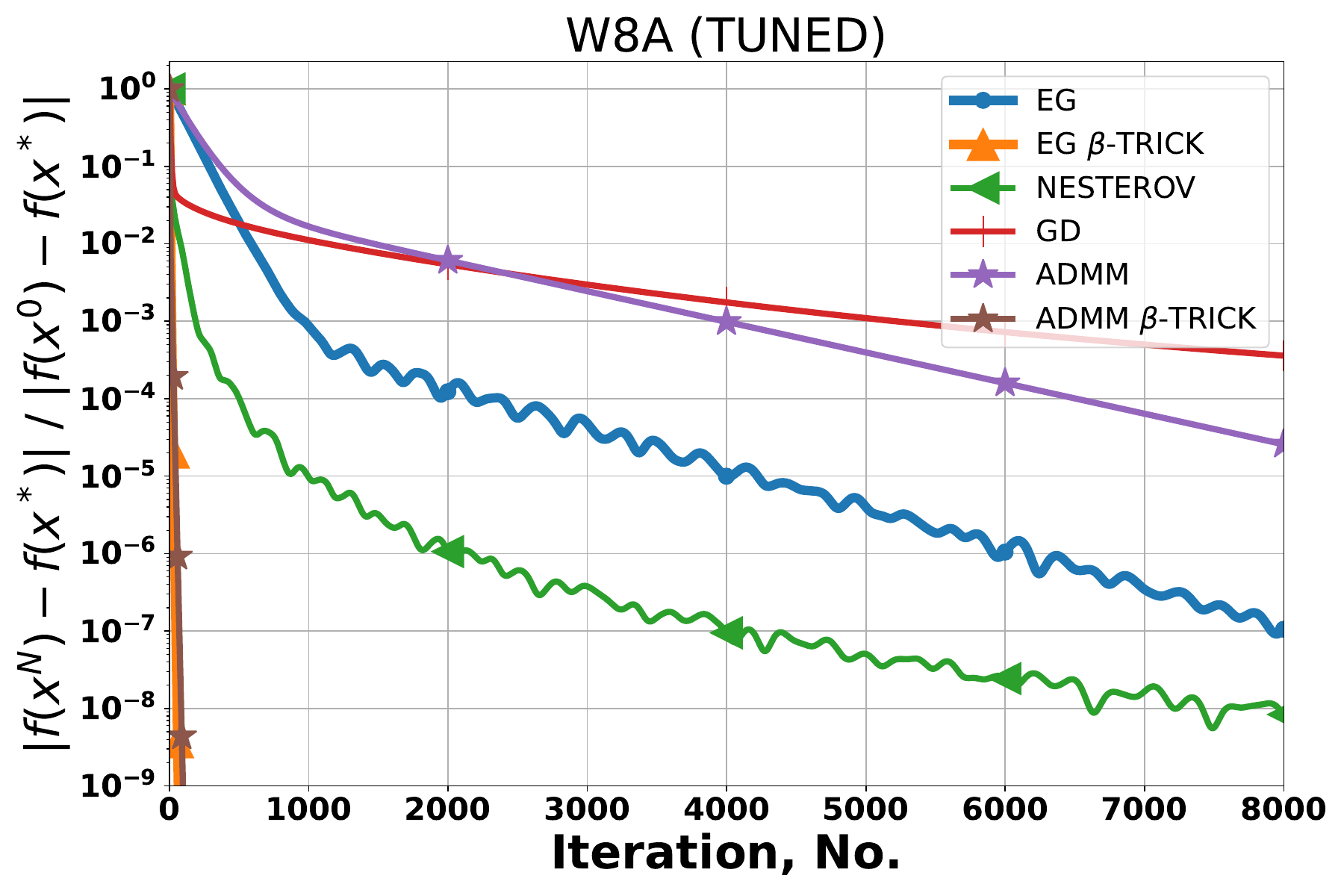}
\end{minipage}%
\begin{minipage}{0.49\textwidth}
  \centering
\includegraphics[width =  \textwidth ]{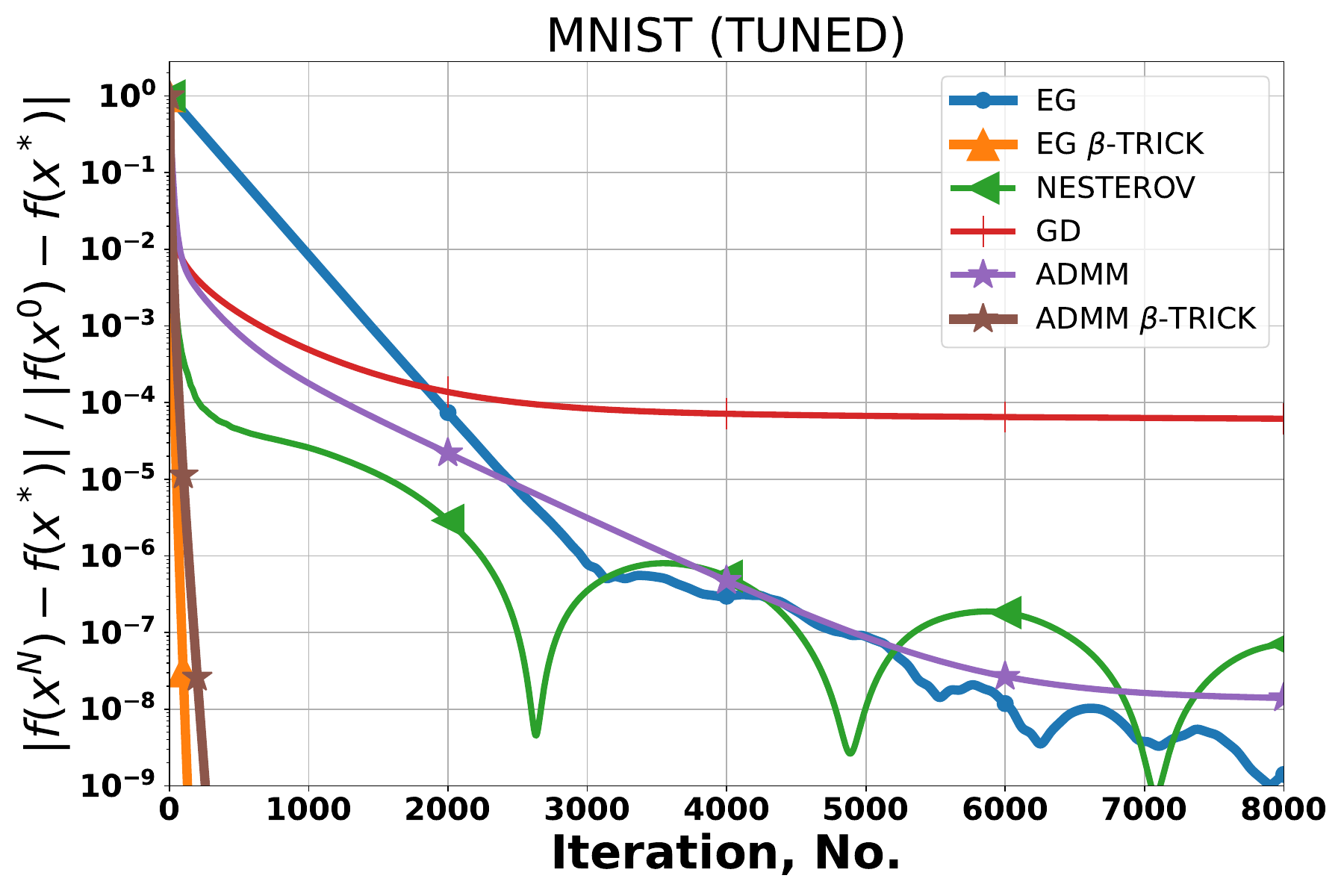}
\end{minipage}%
\\
\begin{minipage}{0.49\textwidth}
\centering
  (c) \texttt{w8a}
\end{minipage}%
\begin{minipage}{0.49\textwidth}
\centering
  (d) \texttt{MNIST}
\end{minipage}%
\caption{
Comparison of tuned methods for solving the VFL problem in different formulations: minimization (\texttt{GD}, \texttt{Nesterov}) and saddle point (\texttt{ADMM}, \texttt{ExtraGradient}/Algorithm \ref{alg:EG}). The comparison is made on LibSVM datasets \texttt{mushrooms}, \texttt{a9a}, \texttt{w8a} and \texttt{MNIST}.}
    \label{fig:comparison1_tuned}
\end{figure*}

{If we continue to talk about step selection, we also suggest considering the option of choosing a step without knowing $\lambda_{\max} (A^T A)$, as suggested by Theorem \ref{th:EG_basic_1}. In particular, Lemma \ref{lem:matrix} gives that the step of the form $\gamma = \tfrac{1}{2} \min \left\{ 1; \tfrac{1}{\sqrt{n \sum_{i=1}^n \lambda_{\max}(A^T_i A_i)}}; \tfrac{1}{L_r}; \tfrac{1}{L_{\ell}} \right\}$ is satisfy the conditions of Theorem \ref{th:EG_basic_1}: $\gamma \leq \tfrac{1}{2} \min \left\{ 1; \tfrac{1}{\sqrt{\lambda_{\max}(A^T A)}}; \tfrac{1}{L_r}; \tfrac{1}{L_{\ell}} \right\}$ (see the proof in Section \ref{app:th:EG_basic_1}). In Figure \ref{fig:comparison1_d4}, we compare Algorithm \ref{alg:EG} with the steps: $\tfrac{1}{2} \min \left\{ 1; \tfrac{1}{\sqrt{\lambda_{\max}(A^T A)}}; \tfrac{1}{L_r}; \tfrac{1}{L_{\ell}} \right\}$ and $\tfrac{1}{2} \min \left\{ 1; \tfrac{1}{\sqrt{n \sum_{i=1}^n \lambda_{\max}(A^T_i A_i)}}; \tfrac{1}{L_r}; \tfrac{1}{L_{\ell}} \right\}$. It can be seen that the second step selection option does not significantly worsen convergence, and sometimes improves it.}

\begin{figure*}[h]
\centering
\begin{minipage}{0.49\textwidth}
  \centering
\includegraphics[width =  \textwidth ]{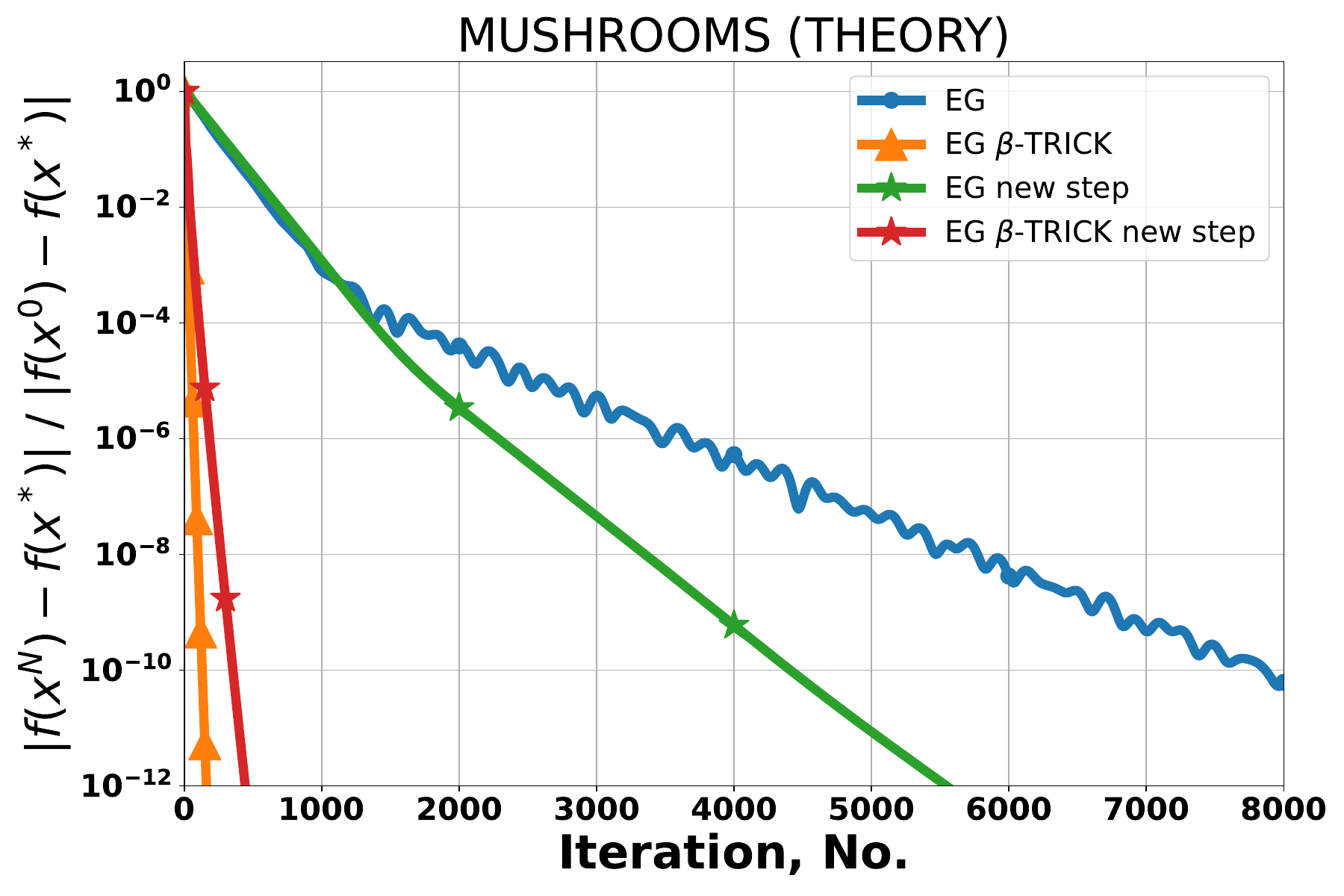}
\end{minipage}%
\begin{minipage}{0.49\textwidth}
  \centering
\includegraphics[width =  \textwidth ]{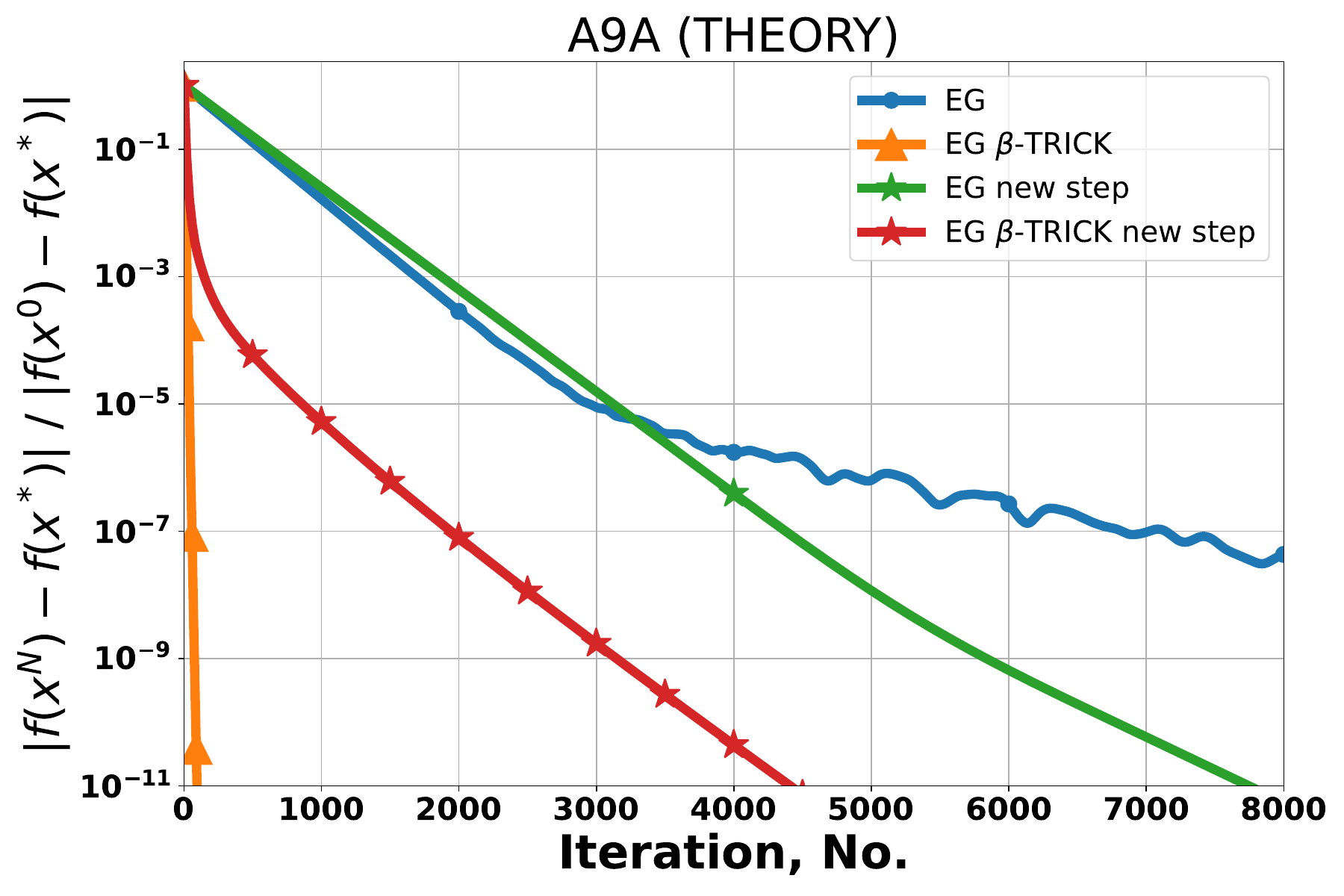}
\end{minipage}%
\\
\begin{minipage}{0.01\textwidth}
\quad
\end{minipage}%
\begin{minipage}{0.49\textwidth}
  \centering
(a) \texttt{mushrooms}
\end{minipage}%
\begin{minipage}{0.49\textwidth}
\centering
 (b) \texttt{a9a}
\end{minipage}%
\\
\begin{minipage}{0.49\textwidth}
  \centering
\includegraphics[width =  \textwidth ]{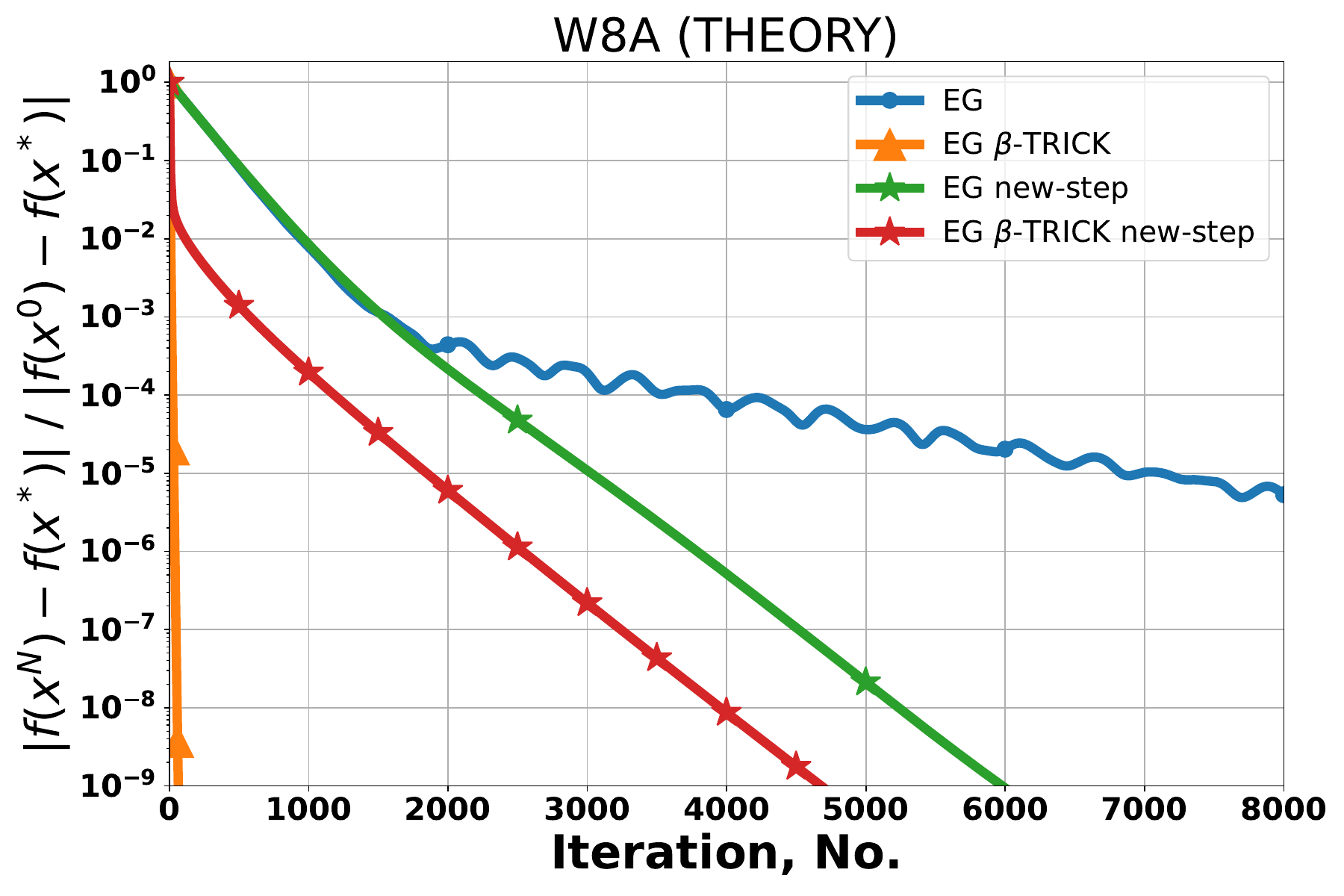}
\end{minipage}%
\begin{minipage}{0.49\textwidth}
  \centering
\includegraphics[width =  \textwidth ]{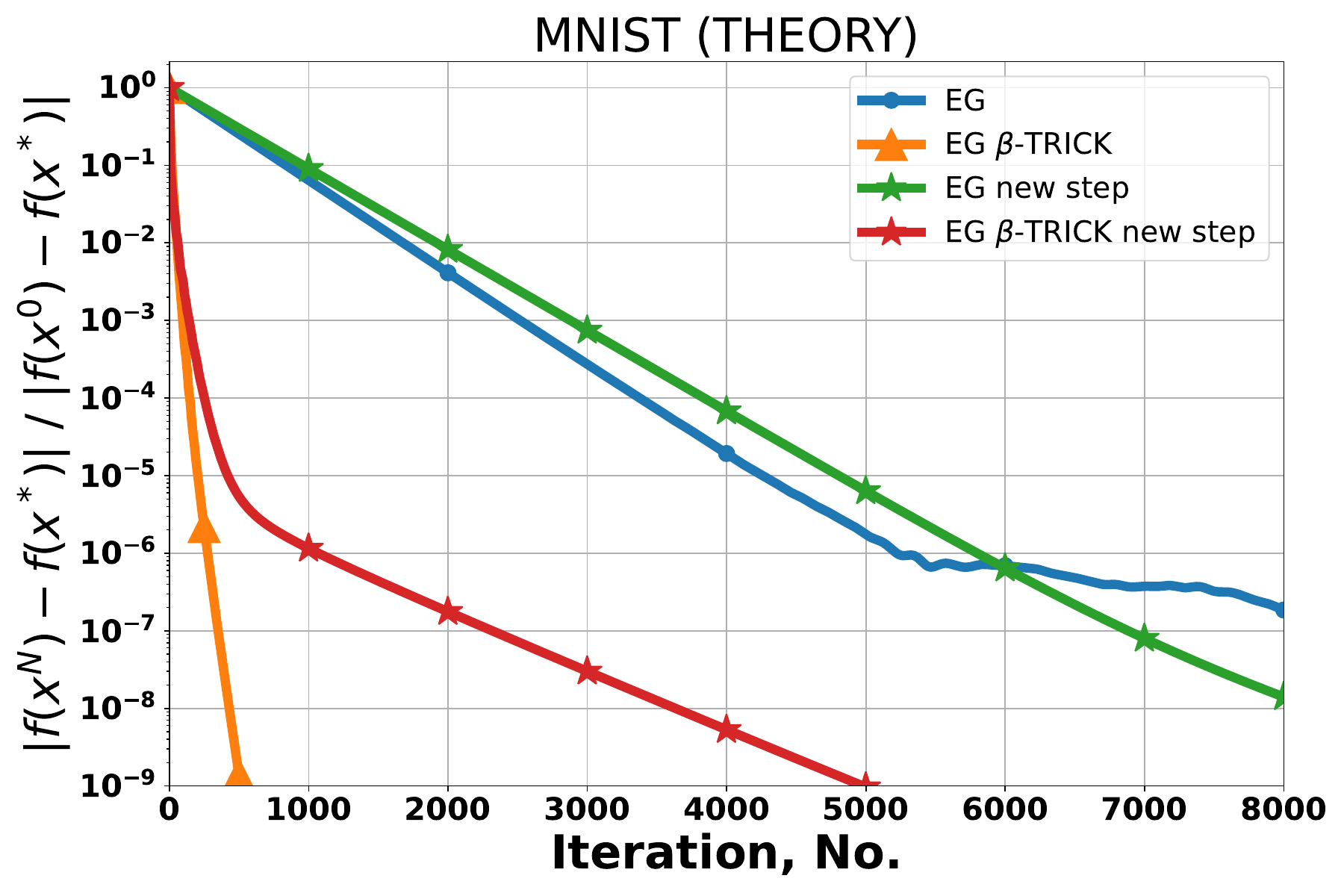}
\end{minipage}%
\\
\begin{minipage}{0.49\textwidth}
\centering
  (c) \texttt{w8a}
\end{minipage}%
\begin{minipage}{0.49\textwidth}
\centering
  (d) \texttt{MNIST}
\end{minipage}%
\caption{
Comparison of Algorithm \ref{alg:EG} for solving the VFL problem with different stepsize tunings: according to Theorem \ref{th:EG_basic_1} and Lemma \ref{lem:matrix}. The comparison is made on LibSVM datasets \texttt{mushrooms}, \texttt{a9a}, \texttt{w8a} and \texttt{MNIST}.}
    \label{fig:comparison1_d4}
\end{figure*}

Next, we want to consider modifications of Algorithm \ref{alg:EG} and show that they can speed Algorithm \ref{alg:EG} up from different points of view.

In the first group of experiments with modifications (Figure \ref{fig:comparison2_unbiased}), we test the performance of Algorithm \ref{alg:EG_quantization}. We use the compression operator $Q = \text{RandK}\%$, which is a random selection coordinates: 100\% (Algorithm \ref{alg:EG}), 50\%, 25\%, 10\%. An important detail is that we set the same random generator and seed on each of the devices. Therefore, at each iteration we send random coordinates, but they are the same for all devices.  The comparison is made in terms of the number of full vectors transmitted. In contrast to the main part, here we tune stepsizes, since with the theoretical step it is not possible to achieve the best acceleration compared to Algorithm \ref{alg:EG}. The comparison is done in two settings: the basic one and using the $\beta$-trick (see disscusion after Corollary \ref{cor:EG_basic_1}). The results show that compression can indeed speed up the communication process.

\begin{figure}[h]
\centering
\begin{minipage}{0.4\textwidth}
  \centering
\includegraphics[width =  \textwidth ]{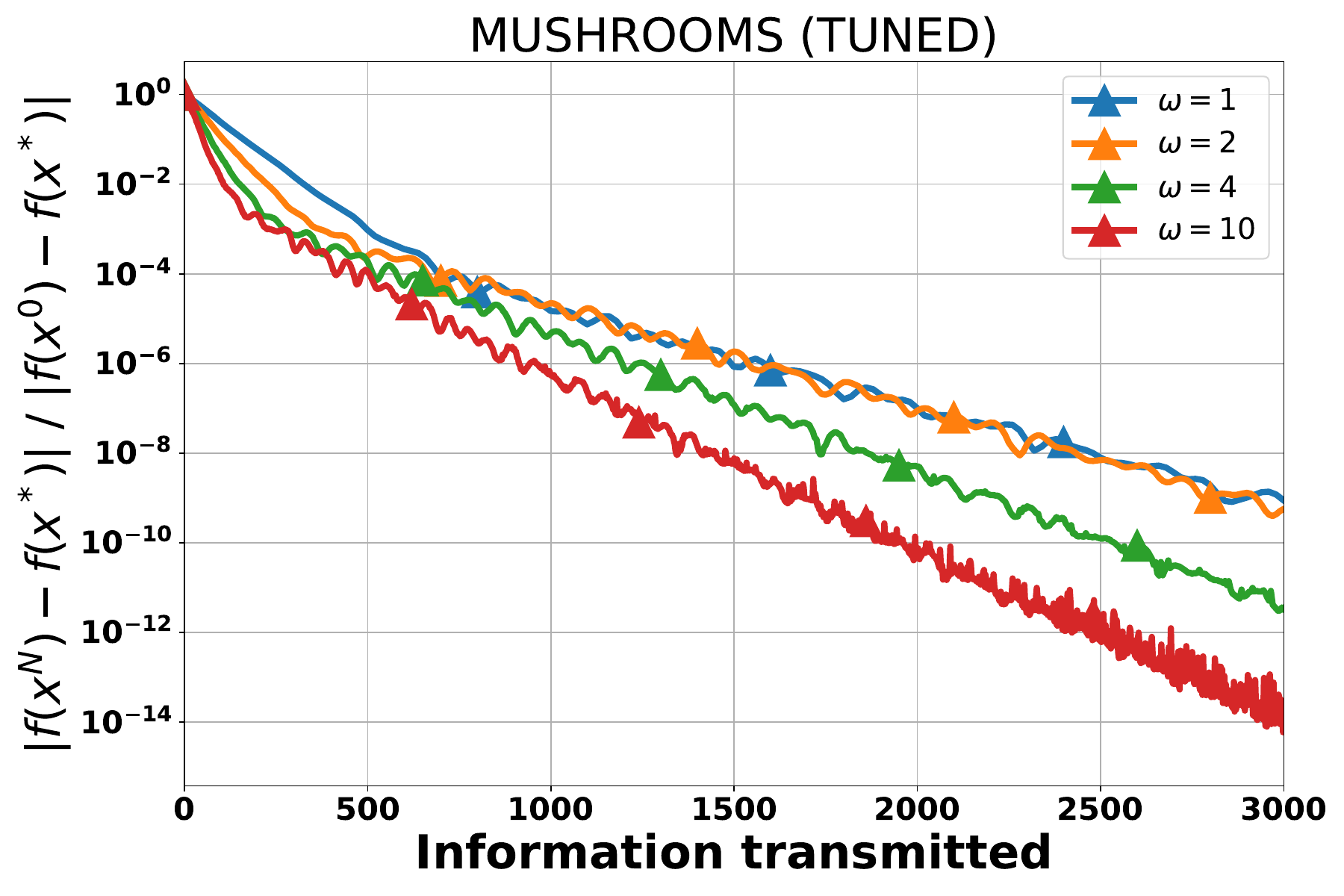}
\end{minipage}%
\begin{minipage}{0.4\textwidth}
  \centering
\includegraphics[width =  \textwidth ]{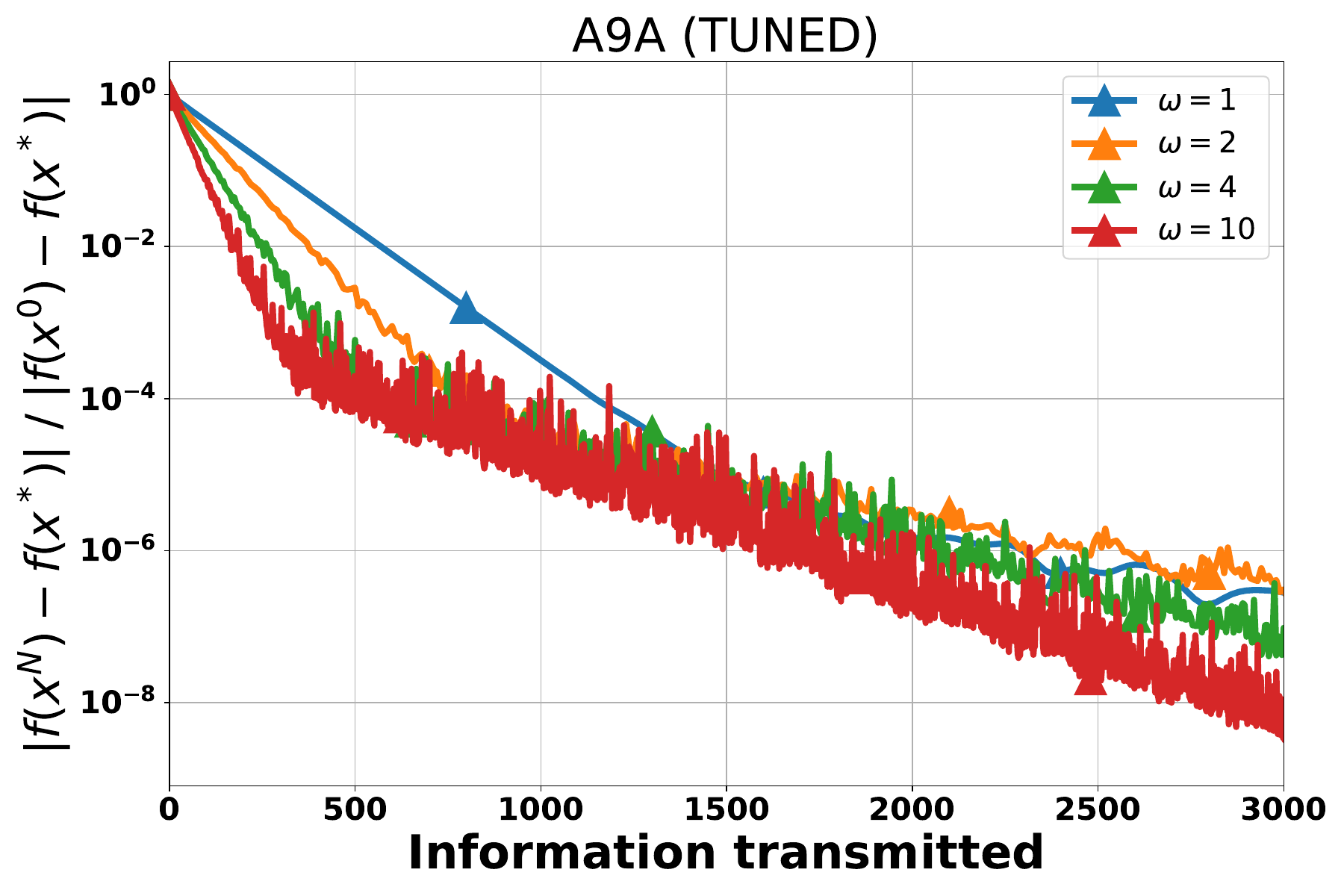}
\end{minipage}%
\\
\begin{minipage}{0.4\textwidth}
  \centering
\includegraphics[width =  \textwidth ]{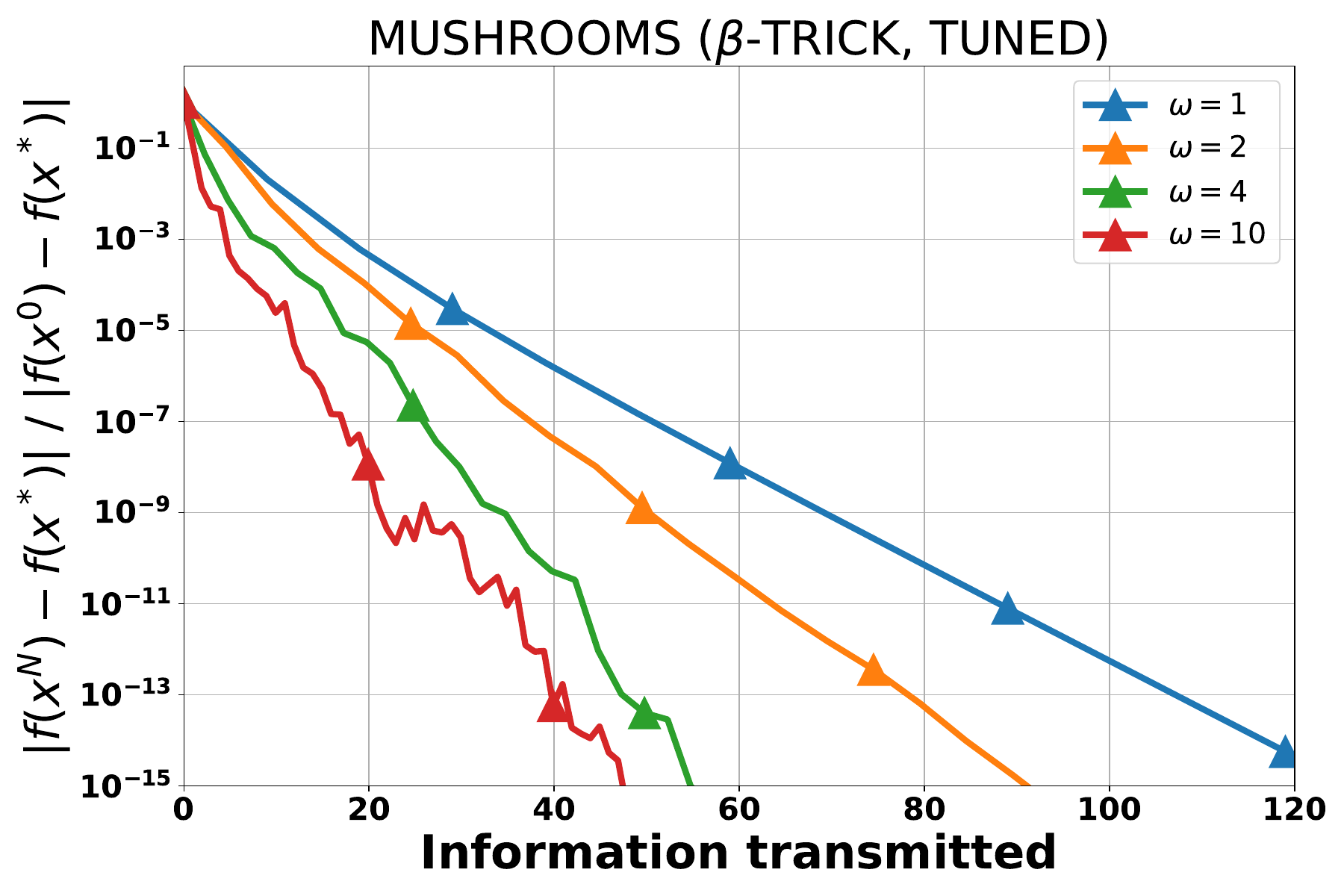}
\end{minipage}%
\begin{minipage}{0.4\textwidth}
  \centering
\includegraphics[width =  \textwidth ]{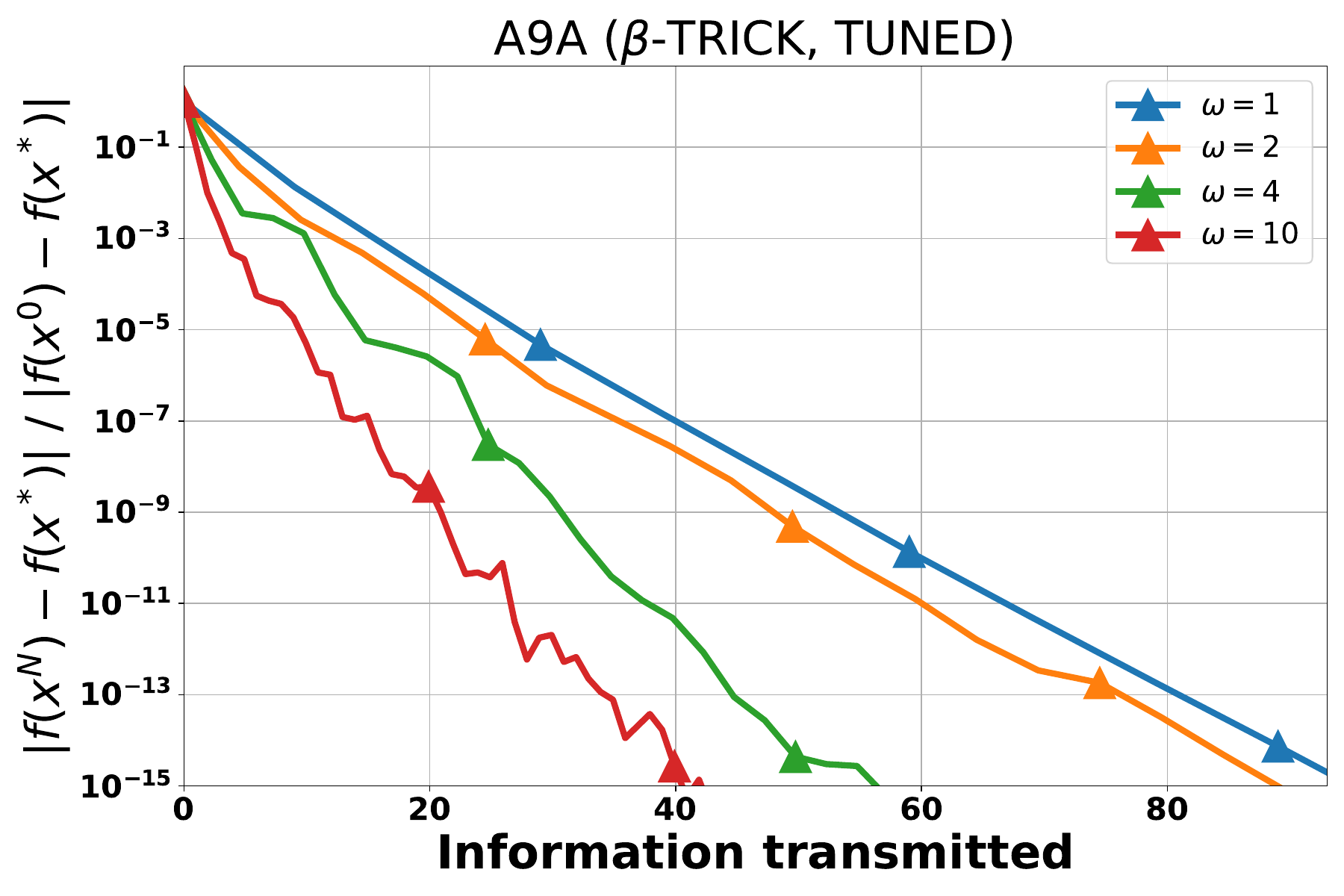}
\end{minipage}%
\\
\begin{minipage}{0.01\textwidth}
\quad
\end{minipage}%
\begin{minipage}{0.4\textwidth}
  \centering
(a) \texttt{mushrooms}
\end{minipage}%
\begin{minipage}{0.4\textwidth}
\centering
 (b) \texttt{a9a}
\end{minipage}%
\\
\begin{minipage}{0.4\textwidth}
  \centering
\includegraphics[width =  \textwidth ]{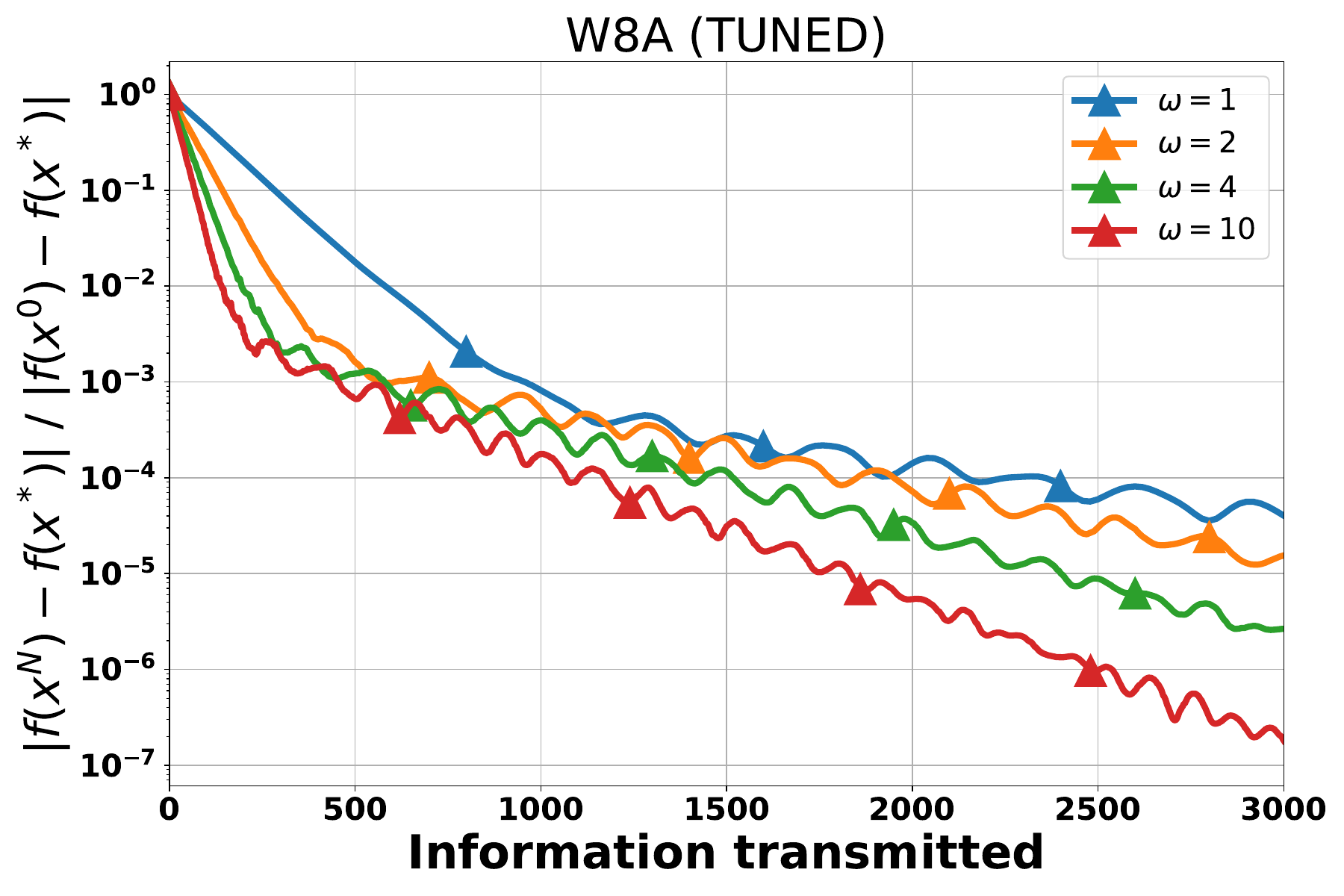}
\end{minipage}%
\begin{minipage}{0.4\textwidth}
  \centering
\includegraphics[width =  \textwidth ]{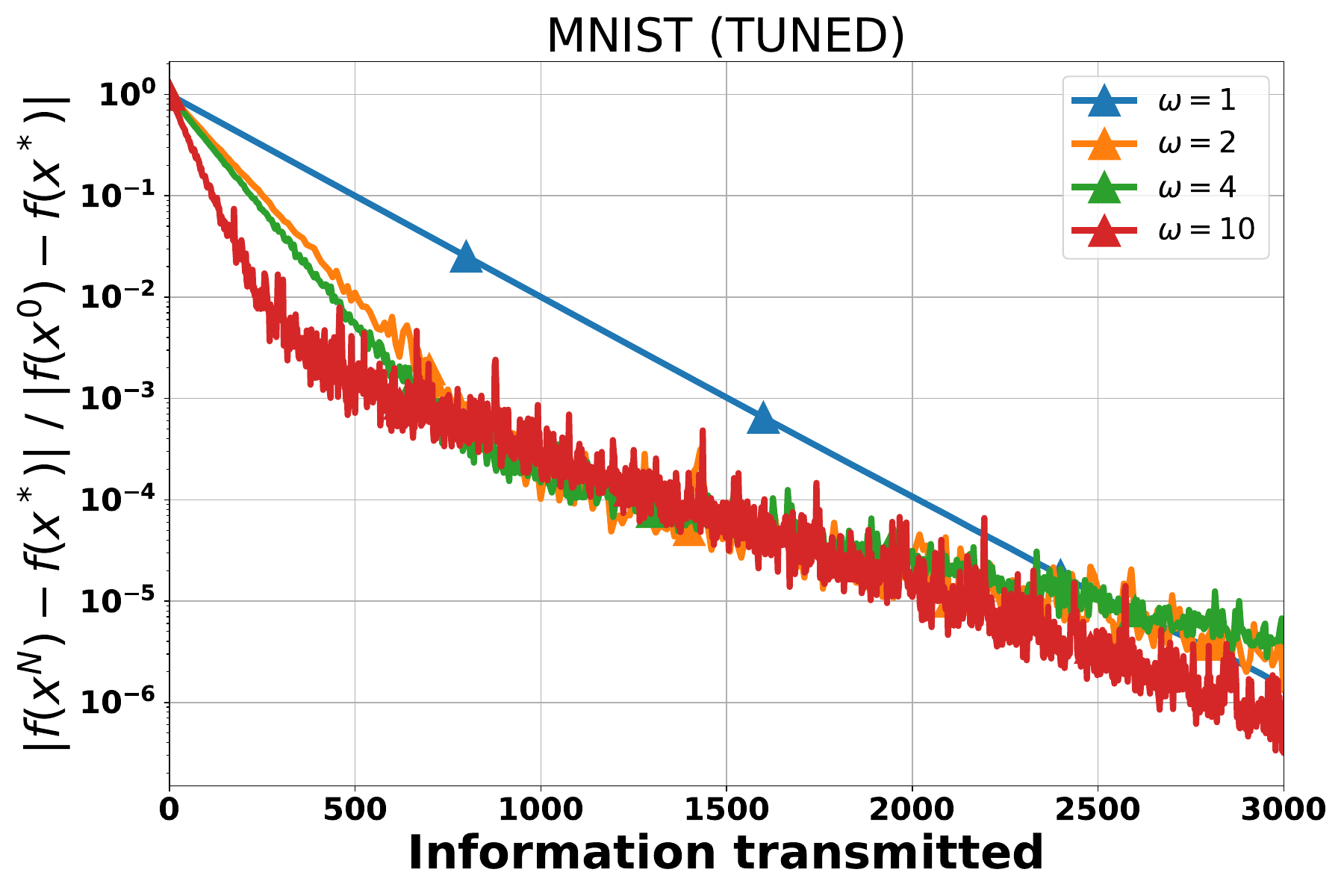}
\end{minipage}%
\\
\begin{minipage}{0.4\textwidth}
  \centering
\includegraphics[width =  \textwidth ]{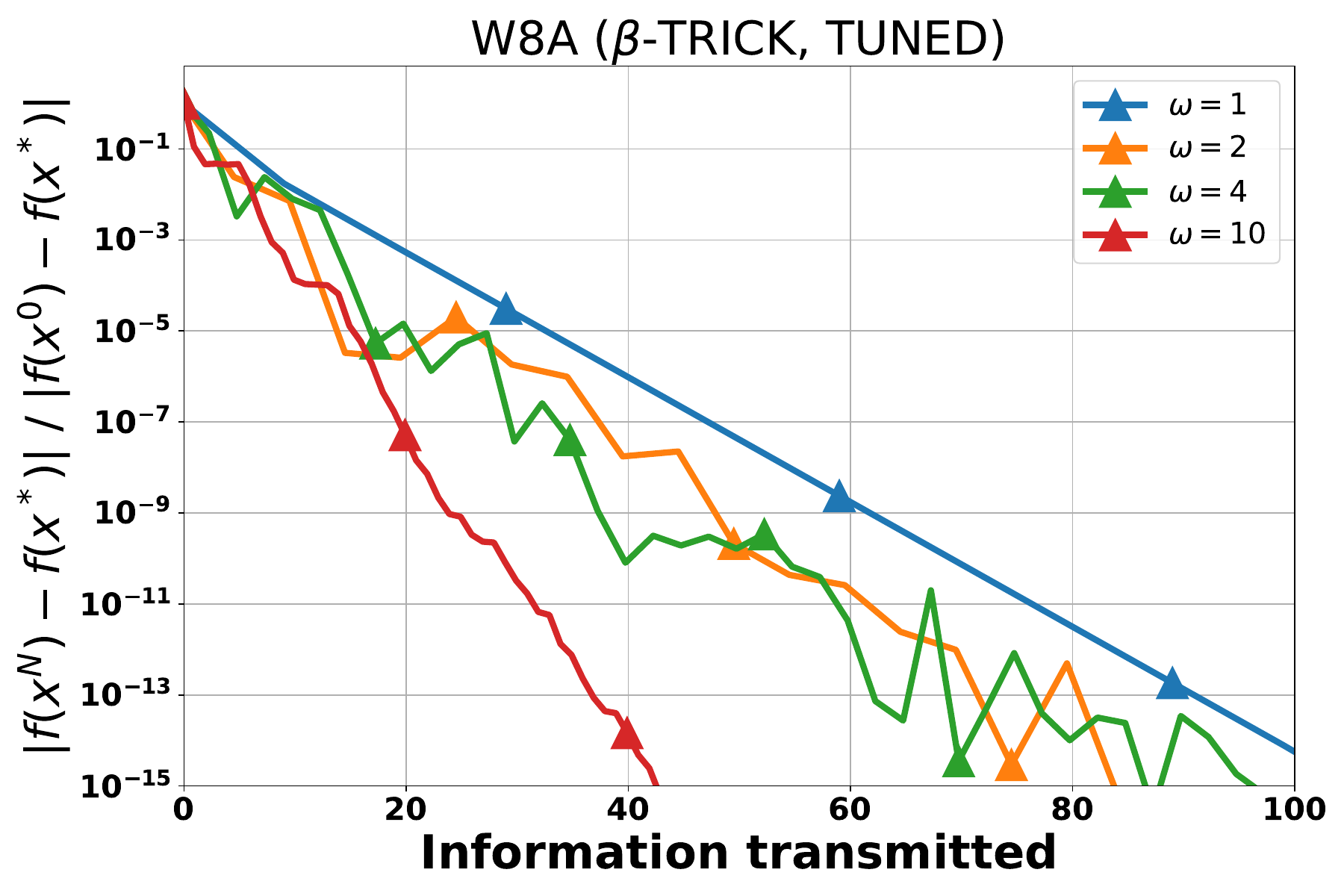}
\end{minipage}%
\begin{minipage}{0.4\textwidth}
  \centering
\includegraphics[width =  \textwidth ]{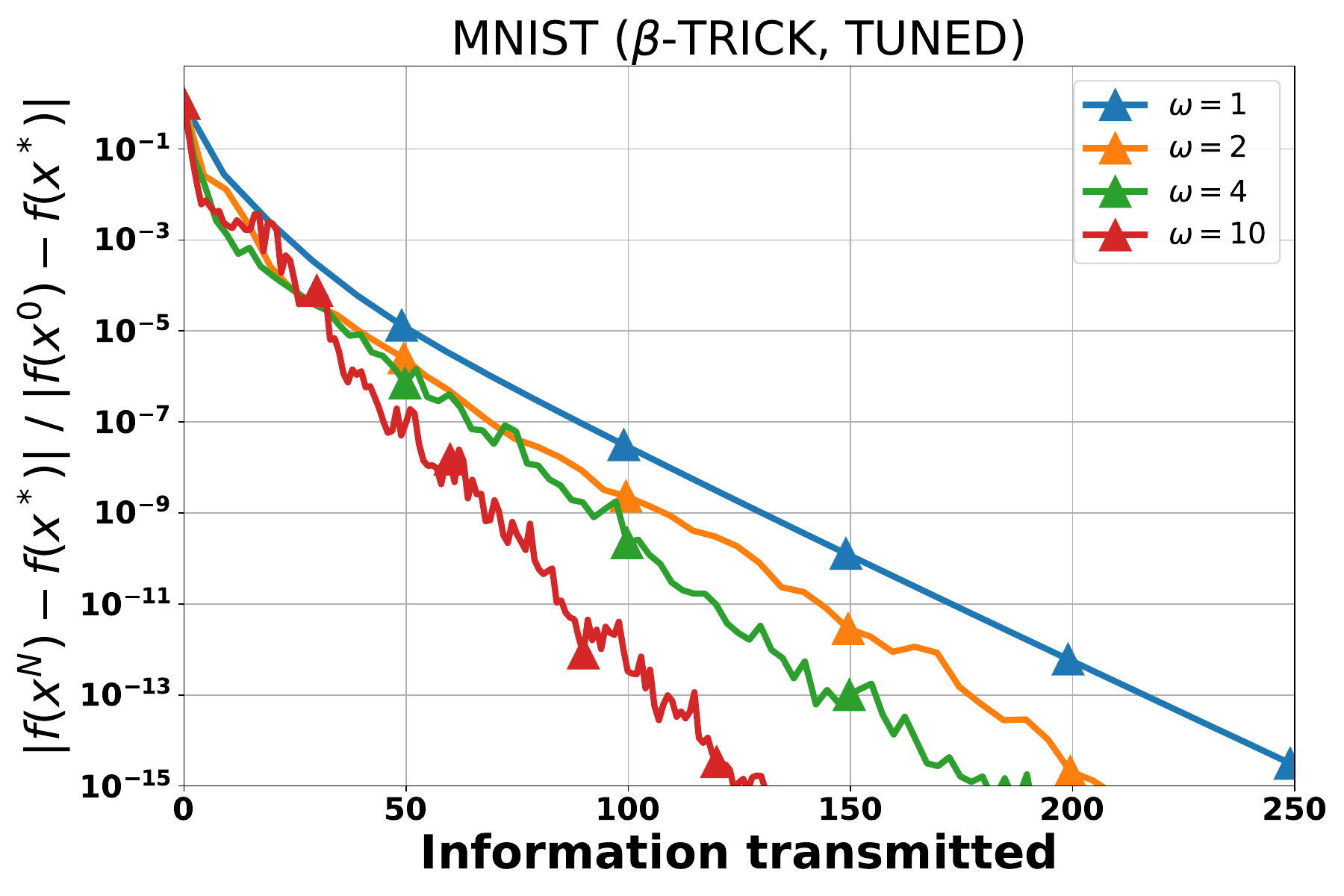}
\end{minipage}%
\\
\begin{minipage}{0.4\textwidth}
\centering
 (c) \texttt{w8a}
\end{minipage}%
\begin{minipage}{0.4\textwidth}
\centering
  (d) \texttt{MNIST}
\end{minipage}%
\vskip-5pt
\caption{
Comparison of Algorithm \ref{alg:EG_quantization} for solving the VFL problem (\ref{eq:vfl_lin_spp_1}). The comparison is made on LibSVM datasets \texttt{mushrooms}, \texttt{a9a}, \texttt{w8a} and \texttt{MNIST}. The compression operator $Q = \text{RandK}\%$. The criterion for comparison is the number of full vectors transmitted. The top line reflects the work of methods on the basic problem, the bottom line solves the problem with the $\beta$-trick (see disscusion after Corollary \ref{cor:EG_basic_1}).}
    \label{fig:comparison2_unbiased}
\end{figure}


In the second group of experiments with modifications (Figure \ref{fig:comparison3}), we test the performance of Algorithm \ref{alg:EG_biased} in comparison with Algorithm \ref{alg:EG_quantization}. We use the compression operators $C = \text{TopK}\%$ (for Algorithm \ref{alg:EG_biased}), which is a greedy selection coordinates, and $Q = \text{RandK}\%$ (for Algorithm \ref{alg:EG_quantization}) with K = 25\% and 10\%. The comparison is made in terms of the number of full vectors transmitted. As in the previous experiment, we tune stepsizes. In experiments, we see that unbiased compression outperforms biased compression almost always. In the horizontal case, the opposite is usually true \cite{beznosikov2020biased}. We attribute this effect to the fact that in the case of RandK\% compression we set the same random generator and seed on different devices and therefore they send the same random coordinates at each iteration. In the case of using TopK\% operator we cannot do this, therefore convergence is worse.

\begin{figure}[h]
\centering
\begin{minipage}{0.4\textwidth}
  \centering
\includegraphics[width =  \textwidth ]{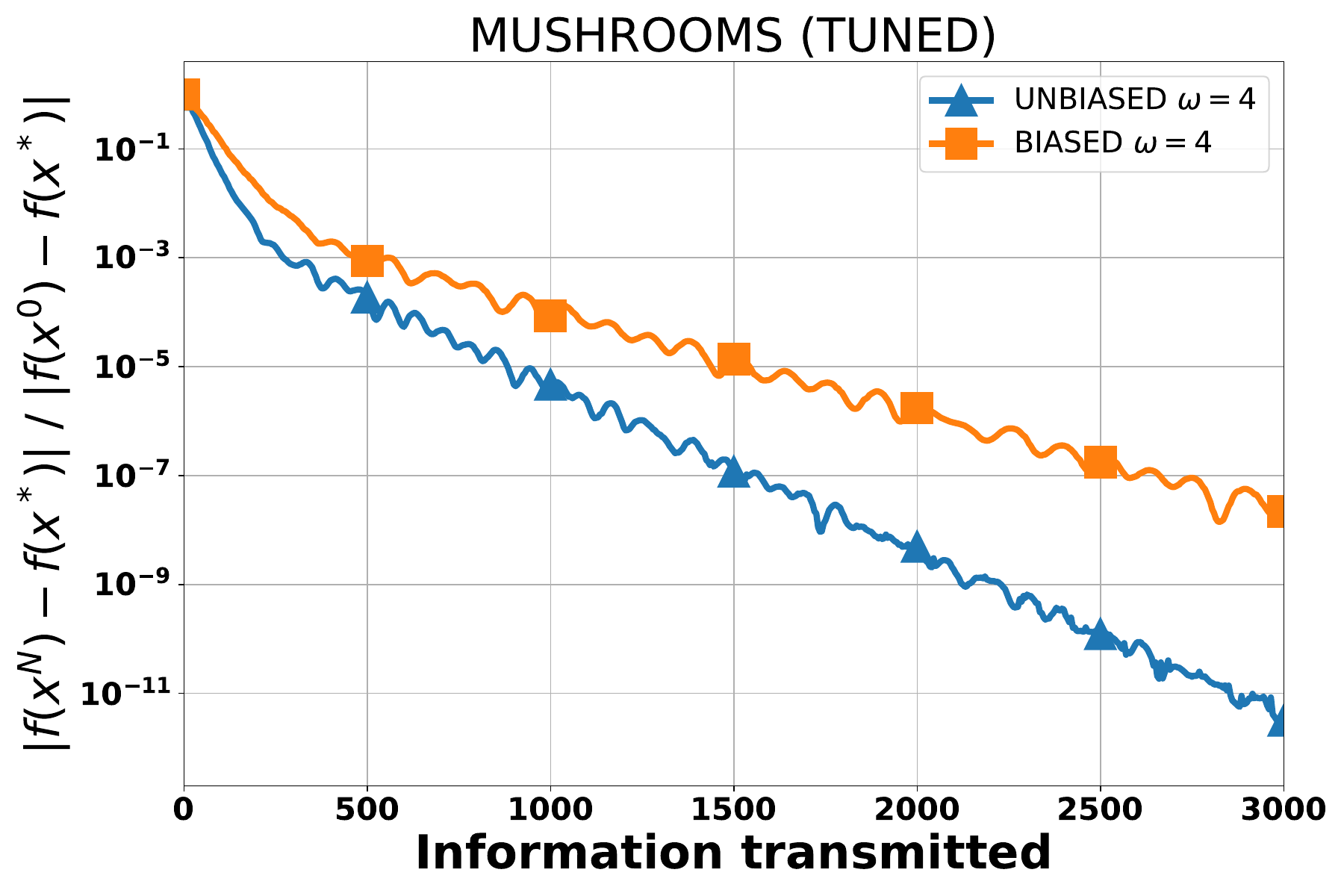}
\end{minipage}%
\begin{minipage}{0.4\textwidth}
  \centering
\includegraphics[width =  \textwidth ]{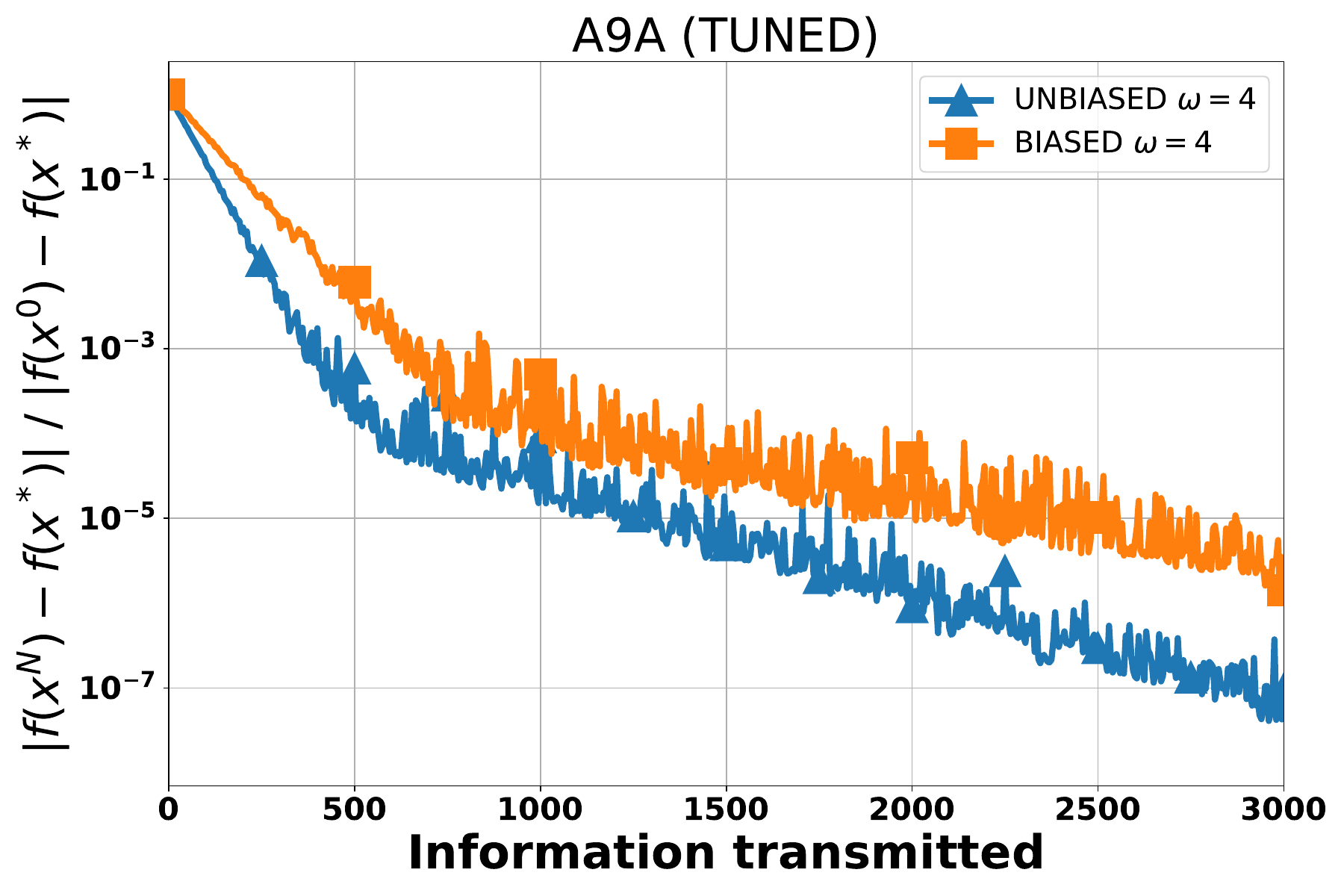}
\end{minipage}%
\\
\begin{minipage}{0.4\textwidth}
  \centering
\includegraphics[width =  \textwidth ]{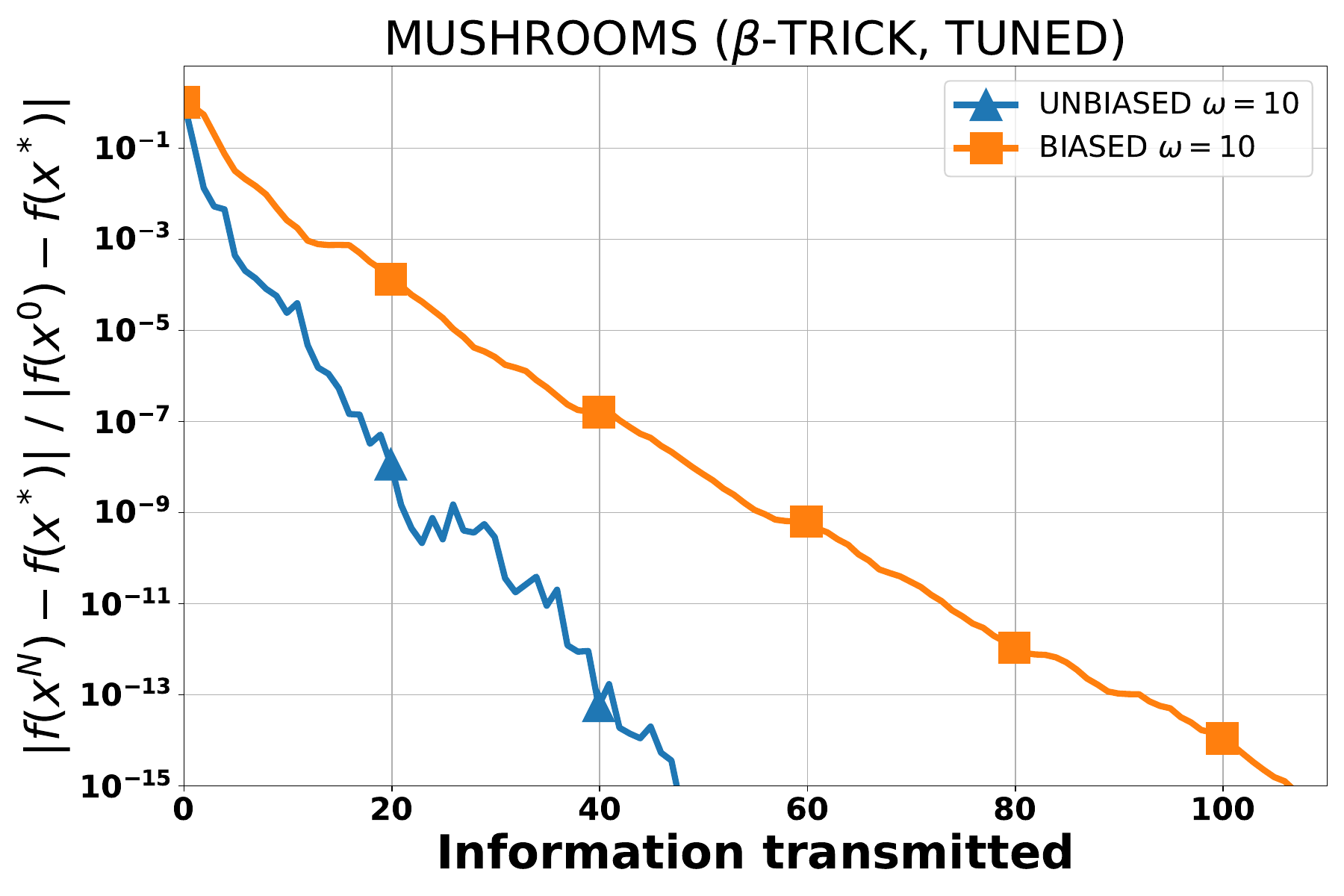}
\end{minipage}%
\begin{minipage}{0.4\textwidth}
  \centering
\includegraphics[width =  \textwidth ]{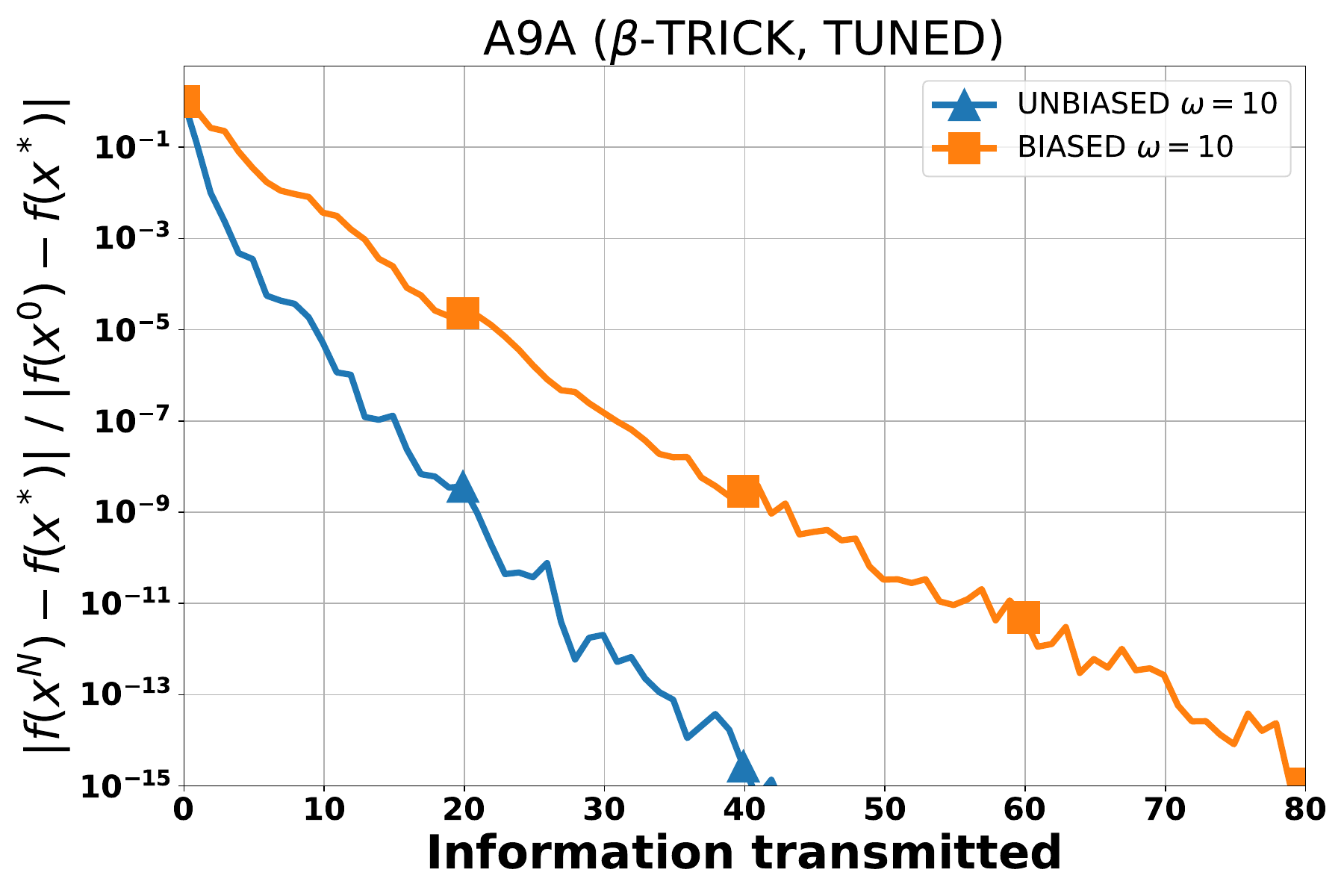}
\end{minipage}%
\\
\begin{minipage}{0.01\textwidth}
\quad
\end{minipage}%
\begin{minipage}{0.4\textwidth}
  \centering
(a) \texttt{mushrooms}
\end{minipage}%
\begin{minipage}{0.4\textwidth}
\centering
 (b) \texttt{a9a}
\end{minipage}%
\\
\begin{minipage}{0.4\textwidth}
  \centering
\includegraphics[width =  \textwidth ]{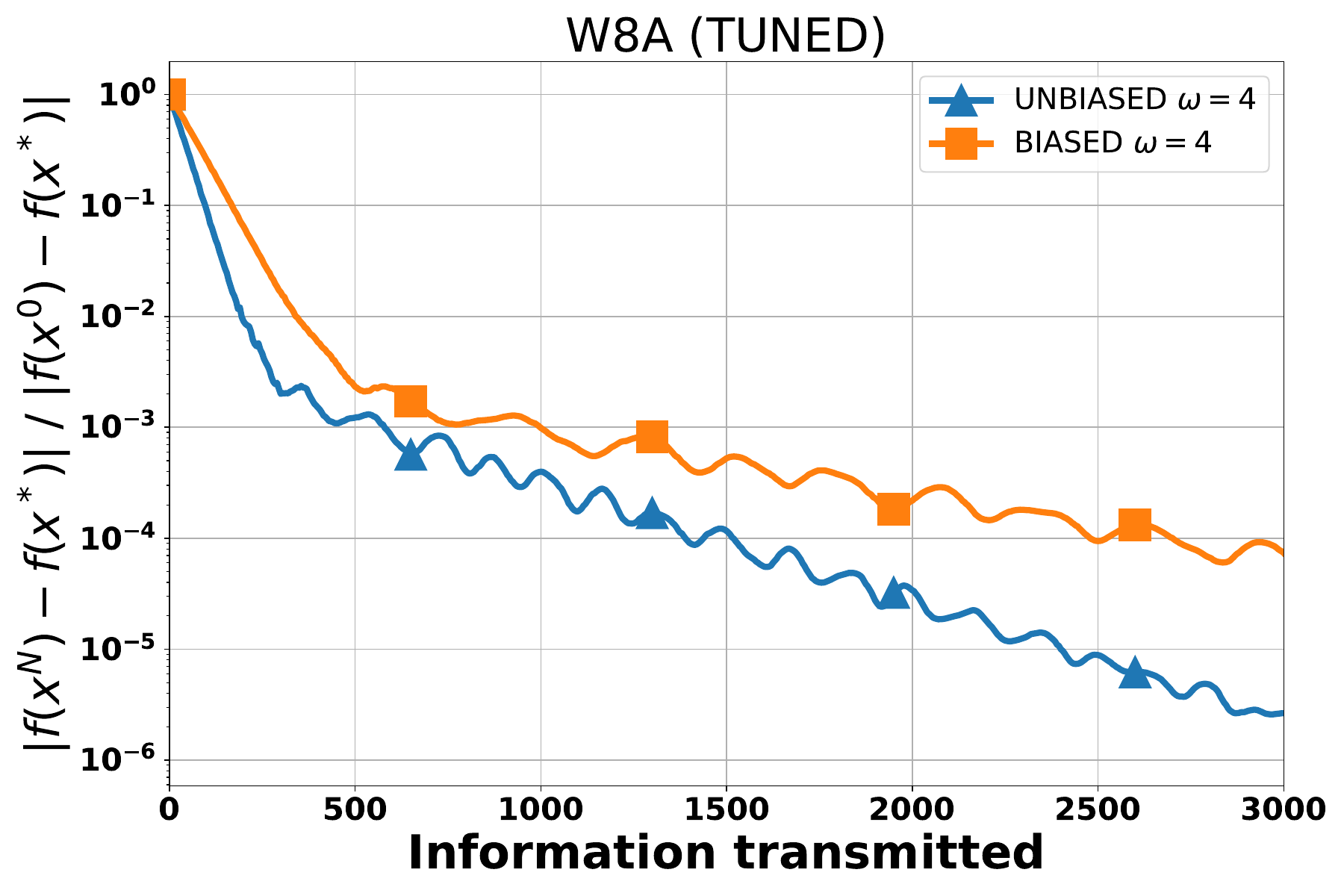}
\end{minipage}%
\begin{minipage}{0.4\textwidth}
  \centering
\includegraphics[width =  \textwidth ]{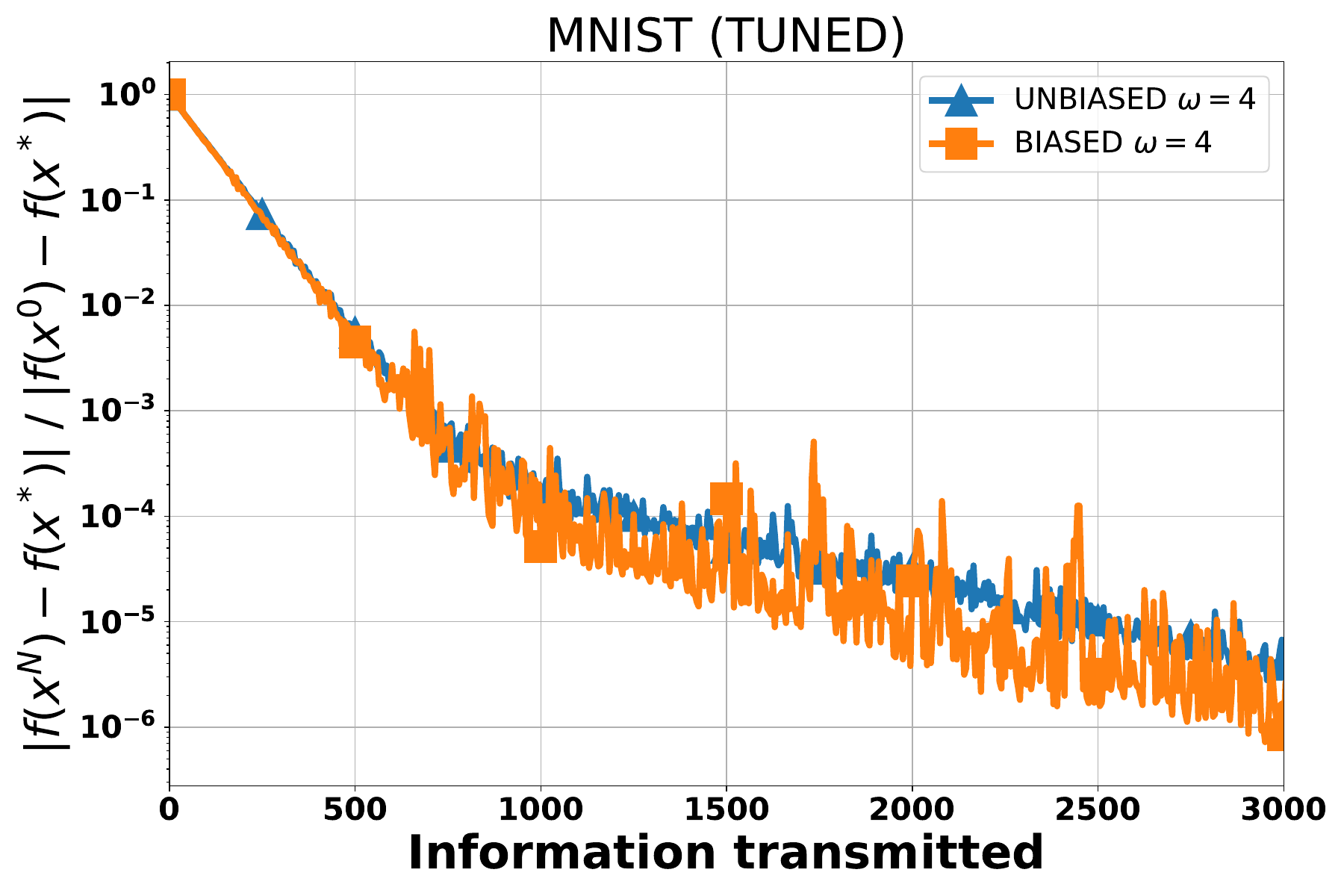}
\end{minipage}%
\\
\begin{minipage}{0.4\textwidth}
  \centering
\includegraphics[width =  \textwidth ]{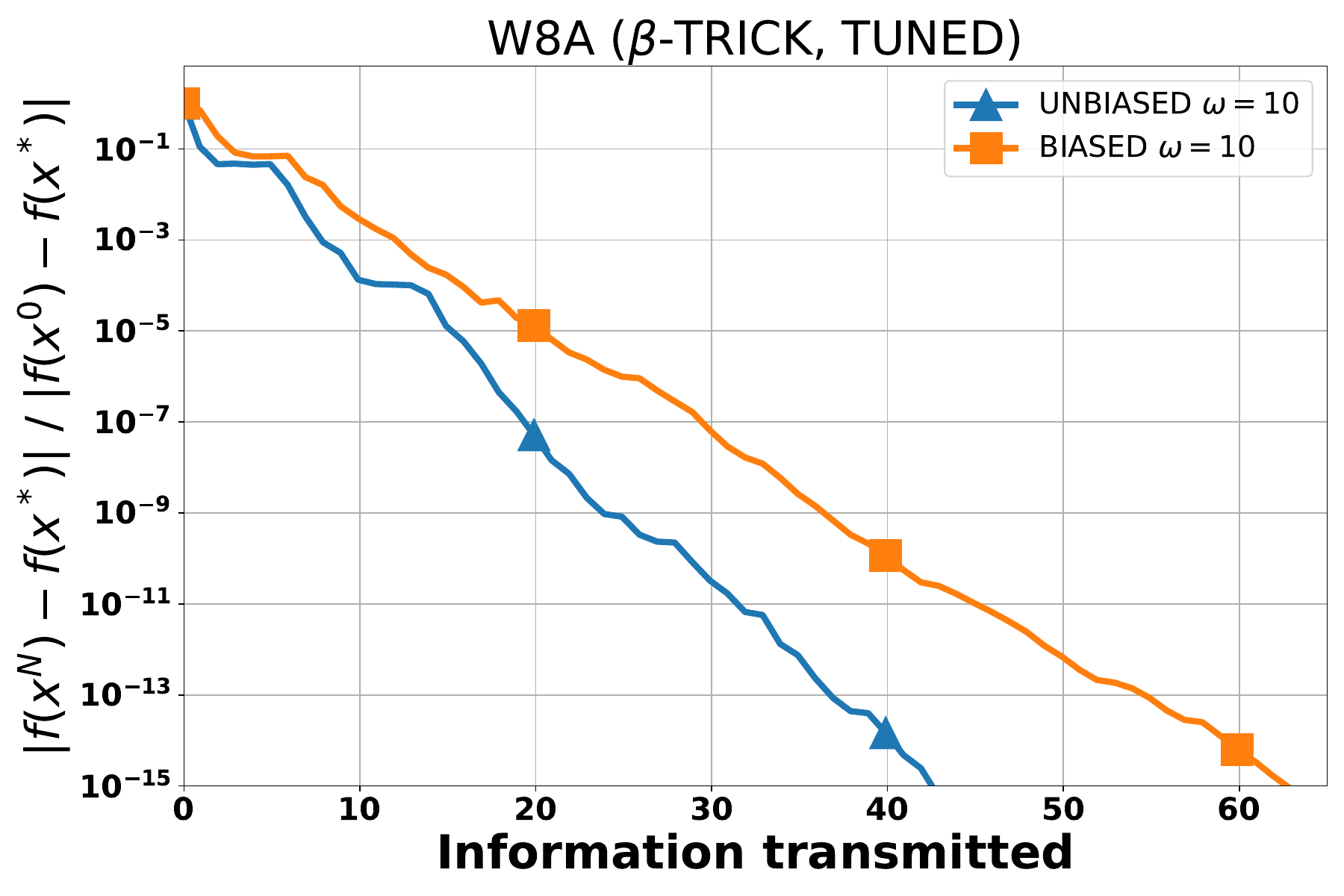}
\end{minipage}%
\begin{minipage}{0.4\textwidth}
  \centering
\includegraphics[width =  \textwidth ]{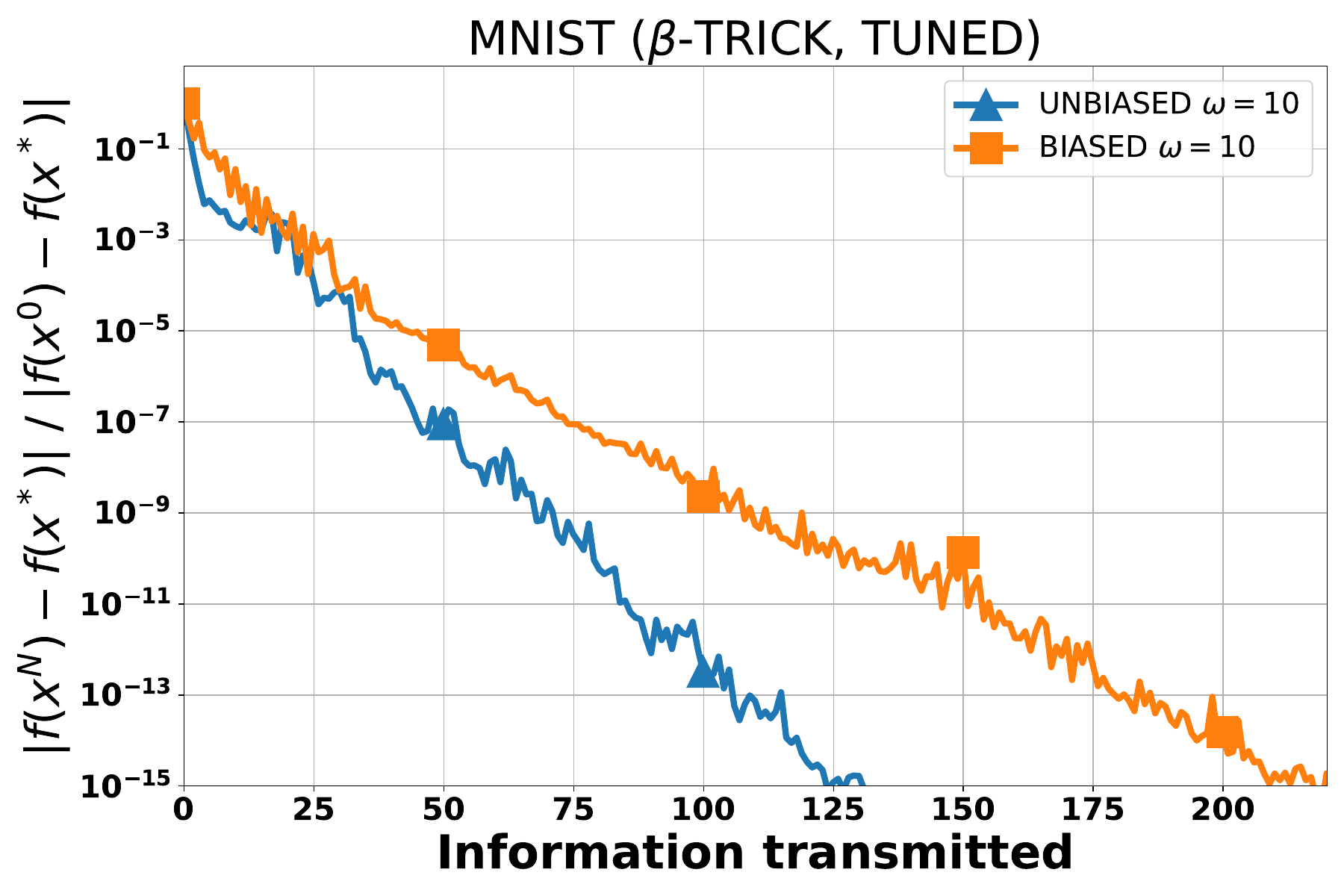}
\end{minipage}%
\\
\begin{minipage}{0.4\textwidth}
\centering
  (c) \texttt{w8a}
\end{minipage}%
\begin{minipage}{0.4\textwidth}
\centering
  (d) \texttt{MNIST}
\end{minipage}%
\vskip-5pt
\caption{
Comparison of Algorithm \ref{alg:EG_biased} and Algorithm \ref{alg:EG_quantization} for solving the VFL problem (\ref{eq:vfl_lin_spp_1}). The comparison is made on LibSVM datasets \texttt{mushrooms}, \texttt{a9a}, \texttt{w8a} and \texttt{MNIST}. The compression operators $C = \text{TopK}\%$ and $Q = \text{RandK}\%$. The criterion for comparison is the number of full vectors transmitted. The top line reflects the work of methods on the basic problem, the bottom line solves the problem with the $\beta$-trick (see disscusion after Corollary \ref{cor:EG_basic_1}).}
    \label{fig:comparison3}
\end{figure}


In the third group of experiments with modifications (Figure \ref{fig:comparison5}), we test the performance of Algorithm \ref{alg:EG_pp}. At each iteration we generate only 2 devices out of 5 that communicate. The comparison is made in terms of the number of full vectors transmitted from all devices. As in the previous experiments, we tune stepsizes. The results show that the partial participation technique can indeed speed up the communication process in terms of the number of devices communicated.

\begin{figure}[h]
\centering
\begin{minipage}{0.49\textwidth}
  \centering
\includegraphics[width =  \textwidth ]{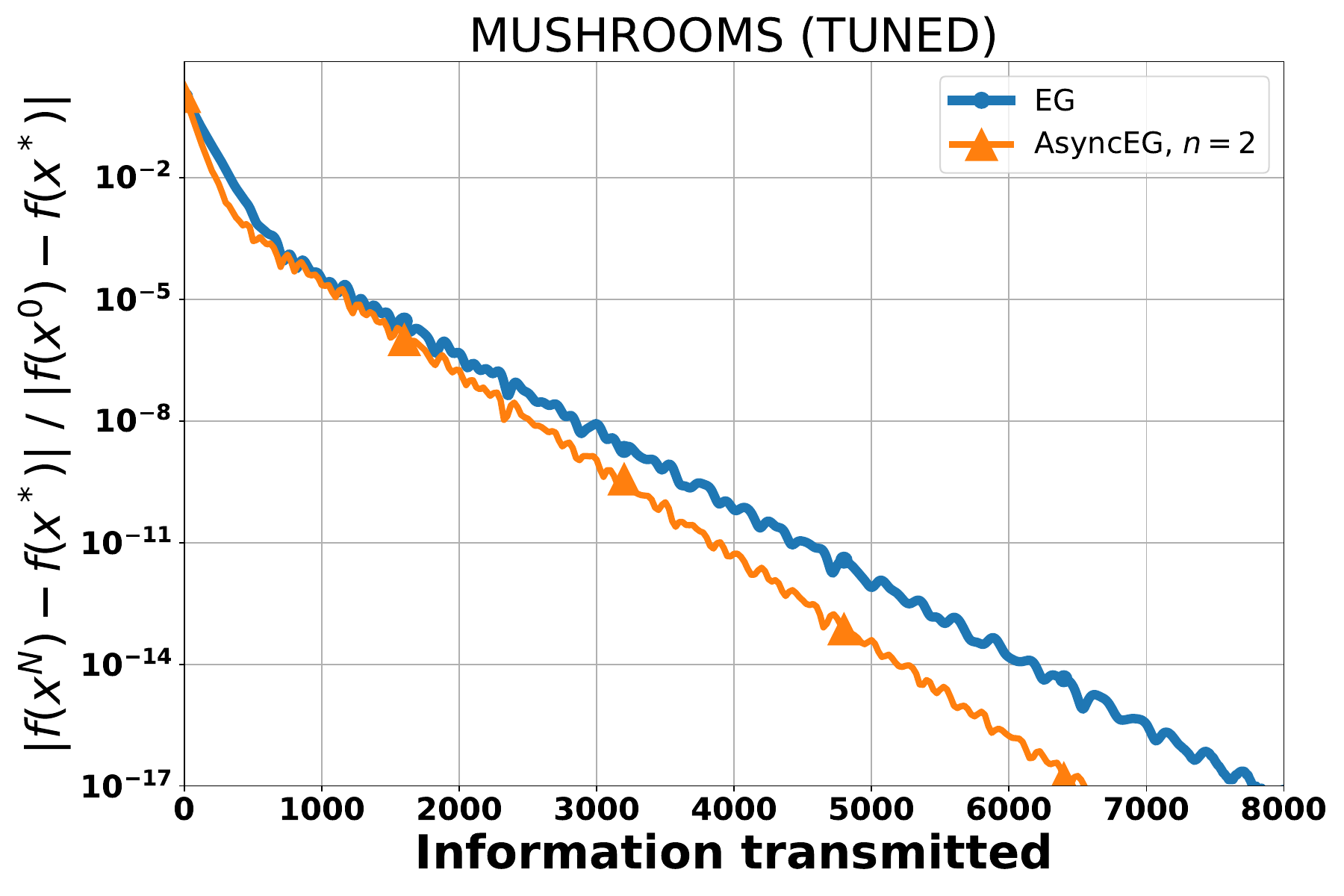}
\end{minipage}%
\begin{minipage}{0.49\textwidth}
  \centering
\includegraphics[width =  \textwidth ]{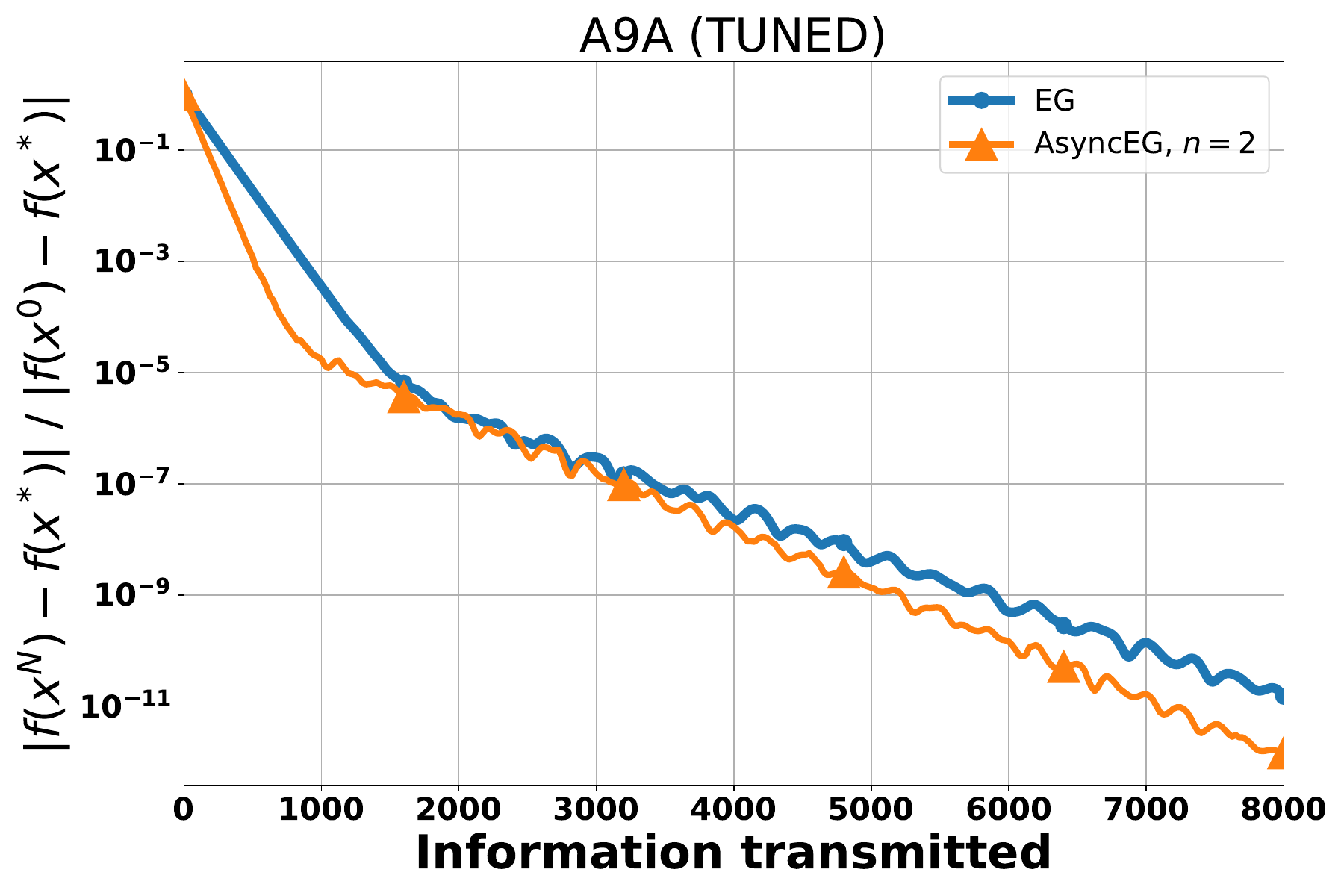}
\end{minipage}%
\\
\begin{minipage}{0.01\textwidth}
\quad
\end{minipage}%
\begin{minipage}{0.49\textwidth}
  \centering
(a) \texttt{mushrooms}
\end{minipage}%
\begin{minipage}{0.49\textwidth}
\centering
 (b) \texttt{a9a}
\end{minipage}%
\\
\begin{minipage}{0.49\textwidth}
  \centering
\includegraphics[width =  \textwidth ]{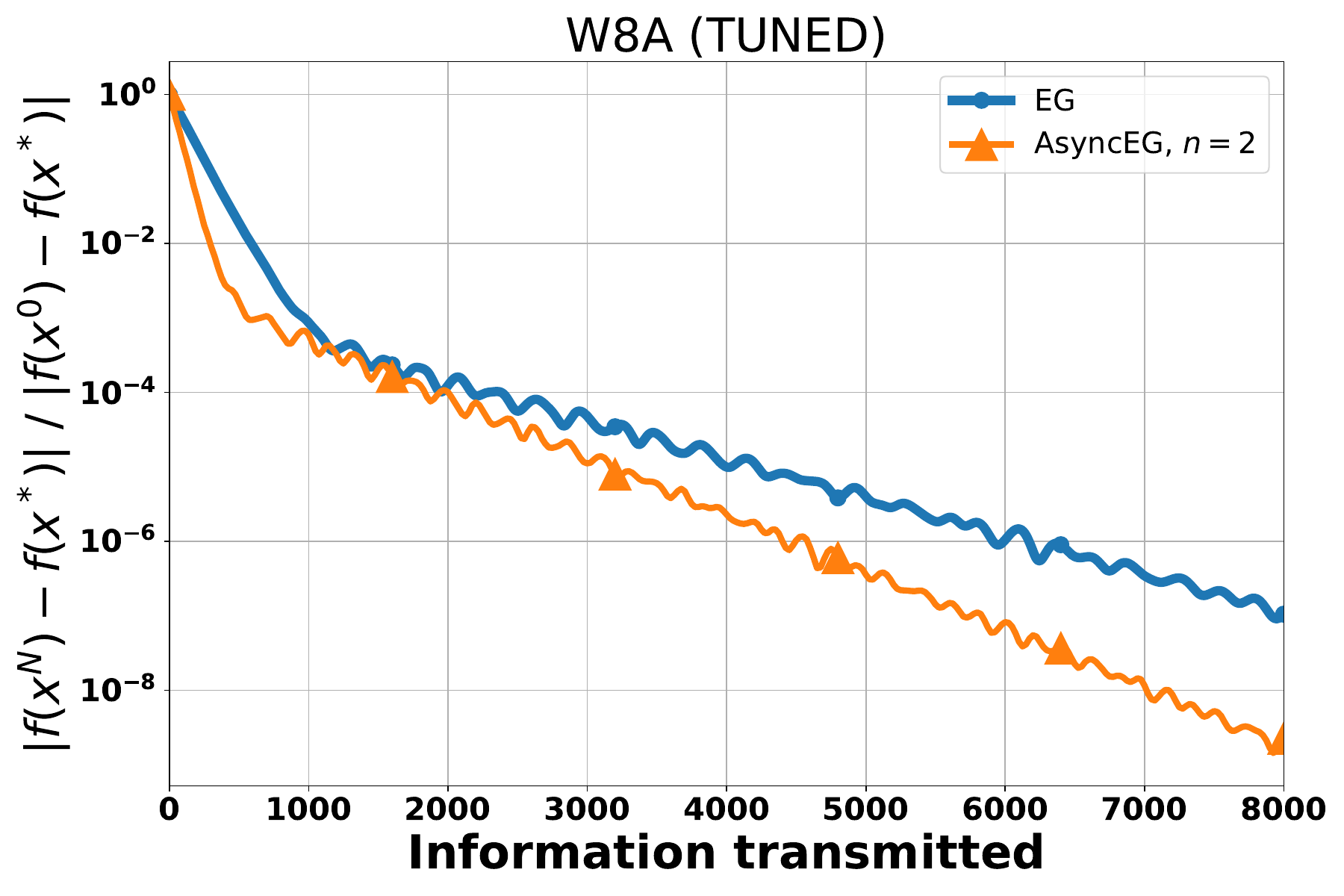}
\end{minipage}%
\begin{minipage}{0.49\textwidth}
  \centering
\includegraphics[width =  \textwidth ]{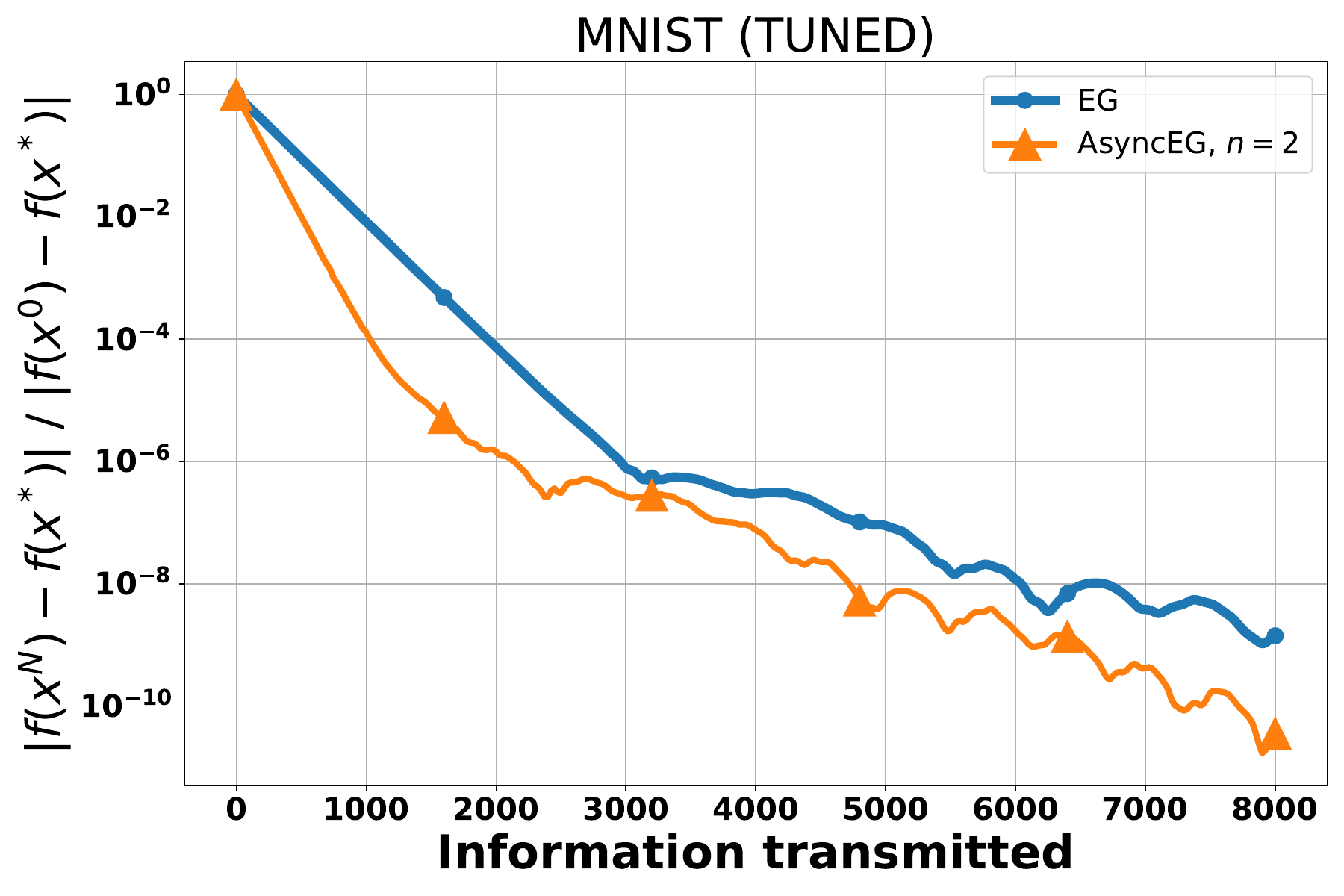}
\end{minipage}%
\\
\begin{minipage}{0.49\textwidth}
\centering
  (c) \texttt{w8a}
\end{minipage}%
\begin{minipage}{0.49\textwidth}
\centering
  (d) \texttt{MNIST}
\end{minipage}%
\caption{
Comparison of Algorithm \ref{alg:EG_pp} for solving the VFL problem (\ref{eq:vfl_lin_spp_1}). The comparison is made on LibSVM datasets \texttt{mushrooms}, \texttt{a9a}, \texttt{w8a} and \texttt{MNIST}. The criterion for comparison is the number of full vectors transmitted.}
    \label{fig:comparison5}
\end{figure}

In the fouth group of experiments with modifications (Figure \ref{fig:comparison6}), we test the performance of Algorithm \ref{alg:EG_coord}. We use a random selection coordinates: 100\% (Algorithm \ref{alg:EG}), 50\%, 25\%, 10\%. An important detail is that we set the same random generator and seed on each of the devices. The comparison is made in terms of the computational powers. Here we also tune stepsizes. The results show that the random coordinate selection can indeed speed up the computational process.

\begin{figure}[h]
\centering
\begin{minipage}{0.4\textwidth}
  \centering
\includegraphics[width =  \textwidth ]{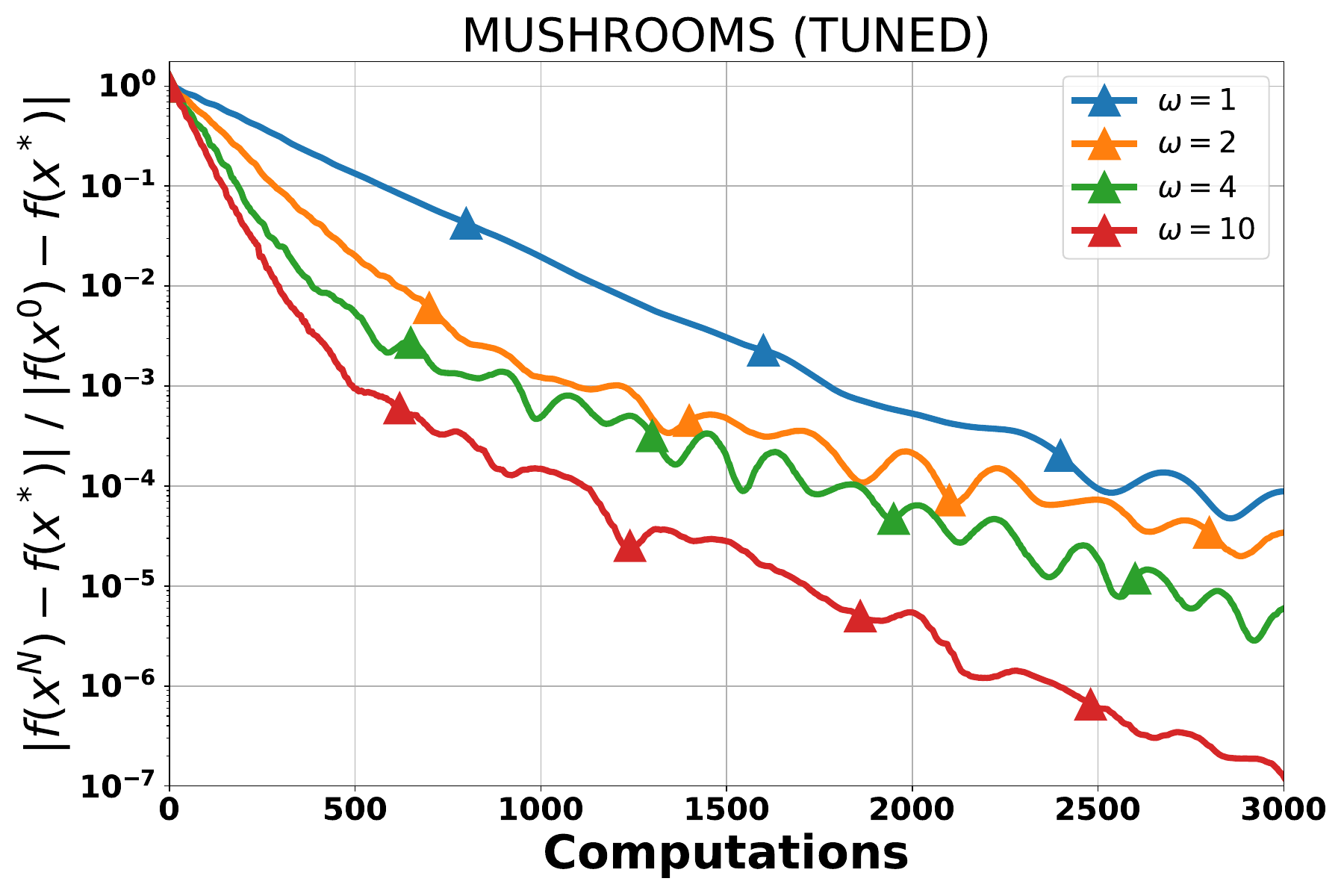}
\end{minipage}%
\begin{minipage}{0.4\textwidth}
  \centering
\includegraphics[width =  \textwidth ]{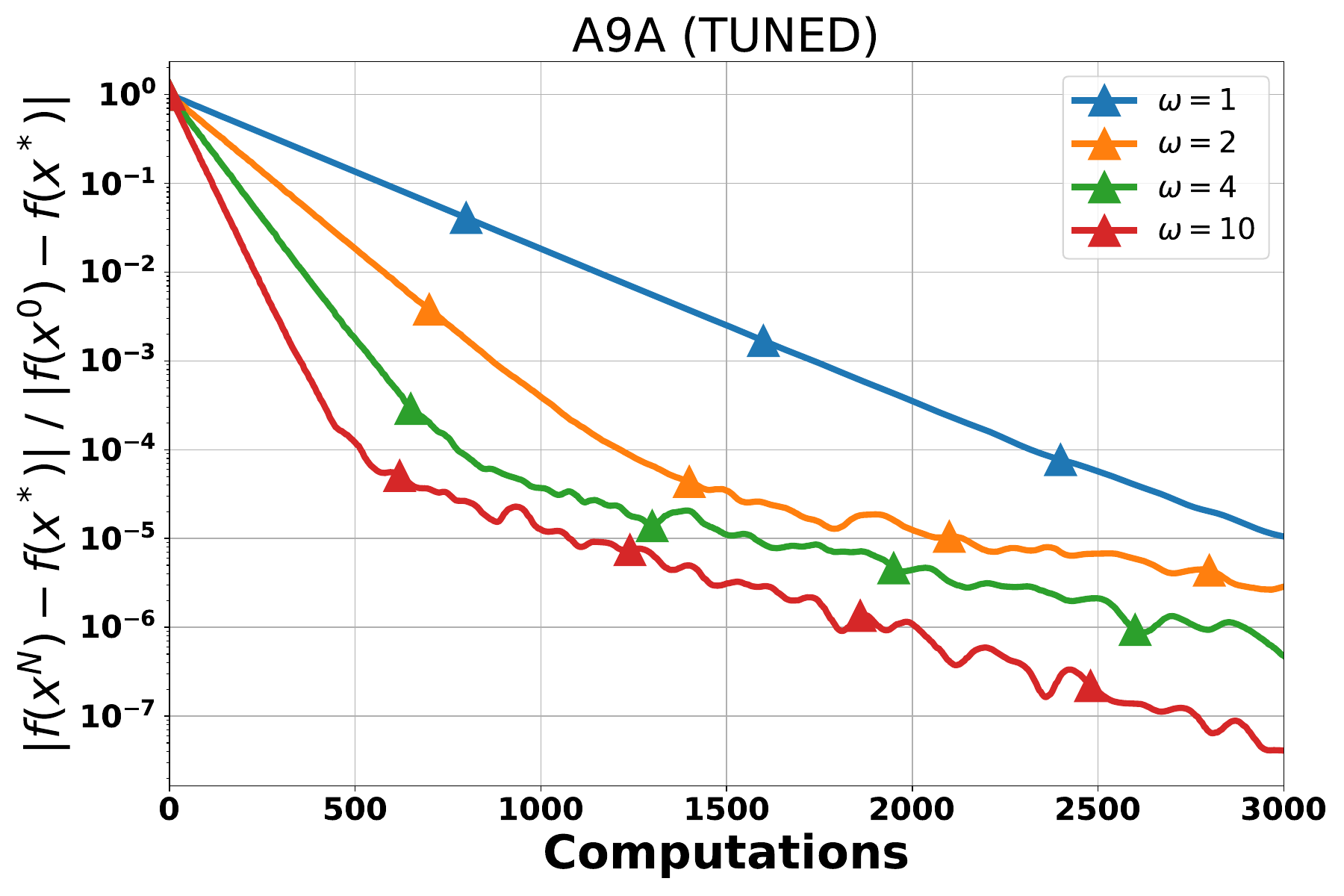}
\end{minipage}%
\\
\begin{minipage}{0.4\textwidth}
  \centering
\includegraphics[width =  \textwidth ]{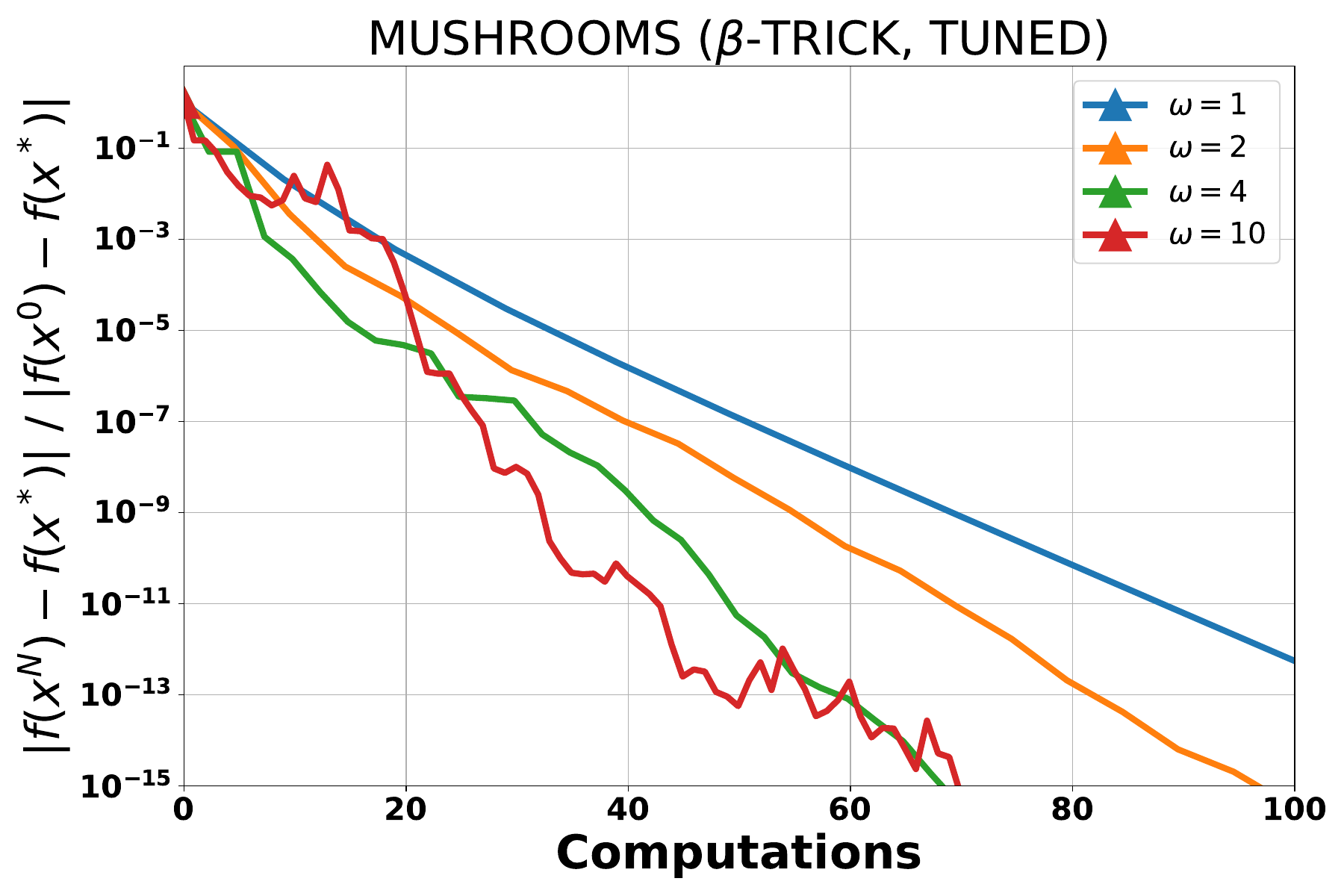}
\end{minipage}%
\begin{minipage}{0.4\textwidth}
  \centering
\includegraphics[width =  \textwidth ]{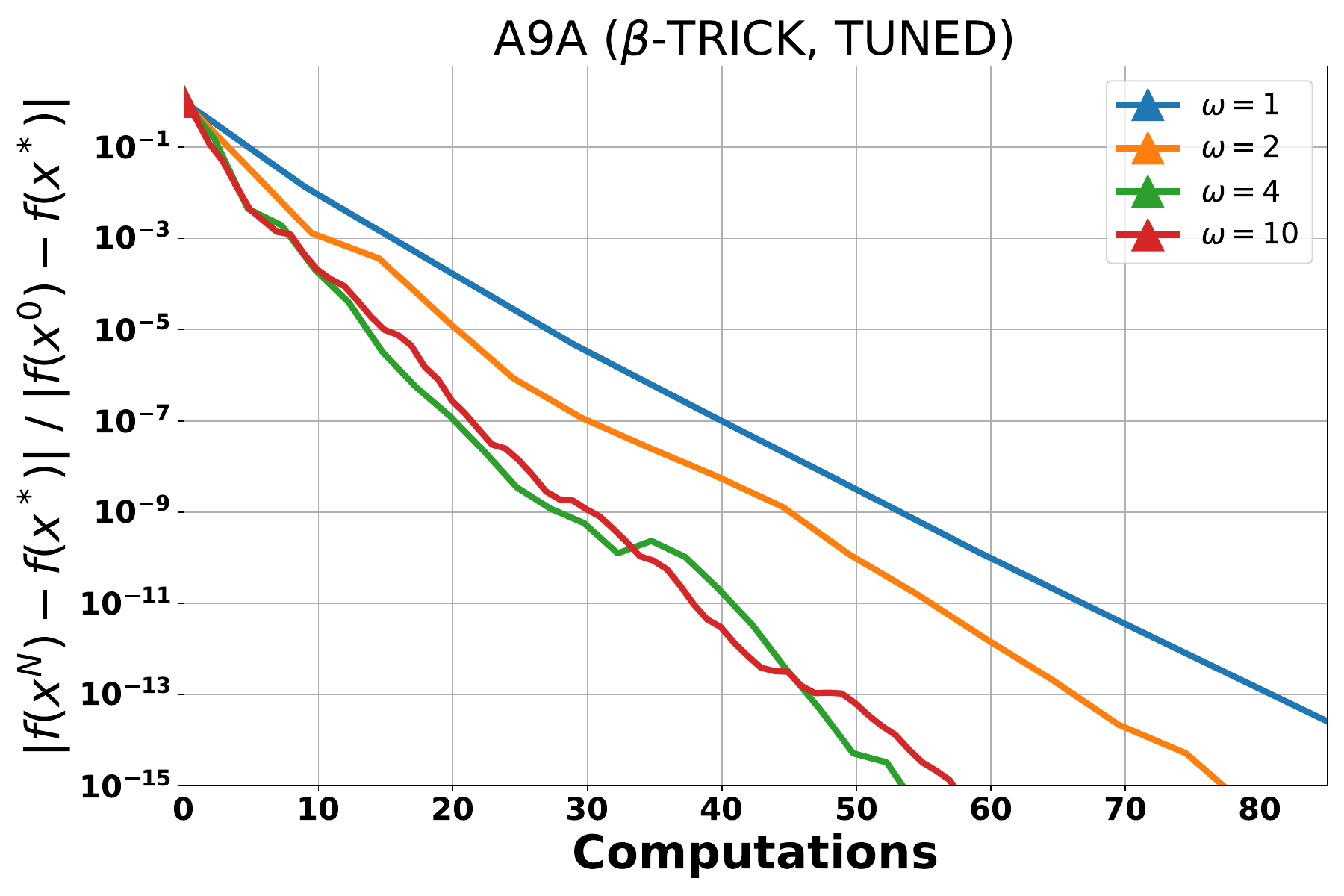}
\end{minipage}%
\\
\begin{minipage}{0.01\textwidth}
\quad
\end{minipage}%
\begin{minipage}{0.4\textwidth}
  \centering
(a) \texttt{mushrooms}
\end{minipage}%
\begin{minipage}{0.4\textwidth}
\centering
 (b) \texttt{a9a}
\end{minipage}%
\\
\begin{minipage}{0.4\textwidth}
  \centering
\includegraphics[width =  \textwidth ]{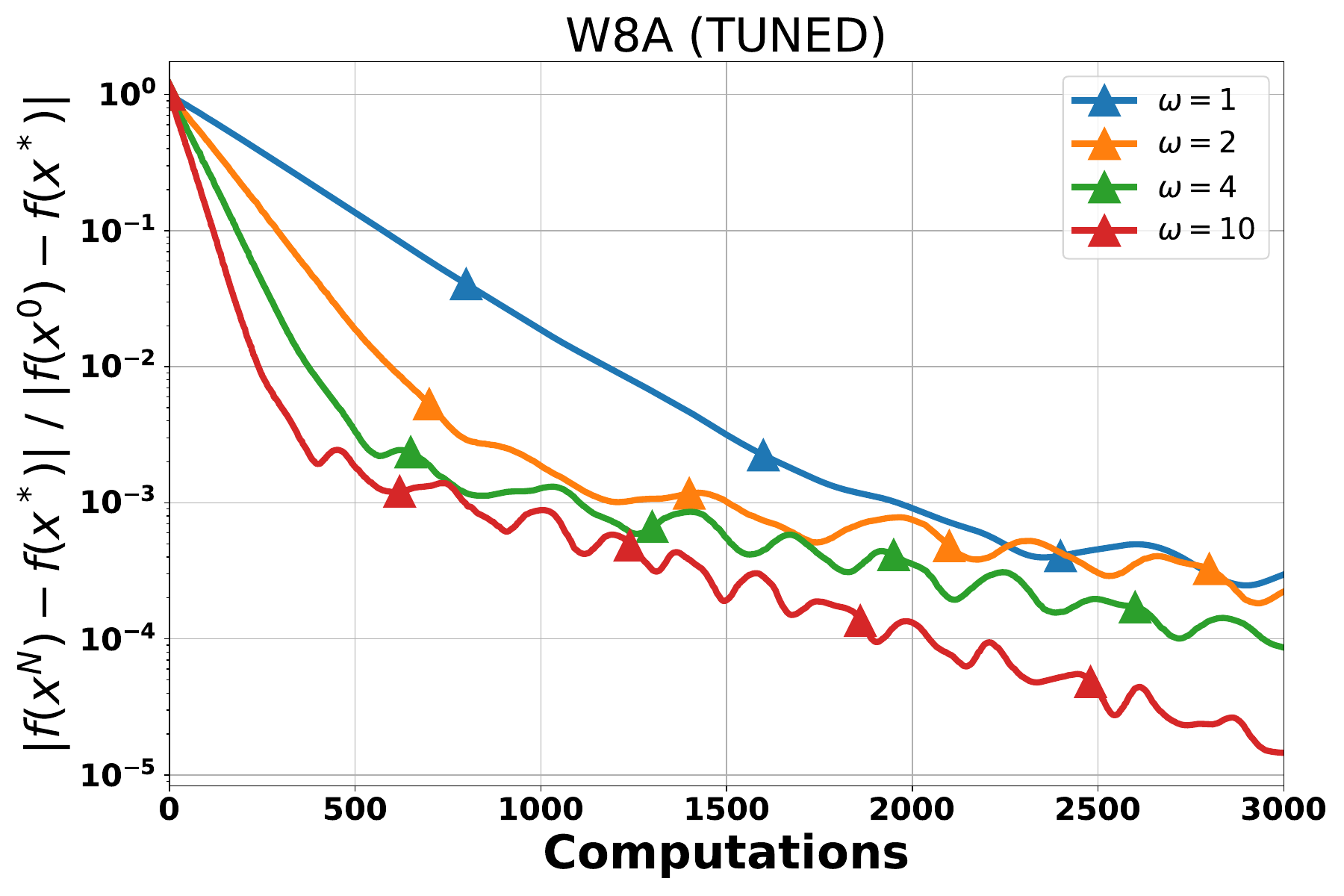}
\end{minipage}%
\begin{minipage}{0.4\textwidth}
  \centering
\includegraphics[width =  \textwidth ]{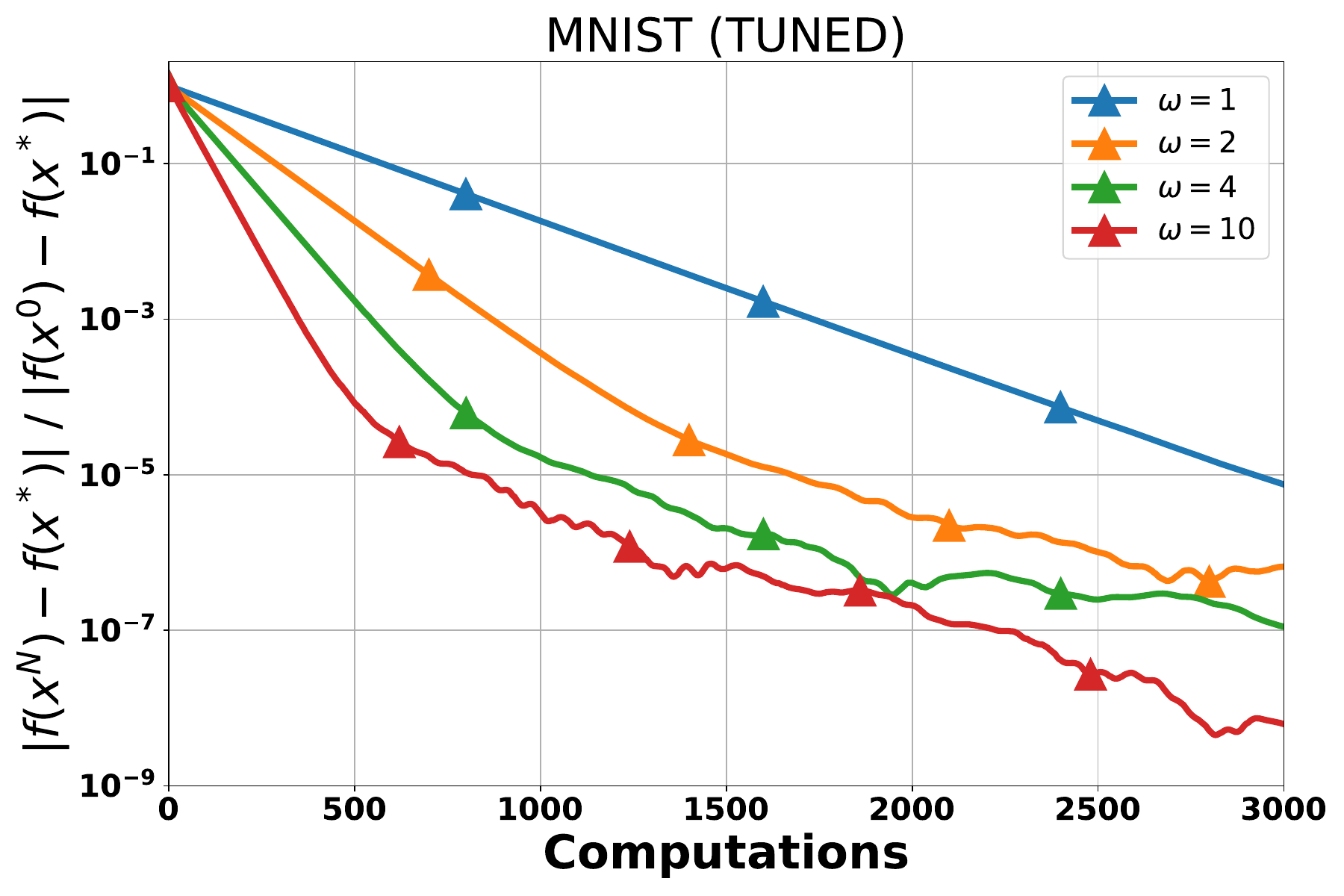}
\end{minipage}%
\\
\begin{minipage}{0.4\textwidth}
  \centering
\includegraphics[width =  \textwidth ]{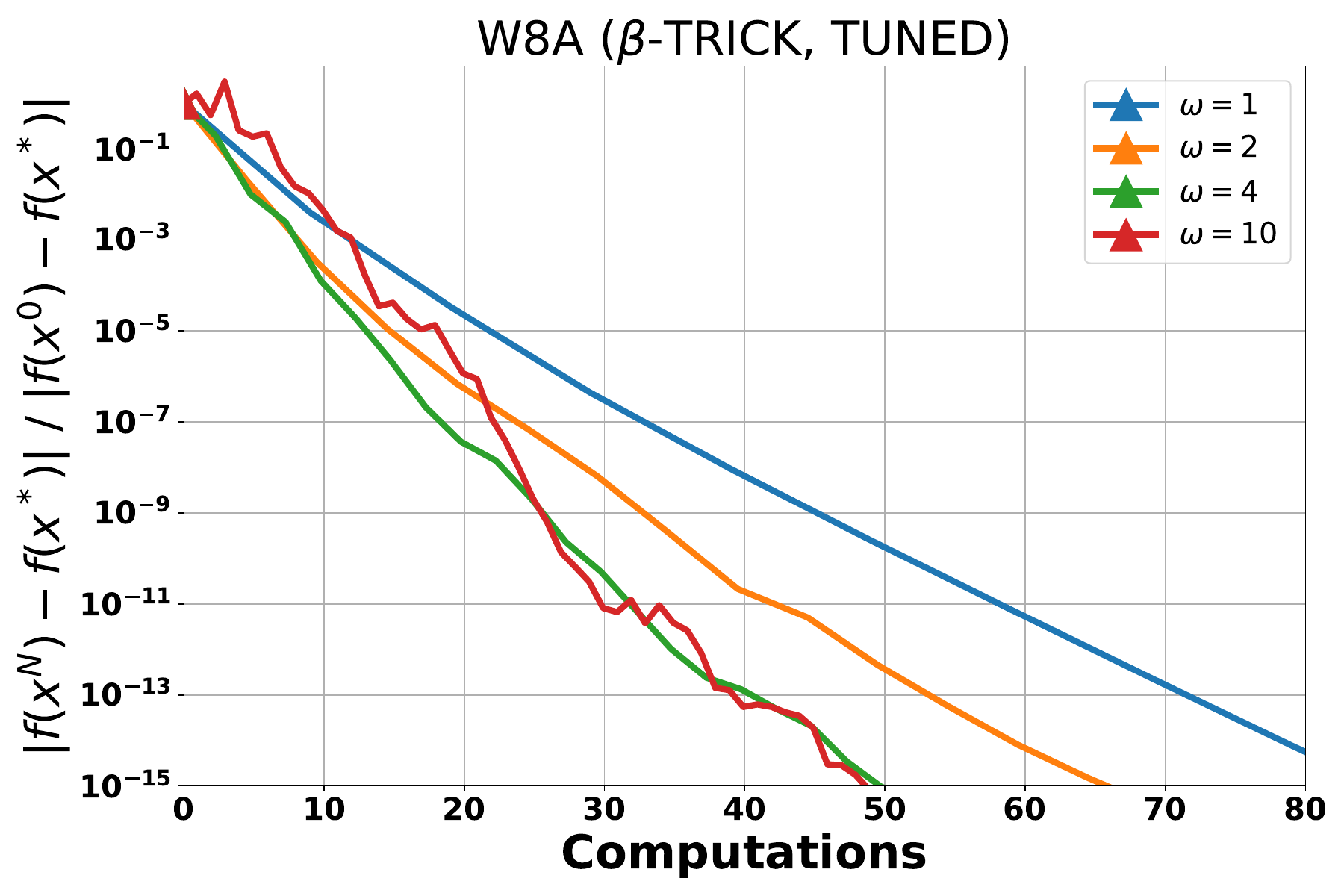}
\end{minipage}%
\begin{minipage}{0.4\textwidth}
  \centering
\includegraphics[width =  \textwidth ]{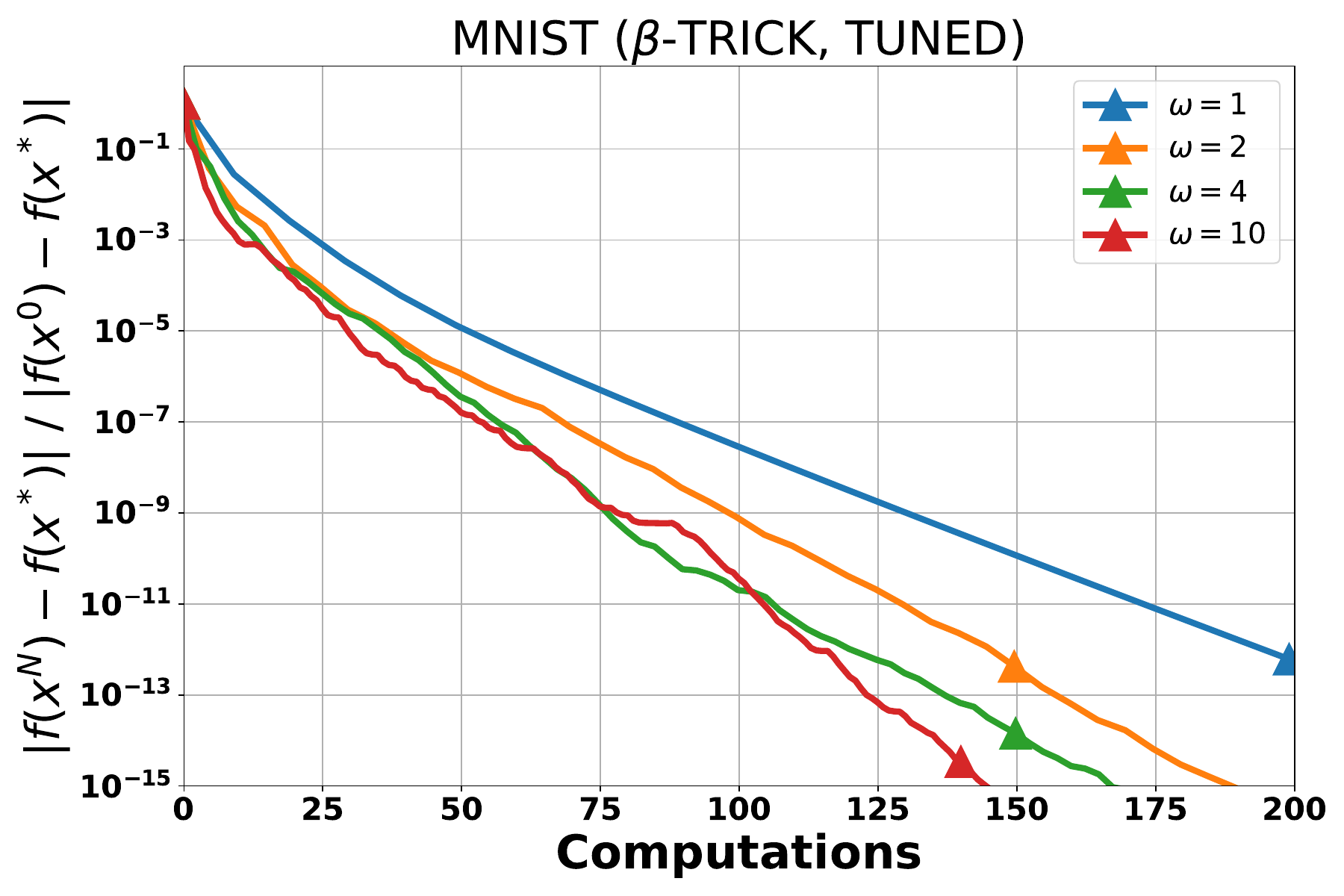}
\end{minipage}%
\\
\begin{minipage}{0.4\textwidth}
\centering
  (c) \texttt{w8a}
\end{minipage}%
\begin{minipage}{0.4\textwidth}
\centering
  (d) \texttt{MNIST}
\end{minipage}%
\caption{
Comparison of Algorithm \ref{alg:EG_coord} for solving the VFL problem (\ref{eq:vfl_lin_spp_1}) on LibSVM datasets \texttt{mushrooms}, \texttt{a9a}, \texttt{w8a} and \texttt{MNIST}. The criterion for comparison is the computational powers. The top line reflects the work of methods on the basic problem, the bottom line solves the problem with the $\beta$-trick (see disscusion after Corollary \ref{cor:EG_basic_1}).}
    \label{fig:comparison6}
\end{figure}

{In the fifth group of experiments with modifications (Figure \ref{fig:comparison4_noise}), we test the performance of Algorithm \ref{alg:EG_noise}. We use different levels of noise and observe how it affects convergence. The results show that noise leads to oscillations of convergence, the higher the noise level, the greater the degree of oscillations.}
\begin{figure}[h]
\begin{minipage}{0.49\textwidth}
  \centering
\includegraphics[width =  \textwidth ]{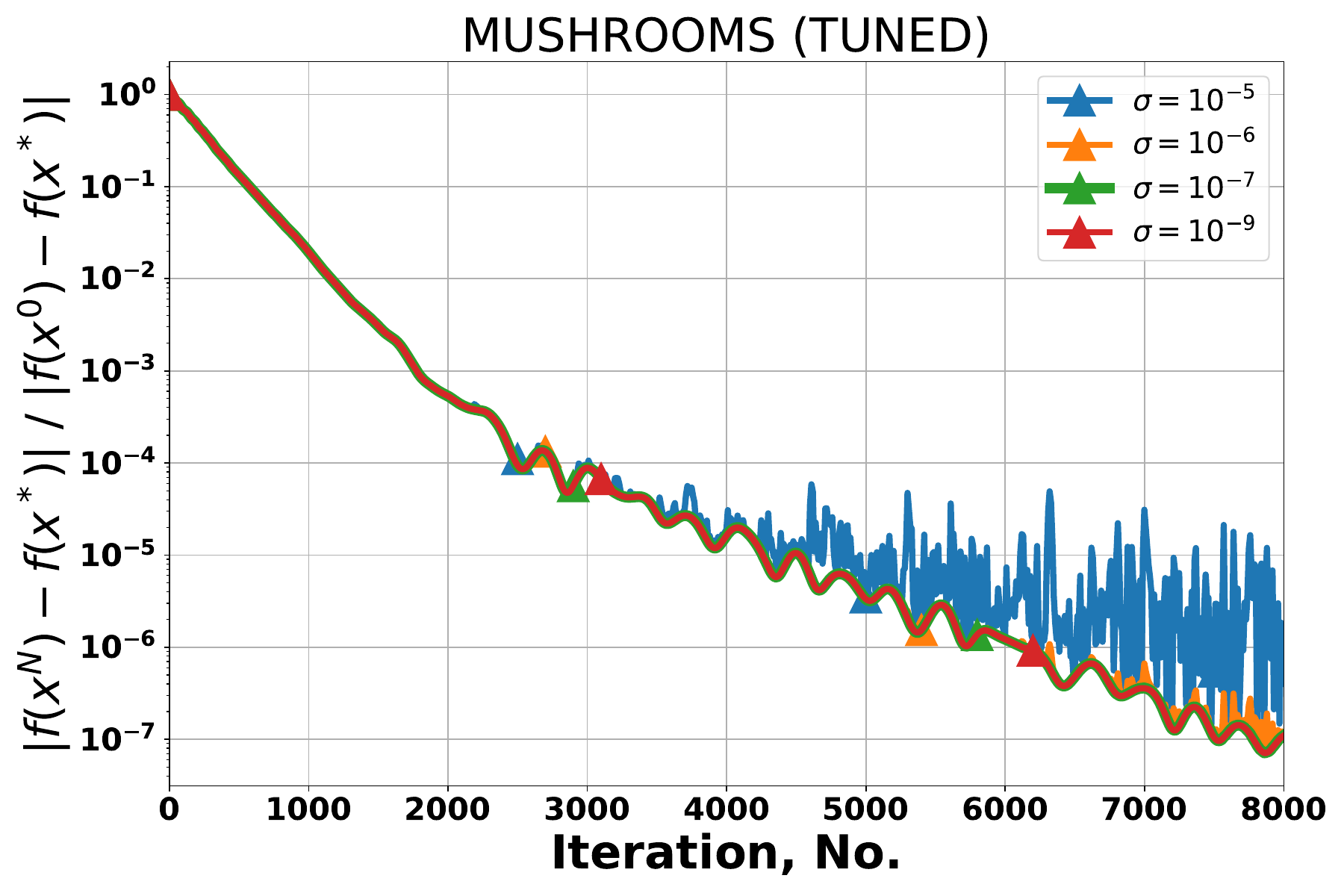}
\end{minipage}%
\begin{minipage}{0.49\textwidth}
  \centering
\includegraphics[width =  \textwidth ]{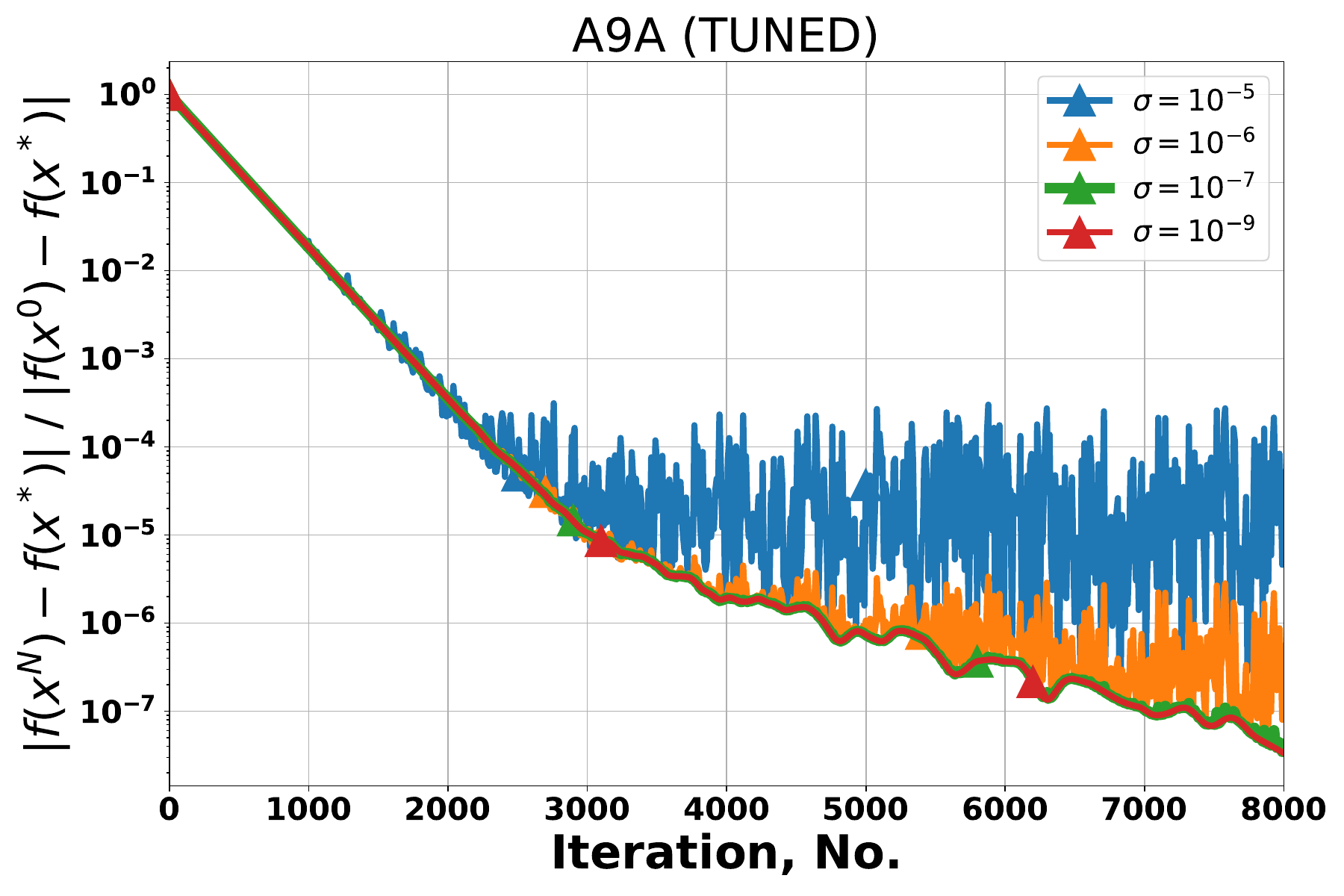}
\end{minipage}%
\\
\begin{minipage}{0.49\textwidth}
  \centering
(a) \texttt{mushrooms}
\end{minipage}%
\begin{minipage}{0.49\textwidth}
\centering
 (b) \texttt{a9a}
\end{minipage}%
\\
\begin{minipage}{0.49\textwidth}
  \centering
\includegraphics[width =  \textwidth ]{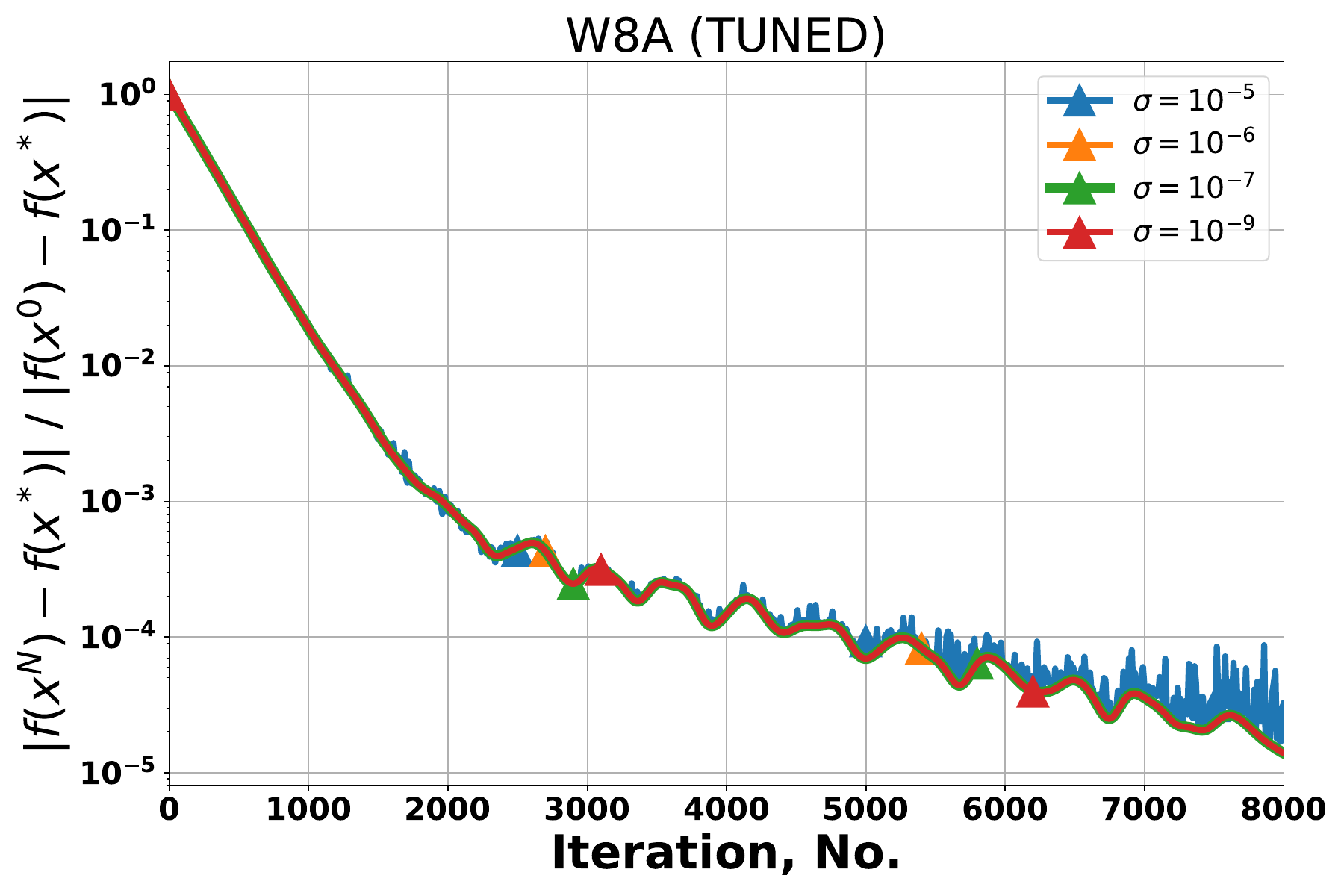}
\end{minipage}%
\begin{minipage}{0.49\textwidth}
  \centering
\includegraphics[width =  \textwidth ]{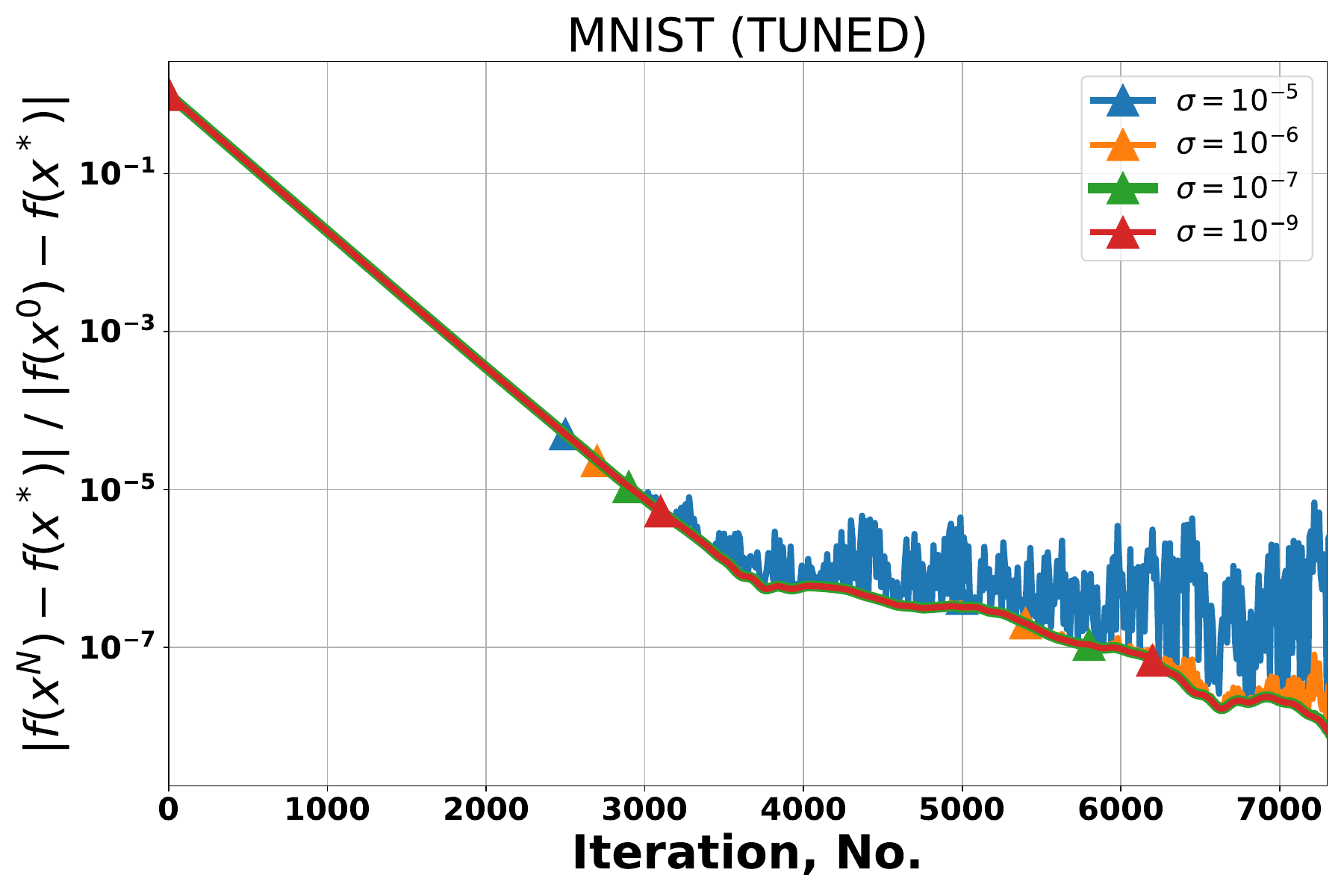}
\end{minipage}%
\\
\begin{minipage}{0.49\textwidth}
\centering
  (c) \texttt{w8a}
\end{minipage}%
\begin{minipage}{0.49\textwidth}
\centering
  (d) \texttt{MNIST}
\end{minipage}%
\caption{
Comparison of Algorithm \ref{alg:EG_noise} with different noise levels for solving the VFL problem \eqref{eq:vfl_lin_spp_1}. The comparison is made on LibSVM datasets \texttt{mushrooms}, \texttt{a9a}, \texttt{w8a} and \texttt{MNIST}. The criterion for comparison is the number of iterations.}
    \label{fig:comparison4_noise}
\end{figure}

\subsection{Technical details}\label{sec:tech_det}

Our algorithms is written in Python 3.10, with the use of PyTorch optimization library. We implement a simulation of distributed optimization system on a single server. Our server is AMD Ryzen Threadripper 2950X 16-Core Processor @ 2.2 GHz CPU and x2 NVIDIA GeForce GTX 1080 Ti GPU.

\end{appendixpart}
\end{document}